\numberwithin{equation}{section}
\newtheorem{theorem}{Theorem}[section]
\newtheorem{lemma}[theorem]{Lemma}
\newtheorem{proposition}[theorem]{Proposition}
\newtheorem{corollary}[theorem]{Corollary}
\theoremstyle{definition}
\newtheorem{example}[theorem]{Example}\newtheorem{definition}[theorem]{Definition}
\newtheorem{remark}[theorem]{Remark}
\newenvironment{assumption}[1]
  {\innercustomthm}
  {\endinnercustomthm}
  \def\treex{{\mathcal T}}
  \def\Nvarx{\Nvar}
  \def\omegax{\Omega}
  \def\Fmcx{\Fmc}
  \def\Fmbx{\Fmb}
  \def\Embx{\Emb}
  \def\Pmbx{\PP} 
  \def\QQx{\QQ}
  \def\QQy{\QQ'}
  \def\Emby{\widetilde{\E}'}
  \def\Ly{{\mathcal L}'}
  \def\tLy{\widetilde{{\mathcal L}}'}
  \def\Ex{\E}
  \def\gammax{\gamma}
  \def\tgammax{\tilde{\gamma}}
  \def\omegay{\Omega'}
  \def\Fmcy{\Fmc'}
  \def\Fmby{\Fmb'}
  \def\Pmby{\PP'}
  \def\By{B'}
  \def\newy{X'}
  \def\newY{X'}
  \def\gammay{\gamma'}
  \def\tgammay{\tilde{\gamma}'} 
  \def\treey{{\mathcal T}'}
  \def\Nvary{\Nvar'}
  \def\perm{{\mathcal S}}
  \def\fmeas{\mu}
  \def\Pone{\mu}
  \def\Ptwo{\mu'}
   \def\tPonet{\widetilde{\mu}}
   \def\tPtwot{\widetilde{\mu}'} 
   \def\nuone{\widetilde{\nu}}
   \def\nutwo{\widetilde{\nu}'}
\def\Xo{X}
\def\Xt{X'}
\def\tX{X}
\def\E{{\mathbb E}}
\def\EE{{\mathcal E}}
\def\R{{\mathbb R}}
\def\N{{\mathbb N}}
\def\PP{{\mathbb P}}
\def\FF{{\mathbb F}}
\def\P{{\mathcal P}}
\def\B{{\mathcal B}}
\def\V{{\mathcal V}}
\def\QQ{\widetilde{\mathbb{P}}}
\def\K{{\mathcal K}}
\def\X{{\mathcal X}}
\def\V{{\mathbb V}}
\def\Y{{\mathcal Y}}
\def\L{{\mathcal L}}
\def\G{{\mathcal G}}
\def\T{{\mathbb T}}
\def\D{{\mathcal D}}
\def\W{{\mathcal W}}
\def\Z{{\mathbb Z}}
\def\A{{\mathcal A}}
\def\F{{\mathcal F}}
\def\C{{\mathcal C}}
\def\SQ{S^{\sqcup}}
\def\newb{\beta}
\def\newx{y}
\newcommand{\indep}{\perp \!\!\! \perp}
\newcommand{\Erdos}{Erd\H{o}s-R\'enyi }
\newcommand{\lan}{\langle}
\newcommand{\ran}{\rangle}
\newcommand{\Emb}{{\mathbb{E}}}
\newcommand{\Fmb}{{\mathbb{F}}}
\newcommand{\Pmb}{{\mathbb{P}}}
\newcommand{\Rmb}{{\mathbb{R}}}
\newcommand{\Tmb}{{\mathbb{T}}}
\newcommand{\Vmb}{{\mathbb{V}}}
\newcommand{\Fmc}{{\mathcal{F}}}
\newcommand{\Lmc}{{\mathcal{L}}}
\newcommand{\gmu}{\bar{\mu}}
\newcommand{\tree}{{\mathcal T}}
\newcommand{\parm}{\theta}
\newcommand{\bCa}{K}
\newcommand{\bCb}{\bar{K}}
\def\mom{\pi}
\def\detfn{\tau}
\def\mmap{\Lambda}
\def\Nvar{{\widehat C}_1}
\def\nufinal{Q}
\def\h{h}
\def\onepoint{{\circ}}
\title{Marginal dynamics of  interacting diffusions on unimodular Galton-Watson trees} 
	\date{today}
	 	\subjclass[2000]{Primary: 60K35; 60J60; 60J80;  Secondary: 60F17; 82C22}
	 	\keywords{interacting diffusions, sparse graphs, random graphs, local weak convergence, mean-field limits, nonlinear Markov processes, \Erdos graphs,  configuration model, unimodularity, Markov random fields} 
	 	\author[Lacker]{Daniel Lacker}
                \address{Columbia University, New York, New York} 
	 	\author[Ramanan]{Kavita Ramanan}
                 \thanks{K. Ramanan was supported in part by the National Science Foundation via  Grant DMS-1713032, 
             the Army Research Office via grant  W911NFF2010133 and a Simon Guggenheim Fellowship.}  
	 	\address{Division of Applied Mathematics, Brown University, 182 George Street, Providence, RI 02912} 
	 	\author[Wu]{Ruoyu Wu}
                 \address{Department of Mathematics, Iowa State University, 411 Morrill Road, Ames, IA 50011} 
	 	 \email{daniel.lacker@columbia.edu, kavita\_ramanan@brown.edu, ruoyu@iastate.edu}
\date{\today}
\begin{document}

\begin{abstract}  
  Consider a system of homogeneous interacting diffusive particles labeled by the nodes of
  a unimodular  Galton-Watson tree,
  where the state of each node evolves infinitesimally like a 
  $d$-dimensional diffusion whose drift coefficient  depends on (the histories
  of) its own state and the states of neighboring nodes,  
  and whose diffusion coefficient depends only on (the history of)  
  its own state.
     Under suitable   regularity assumptions on the coefficients, 
   an autonomous characterization is obtained for  the  marginal distribution of the dynamics of
   the neighborhood of a typical node in terms of a  certain local equation, which is
  a new kind of stochastic differential equation that  is nonlinear in the sense of McKean. This equation 
    describes 
   a   finite-dimensional  
  non-Markovian stochastic process 
   whose infinitesimal evolution at any time depends not only on the structure and current state 
   of the neighborhood, but  also  on   the conditional law 
   of the current  state given the past of the states of neighborhing nodes until that time.
   Such  marginal distributions  are 
  of interest because they  arise as  weak limits of both  marginal distributions and
  empirical measures of interacting diffusions on many sequences of   sparse random graphs, including the configuration
  model and 
  Erd\"{o}s-R\'{e}nyi graphs
  whose average degrees  converge  to  a finite non-zero limit.
   The results obtained complement classical results in the  mean-field regime, which 
    characterize 
  the limiting dynamics of homogeneous interacting diffusions on  complete graphs, as the number of nodes goes to infinity, in terms of   a corresponding nonlinear Markov process.
  However, in the sparse graph setting, the topology of the graph strongly influences the dynamics, and 
   the analysis  requires a completely different approach.
The proofs of existence and uniqueness of the local equation rely on delicate new conditional independence and symmetry properties of particle trajectories on unimodular Galton-Watson trees, as well as judicious use of  changes of measure. 
  
\end{abstract}

\maketitle
    
\tableofcontents

\section{Introduction}

\subsection{Background and Motivation}
\label{subs-backmot}
\footnote{This paper, along with \cite{LacRamWu19b,LacRamWu20a}, supersedes the earlier arXiv version \cite{LacRamWu19a}, after reorganizing and expanding upon several aspects of the material. Notably, this paper removes the assumption of bounded drift in the derivation of the local equation, whereas \cite{LacRamWu20a} sharpens and strengthens the results on local weak convergence of particle systems, and \cite{LacRamWu19b} elaborates further on related yet rather separate conditional independence properties. These three papers treat very different, complementary aspects of the same class of particle systems and may be read independently.}
Given a  (possibly random) simple, (almost surely) locally finite  rooted graph $G=(V,E)$,  consider 
interacting diffusions of the form 
\begin{equation}
  \label{eqn-genericmod}
  dX^{G}_v (t) =  b(X^{G}_v (t),  \mu^{G}_v (t)) dt  +  \sigma (X^{G}_v(t)) dW_v (t),   \quad v \in V, \quad  t \geq 0,  \end{equation}
with  initial condition
$x(0) \in (\R^d)^V$.  
 Here,   $(W_v)_{v \in V}$ are 
independent $d$-dimensional standard Brownian motions,  $b$ and $\sigma$
  are suitably regular drift and diffusion coefficients, and  $\mu^{G}_v(t)$ is the local (random) empirical measure of the states of the neighbors of $v$ at time $t \geq 0$: 
\[  \mu_v^{G}(t) = \frac{1}{|N_v(G)|} \sum_{u \in N_v(G)} \delta_{X^G_u(t)},
\] 
  with 
  $N_v(G) = \{u \in V: (u,v) \in E\}$  denoting the neighborhood of the vertex $v$ in the graph $G$.
(By convention, set $\mu^G_v(t)=\delta_0$ when $N_v(G)$ is empty, that is, when the vertex $v$ is isolated.)
  Large systems of interacting diffusions of the form \eqref{eqn-genericmod} arise as models in a range of applications
in  neuroscience,
physics,
and economics (see \cite{LacRamWu20a} for references). 
     Important quantities of interest include the  dynamics of  the state of a ``typical" vertex 
   and the (global) empirical measure process  defined by 
  \begin{equation}
    \label{gmuG1}
    \gmu^G (t) = \frac{1}{|V|} \sum_{v \in V} \delta_{X^G_v(t)},  \quad t \ge 0. 
  \end{equation}
  However,   these systems are typically too large and complex 
   to be analytically or numerically tractable. 
 Therefore,  it is natural to seek approximations that are provably accurate
in a suitable  asymptotic regime.

Classical works of 
  McKean, Vlasov and others
  (see \cite{Mck67,sznitman1991topics,Kol10,kurtz1999particle} and references therein)
  focused on such particle systems when  $G = K_n$,  the complete graph on $n$ vertices. 
They showed that,  under suitable conditions, the limit of $X_{\o}^{K_n}$ in  \eqref{eqn-genericmod}, where
$\o$ is a randomly chosen root vertex, 
      is described by the following  nonlinear Markov process:
      \begin{equation}
        \label{eq-nonlinear}
        d X (t)  =  b( X(t), \gmu(t)) dt + \sigma (X(t)) dW(t), \quad \gmu(t) = 
        {\mathcal L} (X(t)), \qquad t \geq 0, 
      \end{equation}
      where  $\gmu(t)$ is the (deterministic) weak limit, as $n \rightarrow \infty$,
      of the global empirical measure  $\gmu^{K_n}(t)$, and ${\mathcal L} (Z)$ denotes the law of a random variable $Z$.  
        The measure-valued function  $\gmu(\cdot)$ can also be characterized as the unique
      solution to a nonlinear partial differential equation (namely, the forward
      Kolmogorov equation associated with this process), whence the name {\em nonlinear} 
      Markov process.    
      The key property that  leads to such a  characterization is  the observation that
      particles interact only \emph{weakly},
      with the influence of any single 
      particle on any other particle being of order $1/n$. 
      This leads to asymptotic independence of any finite collection of particles 
      and  convergence of the random (global)  empirical measure process of the finite particle systems
      to a deterministic limit (see \cite{sznitman1991topics,Mck67} for further discussion
      of this phenomenon, known as propagation of chaos).
 An alternative two-step perspective to mean-field limits,
      taken in \cite{KotKur10}, is to first use exchangeability
      to  show that 
      $X^{K_n}$ converges to a limit, which is the unique
      solution of a countably infinite coupled system of diffusions,
      and then show that the marginal of any vertex in this infinite coupled system of diffusions
      can be  autonomously described as the nonlinear Markov process  in \eqref{eq-nonlinear}.

      Given the above intuition, it is natural to expect that asymptotic independence  and the same mean-field characterization \eqref{eq-nonlinear}  for  the limiting dynamics of a typical node may continue to hold for suitably  ``dense'' graph sequences $\{G_n\}_{n \in \N}$, where each graph is not necessarily complete, but the (minimum or average) degree of
      the graphs grows to infinity.  
      Indeed,  
      several recent works \cite{DelGiaLuc16,BhaBudWu18,CopDieGia18,OliRei18,Luc18quenched,Med18} have shown that 
      either asymptotic independence or a mean-field characterization like \eqref{eq-nonlinear} continues 
        to hold  under different sets of   assumptions on the precise nature of denseness of the graph sequence.  
As in the complete graph case, these works exploit the fact that  
       the local interaction strength at a vertex
      is inversely proportional to the degree at that vertex, and thus  vanishes in dense regimes, 
      although the proofs are more involved than in the complete graph case due to a lack of full exchangeability.

      In contrast, 
       very few works  have studied  the limiting behavior of $\bar{\mu}^{G_n}$ or $X_{\o}^{G_n}$ 
      in the complementary sparse graph regime, that is,  when the average degrees of 
      possibly random graphs in the sequence $\{G_n\}_{n \in \N}$ remain uniformly bounded as the size of the graph goes to infinity. 
     In this regime, 
     neighboring particles 
     are strongly interacting and do not become asymptotically independent as the graph size goes to infinity, 
     and so  the limiting dynamics of any finite set of  particles is no longer described in terms of 
     the mean-field limit. 
      In Theorem 3.3 of a companion paper \cite{LacRamWu20a} (which extends results of a previous version \cite{LacRamWu19a}), 
      we consider  a more general  class of  (possibly non-Markovian) dynamics
      than \eqref{eqn-genericmod}, 
    and  show under broad assumptions that if  the  sequence
      $\{(G_n, X^{G_n}(0))\}_{n \in \N}$ of (possibly random)
      rooted graphs and their initial conditions  converges  in distribution
      (in the sense of local convergence of marked graphs)
      to a limit (random) graph $G$, 
      then $\{(G_n, X^{G_n})\}_{n \in \N}$ also converges in distribution
      (again in the sense of  local convergence of marked graphs) to $(G,X^G)$.   
     This, in particular, implies that  the marginal dynamics at the root
     $X^{G_n}_{{\o}}$ converges in law to the corresponding marginal dynamics  $X_{{\o}}^G$
     on the limit  graph $G$, where $\o$ denotes an appropriate root vertex in $G$.

     In many cases of interest, the limit graph $G$ is a 
     so-called unimodular Galton-Watson (UGW) tree
      (see Definition \ref{def-UGW}).   
This is the case,   for example, when 
     $G_n$ is the  \Erdos graph on $n$ vertices with parameter $p_n \in (0,1)$,
     with  $n p_n \rightarrow \parm \in (0,\infty)$, or the graphs 
      $G_n$ are sampled from a  configuration model with a converging  empirical 
     degree sequence with finite non-zero first moment,
     and the root is chosen uniformly
     at random (see, e.g., \cite{dembo-montanari,van2009randomII} or Section 2.2.3 of \cite{LacRamWu20a}). 
     Under the  assumption of local convergence in probability of the  graph sequence $\{G_n\}_{n \in\N}$
       (which is stronger than the local convergence in distribution imposed in the previous paragraph,
       but is nevertheless still satisfied by the examples mentioned above) and 
     suitable assumptions on the initial conditions
     $(X_v^{G_n}(0))_{v \in V_n}$ that are satisfied, for example, when they are independent and identically distributed (i.i.d.) with a distribution independent of $n$ (or, more generally, distributed according to a
     Gibbs measure with a pairwise interaction potential independent of $n$),   
   it is shown in Theorem 3.7 of \cite{LacRamWu20a} that
     the global empirical measure process 
     $\bar{\mu}^{G_n}$ also converges in distribution to the law of  $X_{\o}^{G}$, with
     the same i.i.d.\  initial conditions (respectively, Gibbs measure with the same interaction potential).

     The  only other work that considers  asymptotic limits in the sparse graph regime is
     \cite{OliReiSto19}, which considers a slightly  different   model of Markovian
     interacting diffusions with identity diffusion
     coefficient, weighted 
     pairwise interactions,   an i.i.d.\ random environment and i.i.d.\ initial
     conditions,      when the largest vertex degree in  $G_n$ is 
     additionally assumed to be of order $|G_n|^{o(1)}$.   They  prove a 
     local convergence result for the  interacting processes,  but only state, without proof, an  
     empirical measure convergence result.    However, as  shown  in \cite{LacRamWu20a} (see Theorems 3.9 and 6.4 therein),  
     the empirical measure convergence to a deterministic limit  can fail under local
     convergence in distribution, rather than in probability, of the graph sequences, 
     thus demonstrating that
      the proof of empirical measure convergence is more subtle in
      the sparse graph regime 
      than in the complete or dense graph regimes.  
     
     \subsection{Our Contributions}

     \subsubsection{Discussion of our results}

     Both works \cite{LacRamWu20a} and \cite{OliReiSto19} can be viewed as implementing, for sparse
     graph sequences,  the
     first step of the two-step approach of \cite{KotKur10} mentioned above for complete graphs, namely
     showing that the limit of $\{X^{G_n}\}_{n \in \N}$ exists and can be characterized as the unique
     solution to a countably infinite
     coupled system of SDEs. 
     However, both these works leave open the important question of providing an
     autonomous characterization of the  marginal dynamics of this infinite system of SDEs.
      The main contribution of this article is a resolution of 
      this issue in the case when $G = {\mathcal T}$ is a UGW  tree.   
     Specifically, we consider the interacting particle  
     system on ${\mathcal T}$ defined in \eqref{eqn-genericmod} (or rather
     a possibly non-Markovian generalization of the dynamics described in Section \ref{sec-main}), 
     and  show that the marginal dynamics of the root particle and its neighbors 
     can be characterized by an autonomous system of equations that we call the \emph{local equation}.
      The choice of a UGW tree is in some 
      sense canonical in view of 
      the results in \cite{LacRamWu20a,LacRamWu19a,OliReiSto19} mentioned above that
      show that the
      law of  the root particle dynamics on a UGW tree ${\mathcal T}$
      arises as the limit of both marginal dynamics and the
      global empirical measure processes of diffusive particle systems
      on many sequences of sparse random graphs of growing size.  
     Thus,       the  local equation can be viewed as the analogue, in the sparse graph setting,
      of the equation \eqref{eq-nonlinear} that characterizes the
      nonlinear Markov process describing 
      the limiting evolution of a typical particle for suitably dense graph sequences, 
       although in this work we
         do not work with the most general  initial conditions.
      
        To the best of our knowledge, prior to this work, there did  not exist even a
conjecture regarding  the form of  the limiting  marginal dynamics  of a typical   particle or global empirical measure process in the sparse
graph regime.   In this regime the graph structure clearly plays a role, and        
new ideas are required.
In the particular case when ${\mathcal T}$ is  
      the (deterministic) $\kappa$-regular tree $\mathbb{T}_\kappa$ 
      the local equation describes a new kind of stochastic differential equation that characterizes 
        an $(\R^d)^{1+\kappa}$-valued process  
      whose infinitesimal evolution at any time $t$ depends not only on its current state  at time $t$ but
      also on the conditional law of the current  state  given the {histories} of the states of part of the neighborhood up to
      time $t$ (see Definition \ref{def-locregtree}).   Notably, even when the original interacting process on $\mathbb{T}_\kappa$ is Markovian, the solution to the
        local equation is  a non-Markovian
        process   and it is also nonlinear in the sense that at any time, 
        its evolution  depends on the law of the process up to that time, although  in a non-standard
        way via a  conditional distribution associated with the law.   
      To provide insight into the form  of the local equation,
      in Section \ref{subsub-linegraph} below we first derive  the local equation for a particle system
      in the simplest case when the UGW tree is $\mathbb{T}_2$, or equivalently, $\mathbb{Z}$.
Then, in Section \ref{subsub-gencase} we discuss the
significant additional 
complications that arise  in the general case of a random UGW tree ${\mathcal T}$.

The only other result that we are aware of that provides an autonomous
characterization of marginals of
an infinite system of  interacting diffusions on a sparse graph
was obtained recently in  \cite{DetFouIch18},
which treats a Markovian interacting diffusive particle system  with identity diffusion coefficient
in the special case where  the interaction graph is a directed line, without any feedback of the interactions.   
In this specific setting, a coupling argument is used to obtain an autonomous characterization of the law of the trajectories of any contiguous set of particles in terms of a non-linear  diffusion process.  As we show in Section \ref{subsub-linegraph}, even on a line  such an autonomous characterization 
is  more complicated when the graph is no longer directed.

\subsubsection{The local equation for a particle system  on the line graph}	\label{subsub-linegraph}
Consider the particular diffusive particle system
\begin{align}
  dX_v(t) = \big[ \newb (X_v(t),X_{v+1}(t)) + \newb(X_v(t), X_{v-1}(t))\big] dt
  + \sigma(X_v(t))dW_v(t), \quad v \in \Z, \label{def:T2-particlesystem}
\end{align}
where $(X_v(0))_{v \in \Z}$ are i.i.d., and $\newb : (\R^d)^2 \to \R^d$ and $\sigma : \R^d \to \R^{d \times d}$ are 
    assumed to be sufficiently regular (see Assumption \ref{assumption:A}). 
Note that this is a particular case of the dynamics  \eqref{eqn-genericmod} when $G$ is equal to
  the (deterministic) $2$-regular tree $\mathbb{T}_2$, which can be identified 
  with $\mathbb{Z}$,  and the drift $b$ is linear
    in the measure variable: $b(x,\nu) =  2 \int_{\R^d} \newb (x, y) \, \nu (dy)$ for $x \in \R^d$ and
    $\nu$ a probability measure on $\R^d$. 
    We will also assume that the dependence of $\newb$ on the second variable is non-trivial so that
    we have a system of diffusions that  are truly interacting.  
  For any $t > 0$, let
  \[ X_v[t] := (X_v(s))_{s \in [0,t]} \]
  represent  the trajectory of
$X_v$ in the interval $[0,t]$, and for any subset $A \subset \mathbb{Z}$, let
$X_A[t] := (X_v[t])_{v \in A}$.
Identifying the root node  with $0 \in \mathbb{Z}$,  
we would like to understand the law of the  dynamics of the root marginal $X_0$, but
it turns out that  to obtain an autonomous description, one should instead consider the 
marginal dynamics $X_{\{-1,0,1\}} = (X_{-1}, X_0, X_1)$ of the root {\em and its neighborhood}, rather than just the root.  The characterization via the local equation entails three key ingredients.

\vskip.3cm

\noindent 
{\em (i) Markov random field structure. } 
First,  note that the dynamics of $X_0$ is completely endogenous in that it only depends on
the states of $X_{-1}$ and $X_1$, which are part of the neighborhood.  On the other hand,  the evolution of $X_{-1}$ depends on $X_{-2}$, the state 
of node $-2$, which lies outside  the set $\{-1,0,1\}$.  Therefore, in order to get an autonomous
description of the law of the dynamics of $X_{\{-1,0,1\}}$, we need to be able to express  the conditional law of $X_{-2}(t)$
given $X_{\{-1,0,1\}}$
in terms of the (joint) law of $X_{\{-1,0,1\}}$. 
As a key first step towards achieving this goal, we establish the following 
conditional independence property of the particle system \eqref{def:T2-particlesystem}: for each $t > 0$, 
\begin{align}
(X_j[t])_{j < i} \indep	 (X_j[t])_{j > i + 1} \  | \ (X_{i}[t],X_{i+1}[t]), \ \ \forall i \in \Z \label{def:T2-condind}
\end{align}
where for random elements $Z_1, Z_2, Z_3$, 
 we use $Z_1 \indep Z_2|Z_3$ to denote that  $Z_1$ and $Z_2$ are conditionally independent given $Z_3$. 
In other words, we show that for each $t > 0$, $(X_i[t])_{i \in \Z}$ is a second-order Markov chain on $\Z$.

At first glance, one might conjecture that $(X_j(t))_{j < i} \indep (X_j(t))_{j > i} \,|\, X_i(t)$ for each $i \in \Z$,
  that is, for every fixed $t > 0$, the {\em states} $(X_i(t))_{i \in \Z}$ form a \emph{first-order} Markov chain (on $\Z$).   
  However, this conjecture is not valid, because  conditioning on $X_i(t)$ clearly  provides information
  on the past $X_i[t]$ of $X_i$ and, in turn, for $s \in [0,t]$, the  state  $X_i(s)$ 
   directly influences the values of $X_{i-1}(s)$ and $X_{i+1}(s)$ and hence, of $X_{i-1}(t)$ and $X_{i+1}(t)$.  In other words, conditioning
on $X_i(t)$ correlates $X_{i-1}(t)$ and $X_{i+1}(t)$ via  the information it provides on the past of $X_i$. 
  This observation may then prompt the modified conjecture  that 
  \[ (X_j[t])_{j < i} \indep (X_j[t])_{j > i} \,|\, X_i[t], \qquad i \in \Z;  \]
  that is, for every fixed $t > 0$, the collection of \emph{trajectories} up  to time $t$, $(X_i[t])_{i \in \Z}$,
  is a  first-order 
  Markov chain.  In  particular,  one may naively expect that conditioned on $X_i[t] = \psi$,
  $X_{i-1}[t]$ and $X_{i+1}[t]$ become decoupled and satisfy the following SDE: for  $s \in [0,t]$, 
  \begin{eqnarray*}
    dX_{i-1}(s) & = & [\newb(X_{i-1}(s), \psi(s)) + \newb(X_{i-1}(s), X_{i-2}(s))] ds + \sigma (X_{i-1}(s)) dW_{i-1}(s),  \\
    dX_{i+1} (s) & =  &  [\newb(X_{i+1}(s), X_{i+2}(s)) + \newb(X_{i+1}(t), \psi(s))] ds + \sigma (X_{i+1}(s)) dW_{i+1}(s),
  \end{eqnarray*}
  where $W_{i-1}$ and $W_{i+1}$  are independent Brownian motions.  However, a more careful inspection would reveal 
  that such a reasoning is  spurious because  
   the evolution of $X_i$, and thus the random element $X_i[t]$,  directly depends on
   the values $(X_{i-1}(s), X_{i+1}(s))_{s \in  [0,t]}$,  which are in  turn  driven by the
   Brownian motions  $W_{i-1}$ and  $W_{i+1}$.  Thus, 
   conditioning on $X_i[t] = \psi$ causes   $W_{i-1}$ and  $W_{i+1}$ 
   to become correlated, showing that  $X_{i-1}$ and $X_{i+1}$ do not follow the above SDE
   and are also not   independent under this conditioning.
Thus,  the modified conjecture is also not valid. 
 Instead, as stated in \eqref{def:T2-condind}, we show that by
  conditioning on both  $X_{i}[t]$ and $X_{i+1}[t]$, the driving noise processes $W_{i-1}$ and $W_{i+2}$   remain decoupled.
 While this is not a trivial observation,  some intuition may be gleaned by noting that 
 when one conditions on both  $X_i[t]$ and $X_{i+1}[t]$,  the trajectories of $W_{i}$  and $W_{i+1}$ become
 irrelevant, and so the correlations  induced betweeen $W_{i+1}$ and $W_{i-1}$ when
 conditioning just on  $X_i[t]$, and likewise, the correlations induced betweeen $W_{i}$ and $W_{i+2}$ when
 conditioning just on $X_{i+1}[t]$, are no longer relevant.   In other words, when conditioning on
 $X_i[t]$ and $X_{i+1}[t]$,   the evolution on $[0,t]$ of 
$(X_{j})_{j < i},$ is only influenced by $X_{\{i,i+1\}}[t]$ and the independent  driving noises $(W_{j})_{j < i}$,
whereas  the evolution $(X_j)_{j > i+1},$  is only influenced by
 $X_{\{i,i+1\}}[t]$ and the independent driving noises $(W_{j})_{j >  i+1}$. 
 In particular, conditioning on both $X_{i}[t]$ and $X_{i+1}[t]$ does not alter 
 the independence of the driving noises  $W_{i-1}$ and $W_{i+2}$, although it does alter their
 distribution; they are no longer Brownian motions or even martingales.

In fact, Theorem 2.7 of \cite{LacRamWu19b} shows that  for any locally finite
    graph $G$ and $X^G$ as in \eqref{eqn-genericmod}, for every $t > 0$, 
    the trajectories  $(X^G_v[t])_{v \in V}$ form a \emph{local second-order Markov random field (MRF)}
    (assuming the initial conditions do),  in the sense that
  \begin{align}
  X_A[t] \indep X_B[t] \,|\, X_{\partial^2A}[t], \quad \forall A \subset V \ \text{ finite}, \ B \subset V \setminus (A \cup \partial^2A), \label{gen-condind}
  \end{align}
where $\partial^2A$ is the set of nodes at distance one or two from $A$ (see Section \ref{subs-graphs} for graph-theoretic terminology and Section \ref{se:2MRFs} for a discussion of MRFs). 
  We would like to emphasize, however, that the conditional independence property  \eqref{def:T2-condind} required here is not implied by 
  the local MRF property established in \cite{LacRamWu19b}. Indeed, to obtain the first conditional
   independence statement in 
   \eqref{def:T2-condind}  one would need to apply \eqref{gen-condind} with 
   $A = \{j \in \N : j < i \}$ and $B = \{j \in \N : j > i + 1\}$ in \eqref{gen-condind}.
   In particular, we need \eqref{gen-condind} to also hold for certain infinite  sets $A$.  
  This is analogous to the distinction between (tree-indexed) first-order Markov chains versus first-order local MRFs; the latter often form a proper subset of the former, as explained in \cite[Chapters 10--12]{georgii2011gibbs}. 
   More generally, 
   an extension from a \emph{local} MRF property to a \emph{global} MRF property, in which $A \subset V$ in \eqref{gen-condind} is allowed to be infinite, is highly non-trivial and can fail in general;
   see   \cite{israel1986some,Kes85,Wei80} for works in other contexts that illustrate the underlying subtleties.
   Nevertheless, we show that the \emph{global} MRF property  does hold in our setting; see Propositions
     \ref{pr:properties-GW}  and \ref{pr:invariance-GW} for a proof in the more      
     general context of random UGW trees.
     For further intuition into this second-order MRF property and explicit examples
       that illustrate why the first-order counterparts fail,  we refer the reader to
          Section 3.3 of \cite{LacRamWu19b}.     

\vskip.3cm
 
\noindent 
{\em (ii) A projection theorem and symmetry considerations. }
We now discuss the second ingredient of the proof,  
recalling that we are interested in an autonomous characterization of $X_{\{-1,0,1\}}$, where $(X_v)_{v \in \Z}$ are as in \eqref{def:T2-particlesystem}.
Using an optional projection argument known from filtering theory (see Appendix \ref{sec-mim}),  
we can conclude that (extending the probability space if necessary) there exist  independent Brownian motions
$(\widetilde{W}_{-1},\widetilde{W}_0,\widetilde{W}_1)$ such that 
 $\tX=(\tX_{-1},\tX_0,\tX_1)$  satisfies  
\[
\tX_v (t) = \tilde{b}_v (t, \tX) dt + \sigma ( \tX_v(t) ) d\widetilde{W}_v (t),  \quad v \in \{-1,0,1\}, 
\]
where, with $\C$ denoting the space of $\R^d$-valued continuous functions on $[0,\infty)$, $\tilde{b}_v:[0,\infty) \times \C^{\{-1,0,1\}} \mapsto \R^d$ is a progressively measurable version of the conditional expectation: 
\[
\tilde{b}_v (t, x) := \E\Big[ \newb( X_v(t),  X_{v+1}(t)) + \newb(X_{v}(t),X_{v-1}(t)) \, \Big| \, X_{\{-1,0,1\}}[t] = x[t] \Big], \quad v \in \{-1,0,1\}, 
\]
where we recall $x[t] = (x(s))_{s\in [0,t]}$.   
Clearly, the drift coefficient for the root or zero particle remains the same as in the original system described in
 \eqref{def:T2-particlesystem}: 
\[ \tilde{b}_0 (t,x) = \newb(x_0(t), x_{1}(t)) +\newb(x_0(t),x_{-1}(t)). \]    
On the other hand, 
$\tilde{b}_1$ and $\tilde{b}_{-1}$ do  not coincide with the corresponding drifts in the original system, but 
we can simplify the expressions for them  using the conditional independence relation of \eqref{def:T2-condind} along with  symmetries of the particle system.    
Precisely,  as justified below, we have
\begin{align*}
\tilde{b}_{-1}(t,x) &= \E\big[\newb(X_{-1}(t),X_{0}(t)) +  \newb( X_{-1}(t),  X_{-2}(t)) \, | \, X_{\{-1,0,1\}}[t] = x[t] \big] \\
   &=  \newb(x_{-1}(t),x_{0}(t)) + \E\big[ \newb( X_{-1}(t),  X_{-2}(t)) \, | \, (X_{-1},X_0)[t] = (x_{-1},x_0)[t]  \big]  \\
  &= \newb(x_{-1}(t),x_{0}(t)) + \E\big[ \newb( X_{0}(t),  X_{-1}(t)) \, | \, (X_{0},X_{1})[t] = (x_{-1},x_0)[t]  \big]. 
\end{align*}
Indeed, the crucial steps are the second line, which follows from  the conditional independence
of $X_{-2}[t]$ and $X_{1}[t]$ given $X_{\{-1,0\}}[t]$, and the third line, which follows from
the shift-invariance of  the particle system on $\mathbb{Z}$, which gives equality in law
of $(X_{-2}, X_{-1}, X_0)$ and $(X_{-1}, X_{0}, X_1)$.
We can derive an analogous expression for  $\tilde{b}_{1}(t,x)$ by using the conditional independence of $X_{2}[t]$ and $X_{-1}[t]$ given $X_{\{0,1\}}[t]$, and  the equality in law between $(X_{2},X_{1},X_0)$ and $(X_{-1},X_0,X_{1})$ which now follows from both the shift-invariance and reflection-invariance (around $0 \in \Z$) of $X$: 
\begin{align*}
  \tilde{b}_1(t,x) &= \E\big[ \newb( X_{1}(t),  X_{2}(t)) \,| \, (X_{1},X_{0})[t] = (x_{1},x_0)[t]  \big] + \newb(x_1(t),x_0(t)) \\
  &= \E\big[ \newb( X_{0}(t),  X_{-1}(t)) \, | \, (X_{0},X_{1})[t] = (x_{1},x_0)[t]  \big] + \newb(x_1(t),x_0(t)),  
\end{align*}
for $t > 0$ and $x \in \C^{\{-1,0,1\}}$. 
In summary, if we define
\begin{align}
\tilde\gamma_t(x,y) :=\E\big[ \newb(X_0(t),X_{-1}(t)) \ | \ (X_0,X_{1})[t] = (x,y)[t] \big], \quad (x,y) \in \C^2,  \label{def:intro:gammatil}
\end{align}
then we find that $\tX = \tX_{\{-1,0,1\}}$ solves the coupled system 
\begin{align}
  d\tX_{-1}(t) &= \big[ \newb(\tX_{-1}(t), \tX_0(t)) + \tilde\gamma_t(\tX_{-1},\tX_0)\big] dt + \sigma(\tX_{-1}(t))d\widetilde{W}_{-1}(t),  \nonumber \\
d\tX_{0}(t) &=  \big[ \newb(\tX_0(t), \tX_{1}(t))dt + \newb(\tX_0(t), \tX_{-1}(t)) \big] dt +
\sigma(\tX_{0}(t))d\widetilde{W}_{0}(t), \label{def:intro:T2localeq} \\
d\tX_{1}(t) &= \big[ \tilde\gamma_t(\tX_{1},\tX_0) +  \newb(\tX_1(t), \tX_{0}(t))\big] dt + \sigma(\tX_{1}(t))d\widetilde{W}_{1}(t),  \nonumber
\end{align} 
where $\widetilde{W}_{-1}, \widetilde{W}_0$ and $\widetilde{W}_1$ are  independent $d$-dimensional Brownian motions.
 Modulo some additional technical conditions,  
this is precisely the $\Tmb_2$ local equation 
associated with the particle system
\eqref{def:T2-particlesystem}; see Definition \ref{def-locregtree} with $\kappa = 2$. 
Observe that  even though  the original system  \eqref{def:T2-particlesystem} describes  a (linear) Markov process,
  its marginal $X_{\{-1,0,1\}}$, as described by the system \eqref{def:intro:T2localeq}, is a nonlinear, non-Markovian process 
  since $\tilde\gamma_t$ is a functional of the  law of $\tX_{\{-1,0,1\}}[t]$ of the process and it takes as arguments
 the past of coordinates of the process (up to time $t$).   
However,  also  note that 
this  dependence  ensures that the  coupled system  \eqref{def:intro:T2localeq}  is  autonomously defined.  
 
\vskip.3cm

{
\noindent 
{\em (iii) Proofs of well-posedness. }
The final step of  the proof is to show 
that the law of $X_{\{-1,0,1\}}$ is the unique (weak) solution to the local equation \eqref{def:intro:T2localeq}.
 Banach fixed point arguments, which are  commonly used in the analysis of more standard nonlinear Markov  processes that arise as mean-field limits,  are rendered  unsuitable by   
 the complicated appearance of  conditional laws in the local equation.
 Coupling methods, which constitute another tool to establish uniqueness of mean-field limits,
 are also hard to implement due to the lack of regularity of the conditional expectation
 functional $\tilde{\gamma}_t$ defined in \eqref{def:intro:gammatil}.   

 We develop two alternative approaches to establishing uniqueness.  In the case of bounded drift, we give a direct argument for uniqueness (on $\mathbb{T}_\kappa$ for any $\kappa \geq 2$) using relative entropy estimates in 
Section \ref{subs-altuniq1}, 
 which we sketch here in the case $\kappa = 2$ and $\sigma$ is the identity matrix.  
  We start with the useful observation  that  
  any solution $X=(X_{-1},X_{0},X_1)$ to the  local equation \eqref{def:intro:T2localeq} satisfies
  the following symmetry properties: 
  \begin{equation}
    \label{loceq-symmetry}
    (X_{-1},X_0,X_1)\stackrel{d}{=}(X_1,X_0,X_{-1}), \qquad (X_1,X_0)\stackrel{d}{=}(X_0,X_1).    
  \end{equation}
  This follows from  Lemma \ref{lem-locsym}, which  identifies symmetries in the more general setting of
    a $\kappa$-regular tree, $\kappa \geq 2$.
       Next, let $\Xo=(\Xo_{-1},\Xo_0,\Xo_1)$ and $\newY=(\newY_{-1},\newY_0,\newY_1)$ be two solutions to
\eqref{def:intro:T2localeq}, 
and let 
the associated conditional expectation functionals, as in \eqref{def:intro:gammatil}, be denoted by $\tgammax_t$ and $\tgammay_t$.
Then the difference in the drift coefficients of the SDE \eqref{def:intro:T2localeq} for $\Xo$ and $\newY$ will be governed by 
$\delta\tilde{\gamma}_t:=\tgammax_t-\tgammay_t$.  
Next, recall that $\L (Z)$ denotes the law of a random element $Z$, and let  $H$ denote the relative entropy
  functional:   for  probability measures $\nu, \tilde{\nu}$ on a common measurable space,   let
  \begin{equation}
    \label{def-relentropy}
H(\nu|\tilde{\nu}) := \int \log\tfrac{d\nu}{d \tilde{\nu}}d\nu \ \ \  \text{ if } \nu \ll \tilde{\nu}, \qquad H(\nu|\tilde{\nu})=\infty \ \ \ \text{ if } \nu \not\ll \tilde{\nu}, 
  \end{equation}
  where $\nu \ll \tilde{\nu}$ signifies $\nu$ is absolutely continuous with respect  to  $\tilde{\nu}$. 
  Then   the boundedness assumption  on the drift $b$ (which is inherited by the progressively measurable functionals $\tgammax$ and $\tgammay$, and thus
  $\delta \tilde{\gamma}$),  along with a standard calculation involving Girsanov's theorem (see Corollary \ref{co:entropyestimate}),  
  yields the relative entropy identity
\begin{align*}
H\big( \L(X[T]) \,|\,  \L(\newY[T])\big) &= \frac12 \E\left[\int_0^T\left(|\delta\tilde{\gamma}_t(X_{-1},X_0) |^2 + |\delta\tilde{\gamma}_t(X_1,X_0) |^2 \right)dt \right] \\
	&= \E\left[ \int_0^T |\delta\tilde{\gamma}_t(X_0,X_1) |^2 \,dt \right]. 
\end{align*}
Now, for $x, \newx \in \C$ and $t > 0$, let $\Pone_{x,\newx}[t]$ denote the conditional law of $X_{-1}[t]$ given $(X_0[t],X_1[t])=(x[t],\newx[t])$, and likewise, let $\Ptwo_{x,\newx}[t]$ denote the conditional law of $\newY_{-1}[t]$ given $(\newY_0[t],\newY_1[t])=(x[t],\newx[t])$. 
For $t > 0$ and $x, y \in \C$, 
set $\newb_{t,x}(\newx):=\newb(x(t), y(t))$.  Then,  letting
\[ C := \sup_{z,z' \in \R^d}|\newb(z,z')|,\]
which is finite by assumption, we see that 
\[ \delta\tilde{\gamma}_t(x,\newx)= \int_{\C}  \newb_{t,x} (z) ( \Pone_{x,\newx}[t] - \Ptwo_{x,\newx}[t])(dz) \leq
C  d_{{\rm TV}} ( \Pone_{x,\newx}[t],  \Ptwo_{x,\newx}[t]),
\]
where $d_{{\rm TV}}$ denotes the total variation distance. 
The last two displays, when combined with Pinsker's inequality (see, e.g., \cite[p. 44]{CsiKor11})  and the
chain rule for relative entropy, yield 
\begin{align*}
  H\big(\L(\Xo[T]) \,|\, \L(\Xt[T])\big) &\le 2C^2 \E\left[ \int_0^T H\big(\Pone_{\Xo_0,\Xo_1}[t]\,|\,\Ptwo_{\Xo_0,\Xo_1}[t]\big) \,dt \right]\\
  & \le   2C^2\int_0^T H\big(\L(\Xo[t]) \,|\, \L(\Xt[t])\big)\,dt.
\end{align*}
An application of Gronwall's inequality then shows that  $\L(\Xo[T]) = \L(\Xt[T])$, which proves the desired
uniqueness in law of weak solutions to the local equation.

Our second proof of uniqueness, given in Section \ref{se:pf:uniqueness-localGW}, does not require boundedness of the drift, but is less direct in the sense that it relies on well-posedness of the infinite particle system $X_\Z$ described by \eqref{def:T2-particlesystem}.  This proof exploits the conditional independence and symmetry properties described in (i) and (ii) above to essentially rebuild 
the law of $X_\Z$ using  just the joint law of the root neighborhood. 
Specifically, given a solution $(Y_{-1},Y_0,Y_1)$ to the local equation, let
$\fmeas(dy_{-1}, dy_0, dy_{1})$ denote the joint law of the root neighborhood $(Y_{-1}, Y_0, Y_1)$,
and let
$\Gamma(dy_{1}; Y_0, Y_{1})$  denote the conditional law of $Y_{-1}$ given $(Y_0,Y_{1})$.
By the first symmetry property in \eqref{loceq-symmetry}, the conditional law of $Y_{1}$ given $(Y_0,Y_{-1})$ is precisely   $\Gamma(dy_1; Y_0, Y_{-1})$. 
We then consider the unique probability measure on $\C^\Z$ with (consistent) finite-dimensional distribution on $\C^{\Z \cap [-n,n]}$ given by
\begin{align}
\fmeas(dy_{-1},dy_0,dy_1)\prod_{i=1}^{n-1} \Gamma(dy_{i+1};y_i,y_{i-1}) \Gamma(dy_{-(i+1)};y_{-i},y_{-(i-1)}) \label{intro:uniqconstr}
\end{align}
for each $n \in \N$,  where the product of the kernels reflects
the conditional independence property of $X_\Z$ stated in \eqref{def:T2-condind}.
The crux of the argument is to show that this probability measure on $\C^\Z$ is the law of a solution of the infinite SDE system \eqref{def:T2-particlesystem}; uniqueness for the local equation then follows from uniqueness for the infinite particle system.  
The full justification is much more involved but
ultimately rests upon conditional independence and symmetry arguments like those used above, as well as judicious use of Girsanov's theorem to characterize $\Gamma$ and the measures in \eqref{intro:uniqconstr}. 
It is worth emphasizing that, by purely measure-theoretic arguments, the law of \emph{any} random sequence $X_{\Z} = (X_i)_{i \in \Z}$ that is  invariant under shifts and reflections,  and also  satisfies the conditional independence property \eqref{def:T2-condind}, is uniquely determined by its root neighborhood marginal via the construction in \eqref{intro:uniqconstr}. However,
the difficulty  lies  in transferring additional properties (such as the property that the  collection $X_\Z$ satisfies a
certain SDE) from the marginal to the full configuration, and vice versa. 
}

      \subsubsection{Additional Challenges on Random Trees}
      \label{subsub-gencase}

      As we have seen above, 
      three main ingredients of the proof of  characterization of the law of marginal dynamics
      in terms of  the local equation 
      include a certain conditional independence property that is similar in spirit to the second-order MRF property, symmetry considerations, and a stochastic analytic result on projections of It\^{o} processes.
These arguments can be extended to more general dynamics and 
$\Tmb_\kappa$ for general $\kappa > 2$ in an analogous  manner, although the proofs are more involved, 
with the main  change being 
that one now exploits the  class of symmetries arising  from the automorphism group on $\Tmb_\kappa$,
which can be visualized as translation and rotation symmetries (see Section \ref{subs-locchardet} for the form of the local equation in this case).   
However, the intuition described above is somewhat limited to deterministic trees.

On random UGW trees, 
the  proof of the characterization of marginal dynamics via the local equation (described in Definition \ref{def-GWlocchar} and  Section \ref{se:statements:GWlocaleq}),  is an order of
magnitude harder, and requires new ingredients. 
Firstly, the conditional independence property   
must now be established in an annealed
sense, looking jointly at the particle system and the structure of the underlying tree,
and the statement and proof are significantly more 
involved (see Proposition \ref{pr:properties-GW}).
As for the second step, while the projection argument is similar, 
the  symmetry considerations must be 
  significantly altered, as they are not so useful in the
quenched form used for deterministic  regular trees. 
Instead, the appropriate
notion of symmetry here turns out to be {\em unimodularity}, which is defined
by a certain \emph{mass-transport principle} (elucidated in
  Section \ref{subs-unimod}). This can be viewed 
as a sort of stationarity property, which is often loosely described as the property that \emph{the root is equally likely to be any vertex} \cite{aldous-lyons}, although the precise 
formulation is more subtle.  In the course of the proof of our main result, we show in Proposition \ref{pr:unimodular} that this unimodularity property is preserved by dynamics of the form \eqref{eqn-genericmod}, which may be of interest in its own right.   

The unimodularity property is applied to establish a key identity
(see Proposition \ref{pr:invariance-GW}) that relates certain conditional expectations
related to the histories of the process at the root and its neighbors to a suitably
reweighted version of corresponding conditional expectations related to 
the histories of the process at a child of the root and its neighborhood,
leading  to a more complicated  form of
the analogue of $\tilde\gamma_t$ (as discussed in  Remark \ref{intuition-GW}).  
Section \ref{subs:auxiliary} 
contains precise statements of these key properties, which are applied  in Section \ref{se:pf:existence-localGW} to show that  the marginal distributions satisfy the UGW local equation. 
Finally, the more complicated form of the local equation on the UGW tree  also leads 
to additional subtleties in the last step of establishing well-posedness of
the local equation (see  Sections \ref{se:pf:uniqueness-localGW} and \ref{sec:alternative}). 
  In particular, both proofs now entail certain non-trivial 
  change of measure arguments that
were not necessary in the case of the deterministic regular tree; 
  for  the second proof, see Section \ref{subs-uniqueoutline} for an outline  
  and Section \ref{subs-uniqueproof} for the details and for the first proof, see Section
  \ref{subs-altuniq2}.

Precise statements of our main results are  given in Section \ref{sec-main}. In the next section, we first develop some notation.  
  
\section{Preliminaries and Notation} 
\label{sec-notat} 

In this section, we introduce common notation and definitions used throughout the paper, and which are required to state the main results. Throughout, we write $\N_0:=\N \cup \{0\}$.

\subsection{Graphs and the Ulam-Harris-Neveu labeling for trees}
\label{subs-graphs}

\subsubsection{General graph terminology}
\label{subsub-graphs}

Given a graph $G = (V,E)$, we will often abuse notation
by writing $v \in G$ for $v \in V$ to refer to a vertex or node of the graph.  In this paper, we will always assume that the graph has a finite or countably infinite
vertex set and is simple (no self-edges or multi-edges). 
Given $u, v \in V$, a path from $u$ to $v$ is a sequence of distinct vertices $u=u_0,u_1,u_2, \ldots, u_n=v$ such that 
$(u_{i-1},u_i) \in E$ for $i = 1, \ldots, n$.  The graph $G$ is said to be connected if there exists
a path between any two vertices $u, v \in V$.  
For two vertices $u,v \in V$, the distance between $u$ and $v$ is the length of the shortest path from $u$ to $v$, or $\infty$ if no such path exists.   The diameter $\mathrm{diam}(A)$ of a set $A \subset V$ is the maximal distance between vertices of $A$.
For $v \in V$, the neighborhood of $v$ in $G$ is defined to be
\[
N_v(G) :=  \{u \in V \setminus \{v\}: (u,v) \in E\}.
\]
The degree of a vertex $v$ is $|N_v(G)|$, where as usual $|A|$ denotes the cardinality
of a set $A$.  A graph is said to be \emph{locally finite} if each vertex has a finite degree. 
Given $A \subset V$, its boundary and double boundary are defined to be
\begin{align}
\begin{split}
\partial A &:= \{u \in V \setminus A: \exists v \in A \text{ such that } (u,v) \in E\}, \\
\partial^2 A &:= \partial A \cup \partial ( A \cup \partial A).
\end{split} \label{def:boundaries}
\end{align}
Note that $\partial A$ (resp.\ $\partial^2A$) is the set of vertices that are at a distance $1$ (resp.\ $1$ or $2$) from $A$. 
A \emph{clique} is a complete subgraph, that is,  a set $A \subset V$ such that $(u,v) \in E$ for every distinct $u,v \in A$. Equivalently, 
a clique is a set $A \subset V$ of diameter at most $1$.
Similarly, we say that a set $A \subset V$ is a \emph{$2$-clique} if $\mathrm{diam}(A) \le 2$.

\subsubsection{The Ulam-Harris-Neveu labeling for trees}
\label{subsub-labeling}

A tree is a (undirected) graph $G = (V,E)$ such that given any two vertices $u, v \in G$,
there is a unique path between $u$ and $v$.
It will be convenient to work with a canonical labeling scheme for
trees known as the Ulam-Harris-Neveu labeling (see, e.g.,  \cite[Section VI.2]{harris-book} or \cite{neveu1986arbres}), defined using the vertex set
\begin{equation}
  \label{vertexset}
  \V := \{\o \} \cup \bigcup_{k=1}^\infty\N^k
\end{equation}
For $u, v \in \V$, let $uv$ denote  concatenation, that is, if $u=(u_1,\ldots,u_k) \in \N^k$ and $v=(v_1,\ldots,v_j) \in \N^j$, then $uv = (u_1,\ldots,u_k,v_1,\ldots,v_j) \in \N^{k+j}$. The root $\o$ is the identity element, so $\o u = u \o = u$ for all $u \in \V$.  
 For $v \in \V \backslash \{\o \}$, we write $\mom_v$ 
for the parent of $v$; precisely, $\mom_v$ is the unique element of $\V$ such that there exists
$k \in \N$ satisfying $v=\mom_{v}k$.  We view $\V$ as a graph by  declaring  two vertices to be adjacent if one is the parent of the other. Thus, the neighborhoods of $\V$ are $N_{\o}(\V)=\N$  and $N_v(\V) = \{\mom_v\} \cup \{vk  : k \in \N\}$ for $v \in \V \backslash \{\o\}$. Note that this graph $\V$ is not locally finite.  

There is a natural partial order on $\V$. We say $u \le v$ if there exists (a necessarily unique) $w \in \V$ such that $uw=v$, and say $u < v$ when $w \neq \o$. 
A subset $\tree \subset \V$ is defined to be a \emph{tree} if:
\begin{enumerate}
\item $\o \in \tree$; 
\item If $v \in \tree$ and $u \in \V$ with $u \le v$, then $u \in \tree$; 
\item For each $v \in \tree$ there exists an integer $c_v(\tree) \ge 0$ such that, for $k \in \N$, we have $vk \in \tree$ if and only if $1 \le k \le c_v(\tree)$. 
\end{enumerate}
Note that for us a tree, by default, is locally finite. 
We also use the symbol $\tree$ to refer not only to the subset of $\V$ but also to the induced subgraph. Inductively, for $u \in \tree$, we think of the elements
$(uv)_{v=1}^{c_u(\tree)}$ as the children of the vertex labeled $u$.
For any $\tree \subset \V$ and $v \in \V$, define $N_v(\tree) = \tree \cap N_v(\V)$ to be the set of neighbors of $v$ in $\tree$ if $v \in \tree$, 
and set $N_v(\tree) = \emptyset$ if $v \notin \tree$.
It is convenient to  define also 
$\V_n$ to be  the labels of the first $n$ generations: 
\begin{equation}
  \label{def-vn}
\V_n := \{\o \} \cup \bigcup_{k=1}^n\N^k.
\end{equation}
With a minor abuse of notation, we also use $\V_n$ to denote the corresponding induced subgraph.

\subsection{Measure Spaces}
\label{subs-mspaces}

For a Polish space $\X$, we write $\P(\X)$ for the set of Borel probability measures on $\X$, endowed always with the topology of weak convergence. Note that $\P(\X)$ itself becomes a Polish space with this topology, and we equip it with the corresponding Borel $\sigma$-field. We write $\delta_x$ for the Dirac measure at a point $x \in \X$. For an $\X$-valued random variable $X$, we write $\L(X)$ to denote its law, which is an element of $\P(\X)$.
Given any measure $\nu$ on a measurable space and any  $\nu$-integrable
function $f$ on that space, we use the usual shorthand notation
$\langle \nu, f\rangle := \int f\,d\nu$. Given  $\X$-valued random elements
$Y, Y_n, n \in \N$, we write $Y_n \Rightarrow Y$ to mean that the
law of $Y_n$ converges weakly to the law of $Y$.

\subsection{Function Spaces}
\label{subs-fspaces}

For a fixed positive integer $d$,  
throughout we write 
\[
\C := C(\R_+;\R^d)
\]
for the path space of continuous functions,
endowed with the topology of uniform convergence on compacts.
For $t > 0$, we write $\C_t := C([0,t];\R^d)$, and for $x \in \C$  we write $\|x\|_{*,t} := \sup_{s \in [0,t]}|x(s)|$
and $x[t] := \{x(s), s \in [0,t]\}$ for the truncated path, viewed
as an element of $\C_t$.

\subsection{Configuration spaces}
\label{subs-conspaces}

For a set $\X$ and a graph $G=(V,E)$, we write $\X^V$ or $\X^G$ for the configuration space
$\{(x_v)_{v \in V}: x_v \in \X \mbox{ for every } v \in V\}$. We make use of a standard notation for configurations on subsets of $V$: For $x=(x_v)_{v \in V} \in \X^V$ and $A \subset V$, we write $x_A$ for the element $x_A=(x_v)_{v \in A}$ of $\X^A$.

\subsection{Space of unordered terminating sequences} \label{se:symmetricsequencespace}

As discussed in the introduction, 
we will study stochastic differential equations that  take
values in a sequence of  configuration spaces with corresponding underlying interaction 
graphs that have  different numbers of vertices. 
We want to be able to specify a single ``drift function" that takes as input finite sequences of elements of $\X$ of arbitrary length and is insensitive to the order of these elements.

 To this end, for a set $\X$, we define in this paragraph a space $\SQ(\X)$ of finite unordered $\X$-valued sequences of arbitrary length (possibly zero). First, for $k \in \N$ we define the symmetric power (or unordered Cartesian product) $S^k(\X)$ as the quotient of $\X^k$ by the natural action of the symmetric group on $k$ letters. For convenience, let $S^0(\X)=\{\onepoint\}$. Define $\SQ(\X)$ as the disjoint union,
\[
\SQ(\X) = \bigsqcup_{k=0}^\infty S^k(\X).
\] 
A typical element of  $\SQ(\X)$ will be denoted $(x_v)_{v \in V}$, for a finite (possibly empty) set $V$; if the set is empty, then by convention $(x_v)_{v \in V} = \onepoint \in S^0(\X)$. It must be stressed that, of course, the element $(x_v)_{v \in V}$ has no order. The space $\SQ(\X)$ must not be confused with what is traditionally called the \emph{infinite symmetric product space} in algebraic topology when $\X$ is endowed with a distinguished (base) point $e$, in which the points $(x_1,\ldots,x_n,e)$ and $(x_1,\ldots,x_n)$ would be identified; these two points are distinct in $\SQ(\X)$.

Suppose now that $(\X,d)$ is a metric space, and endow $\SQ(\X)$, with the usual disjoint union topology, i.e., the finest topology on $\SQ(\X)$ for which the injection $S^k(\X) \hookrightarrow \SQ(\X)$ is continuous for each $k \in \N$. A function $F : \SQ(\X) \rightarrow \Y$ to a metric space $\Y$ is continuous if and only if there is a sequence $(f_k)_{k = 0}^\infty$, where $f_0 \in \Y$ and, for each $k \in \N$, $f_k : \X^k \rightarrow \Y$ is a continuous function that is  symmetric in its $k$ variables, such that
\[
F((x_i)_{i \in \{1,\ldots,k\}}) = \begin{cases}
f_k(x_1,\ldots,x_k) &\text{for } k \in \N, \ (x_1,\ldots,x_k) \in \X^k \\
f_0 &\text{for } k=0. 
\end{cases}
\]
If $\X$ is separable and completely metrizeable, then so is $\SQ(\X)$.
Note that a sequence $(x^n_v)_{v \in V_n}$ in $\SQ(\X)$ converges to $(x_v)_{v \in V}$ if and only if for all $\epsilon > 0$ there exists $N \in \N$ such that  for all $n \ge N$ there exists a bijection $\varphi : V_n \rightarrow V$ such that $\max_{v \in V_n}d(x^n_v,x_{\varphi(v)}) < \epsilon$. (Note that this implicitly requires that $|V_n|=|V|$ for sufficiently large $n$.)
It is worth noting that continuous functions on $\SQ(\X)$ are strictly more general than weakly continuous functions on the set of empirical measures, but we refer to \cite{LacRamWu20a} for further discussion.

\section{Statements of main results}
\label{sec-main}

For a tree $\tree$, viewed as a subset of $\V$ as defined in Section \ref{subsub-labeling}, we are interested in the SDE system
\begin{align}
dX^\tree_v(t) &= 1_{\{v \in \tree\}}\Big(b(t,X^\tree_v,X^\tree_{N_v(\tree)})dt + \sigma(t,X^\tree_v)dW_v(t)\Big), \ \ \ v \in \V, \label{statements:SDE}
\end{align} 
where recall that $N_v(\tree)$ denotes the set of neighbors of $v$ in $\tree$,
  and $b$ and $\sigma$ are suitable progressively measurable coefficients as specified in  Assumption \ref{assumption:A}.  
When the tree $\tree$ is random, we always take it to be independent of the initial conditions and Brownian motions.
Note  that we include even those labels $v \in \V \setminus \tree$ that do not belong to the tree, for which the process is constant $X^\tree_v(t)=X^\tree_v(0)$; this will be convenient notation and, in the random tree case, will render the tree itself measurable with respect to the initial $\sigma$-field (as elaborated in Remark \ref{rem-treerecovery}).
Also note  that, unlike in the introduction, we
allow path-dependent coefficients $(b,\sigma)$,  both because this
arises in applications and because this results in no change in the arguments or in the form of the local equation described in Section \ref{subs-locchar}, 
which are inevitably path-dependent regardless of whether $b$ and $\sigma$ are, as discussed in Section \ref{subsub-linegraph}(ii).

We  state first our standing assumptions in
Section \ref{ass}. Then we introduce the local equation in Section \ref{subs-locchar} and
finally state our main results in Section \ref{se:statements:GWlocaleq}.
Throughout, recall the function space $\C = C(\R_+;\R^d)$ and sequence space
  $\SQ (\C)$ defined in Sections \ref{subs-fspaces} and \ref{se:symmetricsequencespace}, respectively.

\subsection{Assumptions}
\label{ass}

Fix a dimension $d \in \N$ and, for $x \in \C$ and $t > 0$, recall from
Section \ref{subs-fspaces} the notation  $\|x\|_{*,t} := \sup_{s \in [0,t]}|x(s)|$.
We assume the drift coefficient $b$, diffusion coefficient $\sigma$, and an initial distribution $\lambda_0$,  satisfy the following: 

\begin{assumption}{\textbf{A}} \label{assumption:A} $\ $

\begin{enumerate}
\item[(A.1)] The drift coefficient $b : \R_+ \times \C \times \SQ(\C) \rightarrow \R^d$ is continuous and has linear growth, in the sense that for each $T > 0$, there exists $C_T <  \infty$ such that, for any $(t,x,(x_v)_{v \in A}) \in [0,T] \times \C \times \SQ(\C)$, we have
\[
|b(t,x,(x_v)_{v \in A})| \le C_T\left(1 + \|x\|_{*,t} + \frac{1}{|A|}\sum_{v \in A}\|x_v\|_{*,t}\right),
\]
where the average is understood to be zero if $|A|=0$.  Moreover, $b$ is progressively measurable; that is, it is jointly measurable
  (which is already implied by the above continuity properties) and non-anticipative in the sense that 
  for each $t \ge 0$, $b(t,x,(x_v)_{v \in A}) = b(t,y,(y_v)_{v \in A})$ whenever $x(s)=y(s)$ and $x_v(s)=y_v(s)$ for all $s \le t$ and $v \in A$.
\item[(A.2)] The diffusion matrix $\sigma : \R_+ \times \C \rightarrow \R^{d\times d}$ satisfies the following: 
\begin{enumerate}
\item[(A.2a)] $\sigma$ is bounded and continuous. Moreover, $\sigma(t,x)$ is invertible for each $(t,x)$, and the inverse is uniformly bounded. Lastly, $\sigma$ is progressively
  measurable, which implies  that  for each $t \ge 0$, $\sigma(t,x)=\sigma(t,y)$ whenever $x(s)=y(s)$ for all $s \le t$.
\item[(A.2b)] The following driftless SDE admits a unique in law weak solution: 
\[
dX(t) = \sigma(t,X)dW(t), \quad X(0) \sim \lambda_0.
\]
\end{enumerate} 
\item[(A.3)] The initial states $(X^\tree_v(0))_{v \in \V}$ are i.i.d.\ with common distribution $\lambda_0 \in \P(\R^d)$, and $\lambda_0$ has finite second moment.
\item[(A.4)] For each non-random tree $\tree \subset \V$, 
  there exists a unique in law weak solution of the SDE system \eqref{statements:SDE} with i.i.d.\ initial positions $(X^\tree_v(0))_{v \in \V}$ with law $\lambda_0$. 
\end{enumerate}
\end{assumption}

The final condition (\ref{assumption:A}.4) regarding uniqueness in law for \eqref{statements:SDE} is not as stringent as it may appear. 
If the tree $\tree$ is finite, it follows automatically from Assumptions (\ref{assumption:A}.1)--(\ref{assumption:A}.2) and Girsanov's theorem (see Lemma \ref{le:ap:girsanov}).   For infinite graphs,  Theorem \ref{th:uniqueness-hominfSDE} below shows that Assumption (\ref{assumption:A}.4) holds if $b$ and $\sigma$ are suitably Lipschitz. The i.i.d.\ assumption  on the initial conditions in (\ref{assumption:A}.3) can be relaxed, although we do  not do so in this article; see Remark \ref{re:non-iid} for further discussion.

\begin{remark}  \label{rem-unique}
As an immediate consequence of Assumption (\ref{assumption:A}.4), it follows that
  the SDE \eqref{statements:SDE} is  unique in law even 
  when the tree $\tree$ is random,  since we always take $\tree$ to be independent of the
  initial conditions and the Brownian motions.
\end{remark}

{
\begin{theorem}  \label{th:uniqueness-hominfSDE} 
	Suppose that Assumptions (\ref{assumption:A}.1) and (\ref{assumption:A}.2a) hold. Assume also that the functions $b$ and $\sigma$ are Lipschitz, in the sense that for each $T > 0$, there exist $\bCa_T, \bCb_T < \infty$ such that, for all $t \in [0,T]$, all $x, x' \in \C$, and all $(x_u)_{u \in A},(x'_u)_{u \in A} \in \SQ(\C)$ indexed by the same finite set $A$, we have
	\begin{equation}    
	\label{Lip-b}
	|b(t,x,(x_u)_{u \in A}) - b(t,x',(x'_u)_{u \in A})| \le \bCa_T \left(\|x-x'\|_{*,t} + \frac{1}{|A|}\sum_{u \in A}\|x_u - x'_u\|_{*,t}\right),
	\end{equation} 
	where the average is understood to be zero if $|A|=0$, and
	\begin{equation}
	\label{Lip-sigma}
	|\sigma(t,x) - \sigma(t,x')| \le \bCb_T\|x-x'\|_{*,t}. 
	\end{equation}
	Then there exists a pathwise unique strong solution  for the SDE system
	\eqref{statements:SDE}, with any initial conditions $(X^\tree_v(0))_{v \in \tree}$.
\end{theorem}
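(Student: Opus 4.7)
My plan is to prove Theorem \ref{th:uniqueness-hominfSDE} by Picard iteration, leveraging the normalization $\tfrac{1}{|A|}$ in the Lipschitz bound \eqref{Lip-b} to obtain recursive error estimates that are uniform in the vertex degree, so that neither the infinite size of $\tree$ nor unbounded degrees obstruct the argument. Define $X^{(0)}_v(t) := X^\tree_v(0)$ and inductively
\begin{equation*}
X^{(n+1)}_v(t) := X^\tree_v(0) + 1_{\{v \in \tree\}}\left(\int_0^t b(s, X^{(n)}_v, X^{(n)}_{N_v(\tree)})\,ds + \int_0^t \sigma(s, X^{(n)}_v)\,dW_v(s)\right).
\end{equation*}
Local finiteness of $\tree$ ensures each $X^{(n)}_v$ is a well-defined adapted process, depending only on initial data and Brownian motions within the tree-ball of radius $n$ around $v$.

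Under the temporary assumption $\sup_v \E|X^\tree_v(0)|^2 < \infty$, standard It\^o/BDG estimates combined with the Lipschitz bounds \eqref{Lip-b}--\eqref{Lip-sigma}, together with a Cauchy--Schwarz inequality to move the square inside the neighborhood average, would yield for $D^{(n)}_v(t) := \E[\sup_{s\le t}|X^{(n+1)}_v(s) - X^{(n)}_v(s)|^2]$,
\begin{equation*}
D^{(n)}_v(t) \;\le\; C\int_0^t\Big( D^{(n-1)}_v(s) + \tfrac{1}{|N_v(\tree)|}\sum_{u \in N_v(\tree)} D^{(n-1)}_u(s)\Big)\,ds,
\end{equation*}
with $C = C(T, \bCa_T, \bCb_T, d)$. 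The crucial observation is that taking the supremum over $v \in \V$ dominates the averaged term by $\sup_v D^{(n-1)}_v$, giving $\sup_v D^{(n)}_v(t) \le 2C\int_0^t \sup_v D^{(n-1)}_v(s)\,ds$ and hence $\sup_v D^{(n)}_v(T) \le (2CT)^n/n!\cdot \sup_v D^{(0)}_v(T) \to 0$. Thus $(X^{(n)}_v)_n$ is $L^2$-Cauchy uniformly in $v$, and a routine passage to the limit produces a strong solution of \eqref{statements:SDE}. The identical recursion applied to $\Delta_v(t):=\E[\sup_{s\le t}|X_v(s) - \widetilde X_v(s)|^2]$ for two solutions $X, \widetilde X$ with common initial data yields pathwise uniqueness.

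To handle arbitrary (not necessarily uniformly $L^2$) initial data, I would localize. Fix any $v_\star \in \tree$ and, for $R > 0$, introduce the truncated initial data $X^R_v(0) := X^\tree_v(0)1_{\{|X^\tree_v(0)|\le R\}}$ together with the stopping time $\tau_R := \inf\{t\ge 0 : |X_u(t)| > R \text{ for some } u \in \tree \text{ with } \mathrm{dist}_\tree(u, v_\star) \le R\}$. The uniform-$L^2$ argument above produces a strong solution $X^R$ for each $R$; pathwise uniqueness then identifies $X^R$ and $X^{R'}$ on $[0, \tau_R \wedge \tau_{R'}]$, and the family can be patched into a global solution for the original initial data. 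The principal obstacle is carrying out this patching rigorously and verifying $\tau_R \to \infty$ a.s.\ as $R\to\infty$---this requires using local finiteness of $\tree$ together with the linear growth of $b$ (implicit in \eqref{Lip-b} and (\ref{assumption:A}.1)) and boundedness of $\sigma$ to prevent finite-time blowup in any bounded region of $\tree$.
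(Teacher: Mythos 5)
Your Picard iteration, and in particular the observation that the $1/|A|$ normalization in \eqref{Lip-b} lets you take the supremum over $v$ and close the Gronwall loop with a degree-independent constant, is exactly the standard argument the paper is alluding to (it simply cites \cite[Theorem 3.1]{LacRamWu20a} and gives no details), so the core of your proposal is on target. The uniform-$L^2$ case is sound.

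The localization step, however, has a genuine gap that goes beyond "needs to be carried out rigorously." Your stopping time $\tau_R$ is defined in terms of the solution $X$, which is precisely what you are trying to construct, so the definition is circular at the point where it is invoked. More seriously, $X^R$ and $X^{R'}$ are solutions started from \emph{different} initial data (truncated at levels $R$ and $R'$), so pathwise uniqueness gives you no identification between them on any stochastic interval; the Yamada--Watanabe-style patching you describe applies to a single initial datum with different stopping levels, not to a family of perturbed initial data. A cleaner route that sidesteps both issues: condition on $\tree$ and $(X^\tree_v(0))_{v\in\V}$, reducing to a deterministic locally finite tree and deterministic initial data $(x_v)_v$, and then observe that the Picard estimate refines to $D^{(n)}_{v_\star}(T) \le (2CT)^n/n! \cdot \bigl(1+\max_{d(u,v_\star)\le n}|x_u|^2\bigr)$, since the $n$-th iterate sees initial data only in the radius-$n$ ball. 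Local finiteness makes each max finite a.s., and for the initial laws used in this paper (i.i.d.\ with finite second moment, Assumption (\ref{assumption:A}.3)) the max grows slowly enough that the $1/n!$ wins; for genuinely arbitrary random initial data one cannot do without such a growth control, and the theorem statement should be read with that caveat in mind. Finally, your uniqueness argument implicitly assumes $\sup_v \Delta_v(t) < \infty$ before applying Gronwall, which is not automatic for an arbitrary pair of solutions; one should either restrict attention to solutions with locally uniformly bounded second moments, or introduce a genuine localization via stopping times (defined now in terms of the two solutions one already has, so no circularity) and send the stopping level to infinity.
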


\begin{proof}
This follows from standard arguments; see \cite[Theorem 3.1]{LacRamWu20a}.
\end{proof} 

Motivated by Theorem \ref{th:uniqueness-hominfSDE}, we will sometimes make the following assumption.

\begin{assumption}{\textbf{B}} \label{assumption:B}
	Suppose that Assumptions (\ref{assumption:A}.1), (\ref{assumption:A}.2a), and (\ref{assumption:A}.3) hold.
	Assume also that the functions $b$ and $\sigma$ are Lipschitz, in the sense that \eqref{Lip-b} and \eqref{Lip-sigma} hold.
\end{assumption}

We note that due to
  Theorem \ref{th:uniqueness-hominfSDE},  Assumption \ref{assumption:B} implies Assumption \ref{assumption:A}.
}

The main examples of interactions we have in mind for the drift $b$ in Assumption (\ref{assumption:A}.1) take the following forms: 

\begin{example} \label{ex:interactions1}
For a first example, suppose $b$ is of the form
\begin{align*}
b(t,x,(x_v)_{v \in A}) = \begin{cases}
\widetilde{b}_0(t,x) &\text{if } A = \emptyset,  \\
\frac{1}{|A|}\sum_{v \in A}\widetilde{b}(t,x,x_v) &\text{if } A \neq \emptyset,
\end{cases}
\end{align*}
for given functions $\widetilde{b}_0 : \R_+ \times \C \rightarrow \R^d$ and $\widetilde{b} : \R_+ \times \C \times \C \rightarrow \R^d$. Assumption (\ref{assumption:A}.1) holds if $\widetilde{b}_0$ and $\widetilde{b}$ are continuous with linear growth, in the sense that  for each $T > 0$ there exists $C_T <  \infty$ such that
\[
|\widetilde{b}_0(t,x)| + |\widetilde{b}(t,x,y)| \le C_T\left(1 + \|x\|_{*,t} + \|y\|_{*,t}\right), \ \text{ for all } (t,x,y).
\]
\end{example}

\begin{example} \label{ex:interactions2}
Generalizing Example \ref{ex:interactions1}, suppose $b$ is of the form
\begin{align*}
b(t,x,(x_v)_{v \in A}) = \begin{cases}
\widetilde{b}_0(t,x) &\text{if } A = \emptyset, \\
\widetilde{b}\left(t,x,\frac{1}{|A|}\sum_{v \in A}\delta_{x_v}\right) &\text{if } A \neq \emptyset,
\end{cases}
\end{align*}
for given functions $\widetilde{b}_0 : \R_+ \times \C \rightarrow \R^d$ and $\widetilde{b} : \R_+ \times \C \times \P(\C) \rightarrow \R^d$.
In fact, $\widetilde{b}$ needs only to be defined on the subspace of $\P(\C)$ consisting of empirical measures of finitely many points.
Assumption (\ref{assumption:A}.1) holds if $\widetilde{b}_0$ and $\widetilde{b}$ are continuous (using weak convergence or any Wasserstein metric on $\P(\C)$) with linear growth, namely if  for each $T > 0$ there exists $C_T < \infty$ such that
\[
|\widetilde{b}_0(t,x)| + |\widetilde{b}(t,x,m)| \le C_T\left(1 + \|x\|_{*,t} + \int_{\C}\|y\|_{*,t}\,dm(y)\right), \ \text{ for all } (t,x,m).
\]
\end{example}

\subsection{The local equation}
\label{subs-locchar} 

The local equation describes a novel stochastic dynamical system
and is significantly more complicated on the UGW tree than on
non-random trees, where its structure is more transparent, especially given the
discussion in Section \ref{subsub-linegraph}. 
Thus, we first introduce its definition 
on the infinite regular tree in Section \ref{subs-locchardet} and defer 
the full formulation for a UGW tree to  Section \ref{subs-loccharUGW}.
However, the reader may choose to skip directly to Section \ref{subs-loccharUGW}
without loss of continuity.

\subsubsection{The local equation for an  infinite regular  tree}
\label{subs-locchardet}

Let $\Tmb_{\kappa}$ be the infinite $\kappa$-regular tree for some integer $\kappa \ge 2$, and
note that it can  be  identified with the subset 
$\{\o\} \cup \{1,\ldots,\kappa\} \cup \bigcup_{n=2}^\infty \left( \{1,\ldots,\kappa\} \times \{1,\ldots,\kappa-1\}^{n-1} \right)$ 
of the vertex set $\V$ defined in \eqref{vertexset}.

Recall from Section \ref{subs-fspaces} that for  $t > 0$ and $x \in \C = C(\R_+;\R^d)$, we write $x[t] := \{x(s) : s \in [0,t]\}$ for the truncated path, viewed as an element of $\C_t =C([0,t];\R^d)$. The following generalizes the local equation outlined in Section \ref{subsub-linegraph} for a model on $\Tmb_2$.

\begin{definition}
  \label{def-locregtree} 
Let $\Tmb_{\kappa,1} = \{\o,1,\ldots,\kappa\}$ denote the first generation of the $\kappa$-regular tree. 
A \emph{weak solution of the $\Tmb_\kappa$ local equation with initial law $\lambda_0 \in \P(\R^d)$}  is a tuple $((\Omega,\F,\FF,\PP), \gamma,(B_v,Y_v)_{v \in \Tmb_{\kappa,1}})$ such that:
\begin{enumerate}
\item $(\Omega,\F,\PP)$ is a probability space with a filtration $\FF=(\F_t)_{t \ge 0}$.
\item $(B_v)_{v \in \Tmb_{\kappa,1}}$ are independent $d$-dimensional $\FF$-Brownian motions.
\item $(Y_v)_{v \in \Tmb_{\kappa,1}}$ are continuous $d$-dimensional $\FF$-adapted processes. 
\item $(Y_v(0))_{v \in \Tmb_{\kappa,1}}$ are i.i.d.\ with law $\lambda_0$.
\item The function $\R_+ \times \C^2 \ni (t,x_{\o},x_1) \mapsto  \gamma_t(x_{\o},x_1) \in \R^d$ is progressively measurable and satisfies
\begin{align}
 \gamma_t(Y_{\o},Y_1) = \E\Big[ b(t,Y_{\o},Y_{\{1,\ldots,\kappa\}}) \, \big| \, Y_{\o}[t], \, Y_1[t] \Big], \quad a.s., \mbox{ for }  a.e. \ t \in [0,T].   \label{def:gamma-kappareg}
\end{align}
\item The following system of stochastic equations holds: 
\begin{align}
\begin{split}
  dY_{\o}(t) & = b(t,Y_{\o},Y_{\{1,\ldots,\kappa\}}) \, dt + \sigma(t,Y_{\o}) \, dB_{\o}(t),
  \\ 
dY_i(t) & = \gamma_t(Y_i,Y_{\o}) dt + \sigma(t,Y_i) \, dB_i(t), \quad i=1,\dotsc,\kappa. 
\end{split} \label{statements:localequation-regular}
\end{align}
\item For each $i=1,\ldots,\kappa$ and $T > 0$, we have
\begin{align*}
\int_0^T\Big(|\gamma_t(Y_{\o},Y_i)|^2+|\gamma_t(Y_i,Y_{\o})|^2+|\gamma_t(\widehat{X}_{\o},\widehat{X}_i)|^2+|\gamma_t(\widehat{X}_i,\widehat{X}_{\o})|^2\Big)dt < \infty, \ \ a.s.,
\end{align*}
where $(\widehat{X}_v)_{v \in \Tmb_{\kappa,1}}$ is the unique in law
(by Assumption (\ref{assumption:A}.2b)) solution to the driftless SDE  system
\begin{align*}
d\widehat{X}_v(t) = \sigma(t,\widehat{X}_v)dB_v(t), \quad v \in \Tmb_{\kappa,1},
\end{align*}
where $(\widehat{X}_v(0))_{v \in \Tmb_{\kappa,1}}$ are i.i.d.\ with law $\lambda_0$. 
\end{enumerate}
Alternatively, we may refer to the law of the $\C^{\kappa+1}$-valued random variable $(Y_v)_{v \in \Tmb_{\kappa,1}}$ as a weak solution.
We say that the $\Tmb_\kappa$ local equation with initial law $\lambda_0$ is \emph{unique in law} if any two weak solutions induce the same law on $\C^{\kappa+1}$. 
\end{definition}

\begin{remark}
	The property (7) in Definition \ref{def-locregtree} will be used to justify certain applications of Girsanov's theorem (as in  Lemma \ref{le:ap:girsanov}). Specifically, it ensures that the joint laws of $(Y_{\o},Y_i)$ and $(\widehat{X}_{\o},\widehat{X}_i)$ are mutually absolutely continuous.	
\end{remark}

\begin{remark}
The local equation describes a ``nonlinear'' process in the sense of a McKean-Vlasov equation because the law of the solution enters the dynamics.
  However, a crucial yet unusual  feature of the local equation \eqref{statements:localequation-regular} is that the conditional expectation mapping $\gamma_t$ appears with different arguments throughout the SDE system.     In  the related paper \cite{LacRamWu20c} (see also \cite{MitchellThesis18} and \cite{SudijonoThesis19}),   
 we show that  analogous discrete-time local dynamics can be simulated efficiently.
  In future work, we plan to investigate the    analytical and numerical tractability
  of the local dynamics in the diffusion setting.  
\end{remark}

It is worth noting how the $\Tmb_\kappa$ local equation \eqref{statements:localequation-regular} simplifies when the drift $b$ takes the form described in Example \ref{ex:interactions1} above. Indeed, as shown in  Lemma \ref{lem-locsym},  
the law of any solution $(Y_{\o},Y_1,\ldots,Y_\kappa)$ is necessarily  invariant under permutations of $(Y_1,\ldots,Y_\kappa)$,  which implies
\begin{align*}
\gamma_t(Y_{\o},Y_1) &= \frac{1}{\kappa}\widetilde{b}(t,Y_{\o},Y_1) + \frac{\kappa-1}{\kappa}\widetilde\gamma_t(Y_{\o},Y_1),	
\end{align*} 
where we define 
\[
\widetilde\gamma_t(Y_{\o},Y_1) := \E\Big[ \widetilde{b}(t,Y_{\o},Y_2) \, \big| \, Y_{\o}[t], \, Y_1[t] \Big].
\] 
We may then write \eqref{statements:localequation-regular} as 
\begin{align*}
dY_{\o}(t) & = \frac{1}{\kappa}\sum_{i=1}^\kappa \widetilde{b}(t,Y_{\o},Y_i) \, dt + \sigma(t,Y_{\o}) \, dB_{\o}(t), \\
dY_i(t) & = \left(\frac{1}{\kappa}\widetilde{b}(t,Y_i,Y_{\o}) + \frac{\kappa-1}{\kappa} \widetilde\gamma_t(Y_i,Y_{\o}) \right) dt + \sigma(t,Y_i) \, dB_i(t), \ \ i=1,\dotsc,\kappa. 
\end{align*}

The main result on the characterization of the dynamics of the root and its neighborhood $\Tmb_{\kappa,1}$ via the local equation is given in Corollary \ref{cor-regtree}. It is  a simple consequence of the more
  general result, Theorem \ref{th:statements-mainlocaleq}, for UGW($\rho$) trees given in Section \ref{se:statements:GWlocaleq}.
  With that in mind,  in the next section, we first introduce the general form of the local equation for a UGW tree.

\subsubsection{The local equation for unimodular Galton-Watson trees}
\label{subs-loccharUGW}

Fix a distribution $\rho \in \P(\N_0)$ with finite non-zero first moment.  
We first formally define a UGW($\rho$) tree: 

\begin{definition}
  \label{def-UGW}
 Given $\rho \in \P(\N_0)$
 with a finite nonzero first moment, the random tree UGW($\rho$) has a root 
 with offspring distribution $\rho$, and each vertex of each subsequent generation has a
 number of offspring according to the distribution $\widehat{\rho} \in \P(\N_0)$, where $\widehat{\rho}$ is given by
\begin{equation}
  \label{def2-hatrho}
\widehat\rho(k) = \frac{(k+1)\rho(k+1)}{\sum_{n \in \N}n\rho(n)}, \ \ k \in \N_0, 
\end{equation}
and the numbers of offspring in different generations are all independent of each other. 
 Recalling the Ulam-Harris-Neveu labelling from Section \ref{subsub-labeling}, we view a UGW($\rho$) tree as a random subset of $\V$. 
\end{definition}

As discussed in Section \ref{subs-backmot}, this kind of random tree arises
as the local weak limit of many natural finite random graph models (see Examples 2.2, 2.3, and 2.4 of \cite{LacRamWu20a}).

We now give the general form of the local equation for UGW trees.
In this case, the structure of the neighborhood of the root is also random. To capture
 this, it is useful to consider the root neighborhood as a subset of the vertex set $\V_1 = \{\o\} \cup \N$.

\begin{definition}  \label{def-GWlocchar}
Given $\rho \in \P(\N_0)$ with finite nonzero first moment and $\lambda_0 \in \P(\R^d)$, a \emph{weak solution of the $\mathrm{UGW}(\rho)$ local equation with initial law $\lambda_0$} is a tuple $((\Omega,\F,\FF,\PP),\tree_1, \gamma,(B_v,Y_v)_{v \in \V_1},\Nvar)$ such that:
\begin{enumerate}
\item $(\Omega,\F,\PP)$ is a probability space with a filtration $\FF=(\F_t)_{t \ge 0}$.
\item $\tree_1$ is a random tree with the same law as the first generation of a $\mathrm{UGW}(\rho)$ tree. More explicitly, $\tree_1$ has vertex set $\{\o,1,\ldots,\kappa\}$ for some $\N_0$-valued $\F_0$-measurable random variable $\kappa$ with law $\rho$, and the edge set is $\{(\o,k) : k =1,\ldots,\kappa\}$. (If $\kappa=0$, this means the vertex set is simply $\{\o\}$, there are no edges,  and $N_{\o}(\tree_1)= \emptyset$.)
\item $\Nvar$ is an $\F_0$-measurable $\N_0$-valued random variable with law $\widehat\rho$, as defined in \eqref{def2-hatrho}.  
\item $(B_v)_{v \in \V_1}$ are independent $d$-dimensional $\FF$-Brownian motions.
\item $(Y_v)_{v \in \V_1}$ are continuous $d$-dimensional $\FF$-adapted processes.
\item $(Y_v(0))_{v \in \V_1}$ are $\F_0$-measurable and i.i.d.\ with law $\lambda_0$.  
\item The function $\R_+ \times \C^2 \ni (t,x_{\o},x_1) \mapsto \gamma_t(x_{\o},x_1) \in \R^d$ is progressively measurable and satisfies
   
\begin{equation}
  \gamma_t(Y_{\o},Y_1) = 
 \left\{
  \begin{array}{ll} 
    \displaystyle  \frac{\E\left[\left. \frac{|N_{\o}(\tree_1)|}{1+\Nvar}b(t,Y_{\o},Y_{N_{\o}(\tree_1)}) \, \right| \, Y_{\o}[t], \, Y_1[t] \right]}{\E\left[\left. \frac{|N_{\o}(\tree_1)|}{1+\Nvar} \, \right| \, Y_{\o}[t], \, Y_1[t] \right]}
    & \mbox{ on }
    \{N_{\o}(\tree_1) \neq \emptyset\},  \\
  \displaystyle  b(t,Y_{\o},\onepoint) & \mbox{ on } \{N_{\o}(\tree_1)=\emptyset\}, 
  \end{array}
  \right. 
  \label{def:gamma}
\end{equation} 
a.s., for a.e.\ $t \in [0,T]$. Recall our convention that $\onepoint$ denotes the unique element of the one-point space $\C^0$. 
\item $\tree_1$, $(Y_v(0))_{v \in \V_1}$, $\Nvar$, and $(B_v)_{v \in \V_1}$ are independent.
\item The following system of stochastic equations is satisfied:
\begin{align}
\begin{split}
dY_{\o}(t) & = b(t,Y_{\o},Y_{N_{\o}(\tree_1)}) \, dt + \sigma(t,Y_{\o}) \, dB_{\o}(t),  \\
dY_k(t) & = 1_{\{k \in \tree_1\}}\Big(\gamma_t(Y_k,Y_{\o}) \, dt + \sigma(t,Y_k) \, dB_k(t)\Big), \ \ k \in \N.
\end{split} \label{statements:localequationGW}
\end{align}
\item For each $k \in \N$ and $T > 0$, we have
\begin{align*}
\int_0^T\Big(|\gamma_t(Y_{\o},Y_k)|^2 + |\gamma_t(Y_k,Y_{\o})|^2+|\gamma_t(\widehat{X}_{\o},\widehat{X}_k)|^2+ |\gamma_t(\widehat{X}_k,\widehat{X}_{\o})|^2\Big)dt < \infty, \ \ a.s.,
\end{align*}
on the event $\{k\in\tree_1\}$,
where $(\widehat{X}_v)_{v \in \V_1}$ is the unique in law (by Assumption (\ref{assumption:A}.2b)) solution of the driftless SDE  system
\begin{align*}
d\widehat{X}_v(t) = 1_{\{v \in \tree_1\}}\sigma(t,\widehat{X}_v)dB_v(t), \quad v \in \V_1,
\end{align*}
where $(\widehat{X}_v(0))_{v \in \V_1}$ are i.i.d.\ with law $\lambda_0$.
\end{enumerate}
Alternatively, we may refer to the law of the $\C^{\V_1}$-valued random variable $(Y_v)_{v \in \V_1}$  as a weak solution.
We say that the $\mathrm{UGW}(\rho)$ local equation with initial law $\lambda_0$ is \emph{unique in law} if any two weak solutions induce the same law on $\C^{\V_1}$. 
\end{definition}

\begin{remark} \label{re:GWsimplifies-regular}
It is worth noting how Definition \ref{def-GWlocchar} reduces to Definition \ref{def-locregtree}  when the tree is the deterministic $\kappa$-regular tree, i.e., the UGW($\rho$) tree with $\rho = \delta_{\kappa}$ for an integer $\kappa \ge 2$. In this case, we have $\widehat\rho = \delta_{\kappa - 1}$,  $N_{\o}(\tree_1) = \{1,\ldots,\kappa\}$, and $\Nvar = \kappa-1$, and the definition of $\gamma_t$ in \eqref{def:gamma} reduces to \eqref{def:gamma-kappareg}.
\end{remark}

\begin{remark}  \label{intuition-GW}
The more complicated form of $\gamma_t$ in Definition \ref{def-GWlocchar}  as opposed
to Definition \ref{def-locregtree} is due to the subtler symmetries of the UGW tree in comparison with the simpler symmetries of the non-random trees $\Tmb_\kappa$ (for example, compare Lemma \ref{lem-locsym} with Lemma \ref{lem-locsymugw}). More precisely, note
that for the UGW tree $\tree$, 
on the event
  $\{|N_{\o}(\tree)| \neq \emptyset\}$, 
  the random variable $\Nvar$ is independent of
$|N_{\o}(\tree)|$ and 
  represents the number of
offspring of vertex $1$, which in a UGW($\rho$) tree has 
law $\widehat\rho$.
The following identity then provides 
  intuition behind the definition of $\gamma_t$ in
\eqref{def:gamma}:   for bounded
functions $h: \N \mapsto \R$,  
\[
\E\left[h(1+\Nvar)1_{\{N_{\o}(\tree_1) \neq \emptyset\}}\right] = \E\left[\frac{|N_{\o}(\tree)|}{1+\Nvar}h(|N_{\o}(\tree)|)1_{\{N_{\o}(\tree_1) \neq \emptyset\}}\right], 
\]
 which is easily verified by showing that both sides are equal to
  $[1 - \rho(0)] \sum_{k=0}^\infty h(k+1) \widehat{\rho}(k)$. 
This should be interpreted as explaining how to change measure, using the Radon-Nikodym derivative $|N_{\o}(\tree)|/(1+\Nvar)$, to effectively \emph{re-root} the tree to vertex $1$ instead of $\o$. See Proposition \ref{pr:invariance-GW} for a precise statement. Of course, in the $\kappa$-regular tree case discussed in Remark \ref{re:GWsimplifies-regular}, no such change of measure is necessary, because the re-rooted tree is isomorphic to the original tree.
On a more technical level, it is worth noting that the presence of the indicators in the SDE system \eqref{statements:localequationGW} ensures that $\{v \in \tree\}$ is a.s.\ $X_v[t]$-measurable for each $t > 0$ (see Remark \ref{rem-treerecovery}), and thus the conditional expectations appearing in \eqref{def:gamma} implicitly condition on the tree structure in addition to the particle trajectories. Also, 
our choice of how to define $\gamma_t(Y_{\o},Y_1)$  on the event $\{N_{\o}(\tree_1)=\emptyset\}$ is a useful convention but is irrelevant to the form of the local equation.
\end{remark}

\subsection{Characterization of marginals via the local equation}
\label{se:statements:GWlocaleq}

The following is our main result for particle systems set on UGW trees.

\begin{theorem} \label{th:statements-mainlocaleq}
Suppose Assumption \ref{assumption:A} holds. 
Let $\tree$ denote a $\mathrm{UGW}(\rho)$ tree, where $\rho \in \P(\N_0)$ has finite nonzero first and second moments. 
Let $X^\tree=(X^\tree_v)_{v \in \V}$ be the solution of the SDE system \eqref{statements:SDE}. 
Then the law of the $\C^{\V_1}$-valued random variable $(X^\tree_v)_{v \in \V_1}$ is a weak solution of the $\mathrm{UGW}(\rho)$ local equation with initial law $\lambda_0$. Moreover, the $\mathrm{UGW}(\rho)$ local equation with initial law $\lambda_0$ is unique in law.  
\end{theorem}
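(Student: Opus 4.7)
The plan is to prove existence and uniqueness as two separate movements, both resting on the conditional independence, symmetry, and unimodularity properties previewed in the introduction.

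For existence, I would verify the items of Definition \ref{def-GWlocchar} for $(Y_v) := (X^\tree_v)_{v \in \V_1}$. Items (1)--(6) and (8) are immediate from the setup: the UGW($\rho$) distribution of the first generation, the independence of $\tree_1$ from the initial data and the Brownian motions, the i.i.d.\ initial conditions, and the independence of the driving noises all descend to the restriction. The substantive items are (7) and (9), which together require producing a progressively measurable $\gamma_t$ satisfying (\ref{def:gamma}) and showing that $(Y_v)$ solves (\ref{statements:localequationGW}) for some family of independent $\FF$-Brownian motions. The starting point is an optional projection / innovations-style Markovian representation (the appendix on projections of It\^o processes, referenced as Appendix \ref{sec-mim}): after enlarging the probability space, one obtains independent Brownian motions $(\widetilde B_v)_{v \in \V_1}$ and progressively measurable drifts $\widetilde b_v(\cdot)$ such that, on $\{v \in \tree\}$, $dX^\tree_v(t) = \widetilde b_v(t)\,dt + \sigma(t,X^\tree_v)\,d\widetilde B_v(t)$, with $\widetilde b_v(t)$ a progressive version of $\E[b(t,X^\tree_v,X^\tree_{N_v(\tree)})\mid X^\tree_{\V_1}[t]]$.

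The crux of existence is to simplify $\widetilde b_k(t)$ for $k \in N_{\o}(\tree)$ into the prescribed form $\gamma_t(X^\tree_k, X^\tree_{\o})$. Two ingredients are required. First, the global second-order Markov random field property (Proposition \ref{pr:properties-GW}) asserts that, on $\{k \in \tree\}$, the subtree hanging off $k$ together with its trajectories up to time $t$ is conditionally independent, given $(X^\tree_{\o}[t], X^\tree_k[t])$, of everything else; hence conditioning on the entire root-neighborhood trajectory $X^\tree_{\V_1}[t]$ collapses to conditioning on the pair. This reduces $\widetilde b_k(t)$ to an expectation involving only the subtree rooted at $k$. Second, since (root, child) is not exchangeable under the UGW law, I would invoke unimodularity: Proposition \ref{pr:unimodular} and the re-rooting identity of Proposition \ref{pr:invariance-GW} rewrite the child-centric conditional expectation as a root-centric one, weighted by the Radon-Nikodym factor $|N_{\o}(\tree)|/(1+\Nvar)$ with an independent $\Nvar \sim \widehat\rho$, producing exactly the ratio formula in (\ref{def:gamma}). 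The integrability condition in item (9) follows from Assumption (\ref{assumption:A}.1), the finite second moments of $\lambda_0$ and $\rho$, and Girsanov mutual absolute continuity against the driftless system (Lemma \ref{le:ap:girsanov}).

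For uniqueness, I would follow the reconstruction strategy outlined at the end of Section \ref{subsub-linegraph}, suitably generalized. Given any weak solution $(\tree_1,(Y_v)_{v\in\V_1},\gamma)$, let $\mu$ denote its joint law and extract, via the symmetry Lemma \ref{lem-locsymugw} (the UGW analogue of the reflection/shift invariance used in the $\Tmb_2$ sketch), a regular conditional kernel $\Gamma$ which, given trajectories on a vertex and its parent, supplies the joint law of the trajectories on that vertex's own offspring subtree. Extend $\tree_1$ to a full UGW($\rho$) tree $\tree$ and iteratively glue copies of $\Gamma$ along successive generations to build a candidate law on $\C^\tree$; consistency at each step uses the conditional independence property of Proposition \ref{pr:properties-GW} together with the Galton-Watson branching identity. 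The decisive technical step is to verify, via Girsanov combined with the unimodular re-rooting identity of Proposition \ref{pr:invariance-GW}, that the reconstructed process is a weak solution of the full SDE system (\ref{statements:SDE}) on $\tree$ with i.i.d.\ $\lambda_0$-initial conditions. Once this is in hand, Assumption (\ref{assumption:A}.4) and Remark \ref{rem-unique} force the joint law of the entire process to be uniquely determined, and projecting back to $\V_1$ yields uniqueness in law of the local equation.

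The principal obstacle, in both halves, is the interplay between the random tree structure and the re-rooting step. Promoting the MRF property from local to global — so that infinite complements are permitted on one side of the conditional independence — is already delicate on deterministic trees, and its annealed counterpart jointly over tree and trajectory requires handling combinatorial and stochastic aspects in tandem. More sharply, the unimodular change of measure with weight $|N_{\o}(\tree)|/(1+\Nvar)$ is precisely the feature that distinguishes the UGW case from the regular-tree case of Definition \ref{def-locregtree}, and carrying it through while conditioning on full trajectories (as opposed to just on the tree) is the main new analytical difficulty. The reconstruction step in the uniqueness argument inherits exactly the same difficulty, since it must transfer a change-of-measure identity from the three-point root neighborhood back to the entire infinite tree.
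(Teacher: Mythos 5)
Your proposal matches the paper's own proof in structure: existence via projection (Theorem \ref{th:brunickshreve}) followed by the second-order MRF reduction (Proposition \ref{pr:properties-GW}) and the unimodular re-rooting (Proposition \ref{pr:invariance-GW}), and uniqueness via reconstruction of the full particle system on the UGW tree from the local marginal, then appealing to Assumption (\ref{assumption:A}.4) and Remark \ref{rem-unique}. One subtle point to sharpen in the uniqueness argument: the conditional kernel you call $\Gamma$ must be defined under the tilted measure $\QQ$ with density $|N_{\o}(\tree)|/|N_1(\tree)|$ (not under $\PP$), and the re-rooting identity of Proposition \ref{pr:invariance-GW} is invoked precisely to establish that this kernel normalizes to one against the \emph{untilted} reference measure on each non-root vertex, i.e.\ to prove consistency of the recursive family $\{Q^n_t\}$ (this is the content of Lemma \ref{lem-claw} and the key identity \eqref{pf:Zconditionaldensity1}), rather than for the final verification that the reconstructed process satisfies \eqref{statements:SDE}, which is a separate Girsanov computation once consistency and the Dol\'eans-exponential form \eqref{pf:uniquenessGW-RNformQn} are in hand.
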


\begin{remark} \label{re:localeq-interpretation}
  To be absolutely clear about the meaning of Theorem \ref{th:statements-mainlocaleq},
  we must stress 
  that $(X^{\tree}_v)_{v \in \V_1}$ provides a weak solution of the local equation,
  but $(X^{\tree_1}_v)_{v \in \V_1}$ does not, where we write $\tree_1:=\tree\cap\V_1$ for the first generation of $\tree$.
    The difference is that 
  $X^{\tree_1} = (X^{\tree_1}_v)_{v \in \V}$ 
  denotes the particle system \emph{set on the one-generation tree} $\tree_1$, 
    in which the children of the root comprise the leaves of the tree, whereas
    $(X^{\tree}_v)_{v \in \V_1}$  represents the 
    root neighborhood for the particle system
    \emph{set on the potentially infinite UGW($\rho$) tree $\tree$}.
\end{remark}

Since the $\kappa$-regular tree is a special case of the UGW tree defined in Definition \ref{def-UGW} 
  (see Remark \ref{re:GWsimplifies-regular}),   the following is an immediate corollary of 
  Theorem \ref{th:statements-mainlocaleq}, where recall that $\mathbb{T}_{\kappa,1} := \{\o, 1, \ldots, \kappa\}$ represents one generation of the tree $\mathbb{T}_\kappa$.

\begin{corollary}  \label{cor-regtree}  
Suppose Assumption \ref{assumption:A} holds. 
  Let $\Tmb_{\kappa}$ denote the infinite $\kappa$-regular tree, for some $\kappa \ge 2$, and let $\Tmb_{\kappa,1}$ denote its first generation.
Let $X^{\Tmb_\kappa}=(X^{\Tmb_\kappa}_v)_{v \in \V}$ denote the solution of the SDE \eqref{statements:SDE} on the tree $\tree=\Tmb_\kappa$.
Then the law of $(X^{\Tmb_{\kappa}}_v)_{v \in \Tmb_{\kappa,1}}$ 
is a weak solution of the $\Tmb_\kappa$ local equation with initial law $\lambda_0$. Moreover, the $\Tmb_\kappa$ local equation with initial law $\lambda_0$ is unique in law.
\end{corollary}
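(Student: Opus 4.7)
The plan is to deduce Corollary \ref{cor-regtree} from Theorem \ref{th:statements-mainlocaleq} by taking $\rho=\delta_\kappa$, so the only real work is to verify that Definition \ref{def-GWlocchar} collapses to Definition \ref{def-locregtree} in this deterministic-degree case. First, note that $\delta_\kappa$ has first moment $\kappa$ and second moment $\kappa^2$, both finite and nonzero for $\kappa\ge 2$, so the hypotheses of Theorem \ref{th:statements-mainlocaleq} are met. A $\mathrm{UGW}(\delta_\kappa)$ tree is a.s.\ the deterministic tree $\Tmb_\kappa$, hence the solution $X^\tree$ of \eqref{statements:SDE} with $\tree$ a $\mathrm{UGW}(\delta_\kappa)$ tree agrees almost surely with $X^{\Tmb_\kappa}$.

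Next, I would carry out the reduction of the local-equation data. By formula \eqref{def2-hatrho},
\[
\widehat{\rho}(k)=\frac{(k+1)\delta_\kappa(k+1)}{\kappa}=\delta_{\kappa-1}(k),
\]
so the auxiliary variable $\Nvar$ in Definition \ref{def-GWlocchar} equals $\kappa-1$ almost surely. The random first-generation tree $\tree_1$ is almost surely equal to $\Tmb_{\kappa,1}=\{\o,1,\ldots,\kappa\}$, so $N_{\o}(\tree_1)=\{1,\ldots,\kappa\}$ a.s.\ and the exceptional branch $\{N_{\o}(\tree_1)=\emptyset\}$ has zero probability and plays no role. Consequently the weight $|N_{\o}(\tree_1)|/(1+\Nvar)=\kappa/\kappa$ equals $1$ almost surely, so the conditional expectation in \eqref{def:gamma} simplifies to
\[
\gamma_t(Y_{\o},Y_1)=\E\bigl[b(t,Y_{\o},Y_{\{1,\ldots,\kappa\}})\,\big|\,Y_{\o}[t],\,Y_1[t]\bigr],
\]
which is exactly \eqref{def:gamma-kappareg}. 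Likewise, the indicators $1_{\{k\in\tree_1\}}$ in \eqref{statements:localequationGW} are $1$ for $k=1,\ldots,\kappa$, so the SDE system \eqref{statements:localequationGW} is precisely \eqref{statements:localequation-regular}. The integrability requirement (10) in Definition \ref{def-GWlocchar} likewise collapses to condition (7) of Definition \ref{def-locregtree}. Thus a tuple is a weak solution of the $\mathrm{UGW}(\delta_\kappa)$ local equation if and only if it is a weak solution of the $\Tmb_\kappa$ local equation.

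With this dictionary established, the existence half of Corollary \ref{cor-regtree} follows: Theorem \ref{th:statements-mainlocaleq} gives that the law of $(X^\tree_v)_{v\in\V_1}$ is a weak solution of the $\mathrm{UGW}(\delta_\kappa)$ local equation, and since $\V_1\cap\tree=\Tmb_{\kappa,1}$ a.s., this is the same as saying that $(X^{\Tmb_\kappa}_v)_{v\in\Tmb_{\kappa,1}}$ solves the $\Tmb_\kappa$ local equation. Uniqueness in law for the $\Tmb_\kappa$ local equation is inherited from uniqueness for the $\mathrm{UGW}(\delta_\kappa)$ local equation via the same correspondence. No substantive obstacle is expected here; all steps are bookkeeping, with the only point worth double-checking being the degenerate denominator in \eqref{def:gamma}, which is harmless because $|N_{\o}(\tree_1)|=\kappa>0$ almost surely under $\rho=\delta_\kappa$.
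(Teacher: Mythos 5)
Your proposal is correct and follows exactly the route the paper takes: the authors state that Corollary \ref{cor-regtree} is an immediate consequence of Theorem \ref{th:statements-mainlocaleq} by specializing to $\rho=\delta_\kappa$, citing Remark \ref{re:GWsimplifies-regular} for the reduction of Definition \ref{def-GWlocchar} to Definition \ref{def-locregtree} (including $\widehat\rho=\delta_{\kappa-1}$, $\Nvar=\kappa-1$, and the collapse of the weight $|N_{\o}(\tree_1)|/(1+\Nvar)$ to $1$). You have simply spelled out the bookkeeping that the paper compresses into that remark.
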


As will be discussed in Section \ref{se:localconvergence}, combining Theorem \ref{th:statements-mainlocaleq} with the results of \cite{LacRamWu20a} yields a characterization of the limiting marginals and empirical measures of particle systems set on large finite graphs converging locally to UGW trees.

\begin{remark} 
The unimodularity condition on the  random tree, although convenient and natural in the context of local limits of random graphs, is not entirely necessary for obtaining a form of marginal dynamics.
 Indeed,  in  a related paper \cite{LacRamWu20c}, we obtain analogous results for
 interacting discrete-time Markov chains (equivalently, stochastic cellular automata),
 on standard  Galton-Watson trees. The marginal dynamics on a general Galton-Watson tree, however, involve the first two generations of the tree instead of just the first generation.  
The extra symmetry imposed by unimodularity 
 enables the reduction to a single generation, essentially because of the symmetry result of Proposition \ref{pr:invariance-GW} below. 
\end{remark}

\begin{remark} \label{re:non-iid}
We focus in this paper on i.i.d.\ initial conditions, for the sake of simplicity, but similar results are valid in greater generality. On the regular tree $\Tmb_\kappa$, if the SDE system \eqref{statements:SDE} starts from a distribution $\lambda \in \P((\R^d)^{\Tmb_\kappa})$ that is automorphism-invariant and a second order MRF (see Definition \ref{def-2MRF}), then Corollary \ref{cor-regtree}  remains valid with $(Y_v(0))_{v \in \Tmb_{\kappa,1}}$ distributed according to the $\Tmb_{\kappa,1}$-marginal of $\lambda$. A similar result should hold in the UGW case, but the requisite symmetry and conditional independence properties are much more subtle to formulate,  and  hence, deferred to future  work. 
\end{remark}

\subsection{Comments on the proof and two key auxiliary results}
\label{subs:auxiliary}

The proof of Theorem \ref{th:statements-mainlocaleq}, which is given in Section \ref{se:pfmainthm},  relies on two important properties
of the interacting particle system \eqref{statements:SDE}
which we  state in this section and which may be of independent interest.
The first result,  Proposition \ref{pr:properties-GW}, states 
the  form of the conditional independence property that we require.
 It can be viewed as a (more complicated) analogue of the second-order MRF property for $\mathbb{T}_2$-trees discussed in Section \ref{subsub-linegraph}(i), but the essential message remains the same: by conditioning on the particle trajectories at the root vertex and a child thereof, the particle trajectories in the two disjoint subtrees obtained by removing the edge between these two vertices become independent.

Recall in the following that $\mom_v$ denotes the parent vertex of any $v \in \V \backslash \{\o \}$ as defined in Section \ref{subsub-labeling}.

\begin{proposition} \label{pr:properties-GW}
Suppose Assumption \ref{assumption:A} holds, and suppose $\tree$ is a UGW($\rho$) tree, where $\rho \in \P(\N_0)$ has finite nonzero first moment.
Then, for each $t > 0$, the following hold:
\begin{enumerate}[(i)]
\item $(X^\tree_{ki}[t])_{i \in \N}$ is conditionally independent of $X^\tree_{\V_1}[t]$ given $X^\tree_{\{\o,k\}}[t]$, for any $k \in \N$.
\item For each $t > 0$, the conditional law of $(X_{ki}[t])_{i \in \N}$ given $(X^\tree_k[t],X_{\o}[t])$ does not depend on the choice of $k \in \N$. More precisely, there exists a measurable map $\mmap_t : \C^2 \rightarrow \P(\C_t^\N)$ such that, for every $k \in \N$ and every Borel set $B \subset \C_t^\N$, we have  
\[
\mmap_t\big(X^\tree_k,X^\tree_{\o}\big)(B) = \PP\big(  (X^\tree_{ki}[t])_{i \in \N} \in B \, | \, X^\tree_{\o}[t], X^\tree_k[t] \big) \ \ a.s.
\]
\end{enumerate}
\end{proposition}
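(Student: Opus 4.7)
The plan is to prove both claims by combining a Girsanov change-of-measure argument (to transfer independence from a driftless reference measure) with the branching symmetries of the UGW distribution. Conditional on $\tree$, I would introduce the driftless reference system $d\widehat{X}_v = 1_{\{v \in \tree\}} \sigma(t,\widehat{X}_v)\,dW_v$ with i.i.d.\ initial conditions sampled from $\lambda_0$. Under the corresponding measure $\widehat{\PP}$, the processes $(\widehat{X}_v)_{v \in \V}$ are mutually independent, so claim~(i) holds trivially under $\widehat{\PP}$. By Girsanov's theorem (Lemma~\ref{le:ap:girsanov}), the Radon--Nikodym derivative $L_t = d\PP/d\widehat{\PP}|_{[0,t]}$ factorizes as a product $\prod_{v \in \tree} L_v(t)$ of vertex-indexed terms, each $L_v(t)$ depending only on $X^\tree_v[t]$ and the neighbor trajectories $X^\tree_{N_v(\tree)}[t]$.

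For part~(i), write $T_k$ for the subtree of $\tree$ rooted at $k$ (including $k$) when $k \in N_{\o}(\tree)$. The crucial observation is that, after conditioning on $X^\tree_{\{\o,k\}}[t]$, the derivative factorizes as $L_t = L_t^{\mathrm{in}} \cdot L_t^{\mathrm{out}}$, where $L_t^{\mathrm{in}}$ collects the factors $L_v$ for $v \in T_k$ and $L_t^{\mathrm{out}}$ collects those for $v \in \V \setminus T_k$. Although $L_{\o}$ and $L_k$ each involve trajectories on both sides of the cut, they do so only through the fixed boundary paths $X^\tree_{\{\o,k\}}[t]$; every other $L_v$ depends on trajectories lying entirely inside $T_k$ or entirely outside $T_k \cup \{\o\}$. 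A routine conditional-expectation computation then transfers the conditional independence from $\widehat{\PP}$ to $\PP$, conditional on $\tree$. To upgrade to the annealed statement I would exploit the branching property of the UGW tree: given $|N_{\o}(\tree)|$, the subtree $T_k$ and its complement $\tree \setminus T_k$ are independent, so averaging over $\tree$ preserves the product structure. On the complementary event $\{k \notin \tree\}$ the claim is trivial because each $X^\tree_{ki}$ is just the i.i.d.\ constant $X^\tree_{ki}(0)$, independent of the remaining variables. A technical point is the convergence of the infinite product $L_t$; this can be handled by truncating to $\tree \cap \V_n$ and passing $n \to \infty$, using the linear growth of $b$ together with the boundedness and bounded-invertibility of $\sigma$ in Assumption~(\ref{assumption:A}.2a).

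Part~(ii) follows from the annealed exchangeability of the children of the root. Given $|N_{\o}(\tree)|=m$, the subtrees rooted at $1,\ldots,m$ are i.i.d.\ Galton--Watson trees with offspring distribution $\widehat{\rho}$, and the driving data $(W_v)$ and $(X^\tree_v(0))$ are i.i.d.\ across all of $\V$. Since the SDE restricted to $T_k$ depends only on $T_k$, the driving data inside $T_k$, and the exogenous boundary path $X^\tree_{\o}[t]$, the joint conditional law of $(T_k, (X^\tree_v[t])_{v \in T_k})$ given $X^\tree_{\o}[t]$ on the event $\{k \in N_{\o}(\tree)\}$ is the same for every such $k$. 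Specializing to the children of $k$ shows that the conditional law of $(X^\tree_{ki}[t])_{i \in \N}$ given $(X^\tree_{\o}[t], X^\tree_k[t])$ does not depend on $k$ on that event; on $\{k \notin \tree\}$ the sequence consists of i.i.d.\ constant paths with law $\lambda_0$, independent of $X^\tree_{\o}$. Since the event $\{k \in \tree\}$ is a.s.\ distinguishable from $\{k \notin \tree\}$ via the path $X^\tree_k[t]$ (constant vs.\ non-constant, using non-degeneracy of $\sigma$), both cases assemble into a single measurable map $\mmap_t : \C^2 \rightarrow \P(\C_t^\N)$. The main obstacle I anticipate is the rigorous annealed version of~(i): simultaneously tracking the conditional independence of the particle trajectories and of the random tree structure requires a careful accounting that goes beyond the quenched Girsanov calculation.
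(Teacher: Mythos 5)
Your overall plan for part~(i)---factorize the Girsanov density into star-shaped factors $L_v$, split across the edge $\{\o,k\}$, and transfer conditional independence from the driftless reference measure---is the right high-level picture, and it matches the paper's use of a $2$-clique factorization and Theorem~\ref{th:hammersleyclifford2}. However, the step you describe as a ``routine conditional-expectation computation'' to transfer conditional independence from $\widehat{\PP}$ to $\PP$ is precisely where the difficulty lies, and you acknowledge as much at the end without resolving it. The subtree rooted at $k$ is a potentially \emph{infinite} set $A$, so what you need is the \emph{global} second-order MRF property for an infinite system, and the paper explicitly flags that passing from a local MRF to a global MRF on an infinite graph is delicate and can fail in general (cf.\ the references to Israel, Kessler, and von Weizs\"acker in Section~\ref{subsub-linegraph}). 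Your proposed remedy---truncating to $\tree\cap\V_n$ and taking $n\to\infty$---is aimed at making the infinite product $L_t$ converge, but that is a different issue; the harder point is that conditional independence is not automatically preserved under weak limits. The paper handles this via Proposition~\ref{pr:GW-conditionalindependence}, which establishes the global MRF property for the finite truncations $\V_{n,n}$ (where global and local coincide because the graph is finite), followed by Lemma~\ref{le:infinitegraphlimit-GW}, which invokes a result of Crimaldi--Pratelli on weak convergence of conditional expectations to show that the conditional laws given $X^n_{\{\o,k\}}[t]$ converge to those given $X_{\{\o,k\}}[t]$. That weak-convergence-of-conditional-laws step is the missing ingredient in your argument, and it requires the auxiliary ``partially driftless'' systems $Q^n, Q$ and an explicit density identity that your outline does not supply.

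For part~(ii), your argument is close to the paper's: both use the exchangeability of the subtrees rooted at the children of $\o$, via the transposition symmetry of the GW distribution and uniqueness in law of the SDE. But two points need tightening. First, the boundary path $X^\tree_{\o}[t]$ is not exogenous to the subtree $T_k$---it is itself driven by all the children trajectories---so the claim that ``the SDE restricted to $T_k$ depends only on $T_k$, the driving data inside $T_k$, and the exogenous boundary path'' is not a valid literal reading of the dynamics, though the conclusion can be recovered by the symmetry-plus-uniqueness route. Second, and more importantly, the symmetry argument naturally yields equalities of joint laws conditioned on events like $\{k\in\tree\}$, and you need part~(i) to separate the conditioning on $\{k\in\tree\}$ (which is $X_k[t]$-measurable) from the law of $(X^\tree_{ki}[t])_{i\in\N}$ given $(X_{\o},X_k)[t]$. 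The paper's proof of~(ii) explicitly invokes~(i) to do this; your outline treats~(ii) as if it were independent of~(i), which leaves a gap in the Bayes computation.
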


The proof of Proposition \ref{pr:properties-GW} is given in Section \ref{se:proofs-GW-conditionalindependence} and relies on general definitions and properties of MRFs on finite graphs outlined in Section \ref{se:2MRFs}. We first study finite truncations of the UGW tree in Proposition \ref{pr:GW-conditionalindependence}, prove a version of this property on the truncated graph, and then carefully take limits. 

While Proposition \ref{pr:properties-GW}(ii) 
captures some of the symmetry of the UGW($\rho$) tree $\tree$, the next result, 
Proposition \ref{pr:invariance-GW}, provides one more crucial symmetry property
and is where unimodularity comes into play; this might be contrasted with the simpler symmetry considerations
used in the case of $\mathbb{T}_2$ as outlined in \eqref{loceq-symmetry} of Section \ref{subsub-linegraph}(ii). 
 Proposition \ref{pr:invariance-GW} below is where the measure change described in Remark \ref{intuition-GW} appears, which explains the form of $\gamma_t$ in Definition \ref{def-GWlocchar}.

Recall the definition of the space $\SQ(\X)$ from Section \ref{se:symmetricsequencespace}.  
For Proposition \ref{pr:invariance-GW} and  its proof,  
it is helpful to introduce some notation to emphasize when we are working with 
unordered vectors (elements of $\SQ(\X)$) versus ordered vectors. For a finite set $A$ and
a (ordered) vector $x_A = (x_v)_{v \in A} \in \X^A$, we write $\lan x_A \ran$ to denote the corresponding
element  (equivalence class) of $\SQ(\X)$. 
The canonical labeling scheme $\V$ introduced in Section \ref{subsub-labeling}
and  adopted in this section
carries with it a natural order, and we will find it helpful to use this notation $\langle \, \cdot \, \rangle$ when it is important to stress that we are dealing with an unordered vector.

\begin{proposition} \label{pr:invariance-GW}
Suppose Assumption \ref{assumption:A} holds, and suppose $\tree$ is a UGW($\rho$) tree, where $\rho \in \P(\N_0)$ has finite nonzero first moment. 
Let $t > 0$, and let $h : \C_t^2 \times \SQ(\C_t) \to \R$ be bounded and measurable. Suppose we are given
  a measurable function $\Xi_t : \C^2 \rightarrow \R$ that satisfies 
\begin{align*}
\Xi_t(X_{\o},X_1) = 1_{\{N_{\o}(\tree) \neq \emptyset\} } \frac{\E\left[\left. \frac{|N_{\o}(\tree)|}{|N_1(\tree)|}h(X_{\o}[t],X_1[t], \lan X_{N_{\o}(\tree_1)}[t] \ran ) \, \right| \, X_{\o}[t], \, X_1[t] \right]}{\E\left[\left. \frac{|N_{\o}(\tree)|}{|N_1(\tree)|} \, \right| \, X_{\o}[t], \, X_1[t] \right]}, \ \ a.s.
\end{align*}
Then, for each $k \in \N$,
\begin{align}
\Xi_t(X_k,X_{\o}) = \E\left[\left. h(X_k[t],X_{\o}[t], \,\lan X_{N_k(\tree)}[t] \ran ) \, \right| \, X_{\o}[t],X_k[t]\right], \ \ a.s., \text{ on } \ \{k \in \tree\}. \label{def:invariance-GW1}
\end{align}
\end{proposition}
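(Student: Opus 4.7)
The plan is to prove the statement in three stages: (i) use Proposition~\ref{pr:properties-GW}(ii) to reduce the target identity to one that is independent of $k$; (ii) prove the case $k=1$ using the unimodularity of the UGW tree; (iii) extend to general $k$ via the exchangeability of the children of the root.

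For the reduction, note that on $\{k \in \tree\}$ one has $N_k(\tree) = \{\o\} \cup \{ki : 1 \le i \le c_k(\tree)\}$, so the random variable $H_k := h(X_k[t], X_\o[t], \langle X_{N_k(\tree)}[t]\rangle)$ is a measurable function of $X_k$, $X_\o$, and the offspring trajectories $(X_{ki})_{i \in \N}$. Integrating $H_k$ against the conditional law $\mmap_t(X_k, X_\o)$ of $(X_{ki}[t])_{i \in \N}$ given $(X_k[t], X_\o[t])$ furnished by Proposition~\ref{pr:properties-GW}(ii) yields $\E[H_k \mid X_\o[t], X_k[t]] = \tilde f(X_k, X_\o)$ for a measurable $\tilde f:\C^2\to\R$ that does \emph{not} depend on $k$; thus it suffices to show $\Xi_t = \tilde f$ on the support of $(X_k, X_\o)$ restricted to $\{k \in \tree\}$.

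The heart of the argument is the case $k=1$. I would first establish the symmetry identity
\begin{equation*}
\E\bigl[\phi(X_\o, X_1)\, h(X_1, X_\o, \langle X_{N_1(\tree)}\rangle)\, 1_{\{1 \in \tree\}}\bigr] = \E\Bigl[\tfrac{|N_\o(\tree)|}{|N_1(\tree)|}\, \phi(X_1, X_\o)\, h(X_\o, X_1, \langle X_{N_\o(\tree)}\rangle)\, 1_{\{1 \in \tree\}}\Bigr] \quad (\star)
\end{equation*}
for every bounded measurable $\phi : \C^2 \to \R$, by applying the mass-transport principle (from unimodularity of $\mathrm{UGW}(\rho)$; see Section~\ref{subs-unimod} and Proposition~\ref{pr:unimodular}) to the marked doubly-rooted tree with the choice $F(\tree, X, u, v) := \phi(X_v, X_u)\, h(X_u, X_v, \langle X_{N_u(\tree)}\rangle)\, 1_{\{v \in N_u(\tree)\}}/|N_u(\tree)|$, and then using the exchangeability of the siblings $1,\dotsc,c_\o(\tree)$ to convert $\E\bigl[\sum_{v \in N_\o(\tree)} g(v)\bigr]$ into $\E\bigl[|N_\o(\tree)|\, g(1)\, 1_{\{1 \in \tree\}}\bigr]$ on both sides; the emerging Radon-Nikodym factor $|N_\o(\tree)|/|N_1(\tree)|$ is precisely the re-rooting weight of Remark~\ref{intuition-GW}. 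Applying $(\star)$ once with the given $h$ and once with $h$ replaced by the path-independent functional $(a,b,c) \mapsto \Xi_t(a,b)$, and invoking the defining relation $\Xi_t(X_\o, X_1)\, \E[\tfrac{|N_\o|}{|N_1|} \mid X_\o, X_1] = \E[\tfrac{|N_\o|}{|N_1|}\, h(X_\o, X_1, \langle X_{N_\o(\tree)}\rangle)\, 1_{\{1 \in \tree\}} \mid X_\o, X_1]$, yields after bookkeeping $\E[\phi(X_\o, X_1)(H_1 - \Xi_t(X_1, X_\o))\, 1_{\{1 \in \tree\}}] = 0$ for every $\phi$, hence $\Xi_t(X_1, X_\o) = \tilde f(X_1, X_\o)$ a.s.\ on $\{1 \in \tree\}$.

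Finally, for the extension to general $k$, the Ulam-Harris-Neveu construction of $\mathrm{UGW}(\rho)$ makes the children of the root exchangeable given their number, so $(X_k, X_\o) \mid \{k \in \tree\} \stackrel{d}{=} (X_1, X_\o) \mid \{c_\o(\tree) \ge k\}$, and in particular the support of $(X_k, X_\o)$ restricted to $\{k \in \tree\}$ is contained in the support of $(X_1, X_\o)$ on $\{1 \in \tree\}$, where the desired equality has already been shown. The principal obstacle lies in deriving $(\star)$: this requires applying mass transport to a path-valued mark (the full diffusion trajectory, not just the instantaneous state), navigating the measurability subtleties of the ratio $|N_\o|/|N_1|$ on the event $\{1 \notin \tree\}$ where it formally equals $0/0$, and extracting a \emph{single} distinguished neighbor $v=1$ from the sum over $N_\o(\tree)$ via a combination of sibling exchangeability and a Fubini-type rearrangement that redistributes the weights from the full neighbor sum into a per-vertex identity.
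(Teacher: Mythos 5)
Your proposal is correct and follows essentially the same route as the paper's proof: the same choice of mass-transport function $F$ produces your identity $(\star)$, which is precisely the paper's display \eqref{pf:unimod-key1}; Lemma~\ref{le:GW-exchangeability} is invoked in the same places to collapse the two sums over $N_{\o}(\tree)$ into single terms at $k=1$; and Proposition~\ref{pr:properties-GW}(ii) is used just as in the paper to pass from $k=1$ to general $k$. The only real point of departure is the algebra that extracts the conclusion from $(\star)$. The paper introduces the auxiliary functions $\varphi_h$ and $\varphi_1$, applies its identity \eqref{pf:unimod-key1.5} a second time with $h$ replaced by the constant $1$ and $g$ replaced by $g(x_1,x_\o)\varphi_h(x_\o,x_1)$, and then needs the reciprocal relation $\varphi_1(X_1,X_\o)\varphi_1(X_\o,X_1)=1$ on $\{1\in\tree\}$ to turn the resulting product into the ratio that defines $\Xi_t$. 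You instead substitute the path-independent bounded functional $(a,b,c)\mapsto\Xi_t(a,b)$ directly for $h$ in $(\star)$ (admissible since $|\Xi_t|\le\|h\|_\infty$), condition the resulting right-hand side on $(X_\o[t],X_1[t])$, and observe via the defining relation of $\Xi_t$ that it coincides with the right-hand side of $(\star)$ for the original $h$; equating the two left-hand sides then yields the conclusion at $k=1$ without ever introducing $\varphi_1$ or the reciprocal identity. This bookkeeping is marginally more direct but mechanistically equivalent. Two details deserve care in a full write-up. First, the exchangeability step that produces the factor $|N_\o(\tree)|/|N_k(\tree)|$ involves an unbounded weight, so Lemma~\ref{le:GW-exchangeability} (stated for bounded $h$) should be applied to truncated integrands and then completed by monotone convergence using $\E[|N_\o(\tree)|]<\infty$. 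Second, the transfer from $k=1$ to general $k$ requires absolute continuity of $\L\bigl((X_k,X_\o);\{k\in\tree\}\bigr)$ with respect to $\L\bigl((X_1,X_\o);\{1\in\tree\}\bigr)$, not merely a containment of topological supports; this does hold, since sibling exchangeability realizes the former as a sub-mixture (over the events $|N_\o(\tree)|=n$, $n\ge k$) of the same conditional components whose full mixture is the latter.
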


The proof of Proposition \ref{pr:invariance-GW} is given in Section \ref{se:proofs-GW-invariance}.
It is worth noting that the statement of Proposition \ref{pr:invariance-GW} would be far less succinct  
if we did not define the SDE as in \eqref{statements:SDE} with the canonical labeling scheme.

\subsection{Limits of finite-graph systems} \label{se:localconvergence}

This section presents the natural application of our local equation to characterizing the limiting behavior of finite particle systems, drawing on our recent results in \cite{LacRamWu20a}. For a finite and possibly random graph $G$ with non-random vertex set $V$, we define $X^G=(X^G_v)_{v\in V}$ as the unique in law solution of the SDE
\begin{align}
dX^G_v(t) &= b(t,X^G_v,X^G_{N_v(G)})dt + \sigma(t,X^G_v)dW_v(t), \ \ v \in V, \text{ with } (X^G_v(0))_{v \in V}  \ \text{ i.i.d.} \sim \lambda_0. \label{SDE-gengraph}
\end{align}
Here $(W_v)_{v \in V}$ are independent Brownian motions, the initial law $\lambda_0 \in \P(\R^d)$ is given, and $N_v(G)$ denotes the set of vertices in $G$ which are adjacent to $v$. Moreover, we assume as always that the graph $G$, if random, is independent of $(W_v,X^G_v(0))_{v \in V}$. Note that under Assumption \ref{assumption:A}, as discussed thereafter, existence and uniqueness in law for the SDE \eqref{SDE-gengraph} hold by Girsanov's theorem. We define also the (global) empirical measure
\begin{align}
\mu^G &:= \frac{1}{|V|}\sum_{v \in V}\delta_{X^G_v}, \label{SDE-gengraph-empmeas}
\end{align}
which we view as a random element of $\P(\C)$.

Using Theorems 3.3 and 3.7 of \cite{LacRamWu20a}, we could now state a rather general theorem that applies
to any (random) graph sequence that \emph{converges in the local weak sense} to a UGW($\rho$) tree. Indeed, \cite[Theorem 3.7]{LacRamWu20a} shows that if $G_n$ \emph{converges in probability in the local weak sense} to a limiting graph $G$, then both $\mu^{G_n}$ and $X^{G_n}_{\o_n}$, where $\o_n$ is a uniformly random vertex in $G_n$, converge, with the limits characterized in terms of the root particle in the SDE system \eqref{SDE-gengraph} set on the limit graph $G$. When $G$ is a UGW($\rho$) tree, we then characterize this root particle via our local equation.
To avoid giving a full definition of local weak convergence of (marked) graphs (which can be found in \cite{LacRamWu20a}
  in Section 2.2.4, including  Definitions 2.8 and 2.10 therein, and Appendix A), 
we prefer not to state the most general result possible here,
and instead we focus on three noteworthy random graph models:
\begin{itemize}
\item The \Erdos graph $G \sim \G(n,p)$ is defined for $n \in \N$ and $p \in (0,1)$ by considering a graph with $n$
  vertices and  independently connecting each pair of distinct vertices with probability $p$ each.
\item The random $\kappa$-regular graph $G \sim \mathrm{Reg}(n,\kappa)$ is defined for $n \in \N$ by choosing a $\kappa$-regular graph (meaning each vertex has exactly $\kappa$ neighbors) uniformly at random from among all $\kappa$-regular graphs on $n$ vertices. It is well known that a $\kappa$-regular graph on $n$ vertices exists as long as $n\kappa$ is even and $n \ge \kappa+1$.
\item The configuration model $G \sim \mathrm{CM}(n,d^n)$, for any graphical sequence $d^n=(d^n_1,\ldots,d^n_n) \in \N^n$, is the uniformly random graph from among all graphs on $n$ vertices with degree sequence $d^n$; see \cite[Chapter 7]{van2009random} for more information. Of course if $d^n=(\kappa,\ldots,\kappa)$ then this reduces to the $\kappa$-regular tree.
\end{itemize}

Recall in the following theorems that $\L(Z)$ denotes the law of a random variable $Z$, and $\Rightarrow$ denotes convergence in law. The following results are all immediate corollaries of \cite[Theorem 3.7]{LacRamWu20a} (see also Examples 2.2, 2.3, and 2.4 therein) along with our Theorem \ref{th:statements-mainlocaleq}.

\begin{corollary}[\Erdos \!\!] \label{co:localconvergence-ER}
Suppose Assumption \ref{assumption:B} holds, and assume the initial distribution $\lambda_0$ has bounded support. For each $n \in \N$ suppose $G_n \sim \G(n,p_n)$ for some $p_n \in (0,1)$, and assume $\lim_{n\to\infty}np_n=\parm$ for some $\parm \in (0,\infty)$. Let $X^{G_n}$ and $\mu^{G_n}$ be as in \eqref{SDE-gengraph} and \eqref{SDE-gengraph-empmeas}, and let $\o_n$ denote a uniformly random vertex in $G_n$ for each $n$. Let $(Y_v)_{v \in \V_1}$ denote the unique in law solution of the $\mathrm{UGW}(\mathrm{Poisson}(\parm))$ local equation given by Theorem \ref{th:statements-mainlocaleq}. Then $X^{G_n}_{\o_n} \Rightarrow Y_{\o}$ in $\C$, and $\mu^{G_n} \Rightarrow \L(Y_{\o})$ in $\P(\C)$.
\end{corollary}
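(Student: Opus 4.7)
The plan is to combine the local weak convergence (in probability) of the Erd\H{o}s-R\'enyi graph sequence to a UGW(Poisson($\parm$)) tree with Theorem 3.7 of \cite{LacRamWu20a} to reduce the statement to one about the particle system on the limit tree, and then invoke Theorem \ref{th:statements-mainlocaleq} to identify the root marginal with the solution of the local equation.

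\textbf{Step 1: Verify the hypotheses of the cited convergence theorem.} Assumption \ref{assumption:B} supplies the Lipschitz regularity of $b$ and $\sigma$, the hypothesis that $\lambda_0$ has bounded support provides the integrability needed in the companion paper, and the initial condition is i.i.d.\ by construction. The graph-theoretic input is the classical fact that $G_n\sim\mathcal G(n,p_n)$ with $np_n\to\parm\in(0,\infty)$ converges \emph{locally in probability} to a $\mathrm{UGW}(\mathrm{Poisson}(\parm))$ tree; this is Example 2.2 of \cite{LacRamWu20a} and rests on the Poisson limit for the degree of a uniformly chosen vertex together with a standard branching-process coupling for its finite-depth neighborhoods. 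The ``in probability'' (rather than merely ``in distribution'') strengthening is exactly what is needed in order for the empirical measure $\mu^{G_n}$ to have a deterministic limit, as emphasized in Section \ref{subs-backmot}.

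\textbf{Step 2: Apply Theorem 3.7 of \cite{LacRamWu20a}.} Let $\tree$ denote a $\mathrm{UGW}(\mathrm{Poisson}(\parm))$ tree, independent of the driving noise, and let $X^{\tree}=(X^{\tree}_v)_{v\in\V}$ be the solution of the SDE system \eqref{statements:SDE} on $\tree$. The cited theorem then yields, simultaneously,
\[
X^{G_n}_{\o_n}\Longrightarrow X^{\tree}_{\o}\quad\text{in }\C,\qquad \mu^{G_n}\Longrightarrow\L(X^{\tree}_{\o})\quad\text{in }\P(\C).
\]

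\textbf{Step 3: Identify the limit via Theorem \ref{th:statements-mainlocaleq}.} Since $\rho=\mathrm{Poisson}(\parm)$ has finite nonzero first and second moments, Theorem \ref{th:statements-mainlocaleq} applies: the law of $(X^{\tree}_v)_{v\in\V_1}$ is the unique weak solution of the $\mathrm{UGW}(\mathrm{Poisson}(\parm))$ local equation with initial law $\lambda_0$. Consequently $X^{\tree}_{\o}\stackrel{d}{=}Y_{\o}$, and substituting this identification into the two displays of Step 2 yields the stated convergences.

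\textbf{Main obstacle.} Since the result is flagged as an immediate consequence of two external results, there is essentially no new argument to construct; the only genuine care needed is in checking that the hypotheses of Theorem 3.7 of \cite{LacRamWu20a} are met, in particular matching the ``local weak convergence in probability'' condition with the classical limit of $\mathcal G(n,p_n)$ (as opposed to the weaker ``in distribution'' variant, which would allow the empirical-measure limit to be random, as noted in Section \ref{subs-backmot}).
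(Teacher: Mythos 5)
Your proposal matches the paper's own argument: the corollary is stated there as an immediate consequence of combining Theorem 3.7 (and Example 2.2) of \cite{LacRamWu20a} with Theorem \ref{th:statements-mainlocaleq}, which is precisely your Steps 1--3. The only caveats you flag (local convergence in probability rather than distribution, finite first and second moments of the Poisson offspring law) are exactly the ones the paper relies upon.
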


\begin{corollary}[Random regular graph] \label{co:localconvergence-reg}
Suppose Assumption \ref{assumption:B} holds,  and assume the initial distribution $\lambda_0$ has bounded support. Let $\kappa \ge 2$ be an integer. For each even number $n \ge \kappa+1$ suppose $G_n \sim \mathrm{Reg}(n,\kappa)$. Let $X^{G_n}$ and $\mu^{G_n}$ be as in \eqref{SDE-gengraph} and \eqref{SDE-gengraph-empmeas}, and let $\o_n$ denote a uniformly random vertex in $G_n$ for each $n$. Let $(Y_v)_{v \in \V_1}$ denote the unique in law solution of the $\mathbb{T}_\kappa$ local equation given by Corollary \ref{cor-regtree}. Then $X^{G_n}_{\o_n} \Rightarrow Y_{\o}$ in $\C$, and $\mu^{G_n} \Rightarrow \L(Y_{\o})$ in $\P(\C)$.
\end{corollary}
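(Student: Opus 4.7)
The plan is to combine Theorem 3.7 of \cite{LacRamWu20a} with Corollary \ref{cor-regtree}, as the setup of Section \ref{se:localconvergence} already anticipates. First I would check the hypotheses of \cite[Theorem 3.7]{LacRamWu20a} for the marked graph sequence $(G_n, (X^{G_n}_v(0))_{v \in G_n})$ with $G_n \sim \mathrm{Reg}(n,\kappa)$ and i.i.d.\ $\lambda_0$-distributed marks. The key ingredient is that $G_n$ converges in the local weak sense \emph{in probability} to the infinite $\kappa$-regular tree $\mathbb{T}_\kappa$, which can be viewed as the UGW($\delta_\kappa$) tree. This is standard and is recorded as Example 2.3 (or its analogue in the configuration-model discussion) of \cite{LacRamWu20a}: by vertex-transitivity of $G_n$ and the well-known fact that random $\kappa$-regular graphs contain only $O(1)$ short cycles, for each fixed radius $r$ the $r$-neighborhood of a uniform vertex $\o_n$ is isomorphic to the $r$-ball in $\mathbb{T}_\kappa$ with probability tending to $1$. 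The i.i.d.\ marks inherit the in-probability convergence automatically.

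Second, with local weak convergence in probability established and the Lipschitz hypotheses of Assumption \ref{assumption:B} in force (together with the bounded support of $\lambda_0$, which supplies the uniform moment bounds required by \cite[Theorem 3.7]{LacRamWu20a}), that theorem yields two conclusions simultaneously: $X^{G_n}_{\o_n} \Rightarrow X^{\mathbb{T}_\kappa}_{\o}$ in $\C$, and $\mu^{G_n} \Rightarrow \L(X^{\mathbb{T}_\kappa}_{\o})$ in $\P(\C)$, where $X^{\mathbb{T}_\kappa}$ denotes the unique in law solution of the SDE system \eqref{statements:SDE} on $\tree = \mathbb{T}_\kappa$. Here the fact that the empirical-measure limit is the \emph{deterministic} probability measure $\L(X^{\mathbb{T}_\kappa}_{\o})$ rather than a random element of $\P(\C)$ is precisely what the in-probability (as opposed to merely in-distribution) local convergence buys us.

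Third, Corollary \ref{cor-regtree} identifies the joint law of the root neighborhood $(X^{\mathbb{T}_\kappa}_v)_{v \in \mathbb{T}_{\kappa,1}}$ with the unique-in-law weak solution $(Y_v)_{v \in \mathbb{T}_{\kappa,1}}$ of the $\mathbb{T}_\kappa$ local equation with initial law $\lambda_0$. In particular $X^{\mathbb{T}_\kappa}_{\o} \stackrel{d}{=} Y_{\o}$, and substituting this identification into both convergence statements from the previous step gives the claim.

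There is no genuine analytic obstacle here once the three ingredients are in place; the entire content of the corollary is the passage from the abstract limit $X^{\mathbb{T}_\kappa}_{\o}$ to the concretely characterized object $Y_{\o}$. The only step deserving a moment's care is confirming that random regular graphs converge \emph{in probability}, not merely in distribution, in the local weak sense — this is exactly what prevents the empirical-measure limit from being random, as warned about in the discussion surrounding \cite{OliReiSto19}. This is well known and is already embedded in the framework of \cite{LacRamWu20a}.
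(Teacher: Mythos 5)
Your proof matches the paper's own argument exactly: the paper dispatches all three corollaries of Section \ref{se:localconvergence} by citing \cite[Theorem 3.7]{LacRamWu20a} (for local weak convergence in probability to the limiting tree and the resulting convergence of both the root marginal and the empirical measure) together with Theorem \ref{th:statements-mainlocaleq} (or Corollary \ref{cor-regtree} in the regular case) to identify the limit. The only quibble is cosmetic: it is exchangeability of the random regular graph law, not vertex-transitivity of the realized graph, that underlies the uniform-root reduction, but this does not affect the substance of the argument.
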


\begin{corollary}[Configuration model] \label{co:localconvergence-CM}
Suppose Assumption \ref{assumption:B} holds, and assume the initial distribution $\lambda_0$ has bounded support. For each $n \in \N$ suppose $d^n=(d^n_1,\ldots,d^n_n)$ is a graphical sequence, and let $G_n \sim \mathrm{CM}(n,d^n)$. Assume $\frac{1}{n}\sum_{k=1}^n\delta_{d^n_k}$ converges weakly to some $\rho \in \P(\N_0)$ with finite nonzero first moment and finite second moment, and assume also that the first moments converge: $\frac{1}{n}\sum_{k=1}^n d^n_k \to \sum_{k=0}^\infty k\rho(k)$. 
Let $X^{G_n}$ and $\mu^{G_n}$ be as in \eqref{SDE-gengraph} and \eqref{SDE-gengraph-empmeas}, and let $\o_n$ denote a uniformly random vertex in $G_n$ for each $n$. Let $(Y_v)_{v \in \V_1}$ denote the unique in law solution of the $\mathrm{UGW}(\rho)$ local equation given by Theorem \ref{th:statements-mainlocaleq}. Then $X^{G_n}_{\o_n} \Rightarrow Y_{\o}$ in $\C$, and $\mu^{G_n} \Rightarrow \L(Y_{\o})$ in $\P(\C)$.
\end{corollary}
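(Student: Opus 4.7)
The plan is to derive this corollary as a direct consequence of Theorem \ref{th:statements-mainlocaleq} together with the general finite-graph-to-limit convergence result, \cite[Theorem 3.7]{LacRamWu20a}, so almost all of the work is already packaged. The only task specific to the configuration model is to identify its local weak limit (in probability) as a UGW($\rho$) tree; everything else then transfers automatically.

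First, I would verify that the rooted graph sequence $(G_n,\o_n)_{n \in \N}$ converges in the local weak sense \emph{in probability} to a UGW($\rho$) tree, under the assumed convergence $\frac{1}{n}\sum_k \delta_{d^n_k} \Rightarrow \rho$ together with the convergence of first moments $\frac{1}{n}\sum_k d^n_k \to \langle \rho, \mathrm{id}\rangle$ and the finite second moment of $\rho$. This is classical (see \cite[Chapter 7]{van2009random} and Example 2.4 of \cite{LacRamWu20a}): the convergence of first moments, together with weak convergence of the degree distribution, ensures that the size-biased forward degree distribution $\widehat{\rho}$ of \eqref{def2-hatrho} is the correct offspring distribution for every subsequent generation; and finite second moment of $\rho$ yields local convergence in probability (not merely in distribution) by a second-moment computation showing that two independently, uniformly chosen roots have asymptotically independent and UGW($\rho$)-distributed neighborhoods.

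Second, I would feed this local weak convergence in probability into \cite[Theorem 3.7]{LacRamWu20a}, whose hypotheses are met by Assumption \ref{assumption:B} (which implies Assumption \ref{assumption:A} via Theorem \ref{th:uniqueness-hominfSDE}), the i.i.d.\ initial condition with bounded support, and the local convergence of graphs just established. That theorem then gives simultaneously that $X^{G_n}_{\o_n} \Rightarrow X^{\treex}_{\o}$ in $\C$ and $\mu^{G_n} \Rightarrow \L(X^{\treex}_{\o})$ in $\P(\C)$, where $X^{\treex}$ denotes the unique in law solution of \eqref{statements:SDE} on a UGW($\rho$) tree $\treex$; the fact that the empirical-measure limit is deterministic, rather than random, is what requires local convergence \emph{in probability} of $G_n$ (not just in distribution), as emphasized in Section \ref{subs-backmot}.

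Finally, I would invoke Theorem \ref{th:statements-mainlocaleq}: the law of $(X^{\treex}_v)_{v \in \V_1}$ is a weak solution of the UGW($\rho$) local equation, and this local equation is unique in law. Hence $\L(X^{\treex}_{\o}) = \L(Y_{\o})$ where $(Y_v)_{v \in \V_1}$ is the unique weak solution described in the statement, completing both convergence claims. The only non-routine step is the verification of local weak convergence of the configuration model in probability at the strength demanded by \cite[Theorem 3.7]{LacRamWu20a}; this is the place where the finite second moment of $\rho$ is genuinely used, but it is essentially a standard computation and not a new obstacle in this paper.
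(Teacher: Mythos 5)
Your proposal is correct and matches the paper's own (one-line) proof, which simply combines \cite[Theorem 3.7]{LacRamWu20a} (together with the local weak convergence in probability of the configuration model from Example 2.4 therein) with Theorem \ref{th:statements-mainlocaleq}. One small misattribution: the finite second moment of $\rho$ is not actually needed for local weak convergence in probability of the configuration model---convergence of the empirical degree distribution together with convergence of first moments already suffices for that---rather, it is imposed in the corollary because it is a standing hypothesis of Theorem \ref{th:statements-mainlocaleq}, which you invoke in your final step.
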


Note that in each of these results we assert that the sequence of  random empirical measures $\{\mu^{G_n}\}_{n \in \N}$
converges in law to a \emph{non-random limit} $\L(Y_{\o})$. By standard propagation of chaos arguments (see \cite{sznitman1991topics} or \cite[Lemma 2.12]{LacRamWu20a}), it follows that if $k \in \N$ is fixed and if $v^1_n,\ldots,v^k_n$ are $k$ independent uniformly random vertices in $G_n$, then $\L(X^{G_n}_{v^1_n},\ldots,X^{G_n}_{v^k_n})$ converges weakly to the $k$-fold product measure $\L(Y_{\o}) \times \cdots \times \L(Y_{\o})$ as $n\to\infty$. The same is then true if $(v^1_n,\ldots,v^k_n)$ is chosen uniformly at random from among the ${n \choose k}$ $k$-tuples of \emph{distinct} vertices.
However, it is important to emphasize  that unlike mean-field limits, in our setting this convergence does not
  hold for any  arbitrary chosen finite  set  of vertices. 
  In particular,  if $v_n^1  = \o_n$ and  $v_n^1$  is a neighbor of $\o_n$ chosen uniformly at random  (assuming one exists,
  else set $v_n^2$ to be a uniformly at random vertex from $V \setminus \o_n$), 
  then the laws of $X^{G_n}_{v_n^1}$ and $X^{G_n}_{v_n^2}$ are not asymptotically independent but remain correlated in the  limit, with the
 limiting correlations captured by the  local  equation.

 \section{Proof of Theorem  \ref{th:statements-mainlocaleq}}
\label{se:pfmainthm}

This section is devoted to the proof of Theorem \ref{th:statements-mainlocaleq} using 
the results stated  in Propositions \ref{pr:properties-GW} and
\ref{pr:invariance-GW}.  Throughout, let $(X^\tree_v)_{v \in \V}$ be a solution to the SDE system
  \eqref{statements:SDE}.
 In Section \ref{se:pf:existence-localGW}
we first verify that the marginal $(X^\tree_v)_{v \in \V_1}$ is a weak solution of the local equation,  in particular establishing existence of a solution to the local equation.
  Then, in Section  \ref{se:pf:existence-localGW}, we show that the local equation is
  well-posed in the sense that it  has a
  unique weak solution.  
In the proofs we will use the notation $\EE$ to denote the Doleans exponential, or
\begin{align}
\EE_t(M) := \exp(M_t - \tfrac12 [M]_t), \quad t  \geq 0, \label{def:doleans-exponential}
\end{align}
for a continuous local martingale $M$, where $[M]$ denotes the (optional) quadratic variation process of $M$.
We also recall that $H$ denotes the relative entropy functional defined in \eqref{def-relentropy}.

\subsection{Verification Result}  \label{se:pf:existence-localGW}

We prove in this section the first claim of Theorem \ref{th:statements-mainlocaleq}, which asserts that the law of the root neighborhood particles $(X_v)_{v \in \V_1}$ 
provides a weak solution of the local equation of Definition \ref{def-GWlocchar}.

We first state a fairly standard integrability estimate, which explains the need for the average $1/|A|$ in the linear growth Assumption (\ref{assumption:A}.1). We defer the proof to Appendix \ref{se:ap:entropy-proof}, as it is similar to \cite[Lemma 5.1]{LacRamWu19b}.
 For any random tree $\tree$, let $(\widehat{X}_v)_{v \in \V}$ denote the unique in law (by Assumption (\ref{assumption:A}.2b)) solution of the driftless SDE  system
\begin{align}
d\widehat{X}_v^\tree(t) = 1_{\{v \in \tree\}}\sigma(t,\widehat{X}^\tree_v)dB_v(t), \quad v \in \V, \label{pf:existence-driftless}
\end{align}
where $(\widehat{X}^\tree_v(0))_{v \in \V}$ are i.i.d.\ with law $\lambda_0$, and as usual the tree, initial conditions, and Brownian motions are independent. Recall in the following that $\L(Z)$ denotes the law of a random variable $Z$, and $x_A = (x_v)_{v \in A}$ denotes a sub-configuration of $x=(x_v)_{v \in \V}$ for $A \subset \V$.

\begin{lemma} \label{le:lingrowth}
Suppose Assumption \ref{assumption:A} holds. For each $T \in (0, \infty)$ there exists a constant $C^*_T < \infty$ such that, for any random tree $\tree \subset \V$, letting $X^\tree$ be the solution of \eqref{statements:SDE}, we have
\begin{align}
\sup_{v \in \V}\E[\|X_v^\tree\|^2_{*,T} \,|\, \tree] \le C^*_T, \ \ a.s., \label{def:2ndmomentbound}
\end{align}
and also, for any finite set $A \subset \V$,
\begin{align}
H\big( \L(X_A^\tree[T]) \,\big|\, \L(\widehat{X}_A^\tree[T])\big) &\le C^*_T(1 + |A|), \label{def:entropybound1} \\
H\big( \L(\widehat{X}_A^\tree[T]) \,\big|\, \L(X_A^\tree[T])\big) &\le C^*_T(1 + |A|). \label{def:entropybound2} 
\end{align}
\end{lemma}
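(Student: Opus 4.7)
The plan is to prove the three bounds \eqref{def:2ndmomentbound}, \eqref{def:entropybound1}, \eqref{def:entropybound2} in sequence, along the lines of \cite[Lemma 5.1]{LacRamWu19b} which is explicitly cited.

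For the second moment bound \eqref{def:2ndmomentbound}, I would condition on $\tree$ (which is independent of the driving noises and initial data), apply It\^o's formula to $|X_v^\tree(t)|^2$, and use Burkholder-Davis-Gundy together with the linear growth in Assumption (\ref{assumption:A}.1). The crucial structural point is that the drift depends on the neighbors through the \emph{average} $\tfrac{1}{|A|}\sum_u \|x_u\|_{*,s}$, which interacts well with squaring via Jensen: $(\tfrac{1}{|A|}\sum_u \|x_u\|_{*,s})^2 \le \tfrac{1}{|A|}\sum_u \|x_u\|^2_{*,s}$. Setting $\Phi(t) := \sup_{v \in \V}\E[\|X_v^\tree\|^2_{*,t}\,|\,\tree]$ and taking the supremum over $v$ in the resulting inequality produces a closed Gronwall-type inequality $\Phi(t) \le C_T(1 + \int_0^t \Phi(s)\,ds)$ that holds almost surely uniformly in the tree, giving the uniform constant $C^*_T$.

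For \eqref{def:entropybound1}, I would apply Girsanov's theorem to the $A$-marginal. Define the optional projection of the drift onto the natural filtration of $X_A^\tree$,
\begin{align*}
\tilde{b}_v^A(t, X_A^\tree[t]) := \E\big[b(t, X_v^\tree, X_{N_v(\tree)}^\tree)\,\big|\, \F^{X_A^\tree}_t, \tree\big], \quad v \in A.
\end{align*}
In its own filtration $X_A^\tree$ is an It\^o process with drift $\tilde b_v^A$ and diffusion $\sigma$, and comparing with the driftless SDE for $\widehat{X}_A^\tree$ via the standard Girsanov entropy formula yields
\begin{align*}
H\big(\L(X_A^\tree[T])\,\big|\, \L(\widehat{X}_A^\tree[T])\big) = \tfrac12 \E\Big[\sum_{v \in A}\int_0^T |\sigma^{-1}(s,X_v^\tree)\tilde{b}_v^A(s,X_A^\tree)|^2\,ds\Big].
\end{align*}
Conditional Jensen's inequality gives $|\tilde{b}_v^A|^2 \le \E[|b|^2\,|\, \F^{X_A^\tree}, \tree]$; then boundedness of $\sigma^{-1}$, the linear growth of $b$, and the second moment bound from the first step produce the desired $C^*_T(1+|A|)$ bound (the leading term scales with $|A|$ because the sum has $|A|$ terms, each uniformly bounded).

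The reverse direction \eqref{def:entropybound2} follows from a symmetric Girsanov computation yielding
\begin{align*}
H\big(\L(\widehat{X}_A^\tree[T])\,\big|\, \L(X_A^\tree[T])\big) = \tfrac12 \E\Big[\sum_{v \in A}\int_0^T |\sigma^{-1}(s,\widehat{X}_v^\tree)\tilde{b}_v^A(s,\widehat{X}_A^\tree)|^2\,ds\Big],
\end{align*}
where the same projected drift is now evaluated along the driftless trajectories. Here the key observation is that $\tilde b_v^A$ inherits linear growth from $b$ (by pulling the bound inside the conditional expectation and using Jensen), and under the driftless measure the coordinates $(\widehat{X}_u^\tree)_u$ are independent, each solving a closed SDE driven by the bounded $\sigma$; therefore their second moments are uniformly controlled by a standard Gronwall argument. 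The main obstacle to making this rigorous is justifying that the relevant Girsanov exponentials are true martingales on a possibly infinite tree and that the stochastic integral of $\sigma^{-1}\tilde b_v^A$ against the innovation Brownian motion is a true martingale of zero mean; these technical points are handled by localization combined with the uniform second-moment estimates above, exactly in the spirit of \cite[Lemma 5.1]{LacRamWu19b}.
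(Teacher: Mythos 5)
Your Gronwall argument for \eqref{def:2ndmomentbound} matches the paper's. For the entropy bounds you take a genuinely different route: you propose to project the drift onto the $X_A^\tree$-filtration via the mimicking theorem and then apply the Girsanov entropy identity to the reduced $|A|$-coordinate system. The paper instead changes measure \emph{on the full ambient space}, defining $\QQ^A$ by exponentiating the Girsanov integrand that removes the drift only for $v\in A$, showing $\QQ^A\circ X_A^{-1}=\PP\circ\widehat{X}_A^{-1}$, and then invoking the data processing inequality $H(\L(X_A[T])\,|\,\L(\widehat{X}_A[T]))\le H(\PP\,|\,\QQ^A)$. For \eqref{def:entropybound1} your route leads to the same final bound, since your Jensen step followed by the tower property collapses to $\tfrac12\E^{\PP}[\sum_{v\in A}\int_0^T|\sigma^{-1}b|^2\,dt]$, exactly what data processing gives; two details should be tidied up, though: $\tilde b_v^A$ must condition only on $\F^{X_A^\tree}_t$ (not additionally on $\tree$, otherwise it is not a functional of the observed $X_A^\tree$-path and the mimicking theorem does not produce an It\^o representation in that filtration), and the diffusion of the projected system is $1_{\{v\in\tree\}}\sigma$, degenerate on $\{v\notin\tree\}$, so one must first condition on a finite tree or argue those coordinates contribute nothing.

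For \eqref{def:entropybound2}, however, there is a genuine gap. You assert $\tilde b_v^A$ ``inherits linear growth from $b$.'' What Jensen actually gives is $|\tilde b_v^A(t,x_A)|\le C_T\bigl(1+\|x_v\|_{*,t}+\psi_v(t,x_A)\bigr)$ with $\psi_v(t,x_A)=\E\bigl[\tfrac{1}{|N_v(\tree)|}\sum_{u\in N_v(\tree)}\|X_u\|_{*,t}\,\big|\,X_A[t]=x_A[t]\bigr]$, a conditional expectation, \emph{under the $X$-law}, of particle trajectories \emph{outside} $A$. Nothing established makes $\psi_v$ an affine function of $\|x_A\|_{*,t}$; all you know is that it lies in $L^2(\L(X_A[T]))$. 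That is enough for \eqref{def:entropybound1}, because there the entropy formula requires $\E[\int_0^T|\tilde b_v^A(t,X_A)|^2\,dt]$ under the same law $\L(X_A)$ used to build $\psi_v$, and the tower property finishes. It is \emph{not} enough for \eqref{def:entropybound2}: the Girsanov identity for $H(\L(\widehat{X}_A[T])\,|\,\L(X_A[T]))$ requires $\int_0^T|\sigma^{-1}\tilde b_v^A(t,\cdot)|^2\,dt$ averaged under the driftless law $\L(\widehat{X}_A)$, and an $L^2(\L(X_A))$ bound on $\psi_v$ does not transfer across the change of measure. The paper's device avoids this entirely: $H(\QQ^A\,|\,\PP)=\tfrac12\E^{\QQ^A}[\sum_{v\in A}\int_0^T|\sigma^{-1}b(t,X_v,X_{N_v(\tree)})|^2\,dt]$ involves the \emph{original} drift $b$ (which genuinely has linear growth), and under $\QQ^A$ the SDE still has the form of \eqref{statements:SDE} with drift dropped only on $A$, so the Gronwall estimate \eqref{def:2ndmomentbound} holds under $\QQ^A$ as well and closes the bound. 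To salvage your route you would need either such a change-of-measure device (which essentially reproduces the paper's argument) or a separate proof that $\psi_v$ has linear growth, which is not automatic under Assumption~\ref{assumption:A}.
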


Now, we work for the rest of Section \ref{se:pf:existence-localGW} on a filtered probability space $(\Omega,\F,\FF,\PP)$, supporting a UGW($\rho$) tree $\tree$, independent $d$-dimensional Brownian motions $(W_v)_{v \in \V}$, and continuous $d$-dimensional processes $(X^\tree_v)_{v \in \V}$ satisfying the SDE system \eqref{statements:SDE}. As always we assume $\tree$, $(W_v)_{v \in \V}$, and $(X^\tree_v(0))_{v \in \V}$ are independent, and $(X^\tree_v(0))_{v \in \V}$ are i.i.d.\ with common law $\lambda_0$.  The offspring distribution $\rho \in \P(\N_0)$ has finite nonzero first moment and finite second moment.
For ease of notation, for the rest of Section \ref{se:pf:existence-localGW} we omit the superscript by writing $(X_v)_{v \in \V} = (X^{\tree}_v)_{v \in \V}$ and $(\widehat{X}_v)_{v \in \V} = (\widehat{X}^{\tree}_v)_{v \in \V}$.
The driftless process $\widehat{X} = (\widehat{X}^\tree_v)_{v  \in \V}$ defined in \eqref{pf:existence-driftless} may live on a different probability space that we do not specify.

\begin{remark}
  \label{rem-treerecovery}
  The dynamics \eqref{statements:SDE} include the ``fictional'' particles $v \notin \tree$ in such a way that the random tree
  $\tree$ can be recovered from $(X_v[t])_{v \in \V}$ for any $t > 0$. Indeed, almost surely, $v \notin \tree$
  if and only if there exists an interval on which $t \mapsto X_v(t)$ is constant. (Note that this holds because the diffusion coefficient is assumed non-degenerate.) More precisely, $\tree$ is measurable with respect to the ``just after time zero'' $\sigma$-field,  or
\begin{align}
\{v \in \tree\} \in \bigcap_{t > 0}\sigma(X_v(s) :  s \le t), \quad\quad \text{a.s. for each } \ v \in \V. \label{def:Tmeasurable}
\end{align}
 Here ``a.s." means that the event $\{v \in \tree\}$ belongs to the completion of the $\sigma$-field appearing on the right-hand side. 
Moreover, there exists a deterministic mapping  $\detfn : \C \rightarrow \{0,1\}$, measurable with respect to $\cap_{t > 0}\F^\C_t$ where $(\F^\C_t)_{t \ge 0}$ is the canonical filtration on $\C$, such that
\begin{align}
1_{\{v \in \tree\}} = \detfn (X_v), \quad\quad \text{a.s. for each } \ v \in \V. \label{def:Tmeasurable-strong}
\end{align}
In particular, this function $\detfn$ does not depend on $v$. These observations will be exploited several times throughout this section.
\end{remark}

 The proof is decomposed into several steps. \\
{\bf Step 1.} The first step of the proof will be to project onto the root neighborhood $\V_1$ using the projection theorem (Theorem \ref{th:brunickshreve}). 
It follows from \eqref{statements:SDE} that  $(X_v)_{v \in \V_1}$ satisfies
\begin{align*}
dX_{\o}(t) & = b(t,X_{\o},X_{N_{\o}(\tree)}) \, dt + \sigma(t,X_{\o}) \, dW_{\o}(t), \\
dX_k(t) & = 1_{\{k \in \tree_1\}}\Big(b(t,X_k,X_{N_k(\tree)}) \, dt + \sigma(t,X_k) \, dW_k(t)\Big), \quad k \in \V_1 \setminus \{{\o}\},
\end{align*}
where we write $\tree_1:=\tree\cap\V_1$ for the first generation of $\tree$.
By Theorem \ref{th:brunickshreve}, 
by extending the probability space if necessary,
we may find independent $d$-dimensional Brownian motions $(B_v)_{v \in \V_1}$
such that 
\begin{equation}
  dX_v (t) = \widetilde{b}_v(t,X_{\V_1}) \, dt +  \widetilde{\sigma}_v(t,X_{\V_1}) \, dB_v(t),
  \ \ v \in \V_1,   \label{pf:existencelocal-SDE-Y}
\end{equation}
where $\widetilde{b}_v: \R_+ \times \C^{\V_1} \mapsto \R^d$ and $\widetilde{\sigma}_v: \R_+ \times \C^{\V_1} \mapsto \R^{d \times d}$ are any progressively measurable functions satisfying
\begin{align}
  \label{proj-b}
  \widetilde{b}_v(t,X_{\V_1}) &=  \E \left[ 1_{\{v \in \tree_1\}}b(t,X_{v},X_{N_{v}(\tree)})  \, | \, X_{\V_1}[t]\right],  \\
  \label{proj-sigma}
  \widetilde{\sigma}_v\widetilde{\sigma}_v^\top(t,X_{\V_1}) &= \E\left[1_{\{v \in \tree_1\}} \sigma \sigma^\top (t,X_{v}) \,  | \, X_{\V_1}[t]\right],  
\end{align}
a.s.\ for a.e.\ $t > 0$.  Such progressively measurable functions always exist by Lemma \ref{le:optprojection}. 
Now,  the functions $\widetilde{b}_{\o}$ and $(\widetilde{\sigma}_v)_{v \in \V_1}$ can be simplified because the corresponding integrands are $X_{\V_1}[t]$-measurable.
Indeed, because $X_{N_{\o}(\tree)}$ and $\tree_1$ are
    $X_{\V_1}[t]$-measurable for each $t > 0$ (as a consequence of
 Remark \ref{rem-treerecovery}),    we may take
\begin{equation}
  \widetilde{b}_{\o}(t,X_{\V_1}) = b(t,X_{\o},X_{N_{\o}(\tree_1)}), \qquad
  \widetilde{\sigma}_v(t,X_{\V_1}) = 1_{\{v \in \tree_1 \}}\sigma (t, X_v), \quad v \in \V_1.
\end{equation}

\noindent 
{\bf  Step 2:}  Next, we simplify the form of $\widetilde{b}_{v}$ for $v \in \V_1\setminus \o,$ using symmetry and conditional independence results. Noting that $\V_1\setminus \{\o\}$  can be identified with $\N$,  
for a given $k \in \N$, we first apply the conditional independence result of Proposition \ref{pr:properties-GW}(i) to deduce that all particles except $\o$ and $k$ may be safely omitted from the conditioning in the definition of $\widetilde{b}_k$. That is, recalling also that $\{k \in \tree_1\}$ is $X_k[t]$-measurable (again by Remark \ref{rem-treerecovery}), we have
\begin{align*}
\widetilde{b}_k(t,X_{\V_1}) &= 1_{\{k \in \tree_1\}} \E\left[b(t,X_{k},X_{N_{k}(\tree)}) \, | \, X_{\o}[t], \, X_k[t]\right],  \ \ a.s., \ \ a.e.\ t > 0.
\end{align*}

Now, fix $t > 0$.  Since $b$ is progressively measurable, 
  there exists  a  measurable function $h: \C_t \times \SQ(\C_t) \mapsto \R$ such that
  $b(t, x, \bar{x}) = h (x[t], \bar{x}[t])$ for $x \in \C, \bar{x} \in \SQ(\C)$.
  Then, on $\{k \in \tree_1\}$,
  $\widetilde{b}_k(t,X_{\V_1})$ is equal to the right-hand side of \eqref{def:invariance-GW1} with this choice of $h$.
  Although $h$ is not bounded as is  required in Proposition \ref{pr:invariance-GW}, both $h(X_{\o}[t],\langle X_{N_{\o}}[t] \rangle)$ and $h(X_{k}[t],\langle X_{N_{k}}[t] \rangle)$ are square-integrable due to the linear growth of $b$ from 
     Assumption (\ref{assumption:A}.1) and Lemma \ref{le:lingrowth}, and we know also that $|N_{\o}(\tree)|$ is square-integrable as we assumed $\rho$ has finite second moment.  Hence, by truncating $h$ and taking limits, we easily extend the validity of the formula in Proposition \ref{pr:invariance-GW} to cover such an $h$.  
Ultimately, we deduce that 
  \begin{equation}
    \label{eq-tilbk}
    \widetilde{b}_k (t, X_{\V_1}) = \gamma_t (X_k, X_{\o}),   \quad \mbox{ on } \{k \in \tree_1\}, 
  \end{equation} 
  where   $\gamma_t:\C^2 \mapsto \R^d$ is a progressively measurable function satisfying
\begin{align}	\label{gammat}
\gamma_t(X_{\o},X_1) = \frac{\E\left[\left. \frac{|N_{\o}(\tree)|}{|N_1(\tree)|}b(t,X_{\o},X_{N_{\o}(\tree_1)}) \, \right| \, X_{\o}[t], \, X_1[t] \right]}{\E\left[\left. \frac{|N_{\o}(\tree)|}{|N_1(\tree)|} \, \right| \, X_{\o}[t], \, X_1[t] \right]} \qquad \text{on } \{N_{\o}(\tree_1) \neq \emptyset\}, 
\end{align}
and  $\gamma_t (X_{\o}, X_1) = b(t, X_{\o}, \onepoint)$ on  $\{N_{\o}(\tree_1) = \emptyset\}$, where we recall that $\onepoint$ denotes the element of the one-point space $\C^0$. 
 Note that $|N_1(\tree)| \geq 1$ a.s., $\E[|N_{\o}(\tree)|^2] < \infty$ and $\E\left[\int_0^T |b(t,X_{\o}[t],  X_{N_{\o}}[t])|^2 dt \right] < \infty$, which together imply
\[
\E\left[1_{\{N_{\o}(\tree) \neq \emptyset\}}\frac{|N_{\o}(\tree)|}{|N_1(\tree)|} \int_0^T |b(t,X_{\o}[t],  X_{N_{\o}}[t])| dt \right] < \infty,
\]
for each $T \in (0,\infty)$.  
Since $X$ is continuous,
the existence of a progressively measurable version of $(t,x_{\o},x_1) \mapsto \gamma_t(x_{\o},x_1)$ is then guaranteed by Lemma \ref{le:optprojection}. 

{\ }

\noindent 
{\bf Step 3.} 
It remains to check that we have all of the ingredients required by Definition \ref{def-GWlocchar} for a solution of the local equation. We begin with the integrability condition stated as property (10) in Definition \ref{def-GWlocchar}.  Note that Lemma \ref{le:lingrowth} and the linear growth of $b$ from Assumption (\ref{assumption:A}.1) ensure that, by Jensen's inequality and \eqref{gammat},  
\begin{align}
\E\left[1_{\{k \in \tree\}}\int_0^T|\gamma_t(X_k,X_{\o})|^2dt\right] &\le \E\left[1_{\{k \in \tree\}}\int_0^T|b(t,X_k,X_{N_k(\tree)})|^2dt\right] < \infty. \label{pf:verif-girsanov-1}
\end{align}
Next, recall from Remark \ref{rem-treerecovery} that $\{N_{\o}(\tree) \neq \emptyset\}$ is $X_{\o}[t]$-measurable for $t > 0$. Applying the conditional Jensen's inequality,  and invoking \eqref{gammat},  Assumption (\ref{assumption:A}.1) and
Lemma \ref{le:lingrowth} yields
\begin{align}
\E &\left[1_{\{N_{\o}(\tree) \neq \emptyset\}}\frac{|N_{\o}(\tree)|}{|N_1(\tree)|}\int_0^T|\gamma_t(X_{\o},X_k)|^2dt\right] \nonumber \\
&\qquad \quad \le \E\left[1_{\{N_{\o}(\tree) \neq \emptyset\}}\frac{|N_{\o}(\tree)|}{|N_1(\tree)|}\int_0^T|b(t,X_{\o},X_{N_{\o}(\tree_1)})|^2dt\right] \nonumber \\
&\qquad\quad \le 3C^2_T T\E\left[1_{\{N_{\o}(\tree) \neq \emptyset\}}\frac{|N_{\o}(\tree)|}{|N_1(\tree)|}\left(1 + \|X_{\o}\|_{*,T}^2 + \frac{1}{|N_{\o}(\tree)|}\sum_{k \in N_{\o}(\tree)}\|X_k\|_{*,T}^2 \right)\right] \nonumber \\
&\qquad\quad \le 3C^2_T T(1+2C^*_T)\E\left[1_{\{N_{\o}(\tree) \neq \emptyset\}}\frac{|N_{\o}(\tree)|}{|N_1(\tree)|}\right] < \infty, \label{pf:verif-girsanov-2}
\end{align}
where $C_T < \infty$ and $C^*_T < \infty$ are the constants from Assumption (\ref{assumption:A}.1) and Lemma \ref{le:lingrowth}, respectively. Recalling that $\gamma_t(X_{\o},X_1)=b(t,X_{\o},\onepoint)$ on $\{N_{\o}(\tree)=\emptyset\}$ we deduce from \eqref{pf:verif-girsanov-1} and \eqref{pf:verif-girsanov-2} that the following two integrals are a.s.\ finite, for each $k \in\N$:
\begin{align*}
&\int_0^T|\gamma_t(X_{\o},X_k)|^2dt, \quad \int_0^T|\gamma_t(X_k,X_{\o})|^2dt.
\end{align*}
The finite entropies of Lemma \ref{le:lingrowth} ensure that the laws of $(X_{\o},X_k)$ and $(\widehat{X}_{\o},\widehat{X}_k)$ are equivalent (i.e., mutually absolutely continuous) for each $k\in\N$, and therefore the following integrals are also a.s.\ finite:
\begin{align*}
&\int_0^T|\gamma_t(\widehat{X}_{\o},\widehat{X}_k)|^2dt, \quad \int_0^T|\gamma_t(\widehat{X}_k,\widehat{X}_{\o})|^2dt.
\end{align*}
Along with the definition of $\gamma_t$  (see \eqref{gammat} and the subsequent line), this verifies both
  properties  (7) and (10)  of  Definition \ref{def-GWlocchar}, with $Y = X$.

Finally, by enlarging the probability space if necessary, let $\Nvar^{\mathrm{ext}}$ be an $\F_0$-measurable $\N_0$-valued random variable with law $\widehat\rho$, independent of $(\tree,(X_v(0))_{v \in \V})$. Define $\Nvar:= |N_1(\tree)| - 1$ on the event $\{N_{\o} (\tree_1) \neq \emptyset\}$, and on the complementary event $\{N_{\o}(\tree_1) = \emptyset\}$ define $\Nvar := \Nvar^{\mathrm{ext}}$.
This way, using the definition of the UGW($\rho$) tree $\tree$, one may easily check that $\Nvar$ has law $\widehat\rho$,
$\tree_1$ is the first generation of a UGW($\rho$) tree, $(X_v(0))_{v \in \V}$ are  i.i.d.\ and $\F_0$-measurable with law $\lambda_0$, and  moreover,  $\Nvar$, $\tree_1$, and $(X_v(0))_{v \in \V}$ are independent.
  This verifies properties (1)--(3), (6), and (8) of Definition \ref{def-GWlocchar}.
  (The definition of $\Nvar$ on $\{N_{\o}(\tree_1) = \emptyset\}$ is made in this way for the sole purpose of meeting the independence requirement of Definition \ref{def-GWlocchar}(8), and $\Nvar^{\mathrm{ext}}$ serves no other purpose.)
  Combining relations \eqref{pf:existencelocal-SDE-Y}-\eqref{gammat},
  we see that  the stochastic equations
  \eqref{statements:localequationGW} are satisfied with $Y_k = X_k$ for all $k \in \V_1$, and thus  
  properties (4), (5),  and (9) of Definition \ref{def-GWlocchar} hold. 
Putting this together, we see that $(X_v)_{v \in \V_1}$ is a weak solution of the UGW($\rho$)
local equation with initial law $\lambda_0$, as in Definition \ref{def-GWlocchar}.

\subsection{Proof of well-posedness of the UGW($\rho$) local equation} 
\label{se:pf:uniqueness-localGW} 
  
Fix $\rho \in \P(\N_0)$ with finite first and second moments.
  As briefly described in Section \ref{subsub-linegraph} in the simplest case of a $2$-regular
  tree, the basic idea behind the proof of uniqueness is to use the weak solution 
to the local equation to construct a solution to the infinite particle system
\eqref{statements:SDE}  on the UGW($\rho$) tree $\tree$, and then invoke uniqueness (in law)
of the latter to deduce that of the former.  However, the construction is more 
involved when $\kappa > 2$ and substantially more complicated in the case of the random UGW tree.
To make the proof more transparent, we first provide an outline and introduce some common notation in Section \ref{subs-uniqueoutline}, then prove the main technical lemmas in Section \ref{subs-uniqueproof}, and finally, in Section \ref{subs-uniquecomplete}, show that the uniqueness property in Theorem \ref{th:statements-mainlocaleq} is a consequence of these lemmas.

\subsubsection{Outline of proof and some common terminology}
\label{subs-uniqueoutline}

Let  $((\Omega,\F,\FF,\PP),\tree_1, \gamma,(B_v,Y_v)_{v \in \V_1}, \Nvar)$ be any weak solution
to the UGW($\rho$) local equation with initial law $\lambda_0$, as specified in Definition
\ref{def-GWlocchar}.   Due to properties (2), (3), (4), and (8) of Definition \ref{def-GWlocchar},
  by extending the probability space if needed,
  we can assume without loss of generality that  $(\Omega,\F,\FF,\PP)$  
  also supports a UGW($\rho$) tree $\tree$, independent of the standard
  $d$-dimensional $\FF$-Brownian motions 
  $(B_v)_{v \in \V}$, and i.i.d.\ initial conditions
  $(Y_v(0))_{v \in \V}$,  
such that $\tree_1 = \tree \cap \V_1$ and  $\Nvar+1 = |N_1(\tree)|$ on $\{N_{\o}(\tree) \ne \emptyset\}$.
Next, again on the event $\{N_{\o} (\tree) \neq \emptyset\}$, we aim to 
  extend the local solution to $\V_2$
  in such a way that the law of 
  the particle system on the random tree  $\tree_2 := \tree \cap \V_2$ of depth $2$
  is consistent with the $\tree_2$-marginal of the interacting particle system
  \eqref{statements:SDE}, where recall that, for any $n \in \N$,  $\V_n$ was defined in \eqref{def-vn}. 
  For this it suffices to specify the conditional joint law of  the states of vertices in
  $\tree_2\setminus \V_1$  given $Y_{\V_1}$.  
  In view of the second-order MRF  property and exchangeability
  (as encapsulated in Proposition \ref{pr:properties-GW}), 
  this is equal to the product
  of the conditional joint laws of the states of the offspring 
  of each $i \in \tree_1\setminus \{\o\}$, given the states of the vertices $i$ and $\o$,
  and each of these conditial laws  is  identical in form.

Now, in the case when $\tree = \T_{\kappa}$ for some $\kappa \geq 2$, 
this conditional law can be identified from the weak solution to the local equation since, by the symmetry of the
tree, it has the same form as the conditional law, given the trajectories of vertices
$\o$ and $1$, of the remaining  children $\tree_1 \setminus \{{\o}, 1\}  = \{2, \ldots, \kappa\}$
of the root $\o$,  except that the roles of $\o$ and $1$ are now reversed, since $1$ now acts as the new root (see Figure \ref{fig:regulartree}).
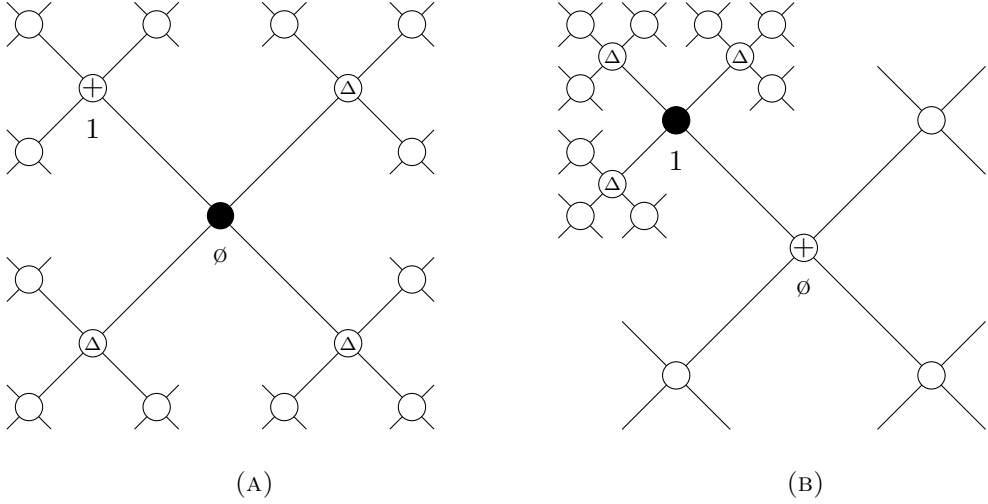
\begin{figure} 
\tikzstyle{level 1}=[sibling angle=90,level distance=8mm]
\tikzstyle{level 2}=[sibling angle=90,level distance=4mm]
\tikzstyle{level 3}=[sibling angle=90,level distance=2mm]
\tikzstyle{level 4}=[sibling angle=90,level distance=2mm]
\tikzstyle{edge from parent}=[segment angle=10,draw]
\begin{subfigure}[b]{.45\linewidth}
\begin{tikzpicture}[grow cyclic,shape=circle,cap=round,scale=3]
\node[label=below:$\o$,fill,scale=1] {} child [color=\A] foreach \A in {black,black,black}
    { node[draw,label=center:{\tiny$\Delta$},scale=1] {} child foreach \B in {black,black,black}
        { node[draw,scale=1] {} child  foreach \C in {black,black,black}
            { node {} 
            }
        }
    } child
    { node[draw,label=center:$+$,label=below:$1$,scale=1] {} child foreach \B in {black,black,black}
        { node[draw,scale=1] {} child  foreach \C in {black,black,black}
            { node {} 
            }
        }
    };
\end{tikzpicture}
\caption{}
  \end{subfigure}
\begin{subfigure}[b]{.45\linewidth}
\begin{tikzpicture}[grow cyclic,shape=circle,cap=round,scale=3]
\node[draw,label=below:$\o$,label=center:$+$,scale=1] {} child [color=\A] foreach \A in {black,black,black}
    { node[draw,scale=1] {} child foreach \B in {black,black,black} 
    	{ node{}
    	}
    } child
    { node[draw,fill,label=below:$1$,scale=1] {} child foreach \B in {black,black,black}
        { node[draw,label=center:{\tiny$\Delta$},scale=1] {} child  foreach \C in {black,black,black}
            { node[draw,scale=1] {} child  foreach \D in {black,black,black}
                { node {} 
                }
            }
        }
    };
\end{tikzpicture}
\caption{}
\end{subfigure}
\caption{The case $\tree = \T_4$, relating the conditional law  $\Lmc(Y_{\Delta} \in \cdot \, | \, Y_{\bullet}, Y_{+})$ when (A) the tree is rooted at `$\bullet$' and  (B) the tree is rerooted at `$+$'.}
\label{fig:regulartree}
 \vspace{-0.2in}
\end{figure}

In the case of the UGW($\rho$) tree, while the conditional joint laws are the same {\em given} the structure
of the tree, re-rooting the tree at  $1$ changes the distribution of the tree.
To account for this, we 
define a new ``tilted'' measure $\QQ$ on  $(\Omega, \F, \FF)$ via the relation 
\begin{align}
  \label{QQPP}
\frac{d\QQ}{d\PP} = \frac{|N_{\o}(\tree)|}{|N_1(\tree)|}1_{\{N_{\o}(\tree_1) \neq \emptyset\}}  + 1_{\{N_{\o}(\tree) = \emptyset\}}.
\end{align}
The fact that this defines a true probability measure $\QQ$ is justified in Lemma \ref{lem-comeas} below.

  We then  characterize the  joint law of $(Y_{\o}, Y_1)$ under this tilted
  measure $\widetilde{\PP}$ in Lemma \ref{lem-comeas},
and then  use the unimodularity of the tree (in particular, Proposition \ref{pr:invariance-GW}) to 
compute the conditional law on each time interval $[0,t]$  of
the trajectories of the neighborhood $N_{\o}(\tree)$ of the root given those of $\o$ and $1$
in  Lemma \ref{lem-claw}.
 Using this conditional law, which is denoted by $Z_t$, 
we extend the particle system to $\V_2$, and recursively to $\V_n$,  and denote
the latter law as $Q^n \in \P(C^{\V_n})$; see \eqref{pf:GW-defQn}. 
Finally, in Proposition \ref{lem-ext} we show that the family
$\{Q^n\}$ is consistent, in the sense that the projection of $Q^n$ to $\C^{\V_k}$ coincides with $Q^k$ for $k < n$, and that its unique extension to a law $Q \in \P(\C^{\V})$ coincides
with the unique law of a weak solution to the infinite particle  system  \eqref{statements:SDE}.

We close this discussion by  introducing some additional  notation that will be used
throughout the proof. 
Let $\nu := \PP \circ Y_{\V_1}^{-1} \in \P(\C^{\V_1})$ denote the law of (the $Y$-marginal of) the weak solution of the UGW ($\rho$) local equation, and define 
the corresponding ``tilted''
measure $\widetilde{\nu} \in \P(\C^{\V_1})$ by $\widetilde{\nu} := \QQ \circ Y_{\V_1}^{-1}$.
In other words, letting $\E^{\PP}$ denote expectation with respect to $\PP$, $\widetilde\nu$ is defined by the Radon-Nikodym derivative
\begin{align*}
\frac{d\widetilde\nu}{d\nu}(Y_{\V_1}) = \E^{\PP}\left[ \left. \frac{|N_{\o}(\tree)|}{|N_1(\tree)|}1_{\{N_{\o}(\tree) \neq \emptyset\}}  + 1_{\{N_{\o}(\tree_1) = \emptyset\}} \, \right| \, Y_{\V_1}\right],
\end{align*}
though we will make no use of this precise form. 
Also, throughout, to compute various laws and conditional laws, it will be convenient to introduce
some reference measures.
For this, we introduce again the solution 
 $(\widehat{X}_v)_{v \in \V}$ to the driftless SDE system (omitting the superscript $\tree$) 
\begin{align}
d\widehat{X}_v(t) = 1_{\{v \in \tree\}}\sigma(t,\widehat{X}_v)dB_v(t),  \quad v \in \V, \quad t > 0. \label{def:driftlessSDE}
\end{align}
Note that this SDE is unique in law due to Assumption (\ref{assumption:A}.2b).  
We also  introduce the canonical probability spaces $(\Omega^n,\F^n,\FF^n,P^{*,n})$ 
to be used throughout the proof. Here, $\Omega^n=\C^{\V_n}$, $\F^n$ is the Borel $\sigma$-field, and $\FF^n=(\F^n_t)_{t \ge 0}$ is the natural right-continuous filtration generated by the canonical coordinate processes, which
are  denoted  by $(X_v)_{v \in \V_n}$, and 
  $P^{*,n} := \PP \circ \widehat{X}_{\V_n}^{-1}$,  
and $\widetilde{P}^{*,n} := \QQ \circ \widehat{X}_{\V_n}^{-1}$ serve as references
measures that represent   the laws of the first $n$ generations of the processes defined in \eqref{def:driftlessSDE} under
  the probability measures $\PP$ and $\QQ$, respectively.
  We define $\tree_n \subset \V$ as the random tree with vertex set $\{v \in \V_n : \detfn(X_v)=1\}$, where $\detfn$ is given as in Remark \ref{rem-treerecovery}. In this way, $\tree_n$ agrees in law with $\tree \cap \V_n$, the height-$n$ truncation of the UGW($\rho$) tree $\tree$. To be clear, $(X_v)_{v \in \V_n}$ and $\tree_n$ live on the canonical space $\Omega^n$, whereas the other random variables such as $(\tree,\widehat{X},Y)$ are defined on $\Omega$.

We make special note of the conventions we  use to help the reader keep track of the various notations. 
We use a tilde for measures associated with the measure change, namely $\QQ$ and its descendants $\widetilde{P}^{*,n}$ and $\widetilde\nu$. The superscripts $*$ and $n$ on $P^{*,n}$ and $\widetilde{P}^{*,n}$ indicate that these measures are to be viewed as reference measures on the canonical space associated with $n$ generations $\Omega^n$. Lastly, the letter $\nu$ (and its decorated versions) will refer to measures constructed from the given solution $Y_{\V_1}$ of the local equation.

\begin{remark}
  It is worth emphasizing again,  as in Remark \ref{re:GWsimplifies-regular}, 
  how the argument simplifies when the tree is the deterministic $\kappa$-regular tree, i.e.,
  the UGW($\rho$) tree with $\rho = \delta_{\kappa}$ for an integer $\kappa \ge 2$. In this case, we have $\widehat\rho = \delta_{\kappa - 1}$, and we have deterministically $|N_v(\tree)| = \kappa$ for all $v \in \V$. In this case, $\QQ=\PP$, $\widetilde\nu=\nu$, and $\widetilde{P}^{*,n}=P^{*,n}$. On a first reading it may help to keep these substitutions in mind.
\end{remark}

  \subsubsection{Details of the Proof}
  \label{subs-uniqueproof}

 Once again, we break down the detailed justification  into three steps.  \\
   {\bf Step 1.}
  Our first goal is  to identify the marginal law of  $(Y_{\o}, Y_1)$ under the
  tilted  measure $\QQ$  defined in \eqref{QQPP}.  Specifically, recalling the definitions of $\nu$,  $\widetilde{\nu}$, the 
   reference measures and canonical processes introduced in 
  the last section, 
 we  define the marginal laws 
\begin{align*}
\nu^{{\o},1} &= \nu \circ (X_{\o},X_1)^{-1} = \PP \circ (Y_{\o},Y_1)^{-1}, \\
\widetilde\nu^{{\o},1} &= \widetilde\nu \circ (X_{\o},X_1)^{-1} = \QQ \circ (Y_{\o},Y_1)^{-1}, \\
\widetilde{P}^{*,{\o},1} &= \widetilde{P}^{*,1} \circ (X_{\o},X_1)^{-1},
\end{align*}
which are all elements of $\P(\C^{\{\o,1\}})$. 
We start with a lemma that uses the projection theorem (Theorem \ref{th:brunickshreve})
  to characterize the law $\widetilde{\nu}^{\o,1}$ as the weak solution to an SDE.

\begin{lemma}
  \label{lem-comeas}
  The measure  $\QQ$ specified in \eqref{QQPP} defines a probabilty measure
  on $(\Omega, \F, \FF)$. Moreover, by extending the probability space  $(\Omega, \F, \FF, \QQ)$ if necessary, we may find 
independent $d$-dimensional standard $\QQ$-Brownian motions $(\widetilde{W}_v)_{v \in \{\o, 1\}}$
   such that $(Y_{\o}, Y_1)$ satisfies the following SDE system:
\begin{align}
d Y_{\o}(t) &= \gamma_t(Y_{\o}, Y_1)dt + \sigma(t, Y_{\o})d\widetilde{W}_{\o}(t), \label{pf:def:uniquenessGW-SDEproj1}  \\
d Y_1(t) &= 1_{\{1 \in \tree_1\}} \left(\gamma_t(Y_1,Y_{\o})dt + \sigma(t,Y_1)d\widetilde{W}_1(t)\right), \label{pf:def:uniquenessGW-SDEproj2}
\end{align} 
where $\gamma_t: \R_+ \times \C^2 \mapsto \R^d$  
is the progressively measurable mapping  defined in \eqref{def:gamma}.
\end{lemma}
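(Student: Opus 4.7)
The plan is to verify the two assertions in sequence. For the first, showing $\QQ$ is a probability measure reduces to checking
\begin{equation*}
\E^{\PP}\Bigl[\tfrac{|N_{\o}(\tree)|}{|N_1(\tree)|}1_{\{N_{\o}(\tree_1) \neq \emptyset\}}\Bigr] = 1 - \rho(0),
\end{equation*}
since $\PP(N_{\o}(\tree) = \emptyset) = \rho(0)$. On $\{N_{\o}(\tree) \neq \emptyset\}$, we have $|N_1(\tree)| = 1 + \Nvar$, where $\Nvar$ is the number of offspring of vertex $1$ and, by the definition of a UGW($\rho$) tree, $\Nvar$ has law $\widehat{\rho}$ independent of $|N_{\o}(\tree)| \sim \rho$. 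Using the explicit formula \eqref{def2-hatrho}, a direct computation telescopes:
\begin{equation*}
\sum_{k \geq 1} \rho(k) \sum_{j \geq 0} \widehat{\rho}(j) \tfrac{k}{1+j} = \tfrac{1}{\sum_n n\rho(n)} \sum_{k \geq 1} k \rho(k) \sum_{j \geq 0} \rho(j+1) = 1 - \rho(0),
\end{equation*}
giving the identity.

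For the SDE representation under $\QQ$, the key initial observation is that, after enlarging $\F_0$ so that the full tree $\tree$ is $\F_0$-measurable (as in the outline preceding the lemma), the Radon--Nikodym derivative $d\QQ/d\PP$ is $\F_0$-measurable. A standard Bayes argument then shows that this density factors out of any conditional expectation with respect to $\F_s \supset \F_0$, so $(B_v)_{v \in \V_1}$ remain independent $\FF$-Brownian motions under $\QQ$, and the system \eqref{statements:localequationGW} continues to hold verbatim under $\QQ$.

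Next, I would apply the projection theorem (Theorem \ref{th:brunickshreve}) to the pair $(Y_{\o}, Y_1)$ under $\QQ$. By Remark \ref{rem-treerecovery}, the indicator $1_{\{1 \in \tree_1\}}$ is $Y_1[t]$-measurable, so the drift $1_{\{1 \in \tree_1\}}\gamma_t(Y_1, Y_{\o})$ and the diffusion coefficients $\sigma(t, Y_{\o})$ and $1_{\{1 \in \tree_1\}}\sigma(t, Y_1)$ are already $(Y_{\o}[t], Y_1[t])$-measurable and require no projection. The only nontrivial step is projecting the drift $b(t, Y_{\o}, Y_{N_{\o}(\tree_1)})$ of $Y_{\o}$. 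By Bayes' rule together with \eqref{QQPP}, and using the $Y_{\o}[t]$-measurability of $\{N_{\o}(\tree_1) = \emptyset\}$,
\begin{equation*}
\E^{\QQ}\bigl[b(t, Y_{\o}, Y_{N_{\o}(\tree_1)}) \, \bigm| \, Y_{\o}[t], Y_1[t]\bigr] = \frac{\E^{\PP}\bigl[\tfrac{|N_{\o}(\tree_1)|}{1+\Nvar} b(t, Y_{\o}, Y_{N_{\o}(\tree_1)}) \, \bigm| \, Y_{\o}[t], Y_1[t]\bigr]}{\E^{\PP}\bigl[\tfrac{|N_{\o}(\tree_1)|}{1+\Nvar} \, \bigm| \, Y_{\o}[t], Y_1[t]\bigr]}
\end{equation*}
on $\{N_{\o}(\tree_1) \neq \emptyset\}$, which is exactly $\gamma_t(Y_{\o}, Y_1)$ by \eqref{def:gamma}; on the complementary event, $Y_{N_{\o}(\tree_1)} = \onepoint$ and the drift is $b(t, Y_{\o}, \onepoint) = \gamma_t(Y_{\o}, Y_1)$ by our convention. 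This yields the SDE \eqref{pf:def:uniquenessGW-SDEproj1}--\eqref{pf:def:uniquenessGW-SDEproj2}.

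The main technical subtlety lies in producing the two independent Brownian motions $\widetilde{W}_{\o}$ and $\widetilde{W}_1$. Since the measure change is absolutely continuous, quadratic covariation is preserved, so $\langle Y_{\o}, Y_1\rangle \equiv 0$ under $\QQ$; invertibility of $\sigma$ then forces $\langle \widetilde{W}_{\o}, \widetilde{W}_1\rangle = 0$ on $\{1 \in \tree_1\}$, and independence follows via L\'{e}vy's characterization. On $\{1 \notin \tree_1\}$, where $Y_1$ is constant and $\widetilde{W}_1$ cannot be recovered from it, we enlarge the probability space and introduce $\widetilde{W}_1$ as an independent Brownian motion, as is standard in applications of Theorem \ref{th:brunickshreve}.
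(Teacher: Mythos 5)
Your proposal is correct and follows essentially the same approach as the paper: verify the total mass identity for $\QQ$ via the telescoping sum in \eqref{def2-hatrho}, observe that the $\F_0$-measurable Radon--Nikodym density preserves the Brownian motions and thus the SDE \eqref{statements:localequationGW} under $\QQ$, and then project onto $(Y_{\o},Y_1)$ via Theorem~\ref{th:brunickshreve}, using Bayes' rule to identify the projected drift of $Y_{\o}$ with $\gamma_t$ from \eqref{def:gamma}. The only difference is that your final paragraph unpacks, via quadratic covariation and L\'evy's characterization, the independence of $\widetilde{W}_{\o}$ and $\widetilde{W}_1$ that the paper obtains directly by reading Theorem~\ref{th:brunickshreve} as producing independent Brownian motions (given the block-diagonal structure of the projected diffusion coefficient); this is an unnecessary but harmless elaboration.
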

\begin{proof}
  To see that \eqref{QQPP} indeed defines a probability measure,
  note that $\PP(N_{\o}(\tree)=\emptyset)=\rho(0)$ and
\begin{align*}
\E^{\PP}\left[\frac{|N_{\o}(\tree)|}{|N_1(\tree)|}1_{\{N_{\o}(\tree_1) \neq \emptyset\}}\right] &= \sum_{k=1}^\infty\sum_{j=0}^\infty \frac{k}{j+1}\rho(k)\widehat\rho(j) \\
	&= \sum_{k=1}^\infty\sum_{j=0}^\infty \frac{k}{j+1}\rho(k)\frac{(j+1)\rho(j+1)}{\sum_{i=1}^\infty i \rho(i)} \\
	&= 1 - \rho(0).
\end{align*}
We stress that $\tree$ is a UGW($\rho$) tree under $\PP$ but not under  $\QQ$, although both measures give rise to the same conditional law of the particles $\widehat{X}_\V$ given the tree $\tree$.

We now turn to the proof of the second assertion
of the lemma. Observe first that $(Y_v)_{v \in \V_1}$ solves the SDE system  \eqref{statements:localequationGW},  
where $\gamma_t$ is defined as in \eqref{def:gamma}. 
Note that the change of measure from $\PP$ to $\QQ$ alters the law of the tree $\tree$ but not the Brownian motions or initial states.
We can then apply Theorem \ref{th:brunickshreve}  to construct, by again extending the probability space $(\Omega, \F, \FF, \QQ)$,
$d$-dimensional independent $\FF$-Brownian motions  $(\widetilde{W}_v)_{v \in \{\o,1\}}$ such that   $(Y_{\o}, Y_1)$ satisfy the following SDE system:  
\begin{align*}
d Y_{\o}(t) &= \widetilde{b}_{\o}(t, Y_{\o}, Y_1)dt + \widetilde{\sigma}_{\o}(t, Y_{\o},Y_1)d\widetilde{W}_{\o}(t),   \\
d Y_1(t) &= \widetilde{b}_1(t, Y_{\o}, Y_1)dt + \widetilde{\sigma}_1(t, Y_{\o}, Y_1)d\widetilde{W}_1(t), 
\end{align*} 
where $\widetilde{b}_v : \R_+ \times \C^2 \to \R^d$ and $\widetilde{\sigma}_v : \R_+ \times \C^2 \to \R^{d \times d}$ are any progressively measurable functions satisfying 
\begin{align*}
\widetilde{b}_{\o}(t,Y_{\o},Y_1) &= \E^{\QQ}\left[\left. b(t,Y_{\o},Y_{N_{\o}(\tree_1)}) \, \right| \, Y_{\o}[t], \, Y_1[t]\right], \\
\widetilde{b}_1(t,Y_{\o},Y_1) &= \E^{\QQ}\left[\left. 1_{\{1 \in \tree_1\}} \gamma_t(Y_1,Y_{\o}) \, \right| \, Y_{\o}[t], \, Y_1[t]\right], \\
\widetilde{\sigma}_{\o}\widetilde{\sigma}_{\o}^\top(t,Y_{\o},Y_1) &= \E^{\QQ}\left[\left. \sigma \sigma^\top(t,Y_{\o}) \, \right| \, Y_{\o}[t], \, Y_1[t]\right], \\
\widetilde{\sigma}_{1}\widetilde{\sigma}_{1}^\top(t,Y_{\o},Y_1) &= \E^{\QQ}\left[\left. 1_{\{1 \in \tree_1\}}\sigma\sigma^\top(t,Y_{1}) \, \right| \, Y_{\o}[t], \, Y_1[t]\right].
\end{align*}
Note again that progressively measurable versions exist by Lemma \ref{le:optprojection}.

 Now, by Remark \ref{rem-treerecovery} and in particular \eqref{def:Tmeasurable-strong},  $\{1 \in \tree_1\}$ is $Y_1[t]$-measurable for each $t > 0$. 
 Together with  the progressive measurability of
 $(t,x,x') \mapsto \gamma_t(x,x')$, this shows that 
\begin{align*}
  \widetilde{\sigma}_{\o}(t,Y_{\o},Y_1) &= \sigma(t,Y_{\o}), \\
  \widetilde{\sigma}_{1}(t,Y_{\o},Y_1) &= 1_{\{1 \in \tree_1\}}\sigma(t,Y_1), \\ 
\widetilde{b}_1(t,Y_{\o},Y_1) &= 1_{\{1 \in \tree_1\}}\gamma_t(Y_1,Y_{\o}).
\end{align*}
On the other hand, in terms of the Radon-Nikodym derivative $d\QQ/d\PP$ we can rewrite  
\begin{align*}
\widetilde{b}_{\o}(t,Y_{\o},Y_1) &= \E^{\PP}\left[\left. \frac{d\QQ}{d\PP} b(t,Y_{\o},Y_{N_{\o}(\tree_1)}) \, \right| \, Y_{\o}[t], Y_1[t] \right] \Big/ \E^{\PP}\left[\left. \frac{d\QQ}{d\PP} \, \right| \, Y_{\o}[t], Y_1[t] \right].
\end{align*}
On the $Y_1[t]$-measurable event $\{N_{\o}(\tree_1) = \emptyset\}$, we have
$\widetilde{b}_{\o}(t,Y_{\o},Y_1) = b(t,Y_{\o},\onepoint)$,
where we recall the convention that $\onepoint$ denotes the unique element of the one-point space $\C^0$. On the other hand, recalling the definitions of $d\QQ/d\PP$ and $\gamma_t$ from \eqref{QQPP} and
  \eqref{def:gamma}, respectively, on the complementary event $\{N_{\o}(\tree_1) \neq \emptyset\}$ we have
\begin{align*}
\widetilde{b}_{\o}(t,Y_{\o},Y_1) &= \E^{\PP}\left[\left. \frac{|N_{\o}(\tree_1)|}{|N_1(\tree_1)|} b(t,Y_{\o},Y_{N_{\o}(\tree_1)}) \, \right| \, Y_{\o}[t], Y_1[t] \right] \Big/ \E^{\PP}\left[\left. \frac{|N_{\o}(\tree_1)|}{|N_1(\tree_1)|} \, \right| \, Y_{\o}[t], Y_1[t] \right] \\
	&= \gamma_t(Y_{\o},Y_1).
\end{align*}
Thus, in either case, $\widetilde{b}_{\o}(t,Y_{\o},Y_1) = \gamma_t(Y_{\o},Y_1)$, and in fact this identity is precisely the purpose of the change of measure $\QQ$. 
This concludes the proof. 
\end{proof}

\noindent
{\bf Step 2.}  We now express (in Lemmas \ref{lem:Z} and \ref{lem-claw} below) 
 the (conditional) density  $d\widetilde{\nu}_t/d \widetilde{P}^{*,1}_t$ explicitly in terms
of certain local martingales that we now define.  We recall the canonical space $\Omega^n$ and canonical
processes $X = (X_v)_{v \in \V_1}$  introduced in Section \ref{subs-uniqueoutline} and
define the processes $M^n_v, R_v$,  and $R_{\o}$ on $\Omega^n$ as follows: 
\begin{align}
M^n_v &:= \int_0^\cdot (\sigma \sigma^\top)^{-1}(s, X_v)b(s,X_v,X_{N_v(\tree_n)}) \cdot dX_v(s), & n \in \N,  v \in \V_{n-1}, \nonumber  \\
R_v &:= \int_0^\cdot (\sigma \sigma^\top)^{-1}(s, X_v)\gamma_s(X_v,X_{\mom_v})  \cdot  dX_v(s), & v \in \V \backslash \{\o \}, \label{def:pf:MRmartingales} \\ 
R_{\o} &:= \int_0^\cdot (\sigma \sigma^\top)^{-1}(s, X_{\o}) \gamma_s(X_{\o},X_1) \cdot dX_{\o}(s), \nonumber
\end{align}
where we have omitted the arguments from $M^n_v$, $R_v$, and $R_{\o}$ for notational conciseness. 
It will be important later to take note of the following consistency property of $M^n_v$ when we stay away from the leaves of $\V_n$: 
\begin{align}
M^n_v((x_u)_{u \in \V_n}) = M^{n+1}_v((x_u)_{u \in \V_{n+1}}), \quad \text{ for  } v \in \V_{n-1}, \ (x_u)_{u \in \V_{n+1}} \in \C^{\V_{n+1}}. \label{pf:GW-Mconsistency}
\end{align}
Recall the Doleans exponential $\EE_t$ defined in \eqref{def:doleans-exponential}. 
 
\begin{lemma}
	\label{lem:Z}
 For each $t > 0$, we have 
	\begin{equation}
		\frac{d\widetilde{\nu}_t}{d \widetilde{P}^{*,1}_t} = \frac{d\nu_t}{d P^{*,1}_t} = \EE_t(M^1_{\o})\prod_{v \in \tree_1 \backslash \{\o \}} \EE_t(R_v). \label{pf:dnu-dp*}
	\end{equation}  
\end{lemma}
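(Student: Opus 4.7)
The plan is to establish the two claimed equalities separately. First I would verify $\tfrac{d\nu_t}{dP^{*,1}_t}=\EE_t(M^1_{\o})\prod_{v\in\tree_1\setminus\{\o\}}\EE_t(R_v)$ by Girsanov's theorem under $\PP$, and then deduce $\tfrac{d\widetilde\nu_t}{d\widetilde P^{*,1}_t}=\tfrac{d\nu_t}{dP^{*,1}_t}$ by showing that $\tfrac{d\widetilde\nu_t}{d\nu_t}=\tfrac{d\widetilde P^{*,1}_t}{dP^{*,1}_t}$ as functions on $\C^{\V_1}$, so that these factors cancel in the chain rule.

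For the Girsanov step, I would compare the $\PP$-SDE system \eqref{statements:localequationGW} for $Y_{\V_1}$ with the driftless system \eqref{def:driftlessSDE} for $\widehat X_{\V_1}$. Both are driven by the same independent $\PP$-Brownian motions $(B_v)_{v\in\V_1}$, and on coordinates $v\in\V_1\setminus\tree_1$ both processes are constant, so the two laws trivially agree there. On the active coordinates, the drift of $Y_{\o}$ is $b(t,Y_{\o},Y_{N_{\o}(\tree_1)})$, contributing the Dol\'eans exponent $M^1_{\o}$, while the drift of each $Y_k$ for $k\in\tree_1\setminus\{\o\}$ is $\gamma_t(Y_k,Y_{\o})$, contributing $R_k$. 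The independence of the driving noises turns the Girsanov density into the advertised product. The true-martingale hypothesis of Lemma~\ref{le:ap:girsanov} is supplied by the square-integrability of $\gamma_t$ along $Y$-paths in property~(10) of Definition~\ref{def-GWlocchar}, together with the linear growth of $b$ from Assumption~(\ref{assumption:A}.1) and standard moment estimates for the finite-dimensional SDE satisfied by $Y_{\V_1}$.

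For the ratio identity, I would express each Radon--Nikodym derivative as a conditional expectation of $d\QQ/d\PP$: at $\omega\in\C^{\V_1}$,
\[
\tfrac{d\widetilde\nu_t}{d\nu_t}(\omega)=\E^{\PP}\left[\tfrac{d\QQ}{d\PP}\,\Big|\,Y_{\V_1}[t]=\omega\right], \qquad \tfrac{d\widetilde P^{*,1}_t}{dP^{*,1}_t}(\omega)=\E^{\PP}\left[\tfrac{d\QQ}{d\PP}\,\Big|\,\widehat X_{\V_1}[t]=\omega\right].
\]
Rewriting $\tfrac{d\QQ}{d\PP}=\tfrac{|N_{\o}(\tree_1)|}{1+\Nvar}\mathbf{1}_{\{1\in\tree_1\}}+\mathbf{1}_{\{1\notin\tree_1\}}$ shows that $d\QQ/d\PP$ depends on $\tree$ only through the $\tree_1$-measurable $|N_{\o}(\tree_1)|$ and through $\Nvar=|N_1(\tree)|-1$. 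Since $\tree_1$ is recovered from $\omega$ by the same deterministic map $\detfn$ of Remark~\ref{rem-treerecovery} in both cases, to conclude that both conditional expectations collapse to the common function $\E^{\PP}[d\QQ/d\PP\,|\,\tree_1]$ of $\tree_1$, it is enough to establish the conditional independences $Y_{\V_1}[t]\indep \Nvar\,|\,\tree_1$ and $\widehat X_{\V_1}[t]\indep\Nvar\,|\,\tree_1$ under $\PP$.

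The main obstacle is verifying these conditional independences, which do the real work in identifying the two densities. For $\widehat X_{\V_1}$ it is immediate: \eqref{def:driftlessSDE} on $\V_1$ involves $\tree$ only via the $\tree_1$-measurable indicators $\mathbf{1}_{\{v\in\tree\}}$, and is driven by $(B_v)_{v\in\V_1}$, which by item~(8) of Definition~\ref{def-GWlocchar} is independent of $\Nvar$. For $Y_{\V_1}$ the coefficients in \eqref{statements:localequationGW} on $\V_1$ again depend on $\tree$ only through $\tree_1$ (the function $\gamma_t$ is a fixed deterministic progressively measurable map, not a function of $\Nvar$), and the data $(B_v,Y_v(0))_{v\in\V_1}$ are independent of $\Nvar$ by item~(8); weak uniqueness for the conditional finite-dimensional SDE given $\tree_1$ (a standard consequence of the non-degeneracy in Assumption~(\ref{assumption:A}.2) and the square-integrability in property~(10), established via Girsanov as in Lemma~\ref{le:ap:girsanov}) then ensures the conditional law of $Y_{\V_1}$ given $(\tree_1,\Nvar)$ equals that given $\tree_1$, yielding the required factorization and completing both claims.
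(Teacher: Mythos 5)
Your proof is correct, but the route you take for the first equality $d\widetilde\nu_t/d\widetilde P^{*,1}_t = d\nu_t/dP^{*,1}_t$ is genuinely different from the paper's. The paper simply re-runs the Girsanov computation under $\QQ$: since $d\QQ/d\PP$ is a function of $(\tree_1,\Nvar)$ alone, which is independent of $(B_v,Y_v(0))_{v\in\V_1}$ under both measures, the $B_v$ remain $\FF$-Brownian motions under $\QQ$, the SDE systems \eqref{statements:localequationGW} and \eqref{def:driftlessSDE} still hold pathwise, and the a.s.\ integrability conditions persist under the equivalent measure $\QQ$; hence Lemma~\ref{le:ap:girsanov} yields the \emph{identical} Dol\'eans-exponential formula for $d\widetilde\nu_t/d\widetilde P^{*,1}_t$. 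Your argument instead establishes the intermediate identity $d\widetilde\nu_t/d\nu_t = d\widetilde P^{*,1}_t/dP^{*,1}_t$ and then invokes the chain rule. That intermediate identity is exactly the statement that $\E^{\PP}\big[d\QQ/d\PP\,\big|\,Y_{\V_1}[t]\big]$ and $\E^{\PP}\big[d\QQ/d\PP\,\big|\,\widehat X_{\V_1}[t]\big]$ both collapse to $\E^{\PP}\big[d\QQ/d\PP\,\big|\,\tree_1\big]$, which is a reformulation of what the ``same logic'' in the paper does implicitly; but you have to pay for it with the two conditional independences $Y_{\V_1}[t]\indep\Nvar\,|\,\tree_1$ and $\widehat X_{\V_1}[t]\indep\Nvar\,|\,\tree_1$, which the paper never needs. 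The one for $\widehat X$ is immediate, but the one for $Y$ is not a triviality: in a weak-solution formulation $Y_{\V_1}$ need not be a measurable function of $(\tree_1,(Y_v(0))_{v\in\V_1},(B_v)_{v\in\V_1})$, so you cannot conclude directly from property~(8) of Definition~\ref{def-GWlocchar}. The argument you gesture at does go through, but it should be spelled out: since $\{\tree_1=\tau_1,\Nvar=m\}$ is $\F_0$-measurable and independent of the Brownian motions, the $(B_v)_{v\in\V_1}$ remain $\FF$-Brownian motions under the regular conditional probability $\PP(\,\cdot\,|\,\tree_1=\tau_1,\Nvar=m)$, and the SDE system \eqref{statements:localequationGW} with tree $\tau_1$ still holds a.s.\ under it; uniqueness in law for that finite system (via Lemma~\ref{le:ap:girsanov}, using Assumption~(\ref{assumption:A}.2b) and property~(10)) then shows the conditional law of $Y_{\V_1}$ does not depend on $m$. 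With that filled in, both arguments are sound; the paper's is shorter, and yours has the modest benefit of isolating the ``tilting acts only through the tree'' mechanism as an explicit conditional-independence statement rather than leaving it implicit in the phrase ``the same logic.''
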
 
 
\begin{proof}
The continuity of $b$ and the processes $\widehat{X}_v$ and $Y_v$ for each $v$ ensures that the following integrals are trivially a.s.\ finite:
	\begin{align*}
	\int_0^T|b(t,\widehat{X}_{\o},\widehat{X}_{N_{\o}(\tree_1)})|^2\,dt, \qquad \int_0^T|b(t,Y_{\o},Y_{N_{\o}(\tree_1)})|^2\,dt.
	\end{align*}
	We know also from condition (10) of Definition \ref{def-GWlocchar} that the following integrals are a.s.\ finite:
	\begin{align*}
	\int_0^T|\gamma_t(Y_k,Y_{\o})|^2dt, \qquad \int_0^T|\gamma_t(\widehat{X}_k,\widehat{X}_{\o})|^2dt.
	\end{align*}
	Recalling the form of the  SDE systems for $Y  = (Y_v)_{v \in \V_1}$ and $(\widehat{X}_v)_{v \in \V_1}$
        in \eqref{statements:localequationGW}   and \eqref{def:driftlessSDE}, respectively, and the definitions of 
        $\nu_t$, $\widetilde{\nu_t}$ and   ${P}^{*,1}_t$,  $\widetilde{P}^{*,1}_t$ as the laws of 
         $(Y_v)_{v  \in \V_1}$ and $(\widehat{X}_v)_{v \in \V_1}$  under $\PP$ and $\widetilde{\PP}$, respectively,  
         these facts justify an application of Girsanov's theorem in the form of Lemma \ref{le:ap:girsanov}.  
	By expanding the expression analogous to \eqref{ap:def:girsanov} in the above setting, we see
          that the Radon-Nikodym derivative of $\nu_t$ with respect to $P^{*,1}_t$ takes the form announced in the second equality in \eqref{pf:dnu-dp*}.
	The same logic (noting that $\QQ$ and $\PP$ are mutually absolutely continuous) also yields the same form for $d\widetilde{\nu}_t/d\widetilde{P}^{*,1}_t$, thus justifying the first equality in \eqref{pf:dnu-dp*}.
\end{proof}

Our next goal is to  calculate the following conditional density process for each $t > 0$:  
\begin{align}  \label{def:zt}
Z_t((\tilde{x}_{k})_{k \in \N}; x_{\o},x_1)
  & := \frac{d\widetilde\nu\big((X_{1+k}[t])_{k \in \N} \in \cdot \, | \, X_1[t]=x_1[t],\,X_{\o}[t]=x_{\o}[t]\big)}{d\widetilde{P}^{*,1}\big((X_{1+k}[t])_{k \in \N} \in \cdot \, | \, X_1[t]=x_1[t],\,X_{\o}[t]=x_{\o}[t]\big)}((\tilde{x}_{k}[t])_{k \in \N}), 
\end{align}
for $(x_{\o}, x_1) \in \C^{\o, 1}$ and $(\tilde{x}_{k})_{k \in \N} \in \C^{\N}$.
Recall the  definition of  $\widetilde{P}^{*,\o,1}_t$  just  prior to Lemma  \ref{lem-comeas} as the
  marginal of $\widetilde{P}^{*,1}$ on $\C^{\{\o,1\}}_t$.  
Since $Z_t(\cdot; X_{\o},X_1)$ is a   well-defined conditional density by Lemma \ref{lem:Z}, for $\widetilde{P}^{*,\o,1}_t$-a.e.\ $(x_{\o},x_1) \in \C_t^2$ we have
\begin{align}
  1 &= \E^{\widetilde{P}^{*,1}}\left[
    Z_t((X_{1+k})_{k \in \N}; X_{\o}, X_1)\, | \, X_{\o}[t]=x_{\o},X_1[t]=x_1\right] \nonumber \\
  &= \E^{\QQ}\left[
Z_t((\widehat{X}_{1+k})_{k \in \N}; \widehat{X}_{\o}, \widehat{X}_1)    \, | \, \widehat{X}_{\o}[t]=x_{\o},\widehat{X}_1[t]=x_1 \right]. \label{pf:QQE[Z]=1}
\end{align}
In particular, on the $\widehat{X}_1[t]$-measurable event $\{1 \notin \tree\}$, note that $Z_t((\widehat{X}_{1+k})_{k \in \N}; \widehat{X}_{\o}, \widehat{X}_1)$
is $(\widehat{X}_{\o}[t],\widehat{X}_1[t])$-measurable and must therefore equal $1$.

\begin{lemma}
  \label{lem-claw}
For each $t > 0$, we have 
\begin{align}
  Z_t((X_{1+k})_{k \in \N}; X_{\o},X_1)
  &= \frac{\EE_t(M^1_{\o})}{\EE_t(R_{\o})}\prod_{v \in N_{\o}(\tree) \backslash \{1\}}\EE_t(R_v), \qquad \widetilde{P}^{*,1}-a.s.  \label{eq:zt}
\end{align} 
Moreover,  for each $n \in \N$ and $v \in \V_n \setminus \V_{n-1}$, we have a.s.\
\begin{align}
\begin{split}
1 &= \E^{\PP}\left[ Z_t(\widehat{X}_{C_v(\tree)};\widehat{X}_{v},\widehat{X}_{\mom_v}) \, | \, \widehat{X}_v[t],\widehat{X}_{\mom_v}[t] \right]  \\
	&= \E^{\PP}\left[ Z_t(\widehat{X}_{C_v(\tree)};\widehat{X}_{v},\widehat{X}_{\mom_v}) \, | \, \widehat{X}_{\V_n}[t] \right],
\end{split}	\label{pf:Zconditionaldensity1}
\end{align}
where we write $C_v(\tree) := N_v(\tree) \backslash \{\mom_v\}$ for the children of the vertex $v$. 
  \end{lemma}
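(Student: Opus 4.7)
My plan for Part 1 is to apply standard conditional-density manipulations to the joint density given by Lemma \ref{lem:Z}. I would set $D := d\widetilde{\nu}_t/d\widetilde{P}^{*,1}_t = \EE_t(M^1_{\o})\prod_{v\in N_{\o}(\tree)}\EE_t(R_v)$ (using $\tree_1\setminus\{\o\}=N_{\o}(\tree)$) and observe that $Z_t = D/\E^{\widetilde{P}^{*,1}}[D\mid X_{\o}[t],X_1[t]]$, with the denominator equal to the marginal density $d\widetilde{\nu}^{\o,1}_t/d\widetilde{P}^{*,\o,1}_t$. The SDE description of $(Y_{\o},Y_1)$ under $\QQ$ from Lemma \ref{lem-comeas}, together with Girsanov's theorem in the form of Lemma \ref{le:ap:girsanov}, will identify this marginal density as $\EE_t(R_{\o})\EE_t(R_1)$, with the convention $\EE_t(R_1)\equiv 1$ on $\{1\notin\tree\}$ (since $X_1$ is constant there and the stochastic integral in $R_1$ vanishes). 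Dividing then cancels the $v=1$ factor in the product against $\EE_t(R_1)$ and produces \eqref{eq:zt}.

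For the first equality of Part 2, I would start from the defining property of $Z_t$ as a conditional density, namely $\E^{\widetilde{P}^{*,1}}[Z_t\mid X_{\o}[t],X_1[t]]=1$, equivalently $\E^{\QQ}[Z_t((\widehat{X}_{1+k})_{k\in\N};\widehat{X}_{\o},\widehat{X}_1)\mid \widehat{X}_{\o}[t],\widehat{X}_1[t]]=1$. Since $Z_t\equiv 1$ on $\{N_{\o}(\tree)=\emptyset\}$, transferring this identity to $\PP$ via $d\QQ/d\PP$ would give
\[
\E^{\PP}\!\left[\tfrac{|N_{\o}(\tree)|}{|N_1(\tree)|}\bigl(Z_t-1\bigr)\,\Big|\, \widehat{X}_{\o}[t],\widehat{X}_1[t]\right]=0 \quad \text{on } \{N_{\o}(\tree)\neq\emptyset\}.
\]
I would then invoke Proposition \ref{pr:invariance-GW} applied to the driftless dynamics (the case $b\equiv 0$, so $X=\widehat{X}$) with $h(x,y,z):=Z_t(z\setminus\{y\};x,y)-1$, where the entry of the unordered sequence $z$ matching $y$ is removed (a.s.\ unambiguously, since sibling trajectories are a.s.\ distinct). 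The displayed identity forces the corresponding $\Xi_t$ to vanish on $\{N_{\o}(\tree)\neq\emptyset\}$, and the conclusion \eqref{def:invariance-GW1} of the proposition then yields the first equality for $v=k$ any child of $\o$. For $v\in\V_n\setminus\V_{n-1}$ with $n\ge 2$, I would reduce to the depth-one case via the Galton-Watson Markov property combined with conditional independence of the driftless particles given $\tree$: conditional on the path from $\o$ down to $\mom_v$ and on $\{v\in\tree\}$, the local configuration $(\mom_v,v,C_v(\tree))$ and its corresponding particles have the same joint structure as the depth-one root neighborhood, so the same unimodular re-rooting argument applies verbatim.

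The tower property (the second equality) will follow from a conditional independence observation: $Z_t(\widehat{X}_{C_v(\tree)};\widehat{X}_v,\widehat{X}_{\mom_v})$ depends only on $\widehat{X}_v[t]$, $\widehat{X}_{\mom_v}[t]$, and the ``subtree data'' $(c_v(\tree),(\widehat{X}_{vk}[t])_{k\le c_v(\tree)})$, and on $\{v\in\tree\}$ the subtree data is independent of $\widehat{X}_{\V_n\setminus\{v,\mom_v\}}[t]$ given $\widehat{X}_v[t]$, by independence of the UGW offspring counts beyond depth $n$ together with the conditional i.i.d.\ structure of $(\widehat{X}_u)_u$ given $\tree$; hence enlarging the conditioning from $\sigma(\widehat{X}_v[t],\widehat{X}_{\mom_v}[t])$ to $\sigma(\widehat{X}_{\V_n}[t])$ adds no information. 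The hardest step will be invoking Proposition \ref{pr:invariance-GW} rigorously: the admissible $h$ there is bounded, whereas $Z_t-1$ is only square-integrable via Lemma \ref{le:lingrowth} applied to the exponentials in \eqref{eq:zt}, so a standard truncation-and-limit argument will be required; I must also verify that the ``remove the entry equal to $y$'' construction defining $h$ is measurable and a.s.\ well-defined on the relevant event $\{1\in\tree\}$.
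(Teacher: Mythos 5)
Your proof is correct and follows essentially the same route as the paper's: derive \eqref{eq:zt} from Lemma \ref{lem:Z}, Lemma \ref{lem-comeas}, Girsanov, and Bayes' rule for conditional densities; then combine the defining identity $\E^{\QQ}[Z_t\mid\widehat X_\o[t],\widehat X_1[t]]=1$ with Proposition \ref{pr:invariance-GW} (applied to the driftless system) to obtain the first equality of \eqref{pf:Zconditionaldensity1} for $v=1$; reduce general non-root $v$ to $v=1$ via the stationarity of the GW offspring law $\widehat\rho$; and obtain the second equality from conditional independence of $\widehat X_{C_v(\tree)}$ and $\widehat X_{\V_n}$ given $1_{\{v\in\tree\}}$.

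The one place your route differs is in how you feed $Z_t$ into Proposition \ref{pr:invariance-GW}. You propose $h(x,y,z) := Z_t(z\setminus\{y\};x,y)-1$ with an ``remove the entry matching $y$'' operation, which requires you to argue that sibling trajectories are a.s.\ distinct and that the removal map is measurable. The paper sidesteps this by observing that after multiplying and dividing by $\EE_t(R_1)$, the formula \eqref{eq:zt} shows $Z_t((X_{1+k})_{k\in\N};X_\o,X_1)=\widehat Z_t(\langle X_{N_\o(\tree)}\rangle;X_\o,X_1)$ is already a function of the \emph{full} unordered neighborhood $\langle X_{N_\o(\tree)}\rangle$ together with $(X_\o,X_1)$ — exactly the signature that Proposition \ref{pr:invariance-GW} expects — so one can take $h=\widehat Z_t$ directly. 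This is cleaner: the removal operation and its measurability become unnecessary, and since $\widehat Z_t\ge 0$ the truncation step needed to remove the boundedness hypothesis on $h$ is a straightforward monotone-convergence argument (your $h=Z_t-1$ changes sign, which slightly complicates that step, though one can still decompose as $Z_t$ minus a constant). You do correctly flag the boundedness issue, which the paper leaves largely implicit, so it is good you noticed it. Finally, in the reduction to $v=1$, the paper does not re-run the unimodular re-rooting at each depth; it uses only that the joint law of $(\widehat X_{\pi_v},\widehat X_v,\widehat X_{C_v(\tree)})$ is the same for all non-root $v$, which is the GW Markov property — you invoke this as well, so your phrase ``the same unimodular re-rooting argument applies verbatim'' should really be replaced by that simpler stationarity observation.
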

\begin{proof}
 We first compute the density $d\widetilde\nu^{{\o},1}_t / d \widetilde{P}^{*,{\o},1}_t$. 
By Lemma \ref{lem-comeas}, $\widetilde\nu^{{\o},1}$ is the law of the solution $(Y_{\o}, Y_1)$ to the SDE system defined by \eqref{pf:def:uniquenessGW-SDEproj1} and \eqref{pf:def:uniquenessGW-SDEproj2}.
 Hence,  condition (10) of Definition \ref{def-GWlocchar} justifies an application of Girsanov's theorem, in the form of Lemma \ref{le:ap:girsanov}, which yields 
\begin{align}
\frac{d\widetilde\nu^{{\o},1}_t}{d \widetilde{P}^{*,{\o},1}_t} (X_{\o}, X_1)
    &=  \begin{cases}\EE_t(R_{\o})\EE_t(R_1) &\text{if } 1 \in \tree_1, \\
\EE_t(R_{\o}) &\text{if } 1 \notin \tree_1. 
\end{cases} \label{def:dtildenu-dp*}
\end{align}
Moreover,  using Bayes' rule we obtain
\begin{align*}
  Z_t((X_{1+k})_{k \in \N}; X_{\o},X_1)
  &= \left. \frac{d\widetilde\nu_t}{d\widetilde{P}^{*,1}_t} (X_{\V_1}) \right/ \frac{d\widetilde\nu^{{\o},1}_t}{d\widetilde{P}^{*,{\o},1}_t}(X_{\o}, X_1). 
  \end{align*} 
Appealing to \eqref{def:dtildenu-dp*} and \eqref{pf:dnu-dp*}, we then obtain \eqref{eq:zt}.
Alternatively, recalling the definitions of the martingales $R_v$ and $M^1_{\o}$, shows that $Z_t$ is really a function of $(\lan X_{N_{\o}(\tree)}[t]\ran,X_{\o}[t],X_1[t])$; that is, the dependence on the coordinates $(X_{1+k}[t])_{k \in \N}$ is only through the equivalence class $\lan X_{N_{\o}(\tree)}[t] \ran$ (which is a random element of $\SQ(\C_t)$). Thus,   we can write  
\begin{align}
  Z_t((X_{1+k})_{k \in \N}; X_{\o},X_1) &=
  \widehat{Z}_t(\lan X_{N_{\o}(\tree)}  \ran; X_{\o},X_1),
  \label{pf:Z-symmetrynote}
\end{align}
where $\widehat{Z}_t: \SQ (\C) \times \C^2 \mapsto \R_+$ is defined by 
\[  \widehat{Z}_t(\lan X_{N_{\o}(\tree)}\ran ;X_{\o},X_1) := \frac{\EE_t(M^1_{\o})}{\EE_t(R_{\o})\EE_t(R_1)}\prod_{v \in N_{\o}(\tree)}\EE_t(R_v). \]

For the proof of the second (and last) assertion of the lemma,
we take advantage of some symmetries of the driftless particle system $\widehat{X}_\V$
defined in \eqref{def:driftlessSDE}. 
First note that, by inspecting \eqref{def:driftlessSDE}, and recalling the  conditional
  independence properties of the UGW tree  $\tree$ itself,
it is clear that $\widehat{X}_{C_v(\tree)}$ is conditionally independent of $\widehat{X}_{\V_n}$ given $\{v \in \tree\}$ under $\PP$, for each $n \in \N$ and $v \in \V_n \setminus \V_{n-1}$.  
 This immediately implies the second identity in \eqref{pf:Zconditionaldensity1}.
Second, we claim that in order to prove the first identity in \eqref{pf:Zconditionaldensity1} it suffices to prove it only for  the case $v=1$. This is because each non-root vertex in the UGW($\rho$) tree $\tree$ has the same offspring distribution $\widehat\rho$ under $\PP$, and thus the conditional law of $\widehat{X}_{C_v(\tree)}$ given $\{v \in \tree\}$ does not depend on the choice of $v \in \V \setminus \{\o\}$.

To prove the first identity in \eqref{pf:Zconditionaldensity1} for the case $v=1$, first recall that, as noted just after \eqref{pf:QQE[Z]=1}, on the event $\{1 \notin \tree\}$ it holds that $Z_t(\widehat{X}_{C_1(\tree)};\widehat{X}_1,\widehat{X}_{\o}) = 1$. Hence, we focus on the complementary event. Recall the notation of \eqref{pf:Z-symmetrynote}, which gives
\begin{align*}
  \E^{\PP}\left[
    Z_t(\widehat{X}_{C_1(\tree)}; \widehat{X}_1,\widehat{X}_{\o})    \, | \, \widehat{X}_1[t],\widehat{X}_{\o}[t] \right] &= \E^{\PP}\left[
  \widehat{Z}_t(\lan \widehat{X}_{N_1(\tree)}\ran; \widehat{X}_1,\widehat{X}_{\o})  \, | \, \widehat{X}_1[t],\widehat{X}_{\o}[t] \right].
\end{align*}
We are now in a position to apply Proposition \ref{pr:invariance-GW}. Indeed, Proposition \ref{pr:invariance-GW} applies not just to  the original SDE system $X_\V$ of  \eqref{statements:SDE} but also to the system $\widehat{X}_\V$ defined in \eqref{def:driftlessSDE}, simply because the latter is the special case of the former corresponding to $b \equiv 0$.
We deduce that, on the event $\{1 \in \tree\}$, we have
\begin{align*}
  \E^{\PP}\left[
    \widehat{Z}_t(\lan \widehat{X}_{N_1(\tree)}\ran; \widehat{X}_1,\widehat{X}_{\o})\, | \, \widehat{X}_1[t],\widehat{X}_{\o}[t] \right] &= \Xi_t(\widehat{X}_1,\widehat{X}_{\o}),
\end{align*}
where we define $\Xi_t : \C_t^2 \to \R$ by
\begin{align*}
  \Xi_t(\widehat{X}_{\o},\widehat{X}_1) := 1_{\{1 \in \tree\}}\frac{\E^{\PP}\left[\left. \frac{|N_{\o}(\tree)|}{|N_1(\tree)|} 
   \widehat{Z}_t(\lan \widehat{X}_{N_1(\tree)}\ran; \widehat{X}_{\o},\widehat{X}_{1})    \, \right| \, \widehat{X}_{\o}[t], \, \widehat{X}_1[t] \right]}{\E^{\PP}\left[\left. \frac{|N_{\o}(\tree)|}{|N_1(\tree)|} \, \right| \, \widehat{X}_{\o}[t], \, \widehat{X}_1[t] \right]}.
\end{align*}
Recalling from \eqref{QQPP}
that $d\QQ/d\PP = |N_{\o}(\tree)|/|N_1(\tree)|$ on $\{1 \in \tree\}$, it follows from Bayes' rule that
\begin{align*}
  \Xi_t(\widehat{X}_{\o},\widehat{X}_1)  = \E^{\QQ}\left[\left.
 \widehat{Z}_t(\lan \widehat{X}_{N_1(\tree)}\ran; \widehat{X}_{\o},\widehat{X}_{1}) 
    \, \right| \, \widehat{X}_{\o}[t], \, \widehat{X}_1[t] \right], \quad \text{ on } \{1 \in \tree\}.
\end{align*}
Reverting back from the $\widehat{Z}$ to $Z$ notation as in \eqref{pf:Z-symmetrynote}, this can be rewritten as
\begin{align*}
  \Xi_t(\widehat{X}_{\o},\widehat{X}_1)  = \E^{\QQ}\left[\left.
  Z_t((\widehat{X}_{1+k})_{k \in \N}; \widehat{X}_{\o},\widehat{X}_{1})   \, \right| \, \widehat{X}_{\o}[t], \, \widehat{X}_1[t] \right], \quad \text{ on } \{1 \in \tree\}.
\end{align*}
It follows from \eqref{pf:QQE[Z]=1} that $\Xi_t(\widehat{X}_{\o},\widehat{X}_1) = 1$ on $\{1 \in \tree\}$, which completes the proof of \eqref{pf:Zconditionaldensity1}.
\end{proof}

\noindent
    {\bf Step 3. }  We finally present the main construction of the argument, which involves establishing a one-to-one correspondence between solutions of the local equation  and solutions of the infinite  SDE system \eqref{statements:SDE} via
      a recursive construction and an extension.   Recall the definition of the law $P^{*,n} \in \P(\C^n)$ of the driftless process introduced in Section \ref{subs-uniqueoutline}, and as usual, let $P^{*,n}_t$ denote its projection onto $\P(\C^n_t)$.
  For each $t > 0$ and $n \ge 1$, define a probability measure $\nufinal^n_t \in \P(\C_t^{\V_n})$ via the density
\begin{align}
  \frac{d\nufinal^n_t}{dP^{*,n}_t}((x_v)_{v \in \V_n}) &= \frac{d\nu_t}{dP^{*,1}_t}((x_v)_{v \in \V_1})\prod_{v \in \tree_{n-1} \backslash \{\o \}}
Z_t((x_{vk})_{k \in \N}; x_v,x_{\mom_v}),
  \label{pf:GW-defQn}
\end{align}
with $Z_t$ as defined in \eqref{def:zt}.
We now establish the following.

\begin{proposition}  \label{lem-ext}
We have $\nufinal^1_t = \nu_t$ for each $t > 0$. Moreover,  $\{\nufinal^n_t : t > 0, \ n \in \N\}$ is a well defined and \emph{consistent} family of probability measures in the sense that for $t > s  \geq 0$ and $n \ge k$ the projection of $\nufinal^n_t$ from $\C^{\V_n}_t$ to $\C^{\V_k}_s$ is precisely $\nufinal^k_s$. 
  Furthermore, the unique extension  $\nufinal \in \P(\C^{\V})$ of
  $\{\nufinal^n\}$ to $\P(\C^{\V})$ coincides with  the (unique) law of a weak
  solution of the SDE system \eqref{statements:SDE} with $\tree$ given as a UGW($\rho$) tree. 
\end{proposition}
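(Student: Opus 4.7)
The plan is to establish the three assertions in order: the first is almost immediate from the definition, the second requires a Girsanov-style martingale argument coupled with the conditional density identity for $Z_t$, and the third rests on a telescoping calculation that identifies the extension $\nufinal$ with the law of a weak solution of the infinite SDE system \eqref{statements:SDE}. The identity $\nufinal^1_t = \nu_t$ follows directly from \eqref{pf:GW-defQn}: when $n=1$ the index set $\tree_{n-1}\setminus\{\o\} = \tree_0 \setminus \{\o\}$ is empty, so the product collapses and $d\nufinal^1_t/dP^{*,1}_t = d\nu_t/dP^{*,1}_t$.

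For consistency in the generation index $n$, I will compare $d\nufinal^n_t/dP^{*,n}_t$ with $d\nufinal^{n-1}_t/dP^{*,n-1}_t$. Under the reference measure $P^{*,n}$, conditionally on $X_{\V_{n-1}}$, the coordinates $(X_v)_{v \in \V_n \setminus \V_{n-1}}$ decompose into independent blocks indexed by their generation-$(n-1)$ parent, since the driftless SDE \eqref{def:driftlessSDE} is driven by independent Brownian motions and the offspring structure of the UGW tree is independent across vertices. The ratio of the two densities is precisely the extra product
$$\prod_{v \in \tree_{n-1}\setminus\V_{n-2}} Z_t\bigl((x_{vk})_{k\in\N};\, x_v,\, x_{\mom_v}\bigr),$$
in which distinct $v$'s involve disjoint blocks of children in $\V_n \setminus \V_{n-1}$; integrating each factor over those children via \eqref{pf:Zconditionaldensity1} returns $1$, yielding the generation consistency. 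Consistency in $t$ follows by observing that the density is a finite product of Doleans exponentials $\EE_t(M^n_v)$ and $\EE_t(R_v)$, each a local martingale under $P^{*,n}$ (since under $P^{*,n}$ each $X_v$ is either constant or driven by an independent Brownian motion); the integrability condition (10) of Definition \ref{def-GWlocchar}, together with Lemma \ref{le:lingrowth} and Assumption (\ref{assumption:A}.1), lets Lemma \ref{le:ap:girsanov} upgrade each factor to a true martingale, and independence of the driving Brownian motions across vertices then makes the full product a $P^{*,n}$-martingale family in $t$.

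Given consistency, Kolmogorov's extension theorem produces a unique $\nufinal \in \P(\C^\V)$ compatible with $\{\nufinal^n\}$. To identify $\nufinal$ as the law of a weak solution of \eqref{statements:SDE}, the crucial step is a telescoping calculation. One first derives the analogue of \eqref{eq:zt} at an arbitrary interior vertex $v$ by repeating the Girsanov computations of Lemmas \ref{lem:Z}--\ref{lem-claw} with $v$ playing the role of the root, invoking the unimodularity-based symmetry captured by Proposition \ref{pr:invariance-GW} to rewrite the resulting conditional expectations in the same form as at the root. Substituting this expression into \eqref{pf:GW-defQn} produces a cancellation of each $\EE_t(R_v)$ appearing in $d\nu_t/dP^{*,1}_t$ against a matching $\EE_t(R_v)^{-1}$ appearing in the $Z_t$ factor at $v$, and similarly across generations, leaving the clean product
$$\frac{d\nufinal^n_t}{dP^{*,n}_t} = \prod_{v \in \tree_{n-1}} \EE_t(M^n_v) \cdot \prod_{v \in \tree_n \setminus \tree_{n-1}} \EE_t(R_v),$$
so that every interior vertex $v \in \tree_{n-1}$ acquires the full drift $b(\cdot, X_v, X_{N_v(\tree_n)})$, while only the leaves of $\tree_n$ retain the surrogate drift $\gamma_t$. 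Girsanov's theorem then shows that under $\nufinal^n$ every interior $X_v$ satisfies the correct SDE \eqref{statements:SDE}; letting $n \to \infty$ and using consistency produces the full infinite SDE system under $\nufinal$. The tree $\tree$, recoverable from the paths by Remark \ref{rem-treerecovery} and unaffected by the drift-only change of measure, retains its UGW($\rho$) law, so $\nufinal$ coincides with the unique in law weak solution to \eqref{statements:SDE} guaranteed by Assumption (\ref{assumption:A}.4) and Remark \ref{rem-unique}.

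The principal obstacle is this telescoping calculation: establishing the non-root analogue of \eqref{eq:zt} requires re-running Lemmas \ref{lem:Z} and \ref{lem-claw} at a generic vertex while invoking unimodularity to equate the resulting conditional-expectation functionals with those at the root, and one must carefully track the cancellation of $\EE_t(R_v)$ factors across generations and verify that the integrability hypotheses (property (10) of Definition \ref{def-GWlocchar} together with Lemma \ref{le:lingrowth}) propagate so that Girsanov may be invoked at every stage. Throughout, the measurability of $\tree$ with respect to the paths (Remark \ref{rem-treerecovery}) is indispensable, as it lets each indicator $1_{\{v \in \tree\}}$ be absorbed into path-space data rather than treated as exogenous randomness.
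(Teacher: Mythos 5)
Your architecture tracks the paper's proof closely --- the same sequence of steps and the same key telescoping cancellation --- but there is a genuine gap in the consistency-in-$t$ argument. You claim that each $\EE_t(M^n_v)$ and $\EE_t(R_v)$ is individually a true martingale and that ``independence of the driving Brownian motions across vertices'' makes the full product a $P^{*,n}$-martingale. The factors are not independent: $M^n_v$ depends on the paths $X_{N_v(\tree_n)}$ of the neighbours, not just on $W_v$, so a product of individually-true martingales need not be a martingale. What actually works is \emph{orthogonality}: the local martingales $\{M^n_v\} \cup \{R_u\}$ are driven by distinct Brownian motions and hence have pairwise-zero quadratic covariation, so the product of their Doleans exponentials equals the Doleans exponential of the sum. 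That single exponential is a non-negative local martingale, hence a supermartingale, and it is upgraded to a true martingale once one verifies constant mean $1$ --- which is precisely what iterating your consistency-in-$n$ identity down to the base case $n=1$ supplies. Trying to apply Lemma \ref{le:ap:girsanov} to each factor in isolation is both unnecessary and problematic, since $M^n_v$ is not a functional of $X_v$ alone.

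Two smaller points. You say one must ``derive the analogue of \eqref{eq:zt} at an arbitrary interior vertex $v$ by repeating the Girsanov computations of Lemmas \ref{lem:Z}--\ref{lem-claw}, invoking the unimodularity-based symmetry of Proposition \ref{pr:invariance-GW}.'' This is not needed: $Z_t$ is a fixed measurable function of its path arguments, \eqref{eq:zt} gives an explicit Doleans-exponential formula for it, and evaluating that formula at $\big((X_{vk})_{k\in\N}, X_v, X_{\mom_v}\big)$ together with the consistency \eqref{pf:GW-Mconsistency} yields $\EE_t(M^n_v)\,\EE_t(R_v)^{-1}\prod_{u\in C_v(\tree)}\EE_t(R_u)$ by direct substitution. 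Unimodularity enters exactly once, in Lemma \ref{lem-claw} to establish the normalization \eqref{pf:Zconditionaldensity1}; nothing needs to be ``re-derived'' afterwards. Finally, your passage to the infinite SDE system under $\nufinal$ is asserted rather than argued; the paper supplies the argument by identifying $\nufinal^n$ as the law of a truncated system \eqref{pf:uniquenessGW-finalprojSDE-Y} and then applying the Kurtz--Protter weak continuity of stochastic integrals to pass to the limit, and some such step is required to get from finite-dimensional marginal identities to the statement that the full infinite configuration solves \eqref{statements:SDE}.
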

\begin{proof}
  Note that $\nufinal^1_t = \nu_t$ for $t > 0$ follows immediately from the definition \eqref{pf:GW-defQn}. 
For the next assertion, note that (as justified below) for each $t > 0$ and $n \in \N$,
\begin{align*}
\E^{P^{*,n+1}} &\left[\frac{d\nufinal^{n+1}_t}{dP^{*,n+1}_t}(X_{\V_{n+1}}[t]) \, \Big| \, X_{\V_n}[t]\right] \\
&= \E^{P^{*,n+1}}\left[ \frac{d\nufinal^{n}_t}{dP^{*,n}_t}(X_{\V_{n}}[t])\prod_{v \in \tree_n \setminus \tree_{n-1}}
Z_t(X_{C_v(\tree)}; X_v,X_{\mom_v}) 
\, \Big| \, X_{\V_n}[t]\right] \\
&= \frac{d\nufinal^{n}_t}{dP^{*,n}_t}(X_{\V_{n}}[t])\prod_{v \in \tree_n \setminus \tree_{n-1}}\E^{P^{*,n+1}}\left[ Z_t(X_{C_v(\tree)}; X_v,X_{\mom_v})  \, \Big| \, X_{\V_n}[t]\right] \\
&= \frac{d\nufinal^{n}_t}{dP^{*,n}_t}(X_{\V_{n}}[t]).
\end{align*}
Indeed, the last line uses the relation \eqref{pf:Zconditionaldensity1}, and the penultimate
line uses the fact that  for $n \in \N$, $(\widehat{X}_{C_v(\tree)})_{v \in \V_n \setminus \V_{n-1}}$ are conditionally independent given $\widehat{X}_{\V_n}$,  which follows from the conditional independence structure of the tree itself; $(C_v)_{v \in \V_n \setminus \V_{n-1}}$ are conditionally independent of each other given $(1_{\{v \in \tree\}})_{v \in \V_n \setminus \V_{n-1}}$.
Iterating this, we find for each $t > 0$ and $n \ge k$ with $n,k \in \N$ that
\begin{align}
\E^{P^{*,n}}\left[\frac{d\nufinal^n_t}{dP^{*,n}_t}(X_{\V_n}[t]) \, \Big| \,  X_{\V_k}[t]\right] &= \frac{d\nufinal^k_t}{dP^{*,k}_t}(X_{\V_k}[t]), \ \ a.s. \label{pf:GWconsistency2}
\end{align}
In particular,
\begin{equation}
	\E^{P^{*,n}}\left[\frac{d\nufinal^n_t}{dP^{*,n}_t}(X_{\V_n}[t]) \right] = \E^{P^{*,1}}\left[\frac{d\nufinal^1_t}{dP^{*,1}_t}(X_{\V_1}[t]) \right] = 1 \label{pf:GWconsistency0}
\end{equation}
and $\nufinal^n_t$ is a well-defined probability measure.

Next, we rewrite the Radon-Nikodym derivative \eqref{pf:GW-defQn} in a more useful form.
Recalling the definitions of the martingales $M^n_v$ and $R_v$ given in \eqref{def:pf:MRmartingales},
the consistency equations \eqref{pf:GW-Mconsistency} and the relation \eqref{eq:zt}, it is straightforward to check that for each
$v \in {\mathcal T}_{n-1} \setminus \{{\o}\}$, 
\begin{align*}
Z_t((X_{vk})_{k \in \N}; X_v, X_{\mom_v}) 
   &= \frac{\EE_t(M^n_v)}{\EE_t(R_v)}\prod_{u \in N_v(\tree) \backslash \{\mom_v\}}\EE_t(R_u) \\
	&= \frac{\EE_t(M^n_v)}{\EE_t(R_v)}\prod_{u \in C_v(\tree)}\EE_t(R_u),
\end{align*}
where we again abbreviate $C_v(\tree) = N_v(\tree) \backslash \{\mom_v\}$.
Combining this relation with \eqref{pf:GW-defQn} and the form of $d\nu_t/dP^{*,1}_t$ given in  \eqref{pf:dnu-dp*}, we obtain 
 \begin{align*}
\frac{d\nufinal^n_t}{d P^{*,n}_t} &= \EE_t(M^n_{\o}) \prod_{v \in \tree_1 \backslash \{\o \}}\EE_t(R_v) \prod_{v \in \tree_{n-1} \backslash \{\o \}} Z_t((X_{vk})_{k \in \N};X_v,X_{\mom_v}) \\
	&= \EE_t(M^n_{\o}) \prod_{v \in \tree_1 \backslash \{\o \}}\EE_t(R_v) \prod_{v \in \tree_{n-1} \backslash \{\o \}} \left(\frac{\EE_t(M^n_v)}{\EE_t(R_v)}\prod_{u \in C_v(\tree)}\EE_t(R_u)\right).
\end{align*}
For each $v \in \tree_{n-1}\backslash \{\o \}$, the factor $\EE_t(R_v)$ appears exactly once in the numerator and once in the denominator. Hence, the above reduces to
\begin{align}
\frac{d\nufinal^n_t}{dP^{*,n}_t} &= \prod_{v \in \tree_{n-1}}\EE_t(M^n_v)\prod_{v \in \tree_n \backslash \tree_{n-1}}\EE_t(R_v) \nonumber \\
	&= \EE_t\Bigg(\sum_{v \in \tree_{n-1}}M^n_v + \sum_{v \in \tree_n \setminus \tree_{n-1}} R_v\Bigg), \label{pf:uniquenessGW-RNformQn}
\end{align}
where the second equality follows from the fact that the local martingales $\{M^n_v : v \in \V_{n-1}\} \cup \{R_v : v \in \V_n \setminus \V_{n-1}\}$ are orthogonal.
Combining this with \eqref{pf:GWconsistency0} gives the martingale property
\begin{align}
\E^{P^{*,n}}\left[\frac{d\nufinal^n_t}{dP^{*,n}_t}(X_{\V_n}[t]) \, \Big| \,  X_{\V_n}[s]\right] &= \frac{d\nufinal^n_s}{dP^{*,n}_s}(X_{\V_n}[s]), \ \ a.s., \label{pf:GWconsistency1}
\end{align}
for $t > s > 0$.  

Together, equations \eqref{pf:GWconsistency2} and \eqref{pf:GWconsistency1} prove the stated 
consistency property of the family $\{\nufinal^n\}$.  
Due to the Daniell-Kolmogorov theorem, we deduce from this that there is a unique $Q \in \P(\C^\V)$ whose restriction to $\C^{\V_n}_t$ is $\nufinal^n_t$ for each $n \in \N$ and $t > 0$.

We now turn to the proof of the last statement  of the proposition, which asserts that  $Q$ is the unique
law  of a weak solution to the SDE system \eqref{statements:SDE}.  
To this end, for each $n \ge 1$ and $t > 0$, we identify $\nufinal^n_t$  as the law of an SDE solution as follows.
Recalling the definition  $P^{*,n} = \PP \circ \widehat{X}_{\V_n}^{-1}$, where
$\widehat{X}_v$ satisfies \eqref{def:driftlessSDE}, and the definitions of $M^n_v$ and $R_v$, 
we deduce from \eqref{pf:GWconsistency0}, \eqref{pf:uniquenessGW-RNformQn}, and Girsanov's theorem
that $\nufinal^n$ is precisely the law of a weak solution $(Y_v)_{v \in \V_n}$ of the SDE system
\begin{align}
dY_{v}(t) &= 1_{\{v \in \widetilde{\tree}\}}\Big(b(t,Y_{v},Y_{N_v(\widetilde{\tree})})dt + \sigma(t,Y_v)dB_v(t)\Big), \quad v \in \V_{n-1} \label{pf:uniquenessGW-finalprojSDE-Y} \\
dY_v(t) &= 1_{\{v \in \widetilde{\tree}\}}\Big(\gamma_t(Y_v,Y_{\mom_v})dt + \sigma(t,Y_v)dB_v(t)\Big), \quad v \in \V_n \backslash \V_{n-1}, \nonumber
\end{align}
where $(B_v)_{v \in \V_n}$ are independent Brownian motions,  $(Y_v(0))_{v \in \V_n}$ are i.i.d.\ with law $\lambda_0$, and $\widetilde{\tree}$ is an independent  UGW($\rho$) tree.

Now, define $\widehat{\nufinal}^n \in \P(\C^\V)$ so that the projection onto $\C^{\V_n}$ is precisely $\nufinal^n$ and the coordinates on $\V \backslash \V_n$ are (arbitrarily) chosen to be identically zero, with probability $1$. It is immediate that $\widehat{\nufinal}^n$ converges weakly to $Q$, due to the consistency property of $\{\nufinal^n_t : t \ge 0, \, n \in \N\}$ established above. On the other hand, we argue that if
$\{\widehat{\nufinal}^n\}$ converges to some limit, then this limit must be the law of a weak solution of the infinite SDE system  \eqref{statements:SDE}.  Indeed, if $\{\widehat{\nufinal}^n\}$ converges to $Q = \L((\widehat{Y}_v)_{v \in \V})$, then we may pass to the limit in \eqref{pf:uniquenessGW-finalprojSDE-Y} (using again the weak continuity of stochastic integrals provided by Kurtz and Protter \cite[Theorem 2.2]{Kurtz-Protter} and the continuity of $b$ and $\sigma$ in Assumption \ref{assumption:A}) to find that for each $n$, the $\V_{n-1}$-coordinates $(\widehat{Y}_v)_{v \in \V_{n-1}}$ satisfy the same SDE system as in \eqref{pf:uniquenessGW-finalprojSDE-Y}. As this holds for each fixed $n$, we conclude that $(\widehat{Y}_v)_{v \in \V_{n-1}}$ satisfies the infinite SDE system \eqref{statements:SDE}.  
In light of the uniqueness in law of solutions of  \eqref{statements:SDE} (see Assumption (\ref{assumption:A}.1) and Remark \ref{rem-unique}), 
we conclude that $Q=\L((X_v)_{v \in \V})$, where $(X_v)_{v \in \V}$ was the unique in law solution of \eqref{statements:SDE} on the UGW($\rho$) tree $\tree$. This completes the proof, as we know from the beginning of the proof that the $\V_1$-marginal of $Q$ is precisely $\nufinal^1=\nu$.
\end{proof}

\subsubsection{Completing the proof of uniqueness in Theorem \ref{th:statements-mainlocaleq}}
\label{subs-uniquecomplete}
The lemmas of the previous section contain the proof of the uniqueness assertion in Theorem \ref{th:statements-mainlocaleq}. Indeed, we began in Section \ref{subs-uniqueoutline} with an arbitrary weak solution $((\Omega,\F,\FF,\PP),\tree_1, \gamma,(B_v,Y_v)_{v \in \V_1})$ 
to the UGW($\rho$) local equation with initial law $\lambda_0$. In Proposition \ref{lem-ext}, recalling the notation $\nu=\L((Y_v)_{v \in \V_1})$, we deduced that necessarily $\nu=\L((X_v)_{v \in \V_1})$, where $(X_v)_{v \in \V}$ solves the SDE system  \eqref{statements:SDE}.  We know from Assumption (\ref{assumption:A}.1) (and Remark \ref{rem-unique}) that the SDE system \eqref{statements:SDE} is unique in law. Hence, the law of $(Y_v)_{v \in \V_1}$ does not depend on the choice of weak solution to the UGW($\rho$) local equation.

{

\subsection{Alternative proof of uniqueness in law of solutions  to the local equation} 
\label{sec:alternative}

  In this section we provide an alternative proof of the uniqueness property stated in Theorem \ref{th:statements-mainlocaleq},
 in the case when  the drift $b$ is bounded. 
  In contrast to the proof given  in the previous section, this proof does not 
  refer to the infinite particle system \eqref{statements:SDE}.
  To lead up to the proof of uniqueness for the UGW tree,
which is given in Section \ref{subs-altuniq2}, 
we  first consider the simpler case of the 
$\kappa$-regular tree in Section \ref{subs-altuniq1}. Throughout, we fix $\lambda_0 \in {\mathcal P}(\R^d)$.

\subsubsection{Alternative proof of uniqueness for the $\kappa$-regular tree}
\label{subs-altuniq1}

We first  establish  certain symmetry properties 
that are satisfied by any solution  to the local equation. 
Let $((\omegax,\Fmcx,\Fmbx,\Pmbx), \gammax, (B,X))$ 
 be  any weak solution of the  local equation on the $\kappa$-regular tree 
with initial law $\lambda_0$,  as stated in Definition \ref{def-locregtree},  and let 
 $\Ex$ denote expectation with respect to $\Pmbx$. 
Note that, in particular, this implies 
\begin{equation}
  \label{solX}
\begin{array}{rcl}
	dX_{\o}(t) & = & b(t,X_{\o},X_{\{1,\dotsc,\kappa\}})\,dt + \sigma(t,X_{\o})\,dB_{\o}(t), \\
	dX_{i}(t) & = & \gammax_t(X_{i},X_{\o})\,dt + \sigma(t,X_{i})\,dB_{i}(t), \quad i=1,\dotsc,\kappa,
\end{array}
\end{equation}
with 
\[ 
	\gammax_t(x,y)  =  \Emb[b(t,X_{\o},X_{\{1,\dotsc,\kappa\}}) \, | \, X_{\o}[t]=x[t], \, X_1[t]=y[t]], \quad x,y \in \C.
\] 
Then we have the following result. 

\begin{lemma}  \label{lem-locsym} 
  If $b$ is bounded,  the law of $(X_{\o},X_1,\dotsc,X_\kappa)$ is invariant under permutations of $(X_1,\dotsc,X_\kappa)$; that is,
  for any permutation $\perm$
  of $\{1, \ldots, \kappa\}$, it follows that 
  \begin{equation}
    \label{locsym1}
    \L ((X_{\o},X_1,\dotsc,X_\kappa))  = \L ((X_{\o},X_{\perm(1)},\dotsc,X_{\perm (\kappa)})). 
  \end{equation}
 Furthermore, for every  $i \in \{1, \ldots, \kappa\}$, 
  \begin{equation}
    \label{locsym2}
    \L ((X_{\o}, X_i)) = \L ((X_i, X_{\o})). 
    \end{equation}
\end{lemma}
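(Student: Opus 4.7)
The plan is to combine the projection (mimicking) theorem (Theorem \ref{th:brunickshreve}) with uniqueness in law for SDEs having bounded drift and bounded invertible diffusion, available via Girsanov in the form of Lemma \ref{le:ap:girsanov}. The boundedness hypothesis on $b$ will be essential: it propagates to $\gammax_t$, since $\gammax_t$ is a conditional expectation of $b$, so every drift encountered below will be uniformly bounded. Combined with Assumptions (\ref{assumption:A}.2a) and (\ref{assumption:A}.2b), this will provide uniqueness in law for the SDE systems I use.

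To prove \eqref{locsym1}, I will fix a permutation $\perm$ of $\{1,\dotsc,\kappa\}$ and consider the relabelled process $Y_{\o} := X_{\o}$, $Y_i := X_{\perm(i)}$, driven by $\tilde{B}_{\o} := B_{\o}$, $\tilde{B}_i := B_{\perm(i)}$. The permuted noises remain jointly independent standard Brownian motions, the initial conditions remain i.i.d.\ with law $\lambda_0$, and since $b$ takes its argument $X_{\{1,\dotsc,\kappa\}}$ as an unordered tuple in $\SQ(\C)$ while each child drift has the common form $\gammax_t(\,\cdot\,, X_{\o})$, rewriting \eqref{solX} in terms of $Y$ will show that $(Y,\tilde B)$ is another weak solution of \eqref{solX} with the same fixed function $\gammax_t$. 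Uniqueness in law then forces $\L(Y)=\L(X)$, giving \eqref{locsym1}.

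For \eqref{locsym2}, I will apply Theorem \ref{th:brunickshreve} to project the $(\kappa+1)$-dimensional system onto the pair $(X_{\o},X_1)$. By condition (5) of Definition \ref{def-locregtree}, the projected root drift equals $\Emb[\, b(t,X_{\o},X_{\{1,\dotsc,\kappa\}}) \mid X_{\o}[t],X_1[t]\,] = \gammax_t(X_{\o},X_1)$, while the drift $\gammax_t(X_1,X_{\o})$ of $X_1$ is already $(X_{\o}[t],X_1[t])$-measurable and so is unchanged by projection. This provides, on a possibly extended probability space, independent Brownian motions $(\beta_{\o},\beta_1)$ such that
\begin{align*}
dX_{\o}(t) &= \gammax_t(X_{\o},X_1)\,dt + \sigma(t,X_{\o})\,d\beta_{\o}(t), \\
dX_1(t) &= \gammax_t(X_1,X_{\o})\,dt + \sigma(t,X_1)\,d\beta_1(t).
\end{align*}
This two-dimensional system is manifestly invariant under swapping its coordinates, so $(X_1,X_{\o})$ driven by $(\beta_1,\beta_{\o})$ is also a weak solution with the same symmetric product initial law $\lambda_0\otimes\lambda_0$. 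Uniqueness in law yields $\L((X_{\o},X_1))=\L((X_1,X_{\o}))$; combining with \eqref{locsym1} applied to the transposition of $1$ and $i$ then extends the identity to arbitrary $i\in\{1,\dotsc,\kappa\}$.

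The main subtlety is in Step 2: the swap symmetry between root and child is not visible in the original local equation, where the drifts of $X_{\o}$ and $X_1$ have completely different forms. It becomes visible only after the projection theorem replaces $b$ by its conditional expectation $\gammax_t(X_{\o},X_1)$, which by design matches the structure of the drift $\gammax_t(X_1,X_{\o})$ of $X_1$. Once this is spotted, bounded-drift uniqueness in law does the rest; without the assumed boundedness of $b$, the Girsanov step would not apply directly, which is consistent with the scope of the alternative uniqueness proof outlined in Section \ref{subs-altuniq1}.
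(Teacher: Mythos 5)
Your proposal is correct and essentially reproduces the paper's own argument: \eqref{locsym1} by relabelling the children and their driving Brownian motions, then invoking uniqueness in law via Girsanov (available because $b$, and hence $\gammax_t$, is bounded); and \eqref{locsym2} by projecting the $(\kappa+1)$-coordinate system onto a pair via Theorem \ref{th:brunickshreve} and observing that the projected two-dimensional SDE is swap-symmetric. The only cosmetic difference is that you prove \eqref{locsym2} for $i=1$ and bootstrap to general $i$ via \eqref{locsym1}, whereas the paper uses \eqref{locsym1} inside the projection step to handle an arbitrary $i$ directly.
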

\begin{proof} 
   We first note that 
   for any fixed  progressively measurable functional
   $\gammax$, since   the SDE  in \eqref{solX} is symmetric and the driving Brownian motions and
   initial conditions are i.i.d., for any permutation
   $\perm$ of $\{1, \ldots, \kappa\}$, 
   $\{(X_{\o}, X_{\perm(1)},  \ldots, X_{\perm(\kappa)}), (B_{\o}, B_{\perm(1)}, \ldots, B_{\perm(\kappa)})\}$ also forms a weak solution to the SDE. 
   If $\gammax$ is also bounded, then 
    due to  Assumption \ref{assumption:A}
   and the boundedness of $b$, 
   (existence and) uniqueness in law of the SDE \eqref{solX}  follows from Girsanov's theorem. 
   Since,  by its definition the particular  $\gammax$ defined above is a bounded progressively measurable functional
   (due to the boundedness of $b$), this immediately proves \eqref{locsym1}.  

    Next, to see why \eqref{locsym2} holds, fix $i \in \{1,\dotsc,\kappa\}$.  Note first that \eqref{locsym1} and the definition of $\gamma_t$ imply $\E[ b(t,X_{\o}, X_{\{1, \ldots, \kappa\}})|X_{\o}[t], X_i[t]] =\gammax_t (X_{\o}[t], X_i[t])$.   
    Applying the projection result in Theorem \ref{th:brunickshreve} and the elementary identity
$\E[\gammax_t (X_{i}[t], X_{\o}[t])|X_{\o}[t], X_i[t]] =\gammax_t (X_{i}[t], X_{\o}[t])$, 
by extending the probability space if necessary, we may find independent $d$-dimensional Brownian motions
$(W_{\o},W_i)$ such that  
\begin{align*}
	dX_{\o}(t) & = \gammax_t(X_{\o},X_i)\,dt + \sigma(t,X_{\o})\,dW_{\o}(t), \\
	dX_i (t) & = \gammax_t(X_{i},X_{\o})\,dt +  \sigma(t,X_i)\,dW_i(t).
\end{align*}
For any fixed bounded functional $\gammax$ (and  hence, for the particular $\gammax$ specified in the local equation) this SDE is unique in law (invoking, as above,  Assumption (\ref{assumption:A}.2b) and Girsanov's theorem). Combined with the fact that the SDE is symmetric, namely   $\{(X_i, X_{\o}), (W_i, W_{\o})\}$ is also a solution to this SDE, this implies that  \eqref{locsym2} also  holds.  
\end{proof}

Now, let  $((\omegay,\Fmcy,\Fmby,\Pmby), \gammay, (\By,\newy))$  be another  weak solution of the $\kappa$-regular tree local equation
with the same initial law $\lambda_0$. 
For $x, y \in \C^2$ and $t > 0$,  letting
\begin{align*}
	\Pone_{x,y}[t] & := \L((X_1,\dotsc,X_\kappa)[t] \,|\, X_{\o}[t]=x[t], X_1[t]=y[t]), \\
	\Ptwo_{x,y}[t] & := \L((\newy_1,\dotsc,\newy_\kappa)[t] \,|\, \newy_{\o}[t]=x[t], \newy_1[t]=y[t]),
\end{align*}
we can write 
\begin{equation*}
	\gammax_t(x,y) = \lan \Pone_{x,y}[t], b(t,x,\cdot) \ran, \quad \gammay_t(x,y) = \lan \Ptwo_{x,y}[t], b(t,x,\cdot) \ran.
\end{equation*}
Then by Assumption \ref{assumption:A}, the boundedness of $b$, and Corollary \ref{co:entropyestimate},  we have  
\begin{align*}
	H(\L(X[t]) \,|\,\L( \newy[t])) & = \frac{1}{2} \Emb \left[\int_0^t \sum_{i=1}^\kappa |\sigma^{-1}(s,X_{i})(\gammax_s(X_i,X_{\o}) - \gammay_s(X_i,X_{\o}))|^2 \,ds \right]\\
	& = \frac{\kappa}{2} \Emb \left[\int_0^t |\sigma^{-1}(s,X_{1})(\gammax_s(X_1,X_{\o}) - \gammay_s(X_1,X_{\o}))|^2 \,ds \right] \\
	& = \frac{\kappa}{2} \Emb \left[\int_0^t |\sigma^{-1}(s,X_{1}) \lan \Pone_{X_1,X_{\o}}[s] - \Ptwo_{X_1,X_{\o}}[s], b(t,X_1,\cdot) \ran|^2 \,ds\right] \\
	& = \frac{\kappa}{2} \Emb \left[\int_0^t |\sigma^{-1}(s,X_{\o}) \lan \Pone_{X_{\o},X_1}[s] - \Ptwo_{X_{\o},X_1}[s], b(t,X_{\o},\cdot) \ran|^2 \,ds\right], 
\end{align*}
where the second  and last lines use the symmetry properties \eqref{locsym1} and \eqref{locsym2}, respectively.  
It then follows from Pinsker's inequality (see, e.g., \cite[p. 44]{CsiKor11}) that
\begin{equation*}
	H(\L(X[t]) \,|\,\L( \newy[t])) \le \kappa\|\sigma^{-1}b\|_\infty^2 \Embx \left[ \int_0^t H(\Pone_{X_{\o},X_1}[s] \,|\, \Ptwo_{X_{\o},X_1}[s]) \,ds \right].
\end{equation*}
Using Fubini's theorem and the chain rule of relative entropy, the right-hand side equals 
\begin{equation*}
\kappa\|\sigma^{-1}b\|_\infty^2 \int_0^t \Emb\big[ H(\L(X[s]) \,|\, \L(\newy[s])) - H(\L((X_{\o},X_1)[s]) \,|\, \L((\newy_{\o},\newy_1)[s])) \big] \,ds.
\end{equation*}
By non-negativity of relative entropy, we finally deduce that
\begin{align*}
H(\L(X[t]) \,|\,\L( \newy[t]))  \le \kappa\|\sigma^{-1}b\|_\infty^2 \int_0^t H(\L(X[s]) \,|\, \L( \newy[s])) \,ds.
\end{align*} 
It then follows from Gronwall's inequality that
\begin{equation*}
	H(\L(X[t]) \,|\,\L( \newy[t])) = 0, \qquad \forall \, t \ge 0,
\end{equation*}
which in particular implies $\L(X) = \L(\newy)$.
This proves the desired uniqueness in law.

\subsubsection{Alternative proof of uniqueness for the UGW$(\rho)$ tree}
\label{subs-altuniq2}

In this section we give an alternative proof of uniqueness for the UGW$(\rho)$ local equation, under the additional assumptions that $b$ is bounded and the offspring distribution has a finite moment generating function:
\begin{equation}
\sum_{k =0}^\infty e^{ck}\rho(k) < \infty , \qquad \forall c > 0. \label{altuniq:rho-tail2}
\end{equation}
Note by a standard Chernoff bound that this is equivalent to the condition
\begin{equation}
\lim_{r\to\infty} e^{cr}\sum_{k=r}^\infty \rho(k) =0 , \qquad \forall c > 0. \label{altuniq:rho-tail}
\end{equation}
This covers the case of UGW trees with uniformly bounded degrees, as well as the important case of Poisson offspring distribution.

Let $((\omegax,\Fmcx,\Fmbx,\Pmbx), \treex_1, \gammax, (B,X), \Nvarx)$ 
be a weak solution of the
UGW$(\rho)$ local equation with initial law $\lambda_0$, as specified in Definition \ref{def-GWlocchar}.  Note that $\rho \in \P(\N)$ has a nonzero first moment as stated  therein, and  
a finite moment of every order by \eqref{altuniq:rho-tail2}.   
Properties (7) and (9) of Definition \ref{def-GWlocchar} state that 
\begin{equation}
  \label{sol-Xugw}
  \begin{array}{rcl} 
	dX_{\o}(t) & = & b(t,X_{\o},X_{N_{\o}(\treex_1)})\,dt + \sigma(t,X_{\o})\,dB_{\o}(t), \\
	dX_{k}(t) & = & 1_{\{k \in \treex_1\}} \Big(\gammax_t(X_{k},X_{\o})\,dt + \sigma(t,X_{k})\,dB_{k}(t)\Big), \quad k \in \N,
  \end{array}
\end{equation}
with 
\[ 	\gammax_t(x,y)  = 
	\left\{
	\begin{array}{ll} 
	\displaystyle  \frac{\E\left[\left. \frac{|N_{\o}(\treex_1)|}{1+\Nvarx}b(t,X_{\o},X_{N_{\o}(\treex_1)}) \, \right| \, X_{\o}[t]=x[t], \, X_1[t]=y[t] \right]}{\E\left[\left. \frac{|N_{\o}(\treex_1)|}{1+\Nvarx} \, \right| \, X_{\o}[t]=x[t], \, X_1[t]=y[t] \right]}
	& \mbox{ on }
	\{N_{\o}(\treex_1) \neq \emptyset\},  \\
	\displaystyle  b(t,X_{\o},\onepoint) & \mbox{ on } \{N_{\o}(\treex_1)=\emptyset\}.
	\end{array}
	\right. 
        \]
        Once again, we start by establishing useful symmetry properties of any weak solution. As in the first proof of uniqueness on the
UGW tree, it is convenient to introduce the  ``tilted'' measure $\QQx$ on $(\omegax, \Fmcx)$ by 
\begin{equation}
  \label{com}
	\frac{d\QQx}{d\Pmbx} = \frac{|N_{\o}(\treex_1)|}{1+\Nvarx}1_{\{N_{\o}(\treex_1) \neq \emptyset\}}  + 1_{\{N_{\o}(\tree) = \emptyset\}},
\end{equation} 
and write $\widetilde{\L}(\cdot)$ and $\widetilde{\E}$ for the law and expectation, respectively, under $\QQx$.  
\begin{remark}
  \label{rem-com}
  The following two properties of $\QQx$ are noteworthy:
  \begin{enumerate}
  \item
   The change of measure from $\Pmb$ to $\QQx$ alters the law of $\treex_1$ and $\Nvarx$, but not the Brownian motion
   or initial states, which (by property (8) of  Definition \ref{def-GWlocchar})
   are independent of $\treex_1$ and $\Nvarx$ under both $\Pmb$ and $\QQx$.   
 \item  On the event $\{N_{\o}(\treex_1) \neq \emptyset\}$, we have the identity
   \begin{align}
     \label{rem-identity}
 	\gammax_t(X_{\o}[t],X_1[t]) = \widetilde{\E}\left[\left. b(t,X_{\o},X_{N_{\o}(\treex_1)}) \, \right| \, X_{\o}[t], \, X_1[t] \right], \ \ a.s., 
 \end{align} 
      \end{enumerate}
\end{remark}

We now establish some invariance properties  of the UGW($\rho$) tree  local equation. 

\begin{lemma}
  \label{lem-locsymugw}
 If $b$  is bounded, then  for every $k \in \N$ and permutation $\perm$ of $\{1, \ldots, k\}$, it  follows that
 \begin{equation}
   \label{eq:alternative-1}
   \widetilde{\L} ( (X_{\o}, X_1, \ldots, X_k) \, | \, |N_{\o}(\treex_1)| = k ) =
  \widetilde{\L} ( (X_{\o}, X_{\perm(1)}, \ldots, X_{\perm(k)}) \, | \, |N_{\o}(\treex_1)| = k). 
  \end{equation}
  Furthermore,  for each bounded measurable function $g: \C^2 \to \R$, we have 
    \begin{equation}
\label{eq:alternative-2}
  \widetilde{\Emb} \left[ g (X_k, X_{\o})\,|\,\{k \in \treex_1\} \right] =
  \widetilde{\Emb} \left[ g (X_{\o}, X_k) \, | \, \{k \in \treex_1\} \right].
  \end{equation} 
\end{lemma}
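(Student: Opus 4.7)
The plan is to prove Part (1) via the intrinsic symmetry of the local SDE in the children combined with Girsanov uniqueness, and Part (2) by applying the projection theorem to $(X_{\o},X_1)$ under the tilted measure $\QQx$, using the identity from Remark \ref{rem-com}(2) to obtain a symmetric projected SDE; the general $k\ge 2$ case will then be reduced to $k=1$ by combining the exchangeability from Part (1) with a re-rooted variant of the projection argument.

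For Part (1), I fix $k\ge 0$ and restrict to $\{|N_{\o}(\treex_1)|=k\}$ under $\Pmbx$. On this event the SDE system \eqref{sol-Xugw} for $(X_{\o},X_1,\dotsc,X_k)$ is manifestly invariant under permutations of the children: by Assumption \ref{assumption:A}(A.1) the drift of $X_{\o}$ depends only on the unordered collection $\lan X_{N_{\o}(\treex_1)}\ran$; each $X_i$ has the identical drift functional $\gammax_t(X_i,X_{\o})$ and an independent driving Brownian motion; and the initial conditions are i.i.d. Since $b$ is bounded, so is $\gammax_t$ (as a conditional expectation), and Assumption \ref{assumption:A}(A.2) together with Girsanov's theorem yields uniqueness in law for the system. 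For any permutation $\perm$ of $\{1,\dotsc,k\}$, the tuple $(X_{\o},X_{\perm(1)},\dotsc,X_{\perm(k)})$ (driven by the correspondingly permuted BMs) is also a weak solution and hence has the same joint law, giving permutation invariance under $\Pmbx$. Since $d\QQx/d\Pmbx$ in \eqref{com} depends only on $(|N_{\o}(\treex_1)|,\Nvarx)$, both invariant under relabeling children, the invariance transfers to $\QQx$, establishing \eqref{eq:alternative-1}.

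For Part (2), the pivotal case is $k=1$. Applying the projection theorem (Theorem \ref{th:brunickshreve}) to $(X_{\o},X_1)$ under $\QQx$, and using Remark \ref{rem-treerecovery} to observe that $1_{\{1\in\treex_1\}}$ is $X_1[t]$-measurable (so that the projected drift of $X_1$ remains $1_{\{1\in\treex_1\}}\gammax_t(X_1,X_{\o})$), I obtain independent $\QQx$-Brownian motions $(\widetilde W_{\o},\widetilde W_1)$ such that
\[
dX_{\o}(t) = \widetilde b_{\o}(t,X_{\o},X_1)\,dt + \sigma(t,X_{\o})\,d\widetilde W_{\o}(t), \qquad \widetilde b_{\o} = \widetilde\E\big[b(t,X_{\o},X_{N_{\o}(\treex_1)})\,\big|\,X_{\o}[t],X_1[t]\big].
\]
The crucial identity from Remark \ref{rem-com}(2) gives $\widetilde b_{\o}=\gammax_t(X_{\o},X_1)$ on $\{1\in\treex_1\}$, so the projected SDE on this event is symmetric in $(X_{\o},X_1)$. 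With i.i.d.\ initial conditions independent of $\{1\in\treex_1\}$ (the measure change involves only $|N_{\o}(\treex_1)|$ and $\Nvarx$) and bounded $\gammax_t$, Girsanov uniqueness yields $\widetilde{\L}((X_{\o},X_1)\mid\{1\in\treex_1\}) = \widetilde{\L}((X_1,X_{\o})\mid\{1\in\treex_1\})$, which is \eqref{eq:alternative-2} for $k=1$.

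For $k\ge 2$, I would combine Part (1) with a re-rooted version of the $k=1$ argument. Part (1) gives $\widetilde{\L}((X_{\o},X_k)\mid\{k\in\treex_1\}) = \widetilde{\L}((X_{\o},X_1)\mid\{k\in\treex_1\})$ and likewise for the swapped pair, so it suffices to establish the swap symmetry of $(X_{\o},X_k)$ conditional on $\{k\in\treex_1\}$ under $\QQx$. To achieve this I would enlarge the probability space with an auxiliary $\Nvarx^{(k)}$ of law $\widehat\rho$ independent of everything else, introduce the tilted measure $\QQx^{(k)}$ with $d\QQx^{(k)}/d\Pmbx = |N_{\o}(\treex_1)|/(1+\Nvarx^{(k)})$ on $\{N_{\o}(\treex_1)\neq\emptyset\}$, and re-run the $k=1$ projection argument with child $k$ in the distinguished role; the exchangeability of children under $\Pmbx$ ensures that the analogue of Remark \ref{rem-com}(2) produces the required drift identification, yielding a symmetric projected SDE and hence swap symmetry of $(X_{\o},X_k)$ on $\{k\in\treex_1\}$ under $\QQx^{(k)}$. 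Finally, because $\Nvarx$ and $\Nvarx^{(k)}$ share the law $\widehat\rho$ and are both independent of $(\treex_1,(X_v)_{v\in\V_1})$ under $\Pmbx$, a direct Radon-Nikodym computation shows that $\QQx$ and $\QQx^{(k)}$ assign identical values to any bounded functional of $(X_{\o},X_k,\treex_1)$ restricted to $\{k\in\treex_1\}$, so the swap symmetry transfers to $\QQx$. The hardest step is the drift identification in the $k=1$ case, for which the measure change $\QQx$ was specifically engineered via Remark \ref{rem-com}(2); the reduction to $k=1$ for $k\ge 2$ is conceptually clear by exchangeability but requires careful handling of the auxiliary random variables and the alternative tilted measure.
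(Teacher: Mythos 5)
Your proposal is correct and uses the same core ingredients as the paper: symmetry of the SDE system plus Girsanov uniqueness under the tilted measure $\QQx$ for \eqref{eq:alternative-1}, and the projection theorem (Theorem \ref{th:brunickshreve}) combined with the drift identity of Remark \ref{rem-com}(2) and the exchangeability of Part (1) for \eqref{eq:alternative-2}. The one place you deviate is in handling $k \ge 2$: the paper simply projects onto $(X_{\o},X_k)$ directly under $\QQx$ and invokes \eqref{eq:alternative-1} to transfer the drift identification from child $1$ to child $k$ on $\{k\in\treex_1\}$, then concludes by symmetry of the projected SDE and Girsanov exactly as in your $k=1$ argument. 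Your introduction of an auxiliary $\Nvarx^{(k)}$ and tilted measure $\QQx^{(k)}$ is harmless but redundant---as you yourself observe, $\QQx^{(k)}$ coincides with $\QQx$ on the $\sigma$-algebra generated by $(\treex_1,(X_v)_{v\in\V_1})$ precisely because $\Nvarx$ and $\Nvarx^{(k)}$ are equal in law and both independent of those variables, so the detour through $\QQx^{(k)}$ buys nothing and the direct approach is cleaner.
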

\begin{proof}
   Note that by Remark \ref{rem-com}(1),   the Brownian motions and initial
   conditions are independent of $\treex_1$, and also by properties (4) and (6) of
   Definition \ref{def-GWlocchar}, 
   the Brownian motions  and initial conditions  are i.i.d. 
Hence,  for any $k \in \N$ and fixed bounded progressively measurable
  functional $\gammax$,  conditioned  on the event $\{|N_{\o}(\treex_1)| = k\}$, 
  the symmetry of the SDE \eqref{sol-Xugw} immediately shows that   
  $\{(X_{\o}, X_{\perm(1)}, \ldots, X_{\perm(k)}), (B_{\o}, B_{\perm(1)}, \ldots, B_{\perm(k)})\}$ is also 
  a weak solution to the SDE \eqref{sol-Xugw}.
   However, by Assumption \ref{assumption:A} and the boundedness of $b$, on the event $\{|N_{\o}(\treex_1)| = k\}$,  
  for any fixed bounded progressively measurable
  functional $\gammax$,  the (existence and) uniqueness in law of  weak solutions to the SDE  \eqref{sol-Xugw}   follows by Girsanov's theorem.
  In particular, since (when $b$ is bounded) the $\gammax$ arising in the weak solution is a progressively measurable
  bounded functional,   the last two statements imply \eqref{eq:alternative-1}. 

  On the other hand,  to show \eqref{eq:alternative-2}, we fix $k \in \N$ and apply the
  projection result in Theorem \ref{th:brunickshreve} to project the SDE \eqref{sol-Xugw} onto $(X_{\o},X_k)$, under the measure $\QQx$. Note first that the identity \eqref{rem-identity} in  Remark \ref{rem-com}(2)
  along with  the identity $\{N_{\o}(\treex_1) \neq \emptyset\} = \{1 \in \treex_1\}$ and the relation \eqref{eq:alternative-1} imply
   \begin{align}
 	\gammax_t(X_{\o}[t],X_k[t]) = \widetilde{\E}\left[\left. b(t,X_{\o},X_{N_{\o}(\treex_1)}) \, \right| \, X_{\o}[t], \, X_k[t] \right], \ \ a.s., \text{ on } \{k \in \treex_1\}.
 \end{align}
   Hence,  invoking the boundedness of  $\sigma$ from Assumption (\ref{assumption:A}.2a) and the
     assumed boundedness of $b$, and thus $\gammax_t$, to verify the  
   condition \eqref{ap:sigmasquareintegrable} of Theorem \ref{th:brunickshreve}, 
by extending the $\QQx$-probability space if necessary,  
we may find independent $d$-dimensional Brownian motions $(W_{0},W_k)$ such that 
\begin{align*}
	dX_{0}(t) & =  \gammax_t(X_{0},X_{k}) \,dt + \sigma(t,X_{0})\,dW_{0}(t), \\
	dX_{k}(t) & = 1_{\{k \in \treex_1\}} \Big(\gammax_t(X_{k},X_{0}) \,dt +  \sigma(t,X_k)\,dW_k(t)\Big), 
\end{align*}
where we have also used the fact that $\{k \in \tree_1\}$ is  $X_k[t]$-measurable for each $t > 0$ (as in Remark \ref{rem-treerecovery}). 
Again,   for any fixed bounded progressively measurable
functional $\gammax$,   by Girsanov's theorem (the boundedness of $b$ and Assumption \ref{assumption:A}),
this SDE is unique in law.
Since, in addition  the SDE is symmetric given $\{k \in \treex_1\}$,
it follows that for each $k \in \N$, 
\[ 
\widetilde{\L}((X_{\o}, X_{k}) | \{ k \in \treex_1\} ) =
\widetilde{\L}((X_{k}, X_{\o}) | \{ k \in \treex_1\}).
\]
This proves the identity in  \eqref{eq:alternative-2}. 
\end{proof}

Now, let  $((\omegay,\Fmcy,\Fmby,\Pmby), \treey_1, \gammay, (\By,\newY), \Nvary)$ be another weak solution of the  
UGW$(\rho)$ local equation with initial law $\lambda_0$, and let $\QQy$ be an absolutely continuous measure
to $\Pmby$, defined as in  \eqref{com}, but with $\QQ, \Pmb, \treex_1, \Nvarx$ replaced
with $\QQy, \Pmby, \treey_1, \Nvary$, respectively. Also, let $\Ly$ be the law under $\Pmby$ and  let $\tLy$ and $\Emby$ be the law and expectation under $\QQy$, respectively.  
For $t > 0$ and $x,y \in \C^2$, define the conditional laws 
\begin{align*} 
  \tPonet_{x,y}[t] & := \widetilde{\L}(X_{N_{\o}(\treex_1)} [t] \,|\, X_{\o}[t]=x[t], X_1[t]=y[t]), \\
  \tPtwot_{x,y}[t] & := \tLy(\newy_{N_{\o}(\treey_1)} [t]  \,|\, \newY_{\o}[t]=x[t], \newY_1[t]=y[t]).
\end{align*}
Recalling the form of $\gammax_t$ and the form of the change of measure in \eqref{com} we can write  
\begin{equation}
  \label{gamma-rep}
	\gammax_t(x,y) = \lan \tPonet_{x,y}[t], b(t,x,\cdot)  \ran, \quad
	\gammay_t(x,y) = \lan \tPtwot_{x,y}[t], b(t,x,\cdot) \ran.
\end{equation}
By properties (2), (3) and (8) of the local equation in Definition  \ref{def-GWlocchar}, 
$\L(\treex_1, \Nvarx)$ and $\Ly(\treey_1, \Nvary)$  both represent the joint law of the root neighborhood and  the number
of offspring of a  neighbor of the root in a 
UGW($\rho$) tree, and so by the definitions of the changes of measure $\QQx$ and $\QQy$, $\widetilde{\L}(\treex_1, \Nvarx)$ also coincides with
$\tLy(\treey_1, \Nvary)$. 
When combined with the chain rule for relative entropy, this  shows that    
\begin{align*}
 H\big(\widetilde{\L}(X[t], \treex_1,\Nvarx) \,|\, \tLy(\newY[t], \treey_1,\Nvary) \big) 
 &=  H\big(\widetilde{\L}(\treex_1,\Nvarx)  \,|\,  \tLy (\treey_1,\Nvary) \big)
 + \widetilde{\Emb} \left[H\big( \nuone_{\treex_1,\Nvarx}[t] \,|\, \nutwo_{\treex_1,\Nvarx}[t]\big)\right] \\
 &= \widetilde{\Emb} \left[H\big( \nuone_{\treex_1,\Nvarx}[t] \,|\, \nutwo_{\treex_1,\Nvarx}[t]\big)\right],
\end{align*}
where, for $t>0$, $m \in \N_0$, and trees $\tau_1 \subset \V_1$, we define
\begin{align*}
	\nuone_{t_1,m}[t] := \widetilde{\L}(X[t] \,|\, \treex_1 = \tau_1, \Nvarx = m), \quad \nutwo_{t_1,m}[t] := \tLy(\newy[t] \,|\, \treey_1 = \tau_1, \Nvary = m).
\end{align*}
Now, since for each solution, the initial condition and driving Brownian motions are independent of the tree by Remark \ref{rem-com}(1), conditioned on $\treex_1 = \tau_1, \Nvarx = m$, $X$
simply satisfies the SDE \eqref{sol-Xugw} with $\treex_1$ and $\Nvarx$ replaced by 
$\tau_1$ and $m$, respectively, and an exactly analogous statement holds for
the conditional dynamics of $\newy$ given $\treey_1$ and $\Nvary$.  Together with 
Assumption \ref{assumption:A} and the boundedness of $b$,
this allows us to  invoke the entropy identity of Corollary \ref{co:entropyestimate} to obtain 
\begin{align*}
  &  \widetilde{\Emb} \left[H\big( \nuone_{\treex_1,\Nvarx}[t] \,|\, \nutwo_{\treex_1,\Nvarx}[t]\big)\right] \\
	& = \frac12 \widetilde{\Emb} \left[\int_0^t \sum_{k=1}^\infty 1_{\{k \in \treex_1\}} |\sigma^{-1}(s,X_{k})(\gammax_s(X_k,X_{\o}) - \gammay_s(X_k,X_{\o}))|^2 \,ds \right]\\
	&  = \frac12 \widetilde{\Emb}\left[ \int_0^t \sum_{k=1}^\infty 1_{\{k \in \treex_1\}} |\sigma^{-1}(s,X_{\o})(\gammax_s(X_{\o},X_{k}) - \gammay_s(X_{\o},X_{k}))|^2 \,ds\right] \\
	& = \frac12 \widetilde{\Emb}\left[ \int_0^t \sum_{k=1}^\infty 1_{\{k \in \treex_1\}} |\sigma^{-1}(s,X_{\o})(\gammax_s(X_{\o},X_{1}) - \gammay_s(X_{\o},X_{1}))|^2 \,ds\right] \\
	&   = \frac12 \widetilde{\Emb} \left[\int_0^t |N_{\o}(\treex_1)| |\sigma^{-1}(s,X_{\o}) \lan \tPonet_{X_{\o},X_{1}}[s] - \tPtwot_{X_{\o},X_{1}}[s], b(t,X_{\o},\cdot) \ran|^2 \,ds \right],
\end{align*}
where the second equality uses \eqref{eq:alternative-2}, the third equality  uses \eqref{eq:alternative-1}, and the fourth equality uses \eqref{gamma-rep}.
Set $C := \|\sigma^{-1}b\|^2_\infty$, 
let $r > 0$, and introduce the indicator of the event $\{|N_{\o}(\treex_1)| \le r\}$ and its complement, to bound the above by
\begin{align*}
\frac{r}{2} \widetilde{\Emb} \left[\int_0^t |\sigma^{-1}(s,X_{\o}) \lan \tPonet_{X_{\o},X_{1}}[s] - \tPtwot_{X_{\o},X_{1}}[s], b(t,X_{\o},\cdot) \ran|^2 \,ds \right] +  2Ct \widetilde{\Emb} \left[|N_{\o}(\treex_1)|1_{\{|N_{\o}(\treex_1)|>r\}}\right]. 
\end{align*} 
Now, letting $d_{\rm{TV}}$ denote the total variation distance, one has for each $s \in [0,t]$, 
\begin{align*}
  |\sigma^{-1}(s,X_{\o})\lan   \tPonet_{X_{\o},X_1}[s] - \tPtwot_{X_{\o},X_1}[s], b(s,X_{\o},\cdot) \ran|^2 
  & \leq C d_{\rm{TV}}^2 (\tPonet_{X_{\o},X_1}[s], \tPtwot_{X_{\o},X_1}[s]), 
\end{align*}
and so Pinsker's inequality (see, e.g., \cite[p. 44]{CsiKor11}) implies 
\[
  |\sigma^{-1}(s,X_{\o})\lan   \tPonet_{X_{\o},X_1}[s] - \tPtwot_{X_{\o},X_1}[s], b(s,X_{\o},\cdot) \ran|^2 
  \leq 2 C H\big(\tPonet_{X_{\o},X_1}[s] \,|\, \tPtwot_{X_{\o},X_1}[s]\big). \] 
Combine the last six  displays to obtain 
\begin{align}
  & \displaystyle H\big(\widetilde{\L}(X[t], \treex_1,\Nvarx) \,|\, \tLy(\newY[t], \treey_1,\Nvary)\big)
  \nonumber \\ & \,
 \qquad  \qquad \displaystyle  \le Cr \widetilde{\E}\left[ \int_0^t H( \tPonet_{X_{\o},X_1}[s] \,|\,  \tPtwot_{X_{\o},X_1}[s]) \,ds \right] + 2Ct \widetilde{\Emb} \left[|N_{\o}(\treex_1)|1_{\{|N_{\o}(\treex_1)|>r\}}\right]. \label{pf:UGWuniq2}
\end{align} 
Moreover, the chain rule and the data processing inequality of relative entropy (see \cite[Appendix E]{GerMenSto20}) imply that for  $s \in [0,t]$, 
\begin{align*}
  \widetilde{\Emb}\left[ H\big(\tPonet_{X_{\o},X_1}[s] \,|\, \tPtwot_{X_{\o},X_1}[s]\big)\right] &= H\big(\widetilde{\L}(X[s]) \,|\, \tLy(\newY[s])\big) -
  H\big(\widetilde{\L}((X_{\o},X_1)[s]) \,|\, \widetilde{\L}^\prime((\newY_{\o},\newY_1)[s])\big)]   \\
	& \le H\big(\widetilde{\L}(X[s]) \,|\, \tLy(\newY[s])\big) \\
	& \le  H\big(\widetilde{\L}(X[s],\tree_1,\Nvar) \,|\, \tLy(\newY[s],\treey_1,\Nvary)\big). 
\end{align*}
Substitute this into \eqref{pf:UGWuniq2}, and apply Gronwall's inequality to deduce that  for every $r >  0$, 
\begin{equation}
	H\big(\widetilde{\L}(X[t],\treex_1,\Nvarx) \,|\, \tLy(\newY[t],\treey_1,\Nvary)\big) \le 2Ct e^{Crt} \widetilde{\Emb} \left[|N_{\o}(\treex_1)|1_{\{|N_{\o}(\treex_1)|>r\}}\right], \quad \forall  \ t \geq 0. \label{pf:alt:trunc1}
\end{equation}
Recalling the definition of $\QQ$ in \eqref{com} and applying the Cauchy-Schwarz inequality, we have
\begin{equation*}
 \widetilde{\Emb} \left[|N_{\o}(\treex_1)|1_{\{|N_{\o}(\treex_1)|>r\}}\right] \le \E\left[|N_{\o}(\treex_1)|^21_{\{|N_{\o}(\treex_1)| > r\}}\right] \le \left(\E\left[|N_{\o}(\treex_1)|^4\right] \,\PP(|N_{\o}(\treex_1)|>r)\right)^{1/2}.
\end{equation*}
Substituting this into \eqref{pf:alt:trunc1}, sending $r \to \infty$ and noting that \eqref{altuniq:rho-tail2}-\eqref{altuniq:rho-tail} imply that 
   $\E\left[|N_{\o}(\treex_1)|^{4}\right] < \infty$ and 
  $e^{Crt} \left(\PP(|N_{\o}(\treex_1)|>r)\right)^{1/2} \rightarrow 0$,  it follows that  
\begin{equation*}
	H\big(\widetilde{\L}(X[t],\treex_1,\Nvarx) \,|\, \tLy(\newY[t],\treey_1,\Nvary)\big) = 0, \qquad \forall \, t \ge 0.
\end{equation*}
This means $\widetilde{\L}(X,\treex_1,\Nvarx) = \tLy(\newY,\treey_1,\Nvary)$, and thus
$\L(X,\treex_1,\Nvarx) = \Ly(\newY,\treey_1,\Nvary)$. 
This completes the (alternative) proof of  uniqueness in law of weak solutions to the UGW$(\rho)$ local equation with a given initial law $\lambda_0$. }

\section{Second-order Markov random fields} \label{se:2MRFs}

 The rest of the paper is devoted to justifying the two key Propositions \ref{pr:properties-GW} and \ref{pr:invariance-GW}. 
We begin by summarizing some general properties of Markov random fields (MRFs) which will play a key role in the former proposition.  
Throughout this section, we work with a fixed Polish space $\X$ and a fixed (non-random) graph $G=(V,E)$, assumed to have finite or countable vertex set. We assume that $G$ is simple (no self-loops or multi-edges), but it need not be locally finite (so that we may use $G=\V$).
We fix a reference measure $\lambda \in \P(\X)$. 
The goal of this section is to summarize how conditional independence properties
of a measure $\mu \in \P(\X^V)$  can be deduced from factorization properties of
its density with respect to the product measure $\lambda^V$.

We recall the basic graph-theoretic definitions given in Section \ref{subsub-graphs}, in particular
the notion of boundary and double boundary of a set $A$ of vertices in a graph $G = (V,E)$ defined in \eqref{def:boundaries}.
In what follows,  for any random elements $Y_i$, $i = 1, 2, 3$, we write  $Y_1 \indep Y_2 \,| \, Y_3$
to denote that $Y_1$ is conditionally independent of $Y_2$ given $Y_3$.

\begin{definition}[Second-order MRF] \label{def-2MRF}
A collection of $\X$-valued random elements $(Y_v)_{v \in G}$ is said to form a (global) second-order MRF with respect to $G$ if
for any sets $A \subset V$, $B \subset V \setminus (A \cup \partial^2 A)$, 
we have the following conditional independence structure:  
\[
Y_A \indep Y_B \ \ | \ \ Y_{\partial^2 A}. 
\]
\end{definition}

Note that a first-order MRF (with respect to $G$), sometimes also referred to as a  Gibbs measure, would require the same to hold but with $\partial A$ in place of $\partial^2A$.

We state here a variant of a well known theorem, which can be found in various forms in \cite[Theorem 2.30]{georgii2011gibbs} and \cite[Proposition 3.8, Theorem 3.9]{lauritzen1996graphical}, for first-order MRFs on finite graphs.
We do not state the more difficult converse, often attributed to Hammersley-Clifford, as we will not need it.  Recall that a 2-clique of a graph is a set of vertices of diameter at most 2. 

\begin{theorem} \label{th:hammersleyclifford2}
 Assume the graph $G$ is finite. 
Assume $\mu \in \P(\X^V)$ is absolutely continuous with respect to $\lambda^V$. Suppose there exists a set $\K$ of $2$-cliques of $G$ such that the density of $\mu$ with respect to $\lambda^V$ factorizes in the form
\begin{equation}    \label{eq-mufactor}
\frac{d\mu}{d\lambda^V}(x_V) = \prod_{K \in \K}f_K(x_K),
\end{equation}
for some measurable functions $f_K : \X^K \rightarrow \R_+$, for $K \in \K$.
Then $\mu$ is a second-order MRF. 
\end{theorem}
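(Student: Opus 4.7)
The plan is to exhibit a factorization of the density $d\mu/d\lambda^V$ that separates the coordinates indexed by $A$ from those indexed by $V \setminus (A \cup \partial^2 A)$, with only the coordinates in $\partial^2 A$ appearing in both factors; the conditional independence claim then follows from a routine measure-theoretic lemma.

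First I would partition the set of cliques into those that meet $A$ and those that do not:
\[
\K = \K_1 \sqcup \K_2, \qquad \K_1 := \{K \in \K : K \cap A \neq \emptyset\}, \qquad \K_2 := \K \setminus \K_1.
\]
The key geometric observation is that every $K \in \K_1$ is contained in $A \cup \partial^2 A$. Indeed, if $v \in K \cap A$ and $u \in K$, then the 2-clique hypothesis gives $d(u,v) \le \mathrm{diam}(K) \le 2$, so $u$ lies either in $A$ (distance $0$), in $\partial A$ (distance $1$), or in $\partial(A \cup \partial A) \subset \partial^2 A$ (distance $2$). Each $K \in \K_2$ trivially satisfies $K \subset V \setminus A$. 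Setting $C := \partial^2 A$ and $D := V \setminus (A \cup C)$, so that $V = A \sqcup C \sqcup D$ and $B \subset D$, the assumed factorization \eqref{eq-mufactor} then rearranges as
\[
\frac{d\mu}{d\lambda^V}(x_V) = F(x_A, x_C) \cdot G(x_C, x_D), \qquad F := \prod_{K \in \K_1} f_K, \quad G := \prod_{K \in \K_2} f_K.
\]

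Next, I would invoke the standard fact that if a probability measure on the product space $\X^A \times \X^C \times \X^D$ admits a density of the form $F(x_A, x_C)G(x_C, x_D)$ with respect to the product measure $\lambda^A \otimes \lambda^C \otimes \lambda^D$, then under this measure $Y_A \indep Y_D \mid Y_C$. The verification amounts to writing the conditional density of $(Y_A, Y_D)$ given $Y_C = x_C$ as
\[
\frac{F(x_A, x_C)}{\int F(\cdot, x_C)\, d\lambda^A} \cdot \frac{G(x_C, x_D)}{\int G(x_C, \cdot)\, d\lambda^D},
\]
which is a product of a function of $x_A$ and a function of $x_D$ (well-defined for $\mu$-a.e.\ $x_C$, since the normalizers are strictly positive on the support of the marginal of $Y_C$). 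Since $B \subset D$, marginalizing out $Y_{D \setminus B}$ yields $Y_A \indep Y_B \mid Y_{\partial^2 A}$, which is precisely the second-order MRF property of Definition \ref{def-2MRF}.

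The main substance of the argument is the clique-containment step, which is a direct consequence of the hypothesis that the sets in $\K$ have diameter at most $2$; once this is secured, the remainder reduces to the standard factorization-implies-conditional-independence principle, and I do not anticipate any further obstacles beyond careful bookkeeping of the null sets where the conditional densities are left undefined.
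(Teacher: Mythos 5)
Your proof is correct and takes essentially the same route as the paper's: both hinge on the observation that a $2$-clique cannot straddle $A$ and $V \setminus (A \cup \partial^2 A)$, both split the product density accordingly with only the $\partial^2 A$ coordinates appearing in both factors, and both conclude via the standard factorization criterion for conditional independence. The only differences are cosmetic—you phrase the geometric step as a containment $K \subset A \cup \partial^2 A$ for cliques meeting $A$ rather than as non-intersection with the complement, and you spell out the normalizing integrals that the paper leaves implicit when freezing $x_{\partial^2 A}$.
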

\begin{proof}
Let $A \subset V$, and let $\varphi : \X^{\partial^2 A} \to \R_+$ denote the marginal density of $X_{\partial^2 A}$. Let $B = (A \cup \partial^2 A)^c$. Then the conditional density of $(X_{A},X_B)$ given $X_{\partial^2 A}$ is precisely
\begin{align*}
\frac{1}{\varphi(x_{\partial^2 A})}\prod_{K \in \K}f_K(x_K).
\end{align*}
No $2$-clique of $G$ that intersects $A$ can also intersect $B$, and vice versa, because any pair of vertices $u\in A$ and $v \in B$ have distance at least 3. Thus, for $x_{\partial^2 A}$ frozen, the above conditional density as a function of $(x_A,x_B)$ factorizes into a function of $x_A$ times a function of $x_B$. This implies $X_A$ and $X_B$ are conditionally independent given $X_{\partial^2 A}$.
\end{proof}

The second-order MRF property is more intuitive, but the factorization property of Theorem \ref{th:hammersleyclifford2} will be quite useful in our analysis. Hence, we give it a name:

\begin{definition} \label{def:2cliquefactorization}
 We say that $\mu \in \P(\X^V)$ \emph{admits a $2$-clique factorization with respect to $\lambda^V$} if the density $d\mu/d\lambda^V$ exists and takes the form \eqref{eq-mufactor}, for some set $\K$ of $2$-cliques of $G$.
\end{definition}

It is clear that Theorem \ref{th:hammersleyclifford2} admits a generalization to $m$-order MRFs, defined in the obvious way for $m \in \N$, where one must assume the density factorizes over $m$-cliques, but we have no use for such a generalization.

\section{Proof of the conditional independence property} \label{se:proofs-GW-conditionalindependence}

We now turn to the proof of the conditional independence property stated in
Proposition \ref{pr:properties-GW}, which played a crucial role in the proof of existence 
for Theorem \ref{th:statements-mainlocaleq}.  The strategy is to first establish
the property on certain finite truncations of the tree, and then use an approximation argument. 
Specifically, in Section \ref{subs-truncated} 
  we first establish the desired conditional independence property on a truncation  
  of the infinite tree $\V$ to one of finite depth  and width by  explicitly identifying the joint density with respect to a product measure and then invoking Theorem \ref{th:hammersleyclifford2}.  
 In Section \ref{subs-condinftree} we then implement a rather delicate limiting argument to show that
the conditional independence property  is preserved when  the infinite
  tree is approximated by 
  trees of finite depth and width.

\subsection{Truncated systems}
\label{subs-truncated}

We begin by studying the particle system set on the truncated (finite) tree
$\tree_n := \tree \cap \V_{n,n}$, where $\tree$ is a UGW($\rho$) tree, and 
\begin{align*}
\V_{m,n} := \{\o\} \cup \bigcup_{k=1}^m \{1,\ldots,n\}^k, \quad \text{ for } n,m \in \N.
\end{align*}  
That is, $\V_{m,n}$ is the set of labels of trees of height $m$ with at most $n$ offspring per generation.
Let $(X^n_v)_{v \in \V} := (X^{\tree_n}_v)_{v \in \V}$ be a solution to  the SDE system
\begin{equation}
  \label{drifted-x}
  dX_v^n (t) = 1_{\{v \in \tree_n\}} \left( b(t,X_v^n, X_{N_v(\tree_n)}^n) dt + \sigma (t, X_v^n) dW_v(t)\right), \qquad v \in \V, 
\end{equation}
where  $(X_v^n(0))_{v \in \V}$ are i.i.d.\ with law $\lambda_0$, and as usual the tree $\tree$, the initial conditions $(X_v^n(0))_{v \in \V}$, and the driving Brownian motions $(W_v)_{v \in \V}$ are independent. Also, for $v \in \V \backslash \tree_n$, note as usual that the particles are constant over time,
with $X_v^n(t)=X_v^n(0)$ for all $t > 0$.
Let $P^n \in \P((\{0,1\} \times \C)^{\V})$ denote the law of 
\begin{equation}
\label{def-pn}
\Bigl(1_{\{ v \in \tree_n \}}, \, X^n_v\Bigr)_{v \in \V}.
\end{equation}
We will identify $P^n$ by way of its Radon-Nikodym derivative with respect to a certain reference measure (in the process 
showing that the SDE \eqref{drifted-x} is unique in law).  
In this case, as a reference measure  
we use $\W \in \P((\{0,1\} \times \C)^{\V})$, defined as the law of $(\xi_v,\widehat{X}_v)_{v \in \V}$, where $(\xi_v)_{v \in \V}$ are independent Bernoulli($1/2$) random variables, and where $\widehat{X}$ solves the driftless SDE  system
\begin{equation}
  \label{driftless-tx}
  d \widehat{X}_v(t) = \xi_v \sigma (t, \widehat{X}_v) dB_v(t), \quad \widehat{X}_v(0) \sim \lambda_0,  \ \ \ v \in \V,
\end{equation} 
with $(B_v)_{v \in \V}$ as independent standard $d$-dimensional Brownian motions, and with $(B_v)_{v \in \V}$, $(\xi_v)_{v \in \V}$, and with $(\widehat{X}_v(0))_{v \in \V}$  independent. Note that the  SDE \eqref{driftless-tx} is well-posed due to Assumption (\ref{assumption:A}.4).
Note in particular that $\W$ is an i.i.d.\ product measure.

To show that $P^n$ of \eqref{def-pn} is a second-order MRF, we will study how its density with respect to $\W$ factorizes, and then apply Theorem \ref{th:hammersleyclifford2}. As a first step, we identify the density of the $\{0,1\}^{\V_{n,n}}$-marginal:

\begin{lemma} \label{le:RNderivative}
Suppose $\rho$ has a finite nonzero first moment.
The law of $(1_{\{v \in \tree\}})_{v \in \V_{n,n}}$ on $\{0,1\}^{\V_{n,n}}$ is absolutely continuous with respect to that of $(\xi_v)_{v \in \V_{n,n}}$. Moreover, the Radon-Nikodym derivative is of the form
\begin{align}
F_n((a_v)_{v \in \V_{n,n}}) = f_{\o}(a_{\o},(a_k)_{k=1}^n)\prod_{v \in \V_{n-1,n} \backslash \{\o\}}f_1(a_v,(a_{vk})_{k=1}^n), \label{pf:GW-CI-2-1}
\end{align}
for measurable functions $f_{\o},f_1 : \{0,1\}^{n+1} \to \R_+$.
\end{lemma}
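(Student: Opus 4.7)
The plan is to compute the joint law of $(1_{\{v \in \tree\}})_{v \in \V_{n,n}}$ directly and read off its density in the required factorized form. Absolute continuity with respect to the law of $(\xi_v)_{v \in \V_{n,n}}$ is immediate: since $(\xi_v)_{v \in \V_{n,n}}$ is uniform on the finite set $\{0,1\}^{\V_{n,n}}$, every probability measure on this set admits a density, equal to $2^{|\V_{n,n}|}$ times the pointwise probability. So the content of the lemma lies entirely in establishing the factorization.

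The core of the argument exploits the recursive independence built into the $\mathrm{UGW}(\rho)$ tree. A configuration $a=(a_v)_{v\in\V_{n,n}}\in\{0,1\}^{\V_{n,n}}$ has positive probability under the law in question only if $a_{\o}=1$, $a_{vk}\le a_v$ for all legal $v,k$, and each child-pattern $(a_{vk})_{k=1}^n$ is non-increasing in $k$, since $vk\in\tree$ exactly when $v\in\tree$ and $c_v(\tree)\ge k$. By Definition \ref{def-UGW}, the offspring counts $(c_v(\tree))_{v\in\V_{n-1,n}}$ are mutually independent with $c_{\o}(\tree)\sim\rho$ and $c_v(\tree)\sim\widehat{\rho}$ for $v\ne\o$. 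Setting $j_v:=\sum_{k=1}^n a_{vk}$ for any such admissible $a$, this independence yields the product formula
\begin{equation*}
\PP\big((1_{\{v\in\tree\}})_{v\in\V_{n,n}}=a\big)
  = \ptil_{\o}\big(a_{\o},(a_k)_{k=1}^n\big)
    \prod_{v\in\V_{n-1,n}\setminus\{\o\}}\ptil_1\big(a_v,(a_{vk})_{k=1}^n\big),
\end{equation*}
where $\ptil_{\o}(1,\cdot)$ equals $\rho(j_{\o})$ when $j_{\o}<n$, equals $\sum_{k\ge n}\rho(k)$ when $j_{\o}=n$, and vanishes on non-monotone patterns; $\ptil_1$ is defined analogously in terms of $\widehat{\rho}$, with the additional convention $\ptil_1(0,(0,\dots,0))=1$ to account for the (deterministically trivial) subtree below a vertex not in $\tree$, and $\ptil_1(0,\cdot)=0$ on any non-zero child pattern.

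Converting to the density $F_n$ against the Bernoulli$(1/2)$ reference amounts to multiplying through by $2^{|\V_{n,n}|}$. Using the one-line geometric identity $|\V_{n,n}|=1+n|\V_{n-1,n}|$, we may then set $f_{\o}:=2^{n+1}\ptil_{\o}$ and $f_1:=2^{n}\ptil_1$, so that the proposed product carries a prefactor of $2^{n+1}\cdot 2^{n(|\V_{n-1,n}|-1)}=2^{|\V_{n,n}|}$, yielding \eqref{pf:GW-CI-2-1}. No genuine obstacle is anticipated: the proof is essentially a bookkeeping exercise combining the combinatorial tree-structure constraints on admissible patterns with the independence of offspring counts guaranteed by the definition of $\mathrm{UGW}(\rho)$.
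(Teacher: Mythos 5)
Your proof is correct and takes the same route as the paper: the paper's proof consists of a single sentence appealing to the conditional independence structure of the UGW tree and the homogeneity of the offspring distribution away from the root, and your proposal spells out that bookkeeping explicitly (the factorized probability in terms of $\rho$, $\widehat\rho$, the truncation of offspring counts at $n$, and the rescaling by $2^{|\V_{n,n}|}=2^{1+n|\V_{n-1,n}|}$), all of which checks out.
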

\begin{proof}
This is an easy consequence of the conditional independence structure of the tree $\tree$ and the fact that, aside from the root, every vertex has an identical offspring distribution.
\end{proof}

Next, we establish the desired second-order MRF 
property for $P^n$.
 We make use of the following notation. For $t > 0$, a set $A \subset \V$, and a probability measure $Q$ on $(\{0,1\} \times \C)^\V$, we write $Q_t$ and $Q_t[A]$ for the projections onto $(\{0,1\} \times \C_t)^\V$ and $(\{0,1\} \times \C_t)^A$, respectively. For example, $Q_t[A]$ is the image of $Q$ through the map $(a_v,x_v)_{v \in \V} \mapsto (a_v,x_v[t])_{v \in A}$.

\begin{proposition} \label{pr:GW-conditionalindependence}
Suppose Assumption \ref{assumption:A} holds, and assume the offspring distribution $\rho$ has a finite nonzero first moment.
Then, for each $t > 0$ and $n \ge 3$, the following hold:
\begin{enumerate}[(i)]
\item $(1_{\{v \in \tree_n\}},X^n_v[t])_{v \in \V}$ is a global second-order MRF. 
\item $(X^n_v[t])_{v \in \V}$ is a global second-order MRF.  
\end{enumerate} 
\end{proposition}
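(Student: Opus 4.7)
The plan is to establish a $2$-clique factorization of the density of $P^n_t$ with respect to $\W_t$ on the finite subgraph $\V_{n,n}$, apply Theorem \ref{th:hammersleyclifford2} there, and then lift the second-order MRF conclusion to the infinite graph $\V$; part (ii) will then be immediate from part (i) via Remark \ref{rem-treerecovery}.

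For the factorization, two contributions appear. By Lemma \ref{le:RNderivative}, the law of $(1_{\{v\in\tree_n\}})_{v\in\V_{n,n}}$ has a density $F_n=f_{\o}(a_{\o},(a_k)_{k=1}^n)\prod_{v\in\V_{n-1,n}\setminus\{\o\}}f_1(a_v,(a_{vk})_{k=1}^n)$ relative to the i.i.d.\ Bernoulli$(1/2)$ indicators, each factor indexed by the parent-and-children set $\{v\}\cup\{v1,\ldots,vn\}$, a set of diameter $2$ in $\V$ and hence a $2$-clique. Conditional on $\tree_n$, Girsanov's theorem (Lemma \ref{le:ap:girsanov}, applicable via Assumption \ref{assumption:A} and the $L^2$ bound of Lemma \ref{le:lingrowth}) yields a density factor
\[
\prod_{v \in \tree_n} \EE_t(M_v), \qquad M_v := \int_0^\cdot (\sigma\sigma^\top)^{-1}(s,X_v^n)\,b\big(s,X_v^n,X_{N_v(\tree_n)}^n\big)\cdot dX_v^n(s),
\]
in which each $\EE_t(M_v)$ depends only on $(X_u^n[t])_{u\in\{v\}\cup N_v(\tree_n)}$, again a set of diameter $2$ in $\V$ and hence a $2$-clique. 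Theorem \ref{th:hammersleyclifford2} applied to $\V_{n,n}$ then yields the second-order MRF property for $(1_{\{v\in\tree_n\}},X_v^n[t])_{v\in\V_{n,n}}$ with respect to the graph $\V_{n,n}$.

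To lift this to $\V$, I would exploit that for each $v\in\V\setminus\V_{n,n}$ the pair $Y_v=(0,X_v^n(0))$ is determined by the initial condition alone, and $(X_v^n(0))_{v\in\V\setminus\V_{n,n}}$ is i.i.d.\ $\lambda_0$ and independent of $(Y_v)_{v\in\V_{n,n}}$. For any $A\subset\V$ and $B\subset\V\setminus(A\cup\partial_\V^2 A)$, writing $A_1=A\cap\V_{n,n}$, $B_1=B\cap\V_{n,n}$, and $C_1=(\partial_\V^2 A)\cap\V_{n,n}$, this independence reduces the required conditional independence to $Y_{A_1}\indep Y_{B_1}\,|\,Y_{C_1}$. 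Since $\V$ is a tree, distances between $\V_{n,n}$-vertices agree in $\V$ and in $\V_{n,n}$ (paths in $\V$ between such vertices go through their common ancestors, which lie in $\V_{n,n}$), so $\partial^2_{\V_{n,n}}A_1\subset C_1$; applying the MRF property on $\V_{n,n}$ with test set $B_1\cup(C_1\setminus\partial^2_{\V_{n,n}}A_1)$ together with the standard reduction $(X\indep(Y,Z)\,|\,W)\Rightarrow(X\indep Y\,|\,(W,Z))$ gives the claim. Part (ii) then follows at once from part (i): by Remark \ref{rem-treerecovery} (which exploits the non-degeneracy of $\sigma$), $1_{\{v\in\tree_n\}}$ is a.s.\ measurable with respect to $X_v^n[t]$ for every $v$ and $t>0$, so $(X_v^n[t])_{v\in\V}$ and $(1_{\{v\in\tree_n\}},X_v^n[t])_{v\in\V}$ generate the same $\sigma$-field up to null sets. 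The main delicate point will be the bookkeeping in the finite-to-infinite passage, particularly relating the double boundary $\partial^2 A$ computed in $\V$ to its analogue in $\V_{n,n}$ and keeping track of the contribution of the i.i.d.\ coordinates outside $\V_{n,n}$.
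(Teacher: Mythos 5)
Your proposal is correct and follows essentially the same strategy as the paper's proof: identify the density $dP^n_t/d\W_t$ on $\V_{n,n}$ as a product of $F_n$ (from Lemma \ref{le:RNderivative}) and the Girsanov factor conditional on the tree, observe that each factor is supported on a $2$-clique of $\V_{n,n}$, and invoke Theorem \ref{th:hammersleyclifford2}; then derive (ii) from (i) via the measurability of the indicator noted in Remark \ref{rem-treerecovery}. The one place where you add detail beyond the paper is the finite-to-infinite lifting step: the paper simply asserts that independence of the coordinates outside $\V_{n,n}$ makes it ``clearly'' sufficient to prove the MRF property for $(1_{\{v \in \tree_n\}},X^n_v[t])_{v \in \V_{n,n}}$ with respect to the graph $\V_{n,n}$, whereas you spell out the comparison of the double-boundary operator in $\V$ versus $\V_{n,n}$ (using that $\V_{n,n}$ is a subtree of $\V$, so graph distances agree) and the conditional-independence bookkeeping needed to absorb the i.i.d.\ coordinates off $\V_{n,n}$. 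This is a genuine gap-filling of a point the paper glosses over, and the argument you give is correct. One minor imprecision: the Girsanov factor $\EE_t(M_v)$ should be regarded as a function of $(a_u,x_u)_{u \in \{v\}\cup N_v(\V_{n,n})}$, since it must encode the indicators that determine $N_v(\tree_n)$; your stated support $\{v\}\cup N_v(\tree_n)$ mentions only the $X$-coordinates. This does not affect the conclusion, since $\{v\}\cup N_v(\V_{n,n})$ is itself a $2$-clique, but the paper's formulation in terms of $\widehat{\tree}_n$ as a deterministic function of the $a$-marks is cleaner on this point.
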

\begin{proof}
The property (ii) easily follows from (i), after noting as in Remark \ref{rem-treerecovery} that $1_{\{v \in \tree_n\}}$ is measurable
with respect to $X^n_v[t]$. Hence, we only prove (i).

Fix $t > 0$ and $n \geq 3$. Because the coordinates of $\V\setminus\V_{n,n}$ are all independent of those in $\V_{n,n}$, it clearly suffices to show that $(1_{\{v \in \tree_n\}},X^n_v[t])_{v \in \V_{n,n}}$ is a global second-order MRF. By Definition \ref{def-2MRF}, we must show that
\begin{align}
(1_{\{v \in \tree_n\}},X^n_v[t])_{v \in A} \, \, \indep \, \, (1_{\{v \in \tree_n\}},X^n_v[t])_{v \in B} \,\, \Bigr| \, \, (1_{\{v \in \tree_n\}},X^n_v[t])_{v \in \partial^2A}, \label{pf:GW-CI-1}
\end{align}
for any sets $A,B \subset \V_{n,n}$ with $B \cap (A \cup \partial^2A) = \emptyset$, where  $\partial^2$ denotes the double boundary operation in the tree $\V_{n,n}$. 
Recall that  $P^n_t[\V_{n,n}]$ is the restriction of the law $P^n$ of the random process
$(1_{\{v \in \tree_n\}}, \, X_v^n)_{v \in \V}$
in \eqref{def-pn} to  $(\{0,1\} \times \C_t)^{\V_n}$, and similarly for $\W_t[\V_{n,n}]$,
where $\W \in \P((\{0,1\} \times \C)^{\V})$ is the law of the process
  $(\xi_v, \widehat{X}_v)_{v \in \V}$ defined just prior to \eqref{driftless-tx}.  
To prove (i), we show that the density $dP^n_t[\V_{n,n}]/d\W_t[\V_{n,n}]$ admits a $2$-clique factorization in the sense of Definition \ref{def:2cliquefactorization}. 
To show this, we will use Girsanov's theorem to identify a conditional density
\emph{given the realization of the tree}, and then note that $dP^n_t[\V_{n,n}]/d\W_t[\V_{n,n}]$ is nothing but the product of this conditional density with the density of the law of $(1_{\{v \in \tree\}})_{v \in \V_{n,n}}$ with respect to the law of $(\xi_v)_{v \in \V_{n,n}}$, the form of which was identified in Lemma \ref{le:RNderivative}.

To identify this conditional density, we need a bit more notation. Define 
\[
\D_{n,n} := \{(1_{\{v \in T\}})_{v \in \V_{n,n}} \in \{0,1\}^{\V_{n,n}} : T \subset \V_{n,n} \text{ is a tree}\}.
\]
Define
$\widehat{\tree}_n : \D_{n,n} \rightarrow 2^{\V_{n,n}}$ by setting $\widehat{\tree}_n((1_{\{v \in T\}})_{v \in \V_{n,n}})=T$ for each tree $T \subset \V_{n,n}$,
and extend $\widehat{\tree}_n$ to all of $\{0,1\}^{\V_{n,n}}$ by (arbitrarily) setting $\widehat{\tree}_n(a) :=\{\o \}$ for $a \notin \D_{n,n}$.
Note that $(1_{\{v \in \tree_n\}})_{v \in \V_{n,n}}$ belongs a.s.\ to $\D_{n,n}$ and that 
$(\xi_v)_{v \in \V_{n,n}}$ is measurable with respect to $\widehat{\tree}_n((\xi_v)_{v \in \V_{n,n}})$ on the
event $\{(\xi_v)_{v \in \V_{n,n}} \in \D_{n,n}\}$.
We may additionally extend the domain $\widehat{\tree}_n$ to all of $(\{0,1\} \times \C)^{\V_{n,n}}$ by the identification $\widehat{\tree}_n((a_v,x_v)_{v \in \V_{n,n}}) = \widehat{\tree}_n((a_v)_{v \in \V_{n,n}})$.
Intuitively, under the measure $P^n_t[\V_{n,n}]$, 
  $\widehat{\tree}_n$ will represent the truncated random UGW($\rho$) tree $\tree_n$, with the advantage  that $\widehat{\tree}_n$ is defined on the canonical space $(\{0,1\} \times \C)^{\V_{n,n}}$.

Given these definitions, we may now identify the density of $P^n_t[\V_{n,n}]$ with respect to $\W_t[\V_{n,n}]$, conditionally on $\widehat{\tree}_n$.
Since $\V_{n,n}$ is a finite set,   
we may apply Girsanov's theorem in the form of Lemma \ref{le:ap:girsanov} (which is applicable since  \eqref{ap:asmp:girsanov} is satisfied due to  Assumption (\ref{assumption:A}.1) and Remark \ref{re:ap:girsanov}): recalling the definition of $X^n$ in \eqref{drifted-x}, the conditional density of $P^n_t[\V_{n,n}]$ with respect to $\W_t[\V_{n,n}]$ given $\widehat{\tree}_n$ is
\begin{align}	\label{dpndw}
\frac{dP^n_t[\V_{n,n}](\cdot\, | \, \widehat{\tree}_n)}{d\W_t[\V_{n,n}](\cdot\, | \, \widehat{\tree}_n)} &= \prod_{v \in \V_{n,n}}\EE_t(M^n_v),
\end{align}
where $\EE_t$ is the Doleans exponential defined in \eqref{def:doleans-exponential}, and
$M^n_v = M_v^n((a_v, x_v)_{v \in \V_{n,n}})$ is given by
\begin{align*}
M^n_v(t)((a_v,x_v)_{v \in \V_{n,n}}) &:= 1_{\{v \in \widehat{\tree}_n\}}\int_0^t (\sigma\sigma^\top)^{-1}b(s,x_v,x_{N_v(\widehat{\tree}_n)}) \cdot dx_v(s),
\end{align*}
where we suppressed the arguments $(a_v)_{v \in \V_{n,n}}$ of $\widehat{\tree}_n$.
Observe that for each $v_0 \in \V_{n,n}$, 
$M^n_{v_0}$ depends on $(a_v, x_v)_{v \in \V_{n,n}}$ only
through $a_{v_0}$, $x_{v_0}$ and 
$(a_v, x_v)_{v \in N_{v_0}(\V_{n,n})}$, recalling that $N_v(\V_{n,n})$ denotes the set of neighbors of $v$ within the tree $\V_{n,n}$.

Letting $F_n$ be as in Lemma \ref{le:RNderivative}, the entire (joint) density takes the form 
\begin{align*}
\frac{dP^n_t[\V_{n,n}]}{d\W_t[\V_{n,n}]}((a_v,x_v)_{v \in \V_{n,n}}) &= F_n((a_v)_{v \in \V_{n,n}})\frac{dP^n_t[\V_{n,n}](\cdot\, | \, \widehat{\tree}_n)}{d\W_t[\V_{n,n}](\cdot\, | \, \widehat{\tree}_n)}((a_v,x_v)_{v \in \V_{n,n}}).
\end{align*}
Together, \eqref{dpndw} and  Lemma \ref{le:RNderivative} imply that this can be rewritten as
\[
\frac{dP^n_t[\V_{n,n}]}{d\W_t[\V_{n,n}]}((a_v,x_v)_{v \in \V_{n,n}}) = \prod_{v \in \V_{n,n}}g_v^n((a_v, x_v),  (a_u,x_u)_{u \in N_v(\V_{n,n})}),
\]
for appropriate functions $(g_v^n)_{v \in \V_{n,n}}$.  More precisely, with $f_{\o}$ and $f_1$ as in Lemma \ref{le:RNderivative},
we have
\[
g^n_v((a_v, x_v), (a_u,x_u)_{u \in N_v(\V_{n,n})}) = \begin{cases}
 f_{\o}(a_{\o},(a_k)_{k=1}^n)\EE_t(M^n_{\o}) &\text{if } v = \o, \\
f_1(a_v,(a_{vk})_{k=1}^n)\EE_t(M^n_v) &\text{if } v \in \V_{n-1,n} \backslash \{\o\}, \\
\EE_t(M^n_v) &\text{if } v \in \V_{n,n} \backslash \V_{n-1,n}. 
\end{cases}
\]
Observing   that  for each $v \in \V_{n,n}$, the set $\{v\} \cup N_v(\V_{n,n})$ is a $2$-clique in $\V_{n,n}$, property (i) now follows from  Theorem \ref{th:hammersleyclifford2}.  
\end{proof}

\subsection{Convergence to the infinite system}
\label{subs-condinftree}

With the second-order MRF property now established for the truncated systems $P^n$, we wish to pass to the limit $n\rightarrow\infty$ to deduce a similar property for the infinite system.
We begin by checking that the law $P^n$ of $(1_{\{v \in \tree_n\}},X^n_v)_{v \in \V}$ converges to the law $P$ of $(1_{\{v \in \tree\}},X_v)_{v \in \V}$ and also that conditional laws converge in a suitable sense, where we recall that $X^n$ and $X$, respectively, denote the solutions of the SDE systems \eqref{drifted-x} and \eqref{statements:SDE}.

\begin{lemma} \label{le:infinitegraphlimit-GW}
Suppose Assumption \ref{assumption:A} holds.  Assume also that $\rho$ has a finite nonzero first moment. 
Then $P^n \rightarrow P$ weakly on $(\{0,1\} \times \C)^\V$.
Moreover, for any $k \in \N$, any $t > 0$, and any bounded continuous function $\varphi : \C_t^{\Vmb} \rightarrow \R$, we have
\begin{align}
\E\big[\varphi(X^n_{\Vmb\setminus\{\o,k\}}[t],X^n_{\{\o,k\}}[t]) \,\big|\, X^n_{\{\o,k\}}[t]\big] \Rightarrow \E\big[\varphi(X_{\Vmb\setminus\{\o,k\}}[t],X_{\{\o,k\}}[t]) \,\big|\, X_{\{\o,k\}}[t]\big],	\label{def:infinitegraphlimit-GW}
\end{align} 
where we recall that $\Rightarrow$ denotes convergence in law.
\end{lemma}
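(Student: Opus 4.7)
The proof splits naturally into two parts, addressing each assertion in turn.

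For the weak convergence $P^n\Rightarrow P$, my plan is to combine tightness with uniqueness of the infinite SDE system. Tightness of $\{P^n\}$ in $\P((\{0,1\}\times\C)^{\V})$ is established coordinate-wise: the binary component lives in the compact space $\{0,1\}^\V$, while tightness of the $\C^A$-marginals for any finite $A\subset\V$ follows from the uniform second-moment bound \eqref{def:2ndmomentbound} of Lemma \ref{le:lingrowth} (applied to the finite tree $\tree_n$), combined with a standard Kolmogorov-type moment estimate on the SDE \eqref{drifted-x}. To identify any subsequential limit $P^*$, I rely on the fact that for each fixed $v\in\V$, one has almost surely $1_{\{v\in\tree_n\}}=1_{\{v\in\tree\}}$ and $N_v(\tree_n)=N_v(\tree)$ for all $n$ large enough (since the labels of $v$ and of its neighbors in $\tree$ are a.s.\ bounded). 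Combined with the continuity of $b$ and $\sigma$ from Assumption \ref{assumption:A}, this allows passage to the limit in the martingale problem driving \eqref{drifted-x}, so that $P^*$ satisfies the martingale problem for \eqref{statements:SDE}. Uniqueness in law (Assumption (\ref{assumption:A}.4), cf.\ Remark \ref{rem-unique}) then forces $P^*=P$.

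For the convergence of conditional expectations, my strategy is to exploit the explicit Girsanov representation from the proof of Proposition \ref{pr:GW-conditionalindependence}. With the product reference $\W$, the restriction of $P^n$ to coordinates in $\V_{n,n}$ has density
\begin{align*}
D^n_t \;=\; F_n\,\prod_{v\in\V_{n,n}}\EE_t(M^n_v),
\end{align*}
so Bayes' formula yields
\begin{align*}
\E^{P^n}\!\bigl[\varphi(X^n[t])\,\bigl|\,X^n_{\{\o,k\}}[t]\bigr]
\;=\;
\frac{\E^{\W}\!\bigl[\varphi(\widehat X[t])\,D^n_t\,\bigl|\,\widehat X_{\{\o,k\}}[t]\bigr]}{\E^{\W}\!\bigl[D^n_t\,\bigl|\,\widehat X_{\{\o,k\}}[t]\bigr]}.
\end{align*}
I would first approximate $\varphi$ by cylindrical functions $\varphi_A$ supported on a finite $A\supset\{\o,k\}$ (valid by bounded convergence in the product topology, using the moment bound \eqref{def:2ndmomentbound}), reducing to a finite-dimensional problem. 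For fixed $A$, as $n\to\infty$, the stabilization $M^n_v\to M_v$ on the a.s.\ event described above, together with continuity of $b$ and $\sigma$, yields almost sure convergence of the finite-dimensional density $D^n_t|_A$; the two-sided entropy bounds \eqref{def:entropybound1}--\eqref{def:entropybound2} then provide the uniform integrability needed to pass the limit through both the numerator and the denominator of the Bayes ratio.

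The main obstacle is the passage to the limit in this Bayes ratio. Two issues must be addressed. First, the denominator $\E^{\W}[D^n_t\mid \widehat X_{\{\o,k\}}[t]]$ must be bounded away from zero with uniform tail control, so that the ratio is well-behaved in $L^1$; this is secured by the two-sided entropy bound \eqref{def:entropybound2} of Lemma \ref{le:lingrowth}, which guarantees mutual absolute continuity of the finite-dimensional restrictions of $P^n$ and $\W$. Second, the cylindrical truncation of $\varphi$ must commute with the conditioning, with errors vanishing uniformly in $n$; here a diagonal argument over growing finite sets $A$, controlled again by the moment and entropy bounds of Lemma \ref{le:lingrowth}, handles the infinitely many remaining coordinates. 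Combining these yields $\E^{P^n}[\varphi\mid X^n_{\{\o,k\}}[t]]\Rightarrow\E^{P}[\varphi\mid X_{\{\o,k\}}[t]]$.
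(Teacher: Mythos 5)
Your treatment of the first assertion ($P^n\Rightarrow P$) is essentially the paper's argument: tightness from the uniform moment bound, Skorokhod representation, stabilization of $\tree_n$-neighborhoods around any fixed vertex for large $n$, passage to the limit in the SDE (the paper uses Kurtz--Protter weak convergence of stochastic integrals, you phrase it via the martingale problem, which is equivalent), and uniqueness in law to pin down the limit.

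The second assertion is where your proposal diverges and, as written, has a genuine gap. You condition under the product reference $\W$ and push the entire Radon--Nikodym density $D^n_t=F_n\prod_{v\in\V_{n,n}}\EE_t(M^n_v)$ through the Bayes ratio. The two issues you flag — lower-bounding the denominator and commuting cylindrical truncation with conditioning — are not the real problem; the problem is the density itself. First, the entropy estimates \eqref{def:entropybound1}--\eqref{def:entropybound2} scale like $C(1+|A|)$, so applying them with $A=\V_{n,n}$ gives a bound that blows up as $n\to\infty$: there is no uniform integrability for the full $D^n_t$ coming from Lemma \ref{le:lingrowth}. Second, even if $\varphi$ is cylindrical on a finite set $A$, the Bayes numerator $\E^{\W}[\varphi\,D^n_t\mid \widehat X_{\{\o,k\}}]$ still involves the full $D^n_t$; the marginal density of $P^n$ on $\C_t^A$ w.r.t.\ $\W$ is $\E^{\W}[D^n_t\mid\widehat X_A]$, which does not factor into a cylindrical piece times a controllable remainder, because the factors $\EE_t(M^n_v)$ for nearby $v$ share variables. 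Third, your claim of ``almost sure convergence of the finite-dimensional density $D^n_t|_A$'' via stabilization of $M^n_v\to M_v$ is sound for each fixed $v$, but the product over $v\in\V_{n,n}$ has a growing number of factors, so pointwise stabilization does not directly give convergence of the product, and the marginalization $\E^{\W}[D^n_t\mid\widehat X_A]$ has no closed form to work with.

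The paper sidesteps all of this by a cleverer choice of reference. Instead of the fully driftless $\W$, it introduces $Q^n$ and $Q$ as laws of auxiliary systems in which the drift is removed \emph{only on the finite set $\V_2$} (see \eqref{drifted-yn}, \eqref{drifted-y}). This has two payoffs. The Girsanov density $dP^n_t/dQ^n_t$ is then a \emph{fixed finite} sum of stochastic integrals over $v\in\V_2$ (via Lemma \ref{le:ap:girsanov2}), so the joint weak convergence $(Y^n,\,dP^n_t/dQ^n_t)\Rightarrow(Y,\,dP_t/dQ_t)$ follows cleanly from Kurtz--Protter — no infinite products, no growing entropy bounds. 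And under $Q^n$, the coordinates in $\{\o,k\}\subset\V_2$ are driftless, so $(1_{\{v\in\tree_n\}},Y^n_v)_{v\in\V\setminus\{\o,k\}}$ is conditionally independent of $(Y^n_v)_{v\in\{\o,k\}}$ given the single binary variable $1_{\{k\in\tree\}}$; hence the conditional expectation under $Q^n$ reduces to a two-valued average that trivially converges. These two facts are exactly the hypotheses of \cite[Theorem 2.1]{CrimaldiPratelli2005}, which then delivers the weak convergence of conditional expectations under $P^n$. The Crimaldi--Pratelli criterion is precisely the right tool for the situation where both the conditioning variable's law and the conditional distribution are moving targets; your Bayes-ratio formulation addresses the same difficulty but needs the reference measure to differ from $P^n$ in only finitely many coordinates to become tractable.
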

\begin{proof} 
Recall $\tree_n = \tree \cap \V_{n,n}$, where $\tree$ is a UGW($\rho$) tree, and $\V_{n,n}$ is as defined in \eqref{def-vn}, 
and note that $(1_{\{v \in \tree_n\}})_{v \in \V}$ therefore converges in law to $(1_{\{v \in \tree\}})_{v \in \V}$ in $\{0,1\}^\V$.  It is straightforward to check that the family of $\C$-valued random variables $\{X^n_v : v \in \V, \, n \in \N\}$ is tight, by standard arguments or by using the relative entropy estimates of Lemma \ref{le:lingrowth}.
Hence, $\{(1_{\{v \in \tree_n\}},X^n_v)_{v \in \V} : n \in \N\}$ is a tight family of $(\{0,1\} \times \C)^\V$-valued random variables. Let $(1_{\{v \in \tree\}},X^\infty_v)_{v \in \V}$ denote any weak limit point, and assume by Skorokhod representation that it is in fact an a.s.\ limit. For $m \in \N$, we have $N_v(\tree_n)=N_v(\tree)$ for all $n > m+1$ and $v \in \V_m$, and using weak convergence of stochastic integrals (see \cite[Theorem 2.2]{Kurtz-Protter}) we deduce that $(X^\infty_v)_{v \in \V_m}$ satisfies
\begin{align*}
dX^\infty_v(t) &= 1_{\{v \in \tree\}}\left(b(t,X^\infty_v,X^\infty_{N_v(\tree)})dt + \sigma(t,X^\infty_v)dW^\infty_v(t)\right), \quad v \in \V_m,
\end{align*}
for some independent Brownian motions $(W^\infty_v)_{v \in \V_m}$. As this is true for each $m$, we deduce that $(X^\infty_v)_{v \in \V}$ and $(X_v)_{v \in \V}$ solve the same SDE system
\eqref{statements:SDE}. The SDE \eqref{statements:SDE} is unique in law by Assumption (\ref{assumption:A}.1) (and Remark \ref{rem-unique}), and so the law of $(1_{\{v \in \tree\}},X^\infty_v)_{v \in \V}$ must be $P:=\L((1_{\{v \in \tree\}},X_v)_{v \in \V})$, which shows that $P^n \to P$.

The second claim requires more care, and we will ultimately appeal to \cite[Theorem 2.1]{CrimaldiPratelli2005}, which gives a criterion for the weak convergence of conditional expectations.
We introduce the following systems that are parallel to $X^n$ and $X$ but are driftless for nodes in $\V_2$. 
Let $Q^n \in \P((\{0,1\} \times \C)^{\V})$ denote the law of 
\begin{equation}
\label{def-qn}
\Bigl(1_{\{ v \in \tree \}}, \, Y^n_v\Bigr)_{v \in \V},
\end{equation}
where $(Y_v^n(0))_{v \in \V}=(X_v(0))_{v \in \V}$ and $(Y^n_v)_{v \in \V}$ solves the SDE system
\begin{align}
  \label{drifted-yn}
  \begin{aligned}
  	dY_v^n (t) & = 1_{\{v \in \tree_n\}} \left( b(t,Y_v^n, Y_{N_v(\tree_n)}^n) dt + \sigma (t, Y_v^n) dW_v(t)\right), \qquad v \in \V \setminus \Vmb_2, \\
  	dY_v^n (t) & = 1_{\{v \in \tree_n\}} \sigma (t, Y_v^n) dW_v(t), \qquad v \in \Vmb_2. 
  \end{aligned}
\end{align}
Recall that the tree $\tree$, the initial conditions $(X_v(0))_{v \in \V}$, and the driving Brownian motions $(W_v)_{v \in \V}$ are independent.
{To see that the SDE \eqref{drifted-yn} is unique in law (and hence, $Q^n$ is well-defined), condition on the (finite) tree $\tree_n$,
  use the independence properties just stated, 
  the fact that the driftless SDE is unique in law by Assumption (\ref{assumption:A}.4) and  Lemma \ref{le:ap:girsanov}
  (along with Remark \ref{re:ap:girsanov} and Assumption (\ref{assumption:A}.1)).}   
  
Similarly, let $Q \in \P((\{0,1\} \times \C)^{\V})$ denote the law of 
\begin{equation}
\label{def-q}
\Bigl(1_{\{ v \in \tree \}}, \, Y_v\Bigr)_{v \in \V},
\end{equation}
where $(Y_v(0))_{v \in \V}=(X_v(0))_{v \in \V}$ and $(Y_v)_{v \in \V}$ solves the SDE system
\begin{align}
  \label{drifted-y}
  \begin{aligned}
  	dY_v (t) & = 1_{\{v \in \tree\}} \left( b(t,Y_v, Y_{N_v(\tree)}) dt + \sigma (t, Y_v) dW_v(t)\right), \qquad v \in \V \setminus \Vmb_2, \\
  	dY_v (t) & = 1_{\{v \in \tree\}} \sigma (t, Y_v) dW_v(t), \qquad v \in \Vmb_2. 
  \end{aligned}
\end{align}
That the SDE \eqref{drifted-y} is unique in law (and thus $Q$  is well-defined) 
  can be deduced by applying Lemma \ref{le:ap:girsanov2}  with $X^2 = (X^2_v)_{v \in \V}$ equal to the solution to  the SDE
  \eqref{statements:SDE}, which is unique in law by Remark \ref{rem-unique}, 
  and $X^1 = (Y_v)_{v \in \V}$ as above, 
   noting that  the two  differ only for $v$ in the finite set $\Vmb_2$, and that
 condition \eqref{asmp:ap:girs2} of Lemma \ref{le:ap:girsanov2} holds by Remark \ref{re:ap:girsanov} and Assumption (\ref{assumption:A}.1).  
  
It is easily checked that $Q^n \to Q$ weakly, using the same argument which showed that $P^n \to P$ above.  
Fix $t > 0$. 
We may now apply Girsanov's theorem, in the precise infinite-dimensional form developed in Lemma \ref{le:ap:girsanov2},
whose application is justified by the uniqueness in law of the SDEs in \eqref{drifted-yn} and \eqref{drifted-y}
and the fact that 
  the condition \eqref{asmp:ap:girs2}  holds on account of  Remark \ref{re:ap:girsanov} and Assumption (\ref{assumption:A}.1),   
to obtain 
\begin{align*}
\frac{dP^n_t}{dQ^n_t}((1_{\{v \in \tree_n\}},Y^n_v)_{v \in \V}) & = \EE_t \left( \sum_{v \in \Vmb_2} \int_0^\cdot 1_{\{v \in \tree_n\}} \sigma(s,Y^n_v)^{-1} b(s,Y^n_v,Y^n_{N_v(\tree_n)}) \cdot dW_v(s)\right), \\
\frac{dP_t}{dQ_t}((1_{\{v \in \tree\}},Y_v)_{v \in \V}) & = \EE_t \left( \sum_{v \in \Vmb_2} \int_0^\cdot 1_{\{v \in \tree\}} \sigma(s,Y_v)^{-1} b(s,Y_v,Y_{N_v(\tree)}) \cdot dW_v(s)\right).
\end{align*}
Note that the summations are a.s.\ finite, since all but finitely many of the indicators $1_{\{v \in \tree_n\}}$
and $1_{\{v \in \tree\}}$ are zero for $v \in \V_2$.

From the weak convergence $Q^n \rightarrow Q$ (of the laws of $(1_{v \in \tree_n}, Y^n_v)_{v \in \V}$ to
  that of $(1_{v \in \tree}, Y_v)_{v \in \V}$) and using weak convergence of stochastic integrals (see \cite[Theorem 2.2]{Kurtz-Protter}), we easily deduce the following weak convergence in $(\{0,1\} \times \C_t)^\V \times \R$:
\begin{align}	\label{eq:cvg_cond2}
\begin{split}
&\left( \Bigl(1_{\{ v \in \tree_n \}}, \, Y^n_v[t]\Bigr)_{v \in \Vmb}, \ \frac{dP^n_t}{dQ^n_t}((1_{\{v \in \tree_n\}},Y^n_v)_{v \in \V}) \right) \\
	&\qquad\Rightarrow \left( \Bigl(1_{\{ v \in \tree \}}, \, Y_v[t]\Bigr)_{v \in \Vmb}, \ \frac{dP_t}{dQ_t}((1_{\{v \in \tree\}},Y_v)_{v \in \V}) \right). 
\end{split}  
\end{align}
To use this to deduce the desired convergence of related  conditional distributions, we now verify an additional 
condition in  \cite[Theorem 2.1]{CrimaldiPratelli2005}.  
Fix $k \in \N$ and a bounded continuous function $g$ on $(\{0,1\} \times \C_t)^{\V\setminus\{\o,k\}}$.
It is clear from the form of \eqref{drifted-yn} that $(1_{\{ v \in \tree_n \}}, Y^n_v[t])_{v \in \V \setminus\{\o,k\}}$ and $(Y^n_v[t])_{v \in \{\o,k\}}$ are conditionally independent given $\{k \in \tree\}$, and similarly with $(\tree_n,Y^n)$ replaced by $(\tree,Y)$, when $n > k$.
For $n > k$, we have $\{k \in \tree_n\}=\{k \in \tree\}$, and thus
\begin{align*}
\E & \left[g\left((1_{\{ v \in \tree_n \}}, \, Y^n_v[t])_{v \in \Vmb\setminus\{\o,k\}}\right) \Big| (1_{\{ v \in \tree_n \}}, \, Y^n_v[t])_{v \in \{\o,k\}} \right]  \\
	& \qquad = \E \left[g\left((1_{\{ v \in \tree_n \}}, \, Y^n_v[t])_{v \in \Vmb\setminus\{\o,k\}}\right) \Big| 1_{\{ k \in \tree \}} \right]  \\
	& \qquad = \frac{1_{\{ k \in \tree \}}}{\PP(k \in \tree)} \E \left[g\left((1_{\{ v \in \tree_n \}}, \, Y^n_v[t])_{v \in \Vmb\setminus\{\o,k\}}\right) 1_{\{ k \in \tree \}} \right]   \\
	& \qquad \quad + \frac{1_{\{ k \notin \tree \}}}{\PP(k \notin \tree)} \E \left[g\left((1_{\{ v \in \tree_n \}}, \, Y^n_v[t])_{v \in \Vmb\setminus\{\o,k\}}\right) 1_{\{ k \notin \tree \}} \right]  \\
	& \qquad \Rightarrow \frac{1_{\{ k \in \tree \}}}{\PP(k \in \tree)} \E \left[g\left((1_{\{ v \in \tree \}}, \, Y_v[t])_{v \in \Vmb\setminus\{\o,k\}}\right) 1_{\{ k \in \tree \}} \right]  \\
	& \qquad \quad + \frac{1_{\{ k \notin \tree \}}}{\PP(k \notin \tree)} \E \left[g\left((1_{\{ v \in \tree \}}, \, Y_v[t])_{v \in \Vmb\setminus\{\o,k\}}\right) 1_{\{ k \notin \tree \}} \right]  \\
	& \qquad = \E \left[g\left((1_{\{ v \in \tree \}}, \, Y_v[t])_{v \in \Vmb\setminus\{\o,k\}}\right) \Big| 1_{\{ k \in \tree \}} \right] \\
	& \qquad = \E \left[g\left((1_{\{ v \in \tree \}}, \, Y_v[t])_{v \in \Vmb\setminus\{\o,k\}}\right) \Big| (1_{\{ v \in \tree \}}, \, Y_v[t])_{v \in \{\o,k\}} \right]. 
\end{align*}
 This and \eqref{eq:cvg_cond2} are precisely the two conditions assumed in \cite[Theorem 2.1]{CrimaldiPratelli2005}, which we may now apply to deduce that
\begin{align*}
&\E\left[g\left((1_{\{ v \in \tree_n \}}, \, X^n_v[t])_{v \in \V}\right) \Big| (1_{\{ v \in \tree_n \}}, \, X^n_v[t])_{v \in \{\o,k\}} \right] \\
	&\qquad\quad\Rightarrow \E\left[g\left((1_{\{ v \in \tree \}}, \, X_v[t])_{v \in \V}\right) \Big| (1_{\{ v \in \tree \}}, \, X_v[t])_{v \in \{\o,k\}} \right],
\end{align*}
for each bounded continuous function $g$ on $(\{0,1\} \times \C)^\V$. Specializing to functions on $\C^\V$ yields the claim \eqref{def:infinitegraphlimit-GW}. 
\end{proof}

\subsection{Proof of Proposition \ref{pr:properties-GW}}

We finally prove Proposition \ref{pr:properties-GW}, starting with claim (i).
Fix $k \in \N$ and let $C_k = \{ki : i \in \N\}$.
Fix two bounded continuous functions $f$ and $g$ on $\C_t^{C_k}$ and $\C_t^{\V_1}$.
From Lemma \ref{le:infinitegraphlimit-GW} we have that
\begin{align*}
	& s_1 \E[f(X^n_{C_k}[t]) \, | \, X^n_{\{\o,k\}}[t]] + s_2 \E[g(X^n_{\V_1}[t]) \, | \, X^n_{\{\o,k\}}[t]] \\
	& \Rightarrow s_1 \E[f(X_{C_k}[t]) \, | \, X_{\{\o,k\}}[t]] + s_2 \E[g(X_{\V_1}[t]) \, | \, X_{\{\o,k\}}[t]]
\end{align*}
for every $s_1, s_2 \in \Rmb$.
Therefore, by the Cram\'{e}r-Wold theorem, 
\begin{align*}
	& \Big(\E[f(X^n_{C_k}[t]) \, | \, X^n_{\{\o,k\}}[t]], \ \E[g(X^n_{\V_1}[t]) \, | \, X^n_{\{\o,k\}}[t]] \Big) \\
	& \Rightarrow \Big(\E[f(X_{C_k}[t]) \, | \, X_{\{\o,k\}}[t]], \ \E[g(X_{\V_1}[t]) \, | \, X_{\{\o,k\}}[t]]\Big).
\end{align*}
 By Proposition \ref{pr:GW-conditionalindependence}(i), $X^n_{C_k}[t]$ and $X^n_{\V_1}[t]$ are conditionally independent given $X^n_{\{\o,k\}}[t]$ for each $n$; indeed, apply Definition \ref{def-2MRF} of a second-order MRF with the set $A$ given as the set of all descendants of $k$, so that $\partial^2A=\{\o,k\}$. Thus, we have 
\begin{align*}
	\E[f(X_{C_k}[t])g(X_{\V_1}[t])] &= \lim_{n\to\infty}\E[f(X^n_{C_k}[t])g(X^n_{\V_1}[t])] \\
	&= \lim_{n\to\infty}\E\left[\E\big[f(X^n_{C_k}[t]) \, | \, X^n_{\{\o,k\}}[t]\big] \, \E\big[g(X^n_{\V_1}[t]) \, | \, X^n_{\{\o,k\}}[t]\big]\right] \\
	&= \E\left[ \E\big[f(X_{C_k}[t]) \, | \, X_{\{\o,k\}}[t]\big] \, \E\big[g(X_{\V_1}[t]) \, | \, X_{\{\o,k\}}[t]\big]\right].
\end{align*}
As this holds for any pair of bounded continuous functions $(f,g)$, we conclude as desired that $X_{C_k}[t]$ and $X_{\V_1}[t]$ are conditionally independent given $X_{\{\o,k\}}[t]$.

To prove part (ii) of Proposition \ref{pr:properties-GW}, we use a symmetry argument. Fix $k \in \N$, and let $\varphi : \V \to \V$ denote the transposition of the subtrees rooted at $1$ and $k$, defined by setting $\varphi(1u)=ku$ and $\varphi(ku)=1u$ for all $u \in \V$ as well as $\varphi(v)=v$ for all $v \in \V$ which satisfy neither $v \ge 1$ nor $v \ge k$ (i.e., for all $v \in \V$
  that are not descendants of $1$ or $k$). Due to the recursive structure of the tree $\tree \sim \mathrm{UGW}(\rho)$, we have $\L(\tree \, | \, k \in \tree) = \L(\varphi(\tree) \, | \, k \in \tree)$. Using uniqueness of the SDE system in Assumption (\ref{assumption:A}.4), we deduce that $\L(X_{\o},X_1,(X_{1j})_{j \in \N} \, | \, k \in \tree) = \L(X_{\o},X_k,(X_{kj})_{j \in \N} \, | \, k \in \tree)$.
Now, fix $t > 0$ and let $\mmap_t : \C \times \C \to \P(\C_t^\N)$ denote a version of the conditional law of $(X_{1j}[t])_{j \in \N}$ given $(X_1[t],X_{\o}[t])$. Then,  for bounded measurable functions $f,g,h$, 
  we combine this symmetry property with  the conditional independence of Proposition \ref{pr:properties-GW}(i) proven above to obtain
\begin{align*}
\E\left[f(X_{\o}[t])g(X_k[t])h((X_{kj}[t])_{j \in \N})1_{\{k \in \tree\}}\right] &= \E\left[f(X_{\o}[t])g(X_1[t])h((X_{1j}[t])_{j \in \N})1_{\{k \in \tree\}}\right] \\
	&= \E\left[f(X_{\o}[t])g(X_1[t])\lan \mmap_t(X_1,X_{\o}),\, h\ran \, 1_{\{k \in \tree\}}\right] \\
	&= \E\left[f(X_{\o}[t])g(X_k[t])\lan \mmap_t(X_k,X_{\o}),\, h\ran \, 1_{\{k \in \tree\}}\right].
\end{align*}
Indeed, the second step followed from the conditional independence of $(X_{1j}[t])_{j \in \N}$ and $\{k \in \tree\}$ (which is $X_k[t]$-measurable by Remark \ref{rem-treerecovery}) given $(X_{\o}[t],X_1[t])$.
This shows that
\begin{align}
\lan \mmap_t(X_k,X_{\o}), \, h \ran = \E\left[h((X_{kj}[t])_{j \in \N}) \, \big| \, X_k[t],X_{\o}[t]\right], \ \ a.s. \ \  \text{on } \{k \in \tree\}. \label{pf:symmetry-GW-1}
\end{align}
Recalling how $\mmap_t$ was defined above, the proof would now be complete if not for the qualification ``on $\{k \in \tree\}$," so we lastly take care of the complementary set.
Let $Y_v(t)=X_v(0)$ for all $t \ge 0$ and $v \in \V$, and note that $Y_v = X_v$ a.s.\ on $\{v \notin \tree\}$ by construction. Note also that  $(Y_v)_{v \in \V}$ are i.i.d. On the event $\{k \notin \tree\}$, we know $X_{kj} \equiv Y_{kj}$ for all $j \in \N$, and so
\begin{align}
\E\left[h((X_{kj}[t])_{j \in \N}) \, \big| \, X_k[t],X_{\o}[t]\right] = \E\left[h((Y_{kj}[t])_{j \in \N})\right] = \E\left[h((Y_{1j}[t])_{j \in \N})\right], \ \ a.s. \label{pf:symmetry-GW-2}
\end{align}
Repeating this independence argument with $k=1$ and using the definition of $\Lambda_t$, we find
\begin{align}
\lan \mmap_t(X_1,X_{\o}), \, h \ran &= \E\left[h((Y_{1j}[t])_{j \in \N})\right], \ \ a.s. \ \  \text{on } \{1 \notin \tree\}.\label{pf:symmetry-GW-3}
\end{align}
Recalling from Remark \ref{rem-treerecovery} that there is a measurable function $\detfn$ such that $1_{\{v \in \tree\}} = \detfn(X_v)$ a.s.\ for each $v$, it is straightforward to deduce  from \eqref{pf:symmetry-GW-2} and \eqref{pf:symmetry-GW-3} that the same identity \eqref{pf:symmetry-GW-1} holds also on the event $\{k \notin \tree\}$. \hfill \qed

\section{Proof of the symmetry property}
\label{se:proofs-GW-invariance}

The last remaining point is to prove Proposition \ref{pr:invariance-GW}, which was the second key ingredient in the 
first (verification) part of Theorem \ref{th:statements-mainlocaleq}. As a first step, in Section \ref{se:proof-GW-exchangeability} we show that the children of the root are exchangeable, in a suitable conditional sense. Then,
in Section \ref{subs-unimod}, 
we use unimodularity to prove Proposition \ref{pr:invariance-GW}.
Recall here that for a finite set $A$ and for $x_A \in \X^A$ we write $\lan x_A \ran$ for the corresponding element (equivalence class) in $\SQ(\X)$.

\subsection{Conditional exchangeability at the generation level} \label{se:proof-GW-exchangeability}

We first show how to use  Proposition \ref{pr:properties-GW} to derive a useful conditional exchangeability property.

\begin{lemma} \label{le:GW-exchangeability}
  Suppose Assumption \ref{assumption:A} holds, and assume that $\rho \in \P(\N_0)$ has a finite nonzero first moment.
  For each $t > 0$ and each bounded measurable function $\h: \C_t^2 \times \SQ(C_t)^2 \to \R$,
  it holds almost surely on the event $\{1 \in \tree\}$ that 
  \begin{align}
    \notag 
    \E&\left[\left. \frac{1}{|N_{\o}(\tree)|}\sum_{k \in N_{\o}(\tree)}\h\big(X_{\o}[t],X_k[t],\lan X_{N_{\o}(\tree)}[t] \ran,\lan X_{N_k(\tree)}[t] \ran \big) \, \right| \, X_{\o}[t], \, \lan X_{N_{\o}(\tree)}[t]\ran\right] \\
    \label{claim-exchange}
	&= \E\left[\left. h\big(X_{\o}[t],X_1[t],\lan X_{N_{\o}(\tree)}[t] \ran,\lan X_{N_1(\tree)}[t] \ran \big) \, \right| \, X_{\o}[t], \, \lan X_{N_{\o}(\tree)}[t]\ran\right]. 
\end{align} 
\end{lemma}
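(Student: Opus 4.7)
My plan is to establish \eqref{claim-exchange} via a conditional exchangeability argument, working separately on each event $\{|N_{\o}(\tree)| = n\}$, $n \ge 1$, and then assembling the results. The central point is that, for a $\mathrm{UGW}(\rho)$ tree, the children of the root are exchangeable conditional on their number, and this exchangeability is inherited by the particle system through symmetry of the SDE. Notably, Proposition \ref{pr:properties-GW} is not needed here; the argument is purely a tree-plus-SDE symmetry.

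First I would observe that, under the Ulam-Harris-Neveu labeling, the identities $\{|N_{\o}(\tree)| = n\} = \{N_{\o}(\tree) = \{1,\dotsc,n\}\}$ hold a.s., and that $\{1 \in \tree\}$ is the a.s.\ disjoint union of these events over $n \in \N$. Moreover, the cardinality $|N_{\o}(\tree)|$ is a.s.\ determined by the unordered collection $\lan X_{N_{\o}(\tree)}[t]\ran$, so it is measurable with respect to the conditioning $\sigma$-field $\sigma(X_{\o}[t], \lan X_{N_{\o}(\tree)}[t]\ran)$. It therefore suffices to verify \eqref{claim-exchange} on each event $\{|N_{\o}(\tree)| = n\}$, since the identity then assembles across $n$ by this measurability.

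The core step is to establish the \emph{conditional exchangeability of the children of the root}: on $\{|N_{\o}(\tree)| = n\}$, the joint law of $\bigl(X_{\o},\,(X_{kv})_{v \in \V,\, kv \in \tree}\bigr)_{k=1}^n$ is invariant under any permutation $\pi$ of the labels $\{1,\dotsc,n\}$, where $\pi$ acts on descendants in the obvious way via $ku \mapsto \pi(k)u$. This has two ingredients. On the tree side, Definition \ref{def-UGW} of $\mathrm{UGW}(\rho)$ says that, conditional on $|N_{\o}(\tree)| = n$, the $n$ subtrees rooted at $1,\dotsc,n$ are i.i.d.\ standard Galton-Watson trees with offspring law $\widehat\rho$, so the relabeled tree $\pi(\tree)$ has the same conditional law as $\tree$. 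On the particle side, the SDE system \eqref{statements:SDE} is manifestly symmetric: the coefficients $b,\sigma$ do not depend on vertex labels, the initial conditions $(X_v(0))_{v\in\V}$ are i.i.d., and the driving Brownian motions $(W_v)_{v\in\V}$ are independent. Thus the relabeled system solves an SDE with an identically-distributed tree and i.i.d.\ drivers, and Assumption~(\ref{assumption:A}.4) then transfers the tree-level exchangeability to joint exchangeability of the particle trajectories.

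With exchangeability in hand, set $F(k) := h(X_{\o}[t], X_k[t], \lan X_{N_{\o}(\tree)}[t]\ran, \lan X_{N_k(\tree)}[t]\ran)$. Both $X_{\o}[t]$ and $\lan X_{N_{\o}(\tree)}[t]\ran$ are invariant under permutations of $\{1,\dotsc,n\}$ (trivially and by definition of the unordered equivalence class, respectively), and under a permutation $\pi$ the vertex $k$ is mapped to $\pi(k)$ while $\lan X_{N_k(\tree)}[t]\ran$ is moved to the $\pi(k)$-slot. It follows that $(F(1),\dotsc,F(n))$ is exchangeable conditional on $\sigma(X_{\o}[t], \lan X_{N_{\o}(\tree)}[t]\ran)$, so that $\E[F(k) \mid X_{\o}[t], \lan X_{N_{\o}(\tree)}[t]\ran]$ is the same for every $k \in \{1,\dotsc,n\}$; averaging these identical quantities gives exactly the right-hand side of \eqref{claim-exchange} on $\{|N_{\o}(\tree)| = n\}$, and combining over $n$ gives the full identity on $\{1 \in \tree\}$. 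The one technical point that I would take care to articulate cleanly is the transfer of symmetry from the SDE to the induced joint law on $\C^{\V}$ via uniqueness in law; everything else is then a direct exchangeability computation.
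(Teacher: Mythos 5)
Your proposal is correct, and it takes a genuinely different route from the paper's. The paper's own proof splits into two cases: it first establishes the identity for $h$ of the special form $h(x,y,\tilde{x},\tilde{y})=f(x,y)$ via a subtree-transposition argument, and then handles general $h$ by reducing to the case $h(x,y,\tilde{x},\tilde{y})=g(y,\tilde{y})$ and appealing to the conditional independence result, Proposition~\ref{pr:properties-GW}, to replace the dependence on $\lan X_{N_k(\tree)}[t]\ran$ by the kernel $\Lambda_t(X_k,X_{\o})$, thereby reducing the general case back to the special one. You instead observe that the same transposition $\varphi$ (acting on all descendants of the swapped children) gives invariance of the law of the \emph{entire} configuration $(\tree, X_{\V})$ conditional on $\{|N_{\o}(\tree)|=n\}$, not merely of the $(X_{\o},X_k)$-marginal that the paper extracts; under this relabeling, $X_{\o}$ and $\lan X_{N_{\o}(\tree)}[t]\ran$ are fixed while $(X_k[t],\lan X_{N_k(\tree)}[t]\ran)$ is mapped to $(X_1[t],\lan X_{N_1(\tree)}[t]\ran)$, which handles general $h$ in one stroke without Proposition~\ref{pr:properties-GW}. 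This is a correct simplification, resting on exactly the same ingredients the paper already uses for its first step (i.i.d.\ subtrees of the root conditional on $|N_{\o}(\tree)|$, i.i.d.\ drivers and initial conditions, and uniqueness in law from Assumption~(\ref{assumption:A}.4)). The one place where I would tighten the write-up is the step you yourself flag: rather than asserting conditional exchangeability of the vector $(F(1),\dotsc,F(n))$ in full, it suffices (and is cleaner) to argue directly that for each $k$, the transposition $(1\,k)$ is a $\PP(\cdot\,|\,|N_{\o}(\tree)|=n)$-preserving transformation fixing the conditioning $\sigma$-field, whence $\E[F(k)\,|\,W]=\E[F(1)\,|\,W]$ a.s.\ on that event; the rest of the argument (measurability of $|N_{\o}(\tree)|$ with respect to $\lan X_{N_{\o}(\tree)}[t]\ran$, via Remark~\ref{rem-treerecovery}, and assembling across $n$) is as you describe.
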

\begin{proof}
 	We first prove \eqref{claim-exchange} assuming that $h$ has the following form: there exists a
    bounded measurable mapping $f : \C_t^2 \to \R$ such that 
    \begin{equation}
      \label{form-g}
      h(x,y,\tilde x, \tilde y) = f(x,y), \qquad x,y \in \C_t,  \ \ \  \tilde x, \tilde y \in \SQ(C_t). 
    \end{equation}  
    Fix $k,n \in \N$ with $k \le n$, and let $\varphi : \V \to \V$ denote the transposition of the subtrees rooted at $1$ and $k$, defined by setting $\varphi(1u)=ku$ and $\varphi(ku)=1u$ for all $u \in \V$ as well as $\varphi(v)=v$ for all $v \in \V$ which satisfy neither $v \ge 1$ nor $v \ge k$ with respect to the Ulam-Harris-Neveu labeling (i.e., for all $v \in \V$  that are  neither descendants of  $1$  nor $k$).
     Due to the recursive structure of the tree $\tree \sim \mathrm{UGW}(\rho)$, we have $\L(\tree \, | \, |N_{\o}(\tree)| = n) = \L(\varphi(\tree) \, | \, |N_{\o}(\tree)| = n)$. 
    Using uniqueness of the SDE system in Assumption (\ref{assumption:A}.4), we deduce that 
    $$\L(X_{\o}[t], X_1[t] \, | \, X_{\o}[t], \, \lan X_{N_{\o}(\tree)}[t]\ran, \, |N_{\o}(\tree)| = n) = \L(X_{\o}[t], X_k[t] \, | \, X_{\o}[t], \, \lan X_{N_{\o}(\tree)}[t]\ran, \, |N_{\o}(\tree)| = n).$$
	From this we have
	\begin{align}
          \notag
	  \frac{1}{n} \sum_{k=1}^n f(X_{\o}[t],X_k[t]) & = \E\left[\left. \frac{1}{n} \sum_{k=1}^n f(X_{\o}[t],X_k[t]) \, \right| \, X_{\o}[t], \, \lan X_{N_{\o}(\tree)}[t]\ran, \, |N_{\o}(\tree)| = n\right] \\
          \label{favg}
		& = \E\left[\left. f(X_{\o}[t],X_1[t]) \, \right| \, X_{\o}[t], \, \lan X_{N_{\o}(\tree)}[t]\ran, \, |N_{\o}(\tree)| = n\right].
	\end{align}
	In other words, it holds a.s.\ on $\{1 \in \tree\} = \{N_{\o}(\tree) \neq \emptyset\}$ that 
	\begin{align*}
		\frac{1}{|N_{\o}(\tree)|} \sum_{k \in N_{\o}(\tree)} f(X_{\o}[t],X_k[t]) 
		& = \E\left[\left. f(X_{\o}[t],X_1[t]) \, \right| \, X_{\o}[t], \, \lan X_{N_{\o}(\tree)}[t]\ran, \, |N_{\o}(\tree)|\right].
	\end{align*} 
	Because $|N_{\o}(\tree)|$ is a.s.\ $\lan X_{N_{\o}(\tree)}[t]\ran$-measurable for each $t>0$,
        this implies 
	\begin{align*}
	\frac{1}{|N_{\o}(\tree)|} \sum_{k \in N_{\o}(\tree)} f(X_{\o}[t],X_k[t]) = \E\left[\left. f(X_{\o}[t],X_1[t]) \, \right| \, X_{\o}[t], \, \lan X_{N_{\o}(\tree)}[t]\ran\right],
	\end{align*} 
	again on the event $\{1 \in \tree\}$.  Thus,  the proof is complete for $h$ of the form \eqref{form-g}.

        We now prove \eqref{claim-exchange} for general $h$.
        Since both sides of  \eqref{claim-exchange} are conditional on $X_{\o}[t]$ and $\langle X_{N_{\o}(\tree)}[t] \rangle$, by  general measure-theoretic considerations, it suffices to prove
          the relation \eqref{claim-exchange} for $h(x,y,\tilde{x},\tilde{y})=g(y,\tilde{y})$ depending only on the variables that are not being conditioned upon. That is, 
it suffices to show that for all bounded measurable functions $g : \C_t \times \SQ(\C_t) \to \R$ we have
\begin{align}
\E&\left[\left. \frac{1}{|N_{\o}(\tree)|}\sum_{k \in N_{\o}(\tree)}g( X_k[t] ,\lan X_{N_k(\tree)}[t] \ran) \, \right| \, X_{\o}[t], \, \lan X_{N_{\o}(\tree)}[t]\ran\right] \nonumber \\
	&= \E\left[\left. g( X_1[t],\lan X_{N_1(\tree)}[t] \ran) \, \right| \, X_{\o}[t], \, \lan X_{N_{\o}(\tree)}[t]\ran\right], \ \ \ a.s., \text{ on } \{1 \in \tree\}. \label{pf:symmetry-goal1}
\end{align}
To prove this, recall first from Proposition \ref{pr:properties-GW}(ii) that there is a measurable function $\mmap_t : \C_t^2 \to \P(\C_t^\N)$ such that
\[
\mmap_t(X_k[t],X_{\o}[t]) = \L((X_{ki}[t])_{i \in \N} \, | \, X_k[t],X_{\o}[t]), \ \ \ a.s., \text{ on } \{k \in \tree\}.
\]
Using the conditional independence of Proposition \ref{pr:properties-GW}(i), we have also
\begin{align}
\mmap_t(X_k[t],X_{\o}[t]) = \L((X_{ki}[t])_{i \in \N} \, | \, X_{\V_1}[t]), \ \ \ a.s., \text{ on } \{k \in \tree\}. \label{pf:symmetry-2mrf}
\end{align}
Noting again that $|N_{\o}(\tree)|$ is $\lan X_{N_{\o}(\tree)}[t] \ran$-measurable, we may use the tower property of conditional expectation (and other relations specified below) to obtain, on $\{1 \in \tree\}$,
\begin{align*}
\E&\left[\left. \frac{1}{|N_{\o}(\tree)|}\sum_{k \in N_{\o}(\tree)} g( X_k[t] ,\lan X_{N_k(\tree)}[t] \ran) \, \right| \, X_{\o}[t], \, \lan X_{N_{\o}(\tree)}[t]\ran\right] \\
&= \E\left[\left. \frac{1}{|N_{\o}(\tree)|}\sum_{k \in N_{\o}(\tree)} \E\big[g( X_k[t] ,\lan X_{N_k(\tree)}[t] \ran) \, | \, X_{\V_1}[t]\big] \,  \, \right| \, X_{\o}[t], \, \lan X_{N_{\o}(\tree)}[t]\ran\right].
\\
	&= \E\left[\left. \frac{1}{|N_{\o}(\tree)|}\sum_{k \in N_{\o}(\tree)} \left\lan \mmap_t(X_k[t],X_{\o}[t]), \, g( X_k[t] , \lan\cdot\ran )\right\ran  \, \right| \, X_{\o}[t], \, \lan X_{N_{\o}(\tree)}[t]\ran\right] \\
	&= \E\left[\left. \lan \mmap_t(X_1[t],X_{\o}[t]), \, g( X_1[t] ,\lan\cdot\ran )\ran  \, \right| \, X_{\o}[t], \, \lan X_{N_{\o}(\tree)}[t]\ran\right],
\end{align*}
where the second equality used \eqref{pf:symmetry-2mrf} and our short-hand notation
  $\langle \nu, f\rangle = \int f d\nu$  for any measure $\nu$ and $\nu$-integrable function $f$, and the last equality used the relation 
\eqref{favg} with $f(x_{\o}, x_k)= \langle \Lambda_t (x_k,x_{\o}), g (x_k, \langle \cdot \rangle) \rangle$ for $x_{\o}, x_k \in \C_t^2$. 
Now, apply \eqref{pf:symmetry-2mrf} once again to rewrite the right-hand side as
\begin{align*}
\E &\Big[   \E\big[g( X_1[t] , \lan X_{N_1(\tree)}[t]\ran ) \, | \, X_{\V_1}[t]\big]  \, \Big| \, X_{\o}[t], \, \lan X_{N_{\o}(\tree)}[t]\ran\Big] \\
	&= \E\left[\left.  g( X_1[t] , \lan X_{N_1(\tree)}[t]\ran )  \, \right| \, X_{\o}[t], \, \lan X_{N_{\o}(\tree)}[t]\ran\right],  \ \ \text{ on } \{1 \in \tree\}.
\end{align*}
This shows \eqref{pf:symmetry-goal1}, thus completing the proof of the lemma. 
\end{proof}

\subsection{Unimodular random graphs}
\label{subs-unimod}

So far we only needed the notion of a unimodular Galton-Watson tree, which could
be defined simply as in Definition \ref{def-UGW}. 
However, the final step of the proof of Proposition \ref{pr:invariance-GW} uses
 crucially the notion of \emph{unimodularity}
 on general graphs,  which we now briefly define; refering to \cite{aldous-lyons} for a more thorough discussion. 
 For this,  we will need to introduce 
the notation for (doubly) rooted (marked) graphs.  We recall the general graph terminology introduced
in Section \ref{subs-graphs}.

A \emph{rooted graph} $(G,o)$ is a connected
graph equipped with a distinguished vertex $o$, where we assume $G$ has finite or countable vertex set and is locally finite, meaning each vertex has finitely many neighbors.
An \emph{isomorphism} from one rooted graph $(G_1,o_1)$ to another $(G_2,o_2)$ is a bijection $\varphi$ from the vertex set of $G_1$ to that of $G_2$ such that $\varphi(o_1)=o_2$ and such that $(u,v)$ is an edge in $G_1$ if and only if $(\varphi(u),\varphi(v))$ is an edge in $G_2$.
We say two rooted graphs  are \emph{isomorphic} if there exists an isomorphism \ between them, and we let $\G_*$  denote the set of isomorphism classes of rooted graphs. Similarly, a \emph{doubly rooted graph} $(G,o,o')$ is a rooted graph $(G,o)$ with an additional distinguished vertex $o'$ (which may equal $o$). Two doubly rooted graphs $(G_i,o_i,o_i')$ are isomorphic if there is an isomorphism from $(G_1,o_1)$ to $(G_2,o_2)$ which also maps $o_1'$ to $o_2'$. We write $\G_{**}$ for the set of isomorphism classes of doubly rooted graphs.

There are analogous definitions for \emph{marked rooted graphs}.
An \emph{$\X$-marked rooted graph} is a tuple $(G,x,o)$, where $(G,o)$ is a rooted graph and $x=(x_v)_{v \in G} \in \X^G$ is a vector of marks, indexed by vertices of $G$.
We say that two marked rooted graphs $(G_1,x^1,o_1)$ and $(G_2,x^2,o_2)$  are \emph{isomorphic} if there exists an isomorphism $\varphi$
between the rooted graphs $(G_1,o_1)$ and $(G_2,o_2)$ that maps the marks of one to the marks of the other (i.e., for which $x^1_{\varphi(v)} = x^2_v$  for all $v \in G$). Let  $\G_*[\X]$ denote the set of isomorphism classes of $\X$-marked rooted graphs.
A double rooted marked graph is defined in the obvious way, and $\G_{**}[\X]$ denotes the set of isomorphism classes of doubly rooted marked graphs.

These spaces of graphs come with natural topologies. For $r \in \N$ and $(G,o)\in\G_*$, let $B_r(G,o)$ denote the induced subgraph of $G$ (rooted at $o$) containing only those vertices with (graph) distance at most $r$ from the root $o$. The distance between $(G_1,o_1)$ and $(G_2,o_2)$ is defined as the value $1/(1+\bar{r})$, where $\bar{r}$ is the supremum over
$r \in \N_0$ such that $B_r(G_1,o_1)$ and $B_r(G_2,o_2)$ are isomorphic,
where we interpret $B_0(G_i,o_i) = \{o_i\}$.  The distance between two marked graphs $(G_i,x^i,o_i)$, $i = 1, 2$, is likewise defined as the value $1/(1+\bar{r})$, where $\bar{r}$ is the supremum over $r \in \N_0$ such that there exists an isomorphism $\varphi$ from $B_r(G_1,o_1)$ to $B_r(G_2,o_2)$ such that $d(x^1_v,x^2_{\varphi(v)}) \le 1/r$ for all $v \in B_r(G_1,o_1)$. We equip $\G_{**}$ and $\G_{**}[\X]$ with similar metrics, just using the union of the balls at the two roots, $B_r(G,o) \cup B_r(G,o')$, in place of the ball around a single root $B_r(G,o)$. Metrized in this manner, the spaces $\G_*$ and $\G_{**}$ are Polish spaces, as are $\G_*[\X]$ and $\G_{**}[\X]$ if $\X$ is itself a Polish space. See \cite[Lemma 3.4]{Bordenave2016}  (or  \cite[Appendix A]{LacRamWu20a}) for a proof
that $\G_*[\X]$ is a Polish space. 
Each space $\G_*[\X]$ and $\G_{**}[\X]$ is equipped with its Borel $\sigma$-algebra.

We are now ready to introduce the definition of unimodularity for general graphs.

\begin{definition}  \label{def-unimodular}
For a metric space $\X$,  
we say that a $\G_*[\X]$-valued random element $(G,X,o)$ is \emph{unimodular}
if the following \emph{mass-transport principle} holds: for every (non-negative) bounded Borel measurable function $F : \G_{**}[\X] \rightarrow \R_+$,
\begin{align}
\E\left[\sum_{o' \in G}F(G,X,o,o')\right] = \E\left[\sum_{o' \in G}F(G,X,o',o)\right].   \label{def:unimodularity}
\end{align}
A $\G_*$-valued random variable $(G,o)$ is said to be \emph{unimodular} if the same identity holds, but
with $X$ removed, that is,  if for every bounded Borel measurable function  $F:\G_{**} \rightarrow \R_+$,
\[
\E \left[ \sum_{o' \in G}  F(G,o,o') \right] = \E \left[ \sum_{o' \in G}  F(G,o',o) \right].
\]
\end{definition}

Recalling the canonical Ulam-Harris-Neveu labeling introduced in Section \ref{subsub-labeling},
as described therein, 
a (countable, locally finite) tree may always be viewed  as a subset of $\V$ satisfying the appropriate properties. Recall that $\o\in\V$ denotes the root of any tree in this canonical labeling,  and let $\mathbb{T}_*$ denote the collection
  of subsets of $\V$  described in Section \ref{subsub-labeling} that define a rooted tree. 
A tree $\tree \in  \mathbb{T}_*$ induces an element $(\tree,\o)$ of $\G_*$, and we say a random
($\mathbb{T}_*$-valued) tree $\tree$ is unimodular if $(\tree,\o)$ is a unimodular random graph in the sense of Definition \ref{def-unimodular}.
 
Recall from Assumption (\ref{assumption:A}.4) and Remark \ref{rem-unique} that there is a unique solution $X^\tree=(X_v^\tree)_{v \in \V}$ to the system \eqref{statements:SDE} for any tree $\tree \in \mathbb{T}_*$.
We may then view $(\tree,(X^\tree_v)_{v \in \tree},\o)$ as a rooted graph marked by the trajectories of the process $X^\tree$, i.e., as a $\G_*[\C]$-valued random element.  

\begin{proposition} \label{pr:unimodular}
Suppose Assumption \ref{assumption:A} holds. Let $\tree$ be any unimodular ($\mathbb{T}_*$-valued) random tree, and let $X^\tree=(X^\tree_v)_{v \in \V}$ be the unique solution of the SDE system \eqref{statements:SDE}.
Then the $\G_*[\C]$-valued random variable $(\tree,(X^\tree_v)_{v \in \tree},\o)$ is unimodular.
\end{proposition}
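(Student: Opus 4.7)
The plan is to reduce the statement to the unimodularity of the bare random tree $(\tree, \o)$ by integrating out the marks $(X^\tree_v)_{v \in \tree}$. This reduction requires establishing that the conditional law of $X^\tree$ given $\tree$ depends only on the rooted-graph isomorphism class of $(\tree, \o)$, so that the associated mass-transport functional descends from $\G_{**}[\C]$ to $\G_{**}$.

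The first step is to establish a graph-equivariance property for the SDE: for any two labeled trees $\tree, \tree' \in \mathbb{T}_*$ and any root-preserving graph isomorphism $\varphi: \tree \to \tree'$, the pushforward $(X^{\tree}_{\varphi^{-1}(v)})_{v \in \tree'}$ has the same law as $(X^{\tree'}_v)_{v \in \tree'}$. This is where Assumption \ref{assumption:A} enters essentially: the drift $b(t, x, (x_v)_{v \in A})$ takes its neighbor argument in the unordered space $\SQ(\C)$, so it is insensitive to permutations; the diffusion $\sigma(t, X_v)$ depends only on the particle's own path; the initial conditions and driving Brownian motions are i.i.d.; and by Assumption (\ref{assumption:A}.4) together with Remark \ref{rem-unique}, the SDE \eqref{statements:SDE} is unique in law. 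Combining these, the relabeled process $(X^{\tree}_{\varphi^{-1}(\cdot)})$ on $\tree'$ is a weak solution to the same SDE on $\tree'$ driven by a (suitably permuted but still i.i.d.) family of Brownian motions with i.i.d.\ initial conditions, so uniqueness in law pins down its distribution.

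With equivariance in hand, given any bounded Borel function $F: \G_{**}[\C] \to \R_+$, I define $\tilde{F}: \G_{**} \to \R_+$ by
\[
  \tilde{F}(G, o, o') := \E\bigl[F\bigl(G, (X^G_v)_{v \in G}, o, o'\bigr)\bigr],
\]
where the expectation is computed by choosing any canonical labeled representative $\tree \in \mathbb{T}_*$ of the isomorphism class of $(G, o)$ (so that $o$ maps to the Ulam--Harris--Neveu root $\o$), solving \eqref{statements:SDE} on $\tree$, and evaluating $F$ on the resulting doubly-rooted marked tree with $o'$ identified under the chosen labeling. By the equivariance step this is independent of the choice of representative, hence well defined on $\G_{**}$; Borel measurability follows from a standard measurable selection of canonical labelings (see \cite[Appendix A]{LacRamWu20a}) together with the continuity of the SDE coefficients. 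Since $\tree$ is independent of the driving noise and initial conditions used to build $X^\tree$, Fubini gives
\[
\E\!\left[\sum_{o' \in \tree} F(\tree, X^\tree, \o, o')\right] = \E\!\left[\sum_{o' \in \tree} \tilde{F}(\tree, \o, o')\right],
\]
and analogously with the roles of $\o$ and $o'$ exchanged on each side. Applying Definition \ref{def-unimodular} for the unimodular random tree $(\tree, \o)$ to the bounded function $\tilde{F}$ equates these two right-hand sides, which yields exactly the mass-transport identity \eqref{def:unimodularity} for $(\tree, X^\tree, \o)$.

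The principal obstacle is the equivariance step: although intuitively clear, its rigorous justification requires careful bookkeeping of the bijections between labeled trees representing the same abstract rooted graph, and the transportation of i.i.d.\ Brownian motions and initial states along such bijections, so that both sides become genuine weak solutions of the same SDE to which Assumption (\ref{assumption:A}.4) can be applied. Once this technical step and the resulting Borel measurability of $\tilde{F}$ are secured, the remainder of the argument is a routine combination of Fubini and the mass transport principle applied to the bare random tree.
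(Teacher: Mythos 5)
Your proposal is correct and follows essentially the same route as the paper's own proof: your auxiliary function $\tilde{F}(G,o,o') = \E[F(G,(X^G_v)_{v\in G},o,o')]$ is precisely the paper's $\langle Q[(\tree,o_1,o_2)],F\rangle$, your equivariance step corresponds to the paper's observation that $Q[\tree,o_1,o_2]$ depends only on the isomorphism class of the doubly rooted tree (justified by Assumption (\ref{assumption:A}.4)), and the final step of conditioning on $\tree$, passing to $\tilde F$, and invoking unimodularity of the bare tree is the same Fubini/tower argument used in the paper. The only minor cosmetic difference is that you verify measurability of $\tilde F$ via a measurable-selection argument while the paper (Remark \ref{rem-Q}) does so by truncating to balls $B_r$ and letting $r\to\infty$, but this does not change the substance of the proof.
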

\begin{proof}
It will help to temporarily free ourselves from the canonical labels of $\V$. For any (countable, locally finite) tree $\tree$ (labeled in any manner), consider the SDE system
\begin{align}
dX^\tree_v(t) = b(t,X^\tree_v,X^\tree_{N_v(\tree)})dt + \sigma(t,X^\tree_v)dW_v(t), \quad v \in \tree, \label{pf:unimod-SDE}
\end{align}
where $N_v(\tree)$ denotes the neighbors of $v$ in $\tree$, $(W_v)_{v \in \tree}$ are independent Brownian motions, and $(X_v(0))_{v \in \tree}$ are i.i.d.\ with law $\lambda_0$. Note that this SDE system is unique in law by Assumption (\ref{assumption:A}.4), as the tree $\tree$ can always be viewed up to isomorphism as a subset of $\V$.
For any non-random doubly rooted tree $(\tree,o_1,o_2)$, the unique solution of \eqref{pf:unimod-SDE}
gives rise to a $\C^\tree$-valued random variable $X^\tree=(X^\tree_v)_{v \in \tree}$, which in turn induces a $\G_{**}[\C]$-valued random variable $(\tree,(X^\tree_v)_{v \in \tree},o_1,o_2)$, whose law we denote by $Q[\tree,o_1,o_2]$.

We claim first that $Q[\tree,o_1,o_2]=Q[\tree',o_1',o_2']$ whenever $(\tree,o_1,o_2)$ and $(\tree',o_1',o_2')$ are isomorphic as doubly rooted graphs.
To see this, let $\varphi : \tree \to \tree'$ denote any isomorphism. It is clear from the structure of the SDE \eqref{pf:unimod-SDE} that the $\C^\tree$-valued random elements $(X^{\tree'}_{\varphi(v)})_{v \in \tree}$ and $(X^{\tree}_v)_{v \in \tree}$ solve the same SDE  and thus have the same law, due to the aforementioned uniqueness in law. In particular, the $\G_{**}[\C]$-valued random variables $(\tree,(X^\tree_v)_{v \in \tree},o_1,o_2)$ and $(\tree',(X^{\tree'}_v)_{v \in \tree'},o_1',o_2')$ have the same law.

This shows that $Q[\tree,o_1,o_2]$ depends on $(\tree,o_1,o_2)$ only through its isomorphism class. We may thus view $Q$ as a (measurable) 
map from the set $\mathbb{T}_{**} \subset \G_{**}$ of doubly rooted trees to $\P(\G_{**}[\C])$.
(For a justification of the measurability of $Q$, see Remark \ref{rem-Q} below.) 
For a bounded measurable function $F : \G_{**} \to \R_+$, the function $\mathbb{T}_{**} \ni (\tree,o_1,o_2) \mapsto \langle Q[(\tree,o_1,o_2)],\, F\rangle \in \R_+$ is also bounded and measurable, and we extend it to be zero on $\G_{**}\setminus \mathbb{T}_{**}$. Then, for a given unimodular ($\mathbb{T}_*$-valued) random tree $\tree$, we have
(as justified subsequently) 
\begin{align*}
\E\left[\sum_{o \in \tree}F(\tree,(X^\tree_v)_{v \in \tree},\o,o)\right] &= \E\left[\sum_{o \in \tree}\E\big[F(\tree,(X^\tree_v)_{v \in \tree},\o,o) \,|\, \tree \big]\right] = \E\left[\sum_{o \in \tree} \big\langle Q[(\tree,\o,o)],\, F\big\rangle \right] \\
&	\hspace{2.0in} = \E\left[\sum_{o \in \tree} \big\langle Q[(\tree,o,\o)],\, F\big\rangle \right] \\
&	\hspace{2.0in}	= \E\left[\sum_{o \in \tree}\E\big[F(\tree,(X^\tree_v)_{v \in \tree},o,\o) \,|\, \tree\big]\right] \\
&	\hspace{2.0in}	= \E\left[\sum_{o \in \tree}F(\tree,(X^\tree_v)_{v \in \tree},o,\o)\right].
\end{align*}
Indeed, the second and fourth steps used the fact that a random tree $\tree \subset \V$ in the SDE system \eqref{statements:SDE} is always assumed to be independent of the Brownian motions and initial conditions, which ensures that the conditional law of $(\tree,(X^\tree_v)_{v \in \tree},\o,o)$ given $\tree$ is precisely $Q[\tree,\o,o]$.
\end{proof}

  \begin{remark}
    \label{rem-Q}
For completeness, we sketch here a proof of the measurability of $Q$ introduced in the last proof. 
For $r \in \N$ and $(G,o_1,o_2)$ for which the graph distance $d_G(o_1,o_2)$ is at most $r$, let $B_r(G,o_1,o_2) \in \G_{**}$ denote the union of the balls of radius $r$ around $o_1$ and $o_2$. The topology of the subspace $\{(\tree,o_1,o_2) \in \mathbb{T}_{**} : (\tree,o_1,o_2)=B_r(\tree,o_1,o_2)\}$ is discrete for each $r$, so the map $(\tree,o_1,o_2) \mapsto Q[B_r(\tree,o_1,o_2)]$ is trivially measurable for each $r$. To complete the proof, it suffices to argue that $\lim_{r\to\infty}Q[B_r(\tree,o_1,o_2)]=Q[(\tree,o_1,o_2)]$ for each $(\tree,o_1,o_2) \in \mathbb{T}_{**}$. If we fix a doubly rooted tree $(\tree,o_1,o_2)$ (with labels, i.e., not an element of $\G_{**}$ but rather a representative from an equivalence class therein), then straightforward weak convergence arguments show that, for each $k \in \N$, $(X^{B_r(\tree,o_1,o_2)}_v)_{v \in B_k(\tree,o_1,o_2)}$ converges in law to $(X^{(\tree,o_1,o_2)}_v)_{v \in B_k(\tree,o_1,o_2)}$ as $r\to\infty$, which proves the claim.
  \end{remark}

\begin{remark} \label{re:UGWunimodular}
  It is well known that a UGW($\rho$) tree $(\tree,\o)$ is unimodular (hence the name), for $\rho \in \P(\N_0)$ with finite nonzero first moment, and from Proposition \ref{pr:unimodular} we then deduce that $(\tree,(X^\tree_v)_{v \in \tree},\o)$ is unimodular. A direct proof of the mass-transport principle for $(\tree,\o)$ is attributed to \cite{lyons1995conceptual}, but
  one can argue instead by approximation by finite uniformly rooted graphs; see \cite[Example 10.2]{aldous-lyons} or \cite[Proposition 2.5]{dembo-montanari}.
\end{remark}

\subsection{Proof of Proposition \ref{pr:invariance-GW}} 
As in the statement of Proposition \ref{pr:invariance-GW}, let $h: \C_t^2 \times \SQ(\C_t) \mapsto \R$ be bounded
  and measurable.  To prove the proposition, we may assume without loss of generality that in addition $h \ge 0$. 
Fix $t > 0$, and let $g : \C_t^2 \to \R_+$ be any bounded measurable function. Because $t$ is fixed, throughout this proof we will omit the argument $[t]$ for the sake of readability, with the understanding that every appearance of $X_v$ below should be written more precisely as $X_v[t]$. 
Recall once more that for a finite set $A$ and for $x_A \in \X^A$ we write $\lan x_A \ran$ for the corresponding element (equivalence class) in $\SQ(\X)$.
We will take advantage of the unimodularity of $(\tree,X,\o)$ shown in Proposition \ref{pr:unimodular}, by applying the mass-transport principle with
\[
F(G,x,\o,o) := g(x_{\o},x_o)h(x_o,x_{\o},\lan x_{N_o(G)}\ran)1_{\{o \in N_{\o}(G)\}}/|N_{\o}(G)|.
\]
Note that $F$ is well defined on $\G_{**}[\C_t]$ because it is invariant under isomorphisms of $(G,x,\o,o)$. 
We recall also that $\{v \in \tree\}$ is measurable with respect to $X_v$ for each $v \in \V$, as explained in Remark \ref{rem-treerecovery}, which in particular implies that $\{1 \in \tree\}$ and $|N_{\o}(\tree)|$ are $\langle X_{N_{\o}(\tree)}\rangle$-measurable, and $|N_k(\tree)|$ is $\lan X_{N_k(\tree)} \ran$-measurable.
The following calculation will use Lemma \ref{le:GW-exchangeability} and the aforementioned measurability properties in the first and last equality, 
unimodularity as in \eqref{def:unimodularity} with $F$ as above 
in the third equality,
and the fact that $\o \in N_v (\tree)$ if and only if $v \in N_{\o}(\tree)$ in the fourth equality (recalling also our convention that $\frac{1}{|N_{\o}(\tree)|}\sum_{k \in N_{\o}(\tree)}=0$ when $N_{\o}(\tree)=\emptyset$):
\begin{align}
\E&\left[g(X_{\o},X_1)h(X_1,X_{\o},\lan X_{N_1(\tree)}\ran)1_{\{1 \in \tree\}} \right]  \nonumber \\
	&\quad = \E\left[\frac{1}{|N_{\o}(\tree)|}\sum_{k \in N_{\o}(\tree)}g(X_{\o},X_k)h(X_k,X_{\o},\lan X_{N_k(\tree)}\ran) \right] \nonumber \\
	&\quad = \E\left[\sum_{v \in \tree}g(X_{\o},X_v)h(X_v,X_{\o},\lan X_{N_v(\tree)}\ran)1_{\{v \in N_{\o}(\tree)\}}\frac{1}{|N_{\o}(\tree)|} \right] \nonumber \\
	&\quad = \E\left[\sum_{v \in \tree}g(X_v,X_{\o})h(X_{\o},X_v,\lan X_{N_{\o}(\tree)}\ran)1_{\{\o \in N_{v}(\tree)\}}\frac{1}{|N_{v}(\tree)|} \right] \nonumber \\
	&\quad = \E\left[\frac{1}{|N_{\o}(\tree)|}\sum_{k \in N_{\o}(\tree)}g(X_k,X_{\o})h(X_{\o},X_k,\lan X_{N_{\o}(\tree)}\ran) \frac{|N_{\o}(\tree)|}{|N_k(\tree)|} \right] \nonumber \\
	&\quad = \E\left[ g(X_1,X_{\o})h(X_{\o},X_1,\lan X_{N_{\o}(\tree)}\ran) \frac{|N_{\o}(\tree)|}{|N_1(\tree)|} 1_{\{1 \in \tree\}}\right].  \label{pf:unimod-key1}
\end{align}

If $\varphi_h : \C_t^2 \to \R$ is defined by
\begin{align*}
\varphi_h(X_{\o},X_1) =  1_{\{1 \in \tree\} }\E\left[\left. \frac{|N_{\o}(\tree)|}{|N_1(\tree)|}h(X_{\o},X_1,\lan X_{N_{\o}(\tree)}\ran) \, \right| \, X_{\o}, X_1\right],
\end{align*}
then \eqref{pf:unimod-key1} can be rewritten as 
\begin{align}
\E\left[g(X_{\o},X_1)h(X_1,X_{\o},\lan X_{N_1(\tree)}\ran)1_{\{1 \in \tree\}} \right]   &= \E\left[ g(X_1,X_{\o})\varphi_h(X_{\o},X_1) 1_{\{1 \in \tree\}}\right]. \label{pf:unimod-key1.5}
\end{align}
Similarly, define $\varphi_1 : \C_t^2 \to \R$ by
\begin{align*}
\varphi_1(X_{\o},X_1) =   1_{\{1 \in \tree\}} \E\left[\left. \frac{|N_{\o}(\tree)|}{|N_1(\tree)|} \, \right| \, X_{\o}, X_1\right].
\end{align*}
Apply the identity \eqref{pf:unimod-key1.5}, with $h$ replaced by the constant function $1$ and with $g(x_{\o},x_1)$ replaced by $g(x_1,x_{\o})\varphi_h(x_{\o},x_1)$, to obtain 
\begin{align*}
\E&\left[ g(X_1,X_{\o})\varphi_h(X_{\o},X_1) 1_{\{1 \in \tree\}}\right] = \E\left[ g(X_{\o},X_1)\varphi_h(X_1,X_{\o})\varphi_1(X_{\o},X_1) 1_{\{1 \in \tree\}}\right].
\end{align*}
Substitution of this identity into the right-hand side of \eqref{pf:unimod-key1.5} yields 
\begin{align}
\E&\left[g(X_{\o},X_1)h(X_1,X_{\o},\lan X_{N_1(\tree)}\ran)1_{\{1 \in \tree\}} \right] = \E\left[ g(X_{\o},X_1)\varphi_h(X_1,X_{\o})\varphi_1(X_{\o},X_1) 1_{\{1 \in \tree\}}\right]. \label{pf:unimod-key2}
\end{align}
The fact that this holds for any $g$ implies that, a.s.\ on $\{1 \in \tree\}$,
\begin{align*}
\E\left[\left. h(X_1,X_{\o},\lan X_{N_1(\tree)}\ran) \, \right| \, X_{\o}, X_1\right] = \varphi_h(X_1,X_{\o})\varphi_1(X_{\o},X_1). 
\end{align*}
On the other hand, applying \eqref{pf:unimod-key2} with $h$ replaced by the constant function $1$, we deduce that $\varphi_1(X_1,X_{\o})\varphi_1(X_{\o},X_1) = 1$ a.s.\ on $\{1 \in \tree\}$, and so
\begin{align*}
\E\left[\left. h(X_1,X_{\o},\lan X_{N_1(\tree)}\ran) \, \right| \, X_{\o}, X_1\right] = \frac{\varphi_h(X_1,X_{\o})}{\varphi_1(X_1,X_{\o})}.
\end{align*}
Now recalling the definition of $\Xi_t$ given in the statement of Proposition \ref{pr:invariance-GW}, 
(still omitting $[t]$ from the notation), it follows that 
\[
\Xi_t(X_{\o},X_1) = 1_{\{1 \in \tree\}}\frac{\varphi_h(X_{\o},X_1)}{\varphi_1(X_{\o},X_1)}. 
\]
Thus, the last two displays 
 establish  \eqref{def:invariance-GW1} with $k=1$. In light of the symmetry provided by Proposition \ref{pr:properties-GW}(ii), this is enough to complete the proof. \hfill\qedsymbol

\vspace{.1in}
\textbf{Acknowledgments:} We would like to thank the reviewer for feedback that improved the exposition of the paper.

\appendix

\section{A projection theorem}
\label{sec-mim}

Here we state and prove a result, used crucially in deriving the local equation, which can be seen as a \emph{projection} or \emph{mimicking} theorem for It\^o processes.
Theorem \ref{th:brunickshreve} below seems to be reasonably well known, particularly in filtering theory, appearing (in various different forms) for instance in \cite[Theorem 7.17]{liptser-shiryaev}, \cite[Corollary 3.11]{brunick2013mimicking}, and \cite[Section VI.8]{rogers-williams} but we give a short and mostly self-contained proof.
Theorem \ref{th:brunickshreve} can be seen also as a path-dependent counterpart of the famous mimicking theorem of Gy\"{o}ngy \cite{gyongy1986mimicking}.

We begin with a technical lemma to clear up any concerns about the existence of suitable versions of conditional expectations, of the sort that appear in the definitions of $\gamma_t$ in \eqref{def:gamma-kappareg} and \eqref{def:gamma}.
As usual, write $\C=C(\R_+;\R^d)$ and $\C_t=C([0,t];\R^d)$ for the spaces of $\R^d$-valued paths, for $t > 0$, and $x[t]$ for the path up to time $t$ of any $x \in \C$. Recall that we call a function $f$ from $\R_+ \times \C$ to a measurable space $S$ \emph{progressively measurable} if it is jointly measurable and satisfies $f(t,x)=f(t,y)$ whenever $t \ge 0$ and $x,y \in \C$ satisfy $x[t]=y[t]$.

\begin{lemma} \label{le:optprojection}
Suppose $\Gamma=(\Gamma(t))_{t \ge 0}$ and $Y=(Y(t))_{t \ge 0}$ are stochastic processes with values in $\R^k$ and $\R^d$, respectively.  Suppose $Y$ is continuous, and $\E[\int_0^T|\Gamma(t)|dt] < \infty$ for each $T > 0$. Then there exists a progressively measurable function $\gamma : \R_+ \times \C \to \R^k$ such that
\begin{align*}
\gamma(t,Y) = \E[\Gamma(t)\,|\,Y[t]], \quad \text{a.s., for a.e. } t \ge 0.
\end{align*}
\end{lemma}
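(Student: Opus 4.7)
The plan is to apply optional projection theory with respect to the filtration generated by $Y$, and then represent the resulting optional process as a progressively measurable functional of the path of $Y$.

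First, the hypothesis $\E[\int_0^T |\Gamma(t)|\,dt] < \infty$ for each $T > 0$ tacitly assumes $\Gamma$ is jointly measurable in $(t,\omega)$, which I take as given. Let $\FF^Y = (\F^Y_t)_{t \ge 0}$ denote the natural filtration of $Y$, where $\F^Y_t = \sigma(Y(s) : s \le t)$; by continuity of $Y$ this equals $\sigma(Y[t])$. The classical optional projection theorem (as in Dellacherie--Meyer, \emph{Probabilities and Potential B}, Chapter VI), applied to $\Gamma$ and $\FF^Y$, yields an $\FF^Y$-optional, and in particular $\FF^Y$-progressively measurable, process $\widetilde\Gamma$ satisfying
\[
\widetilde\Gamma(t) = \E[\Gamma(t) \,|\, \F^Y_t], \quad \text{a.s., for every } t \ge 0.
\]

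The main remaining task is to produce a progressively measurable functional $\gamma : \R_+ \times \C \to \R^k$ such that $\widetilde\Gamma(t,\omega) = \gamma(t, Y(\omega))$ outside a null set. For this I propose a monotone class argument on the canonical path space $\C$, equipped with its canonical filtration $\F^\C_t$ generated by the coordinate maps up to time $t$. The class of bounded $\FF^Y$-progressively measurable processes admitting such a functional representation is a vector space, closed under bounded monotone convergence, and it contains the generators $(t,\omega) \mapsto 1_{[s,u)}(t)\, 1_A(Y(\omega))$ for $0 \le s < u$ and $A \in \F^\C_s$, since for these one may simply take $\gamma(t,x) := 1_{[s,u)}(t)\, 1_A(x)$, which is progressively measurable by the very definition of $\F^\C_s$. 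A monotone class argument then extends the representation to all bounded $\FF^Y$-progressively measurable processes, and a truncation and monotone passage handle the integrable case.

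The principal obstacle is precisely this functional representation step: while Doob--Dynkin provides, for each individual $t$, a measurable $\gamma_t : \C_t \to \R^k$ with $\gamma_t(Y[t]) = \E[\Gamma(t)\,|\,Y[t]]$ a.s., combining these pointwise choices into a single progressively measurable functional on $\R_+ \times \C$ requires the monotone class argument above and relies crucially on continuity of $Y$ to identify $\F^Y_t$ with $\sigma(Y[t])$. Once $\gamma$ is in hand, the identity $\gamma(t, Y) = \E[\Gamma(t) \,|\, Y[t]]$ a.s.\ for a.e.\ $t$ follows immediately from the defining property of $\widetilde\Gamma$.
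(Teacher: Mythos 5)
The paper's own proof is a one-line citation of \cite[Proposition~5.1]{brunick2013mimicking}, applied with the Polish-space-valued auxiliary process $Z_t := Y[t]$; that proposition produces a jointly Borel measurable $\widehat\gamma : \R_+ \times \C \to \R^k$ with $\widehat\gamma(t,Y[t])=\E[\Gamma(t)\,|\,Y[t]]$ a.s.\ for a.e.\ $t$, and then $\gamma(t,x):=\widehat\gamma(t,x[t])$ is progressive by construction. Your proposal instead tries to reprove this from scratch via optional projection followed by a monotone class argument. That is a genuinely different route, and while it has the right shape, there are two real gaps.

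First, the optional projection theorem you invoke (Dellacherie--Meyer, Chapter VI) is stated for a filtration satisfying the usual conditions of right-continuity and completeness. The raw filtration $\FF^Y$ with $\F^Y_t=\sigma(Y[t])$ is generally neither, even for continuous $Y$ (Brownian motion's raw filtration is already not right-continuous). If you augment and pass to $\overline{\F^Y}_{t+}$ to legitimize the projection, the resulting optional process is a version of $\E[\Gamma(t)\,|\,\overline{\F^Y}_{t+}]$, which conditions on a potentially strictly larger $\sigma$-field than $\sigma(Y[t])$, and it is no longer clear that the output is adapted to the raw filtration or equals the conditional expectation you actually need. The "a.e.\ $t$" qualification softens this but does not obviously make the discrepancy vanish.

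Second, and more seriously, the monotone class step does not close as stated. The $\sigma$-algebra on $\R_+\times\Omega$ generated by the rectangles $[s,u)\times A$ with $A\in\F^Y_s$ is (roughly) the predictable/optional $\sigma$-algebra, which in general is strictly contained in the progressive $\sigma$-algebra. So the class you can reach by bounded monotone convergence starting from these rectangles is the class of bounded predictable (or optional) functionals, not the class of all bounded $\FF^Y$-progressively measurable processes. Whether the optional projection $\widetilde\Gamma$ lands inside that reachable class depends on finer structure of the (raw or augmented) filtration of a continuous process that you have not established. Both gaps can in principle be patched — and the weak "a.e.\ $t$" conclusion gives real slack, since every progressive process agrees $\lambda\times\PP$-a.e.\ with a predictable one — but these are precisely the technicalities that \cite[Proposition~5.1]{brunick2013mimicking} is designed to encapsulate. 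A cleaner self-contained alternative, closer in spirit to that reference, is to work directly on $\R_+\times\C$: push $(t,\omega)\mapsto(t,Y(\omega))$ forward, restrict to the progressive $\sigma$-algebra on the canonical space, and obtain $\gamma$ as a Radon--Nikodym derivative (or conditional expectation) with respect to that restricted $\sigma$-algebra, which is progressive by fiat and avoids the optional projection theorem entirely.
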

\begin{proof}
Apply \cite[Proposition 5.1]{brunick2013mimicking}, taking the Polish-space-valued process $Z_t$ therein to be the $\C$-valued process $Y[t]$, to find a Borel measurable function $\widehat\gamma : \R_+ \times \C \to \R^k$ such that
\begin{align*}
\widehat\gamma(t,Y[t]) = \E[\Gamma(t)\,|\,Y[t]], \quad \text{a.s., for a.e. } t \ge 0.
\end{align*}
Then set $\gamma(t,x)=\widehat \gamma(t,x[t])$ for $(t,x) \in \R_+ \times \C$.
\end{proof}

\begin{theorem} \label{th:brunickshreve}
Let $(\Omega,\F,\FF,\PP)$ be a filtered probability space supporting an $\FF$-Brownian motion $W$ of dimension $m$ as well as a continuous $\FF$-adapted process $X$ of dimension $d$ such that $X$ admits the differential
\begin{align*}
dX(t) &= b(t)dt + \sigma(t)dW(t),
\end{align*}
where $b$ and $\sigma$ are $\FF$-progressively processes taking values in $\R^d$, and $\R^{d \times m}$, respectively,
with 
\begin{align}
\E\left[\int_0^t\left(|b(s)| + \mathrm{Tr}[\sigma\sigma^\top(s)]\right) ds\right] < \infty, \quad \text{ for } t > 0. \label{ap:sigmasquareintegrable}
\end{align}
Let $\widetilde{b} : \R_+ \times \C \to \R^d$ and $\widetilde{\sigma} : \R_+ \times \C \to \R^{d\times d}$  be any progressively measurable functions satisfying
\begin{align*}
  \widetilde{b}(t,X[t]) &= \E\big[b(t) \, | \, X[t]\big], \qquad  \widetilde{\sigma}\widetilde{\sigma}^\top(t,X[t]) = \E\big[\sigma \sigma^\top (t) \, | \, X[t]\big], \quad \text{a.s., for a.e. } t \ge 0.
\end{align*} 
Let $\FF^X=(\F^X_t)_{t \ge 0}$ denote the filtration generated by $X$, defined by $\F^X_t=\sigma(X[t])$.
Then there exists an extension $(\check \Omega, \check \F, \check \FF, \check \PP)$ of the probability space $(\Omega,\F, \FF^X, \PP)$ 
supporting a standard $d$-dimensional $\check\FF$-Brownian motion $\widetilde{W}$ such that 
\begin{align*} 
d X(t) = \widetilde{b}(t,X)dt + \widetilde{\sigma}(t,X)d\widetilde{W}(t), \quad t \geq 0.
\end{align*}
\end{theorem}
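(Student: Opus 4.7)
The plan is to follow the standard two-step strategy in mimicking/innovation theorems: first build the candidate martingale part on the small filtration $\FF^X$, identify its quadratic variation, and then invoke an abstract martingale representation to manufacture the Brownian motion $\widetilde{W}$ on a (possibly enlarged) space.

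First I would set
\[
M(t) := X(t) - X(0) - \int_0^t \widetilde{b}(s,X)\,ds,
\]
which is continuous and $\FF^X$-adapted by the progressive measurability of $\widetilde{b}$. To see that $M$ is an $\FF^X$-martingale, write $X(t)-X(0)-\int_0^t b(s)\,ds = \int_0^t \sigma(s)\,dW(s)$, which is a genuine $\FF$-martingale by the integrability hypothesis \eqref{ap:sigmasquareintegrable}. For $s\leq t$,
\[
\E\bigl[M(t)-M(s)\,\big|\,\F^X_s\bigr] = \E\!\left[\int_s^t\bigl(b(u)-\widetilde{b}(u,X)\bigr)\,du\,\Big|\,\F^X_s\right] = 0,
\]
where the last equality follows by Fubini and the tower property applied to $\E[b(u)\,|\,\F^X_u]=\widetilde{b}(u,X)$. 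The existence of progressively measurable $\widetilde{b}$ and $\widetilde\sigma\widetilde\sigma^\top$ with the stated conditional-expectation properties is guaranteed by Lemma \ref{le:optprojection}.

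Next I would compute the sharp bracket of $M$ under $\FF^X$. Under $\FF$ one has $[M^i,M^j]_t = \int_0^t (\sigma\sigma^\top)_{ij}(s)\,ds$, so $M^iM^j - \int_0^\cdot (\sigma\sigma^\top)_{ij}(s)\,ds$ is an $\FF$-martingale. Since $M^iM^j$ is $\FF^X$-adapted, the same Fubini-plus-tower argument gives, for $s\leq t$,
\[
\E\bigl[M^i(t)M^j(t)-M^i(s)M^j(s)\,\big|\,\F^X_s\bigr] = \E\!\left[\int_s^t(\widetilde\sigma\widetilde\sigma^\top)_{ij}(u,X)\,du\,\Big|\,\F^X_s\right],
\]
which identifies $\langle M^i,M^j\rangle_t = \int_0^t (\widetilde{\sigma}\widetilde{\sigma}^\top)_{ij}(s,X)\,ds$.

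Finally, I would invoke the standard representation theorem for continuous local martingales whose quadratic variation is absolutely continuous with a prescribed integrand (see, e.g., Karatzas--Shreve, Theorem 3.4.2, or Ikeda--Watanabe, Theorem II.7.1'). Since $\langle M\rangle_t = \int_0^t\widetilde{\sigma}\widetilde{\sigma}^\top(s,X)\,ds$ with $\widetilde{\sigma}$ progressively measurable, there exists an extension $(\check\Omega,\check\F,\check\FF,\check\PP)$ of $(\Omega,\F,\FF^X,\PP)$ carrying a standard $d$-dimensional $\check\FF$-Brownian motion $\widetilde W$ such that $M(t)=\int_0^t\widetilde{\sigma}(s,X)\,d\widetilde W(s)$. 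Rearranging gives the advertised SDE for $X$. The only subtle point is that $\widetilde\sigma$ need not be invertible (even though $\sigma$ is in Assumption (\ref{assumption:A}.2a), conditional expectations may degenerate), so one cannot simply set $d\widetilde W = \widetilde\sigma^{-1}dM$; this is precisely why the probability space must be enlarged, and why one appeals to the abstract representation theorem rather than a direct formula. This enlargement step is the main technical subtlety, but it is entirely standard and requires no hypothesis beyond the continuity of $M$ and the form of its bracket already established.
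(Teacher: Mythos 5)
Your first step is correct: $M(t) = X(t) - X(0) - \int_0^t \widetilde{b}(s,X)\,ds$ is indeed an integrable $\FF^X$-martingale, by the Fubini--tower argument applied to the decomposition $M(t) = \int_0^t(b(s) - \widetilde{b}(s,X))\,ds + \int_0^t\sigma(s)\,dW(s)$ and the $L^1$ bound \eqref{ap:sigmasquareintegrable}. The final step (the abstract martingale representation theorem, Karatzas--Shreve Theorem 3.4.2) is the right tool and the one the paper also ultimately invokes.

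The problem is in your bracket computation. You write that ``$M^iM^j - \int_0^\cdot (\sigma\sigma^\top)_{ij}(s)\,ds$ is an $\FF$-martingale,'' intending to feed this into the Fubini--tower argument. But this premise is false: under $\FF$, the process $M$ has the semimartingale decomposition $M = \int_0^\cdot \sigma\,dW + \int_0^\cdot (b - \widetilde{b})\,ds$, with a generically nonzero drift $b(s) - \widetilde{b}(s,X)$ (for instance, if $b$ depends on a Brownian motion independent of $X$). So $M$ is not an $\FF$-local martingale, and $M^iM^j - [M^i,M^j]$ is likewise not an $\FF$-local martingale --- the cross term $\int_0^\cdot M^i(s)\,d\bigl(\int_0^s(b^j - \widetilde{b}^j)\,du\bigr)$ persists. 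Consequently your Fubini--tower argument for the bracket does not go through as stated. The conclusion you want, $\langle M^i, M^j\rangle_t = \int_0^t (\widetilde{\sigma}\widetilde{\sigma}^\top)_{ij}(s,X)\,ds$, is nevertheless true, but for a simpler reason that bypasses any $\FF$-martingale claim: $M - X$ is of bounded variation, so the pathwise quadratic variation satisfies $[M^i,M^j]_t = [X^i,X^j]_t = \int_0^t (\sigma\sigma^\top)_{ij}(s)\,ds$ a.s.; since $[X]$ is a pathwise functional of $X$, it is $\FF^X$-adapted (modulo nulls), and then $\langle M^i,M^j\rangle = [M^i,M^j]$ for the continuous $\FF^X$-local martingale $M$. (Incidentally, this also shows that the ``conditional expectation'' defining $\widetilde{\sigma}\widetilde{\sigma}^\top$ is automatically trivial: $\sigma\sigma^\top(t)$ is a.s.\ $\F^X_t$-measurable for a.e.\ $t$.)

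By contrast, the paper's proof works with the full martingale problem, i.e.\ with $\varphi(X(\cdot)) - \int_0^\cdot\bigl(b(u)\cdot\nabla\varphi(X(u)) + \tfrac12\mathrm{Tr}[\sigma\sigma^\top(u)\nabla^2\varphi(X(u))]\bigr)\,du$ for test functions $\varphi \in C^\infty_c(\R^d)$. Because $\varphi$ and its first two derivatives are bounded, each such process equals $\varphi(X(0)) + \int_0^\cdot\nabla\varphi(X(u))^\top\sigma(u)\,dW(u)$, which is a genuine $\FF$-martingale under the $L^1$ hypothesis \eqref{ap:sigmasquareintegrable}, and then a single Fubini--tower step identifies the $\FF^X$ martingale problem with coefficients $\widetilde b$ and $\widetilde{\sigma}\widetilde{\sigma}^\top$. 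The test-function formulation bundles the drift and diffusion identifications into one martingale property and thereby avoids the filtration mismatch that trips up your second step. If you repair the bracket step as indicated above, your more direct route is valid and essentially of the same length.
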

\begin{proof}
Let $C^\infty_c(\R^d)$ denote the set of smooth functions on $\R^d$ with compact support. Write $\nabla$ and $\nabla^2$ for the gradient and Hessian operators, respectively. By  It\^o's formula and the condition  \eqref{ap:sigmasquareintegrable}, for each $\varphi \in C^\infty_c(\R^d)$ the process
\[
\varphi(X(t)) - \int_0^t\left(b(u) \cdot \nabla\varphi(X(u)) + \frac12\mathrm{Tr}[\sigma\sigma^\top(u)\nabla^2\varphi(X(u))]\right)du
\]
is a $\FF$-martingale. 
In particular, if $t > s$, and if $Z$ is any bounded $\F_s$-measurable random variable then
\begin{align*}
0 &= \E\left[Z\left(\varphi(X(t)) - \varphi(X(s)) - \int_s^t\left(b(u) \cdot \nabla\varphi(X(u)) + \frac12\mathrm{Tr}[\sigma\sigma^\top(u)\nabla^2\varphi(X(u))]\right)du \right)\right].
\end{align*}
Now, If $Z$ is measurable with respect to  $\F^X_s \subset \F_s$, then we may use Fubini's theorem
and the tower property of conditional expectations to obtain 
\begin{align*}
0 &= \E\left[Z\left(\varphi(X(t)) - \varphi(X(s)) - \int_s^t\left(\widetilde{b}(u,X) \cdot \nabla\varphi(X(u)) + \frac12\mathrm{Tr}[\widetilde{\sigma}\widetilde{\sigma}^\top(u,X(u))\nabla^2\varphi(X(u))]\right)du \right)\right].
\end{align*}
This shows that the process 
\[
\varphi(X(t)) - \int_0^t\left(\widetilde{b}(u,X) \cdot \nabla\varphi(X(u)) + \frac12\mathrm{Tr}[
 \widetilde{\sigma}\widetilde{\sigma}^\top(u,X)\nabla^2\varphi(X(u))]\right)du
\]
is a $\FF^X$-martingale, for every $\varphi \in C^\infty_c(\R^d)$.

The claim now follows from the usual construction of weak solutions from solutions to  martingale problems (e.g., using the arguments
 in Proposition 5.4.6 and Theorem 3.4.2 of \cite{karatzas-shreve} or  \cite[Theorem (20.1), p.\ 160]{rogers-williams}).
\end{proof}

\section{Forms of Girsanov's theorem}

We develop here two simple forms of Girsanov's theorem tailored to the needs of proofs of results in this paper. No aspects of these results should come as a surprise to specialists, but we were unable to locate a reference that covered our precise requirements, which fall beyond the scope of the standard Novikov condition.
Our drift $b$ in Assumption \ref{assumption:A} has linear growth, and thus, at least for the first lemma below, fairly standard results could cover some of our needs, such as \cite[Corollary 3.5.16]{karatzas-shreve} or \cite[Theorem 7.7]{liptser-shiryaev}. But those results, strictly speaking, do not allow a general diffusion coefficient $\sigma$. The result \cite[Theorem 7.7]{liptser-shiryaev} is extended in \cite[Section 7.6]{liptser-shiryaev} but still requires Lipschitz coefficients, which is not good enough for us because of the $\gamma_t$ term in the local equation \eqref{statements:localequation-regular}, which need not be Lipschitz even when $b$ is. Our second result below, Lemma \ref{le:ap:girsanov2}, is not directly covered by the aforementioned results either, because it involves an infinite-dimensional SDE system, though we only consider a change in drift for a finite number of coordinates.
In any case, we give simple proofs of our two results using an elegant recent criterion of \cite{blanchet-ruf}.

\begin{lemma} \label{le:ap:girsanov}
Let $d \in \N$ and $\lambda_0 \in \P(\R^d)$. For $T \in (0,\infty)$, suppose $b : [0,T] \times \C \to \R^d$ and $\sigma : [0,T] \times \C \to \R^{d \times d}$ are progressively measurable. Assume $\sigma(t,x)$ is invertible for each $(t,x)$ and that $\sigma$ and $\sigma^{-1}$ are uniformly bounded.
For $i=1,2$, suppose $(\Omega^i,\F^i,\FF^i = \{\F^i_t\}_{t \geq 0},\PP^i)$ is a filtered probability space supporting a $d$-dimensional $\FF^i$-Brownian motion $W^i$ and continuous $d$-dimensional $\FF^i$-adapted process $X^i$, which satisfy
for $t \in [0,T]$,
\begin{align}
  \label{eq-SDE1}
  dX^1(t) &= b(t,X^1)dt + \sigma(t,X^1)dW^1(t), \quad X^1(0) \sim \lambda_0, \\
  \nonumber 
dX^2(t) &= \sigma(t,X^2)dW^2(t), \qquad\qquad\qquad X^2(0) \sim \lambda_0.
\end{align}
Assume the latter SDE is unique in law, and 
	that
	\begin{equation}
	\PP^i\left(\int_0^T |b(t,X^i)|^2dt < \infty\right)  = 1, \quad i=1,2. \label{ap:asmp:girsanov}
	\end{equation}
	Then 
        $\L(X^1[T])$ and $\L(X^2[T])$ are equivalent, and for $x \in \C_T$,  
	\begin{equation}
	\frac{d\L(X^1[T])}{d\L(X^2[T])}(x) = \exp\left(\int_0^T (\sigma\sigma^\top)^{-1}b(t,x) \cdot dx(t) - \frac12 \int_0^T | \sigma^{-1}b(t,x)|^2dt \right). \label{ap:def:girsanov}
	\end{equation}
\end{lemma}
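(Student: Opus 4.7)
The plan is to perform the Girsanov transformation in the ``drift removal'' direction, i.e.\ starting from the drifted process $X^1$ and changing measure so that the drift disappears, then invoke the assumed uniqueness in law of the driftless SDE. Concretely, working on $(\Omega^1,\F^1,\FF^1,\PP^1)$, I will introduce the continuous local martingale
\[
\hat M_t \;=\; -\int_0^t \bigl[(\sigma\sigma^\top)^{-1}b\bigr](s,X^1) \cdot \sigma(s,X^1)\,dW^1(s), \qquad t \in [0,T],
\]
whose quadratic variation is $[\hat M]_t = \int_0^t |\sigma^{-1}b(s,X^1)|^2 ds$, which is a.s.\ finite at $t=T$ by hypothesis \eqref{ap:asmp:girsanov} (with $i=1$) and the boundedness of $\sigma^{-1}$.

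The first and main step will be to show that the Dol\'eans exponential $\EE(\hat M)$ is a genuine $\PP^1$-martingale on $[0,T]$. Since $b$ only has pathwise square-integrability and no Novikov-type bound, the standard criteria do not apply directly, so I will use a localization argument in the spirit of Blanchet--Ruf. Introduce the stopping times
\[
\tau_n := \inf\Bigl\{t \in [0,T] : \int_0^t |\sigma^{-1}b(s,X^1)|^2 ds \ge n\Bigr\} \wedge T,
\]
so that $\EE_{\cdot \wedge \tau_n}(\hat M)$ is bounded in $L^2$ (by Novikov applied to the bounded quadratic variation $[\hat M]_{\tau_n} \le n$) and hence a true martingale. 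Define the sub-probability measures $\tilde \PP^{1,n}$ on $\F^1_T$ by $d\tilde\PP^{1,n}/d\PP^1 = \EE_{\tau_n}(\hat M)$. By the standard Girsanov theorem, under $\tilde\PP^{1,n}$ the stopped process $X^1(\cdot \wedge \tau_n)$ is a weak solution of the driftless SDE (stopped at $\tau_n$). Thus the sequence $(\tilde\PP^{1,n})_n$ is consistent and monotonically increasing to a sub-probability measure $\tilde\PP^1$, and it only remains to check that $\tilde\PP^1(\Omega^1)=1$, equivalently that $\tau_n \to T$ $\tilde\PP^1$-a.s. Under $\tilde\PP^{1,n}$ the law of $X^1$ up to $\tau_n$ coincides (by the assumed uniqueness in law of the driftless SDE) with the corresponding stopped law of $X^2$ under $\PP^2$; so the event $\{\tau_n < T\}$ translates into $\{\int_0^T |\sigma^{-1}b(s,X^2)|^2 ds \ge n\}$, whose $\PP^2$-probability tends to $0$ by \eqref{ap:asmp:girsanov} with $i=2$. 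This is precisely where the second integrability condition is used, and where I expect the most care in the argument: keeping track of the two measures and confirming that the candidate density has full mass.

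Once $\EE_T(\hat M)$ is known to be a true density, defining $d\tilde\PP^1/d\PP^1 := \EE_T(\hat M)$ and applying Girsanov yields that $X^1$ under $\tilde\PP^1$ solves the driftless SDE with $X^1(0)\sim \lambda_0$. By the assumed uniqueness in law, $\L^{\tilde\PP^1}(X^1[T]) = \L(X^2[T])$, which in particular gives equivalence of the two laws and the identity
\[
\L(X^2[T])(A) \;=\; \E^{\PP^1}\bigl[\EE_T(\hat M)\,1_A(X^1[T])\bigr], \qquad A \in \mathcal{B}(\C_T).
\]
The final step is a purely algebraic manipulation to obtain the claimed formula: using $\sigma(s,X^1)\,dW^1(s) = dX^1(s) - b(s,X^1)\,ds$,
\[
-\hat M_T \;=\; \int_0^T (\sigma\sigma^\top)^{-1}b(s,X^1)\cdot dX^1(s) \,-\, \int_0^T |\sigma^{-1}b(s,X^1)|^2 ds,
\]
so that $\EE_T(\hat M)^{-1}$ equals the right-hand side of \eqref{ap:def:girsanov} evaluated at $x=X^1$. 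Inverting the above identity then gives $d\L(X^1[T])/d\L(X^2[T])$ in the stated form, completing the proof.
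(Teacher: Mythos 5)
Your approach is correct in substance and takes a genuinely different Girsanov direction from the paper's. The paper starts from the driftless law $P^2 = \L(X^2[T])$, defines the candidate density $R(t,x)$ (the right side of \eqref{ap:def:girsanov}), verifies via the Blanchet--Ruf criterion that $(R(t,X^2))_{t\in[0,T]}$ is a true $P^2$-martingale, and then identifies the resulting tilted law as $\L(X^1[T])$ by the assumed uniqueness. You instead start from $\PP^1$ and remove the drift via $\EE_T(\hat M)$, checking that the tilted measure has full mass by the analogous localization. The two routes are essentially symmetric, and the two halves of hypothesis \eqref{ap:asmp:girsanov} trade places: the paper uses $i=2$ so that the density $R(t,X^2)$ is well defined $P^2$-a.s.\ and uses $i=1$ in the criterion (via $Q^t_n(\tau_n\le t)=\PP^1(\tau_n(X^1)\le t)\to 0$); you use $i=1$ so that $[\hat M]_T<\infty$ $\PP^1$-a.s.\ and use $i=2$ in the mass-one check (via $\tilde\PP^{1,n}(\tau_n<T)=\PP^2(\tau_n(X^2)<T)\to 0$). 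Both routes rely on the same tacit ``patching'' step: under the localized change of measure only the $\tau_n$-stopped process satisfies a stopped driftless SDE, so one must upgrade uniqueness in law of the full driftless SDE to uniqueness of its stopping; this is a routine argument because $\tau_n$ is intrinsic to the stopped path, and both the paper and your writeup leave it implicit. Two small imprecisions to note: each $\tilde\PP^{1,n}$ is a genuine probability measure, not a sub-probability, since $\EE_{\cdot\wedge\tau_n}(\hat M)$ has mean one by Novikov; and the cleanest way to phrase your mass-one step is $\E^{\PP^1}[\EE_T(\hat M)]=\lim_n\tilde\PP^{1,n}(\tau_n=T)$ by monotone convergence over the increasing events $\{\tau_n=T\}$, using that $\EE_T(\hat M)=\EE_{T\wedge\tau_n}(\hat M)$ on $\{\tau_n=T\}$. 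The paper's direction has the minor advantage of producing the stated formula directly; yours requires the (correct and easy) inversion of $\EE_T(\hat M)$ at the end.
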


\begin{remark} \label{re:ap:girsanov}
If $t \mapsto b(t,x)$ is continuous for each $x$, then $\int_0^T|b(t,x)|^2dt \le T \sup_{t \in [0,T]}|b(t,x)|^2 < \infty$ for each $x$, and the key assumption \eqref{ap:asmp:girsanov} in Lemma \ref{le:ap:girsanov} holds automatically.
\end{remark}

\begin{proof}[Proof of Lemma \ref{le:ap:girsanov}]
If $b$ is uniformly bounded, then uniqueness in law of the SDE for $X^1$ and  \eqref{ap:def:girsanov} are completely standard, following from Girsanov's theorem. Now, fix $T \in (0, \infty)$ and assume more generally that $\PP(\int_0^T |b(t,X^1)|^2dt < \infty) = \PP(\int_0^T |b(t,X^2)|^2dt < \infty) = 1$. Define $\tau_n : \C \to [0,T] \cup \{\infty\}$ and $b_n : [0,T] \times \C \to \R^d$ by
\begin{align*}
b_n(t,x) := 1_{\{t \le \tau_n(x)\}}b(t,x), \quad \tau_n(x) := \inf\Big\{t \in [0,T] : \int_0^t |b(s,x)|^2ds \ge n\Big\}.
\end{align*}
Abbreviate $P^2=\L(X^2[T])$.  
Now, define   $R : [0,T] \times \C \to \R_+$ by 
\begin{align*}
R(t,x) &:= \exp\left(\int_0^t (\sigma\sigma^\top)^{-1}b(s,x) \cdot dx(s) - \frac12 \int_0^t | \sigma^{-1}b(s,x)|^2ds \right).
\end{align*} 
Note that the uniform boundedness of $\sigma^{-1}$ and the bound \eqref{ap:asmp:girsanov}
  ensure that  
  $(R(t, \cdot))_{t \in [0,T]}$ is well defined $P^2$-a.e.
Moreover,  the uniform boundedness of $\sigma^{-1}$ and  the definition of $b_n$ guarantee that 
$\int_0^T |\sigma^{-1} b_n(t,x)|^2 dt = \int_0^{T \wedge \tau_n(x)}  |\sigma^{-1} b(t,x)|^2 ds \le n$ for all $x \in \C$,
and thus Novikov's condition 
is satisfied. Hence, $(R(t \wedge \tau_n(X^2), X^2), {\mathcal F}^2_t)_{t \in [0,T]}$
is a  $\PP^2$-martingale for each $n$ \cite[Corollary 3.5.13]{karatzas-shreve}. 
Thus, by Girsanov's theorem (see, e.g., \cite[Theorem 3.5.1]{karatzas-shreve}), the SDE 
\[   dX^{1,n}(t) = b_n(t,X^{1,n}) dt + \sigma(t,X^{1,n}) dW(t), \quad X^{1,n}(0) \sim \lambda_0, \]
is unique in law, with its law $P^{1,n}$ satisfying $P^{1,n} \ll P^2$, 
where 
  \[ \frac{dP^{1,n}}{dP^2}(x) := R(T \wedge \tau_n(x), x) = 
  \exp\left(\int_0^T (\sigma\sigma^\top)^{-1}b_n(t,x) \cdot dx(t) - \frac12 \int_0^T | \sigma^{-1}b_n(t,x)|^2dt \right), 
  \]
for $P^2$-almost every    $x \in \C.$    
Assume $X^{1,n}$ is constructed on a probability space $(\Omega^{1,n},\F^{1,n},\FF^{1,n},\PP^{1,n})$.

We will now apply the criterion of \cite[Corollary 2.1]{blanchet-ruf} to prove that under $P^2$, the process $(R(t,\cdot), {\mathcal F}_t^2)_{t \in [0,T]}$
is not only a local martingale  but is in fact a true martingale. To this end, note that the assumption $\PP^2(\int_0^T |b(t,X^2)|^2dt < \infty) = 1$ from \eqref{ap:asmp:girsanov} and the uniform boundedness of $\sigma$ and $\sigma^{-1}$ ensure that $\tau_n(X^2)\to \infty$ and $R(t \wedge \tau_n(X^2),X^2) \to R(t,X^2)$ a.s.\ as $n\to\infty$.  Now, for each $n \in \N$ and $t \in [0,T]$,  define $Q^t_n \ll P^2$ by
\[ \frac{dQ^t_n}{dP^2}(x)=R(t \wedge \tau_n(x),x), \quad  x  \in \C. \]
Then \cite[Corollary 2.1]{blanchet-ruf} states that $(R(t, \cdot))_{t \in [0,T]}$ is a $P^2$-martingale if and only if
\begin{equation}
  \label{eq-criterion}
  \lim_{n\to\infty}Q^t_n(\tau_n \le t) = 0, \quad \mbox{  for each } t \in [0,T].
  \end{equation}
But the latter follows from the assumption $\PP^1(\int_0^T |b(t,X^1)|^2dt < \infty) = 1$ imposed in \eqref{ap:asmp:girsanov}, since recalling $P^2 = \PP^2 \circ (X^2)^{-1}$ and $P^{1,n} = \PP^{1,n} \circ (X^{1,n})^{-1}$ and
letting $\E^2$ and $\E^{1,n}$ denote expectation under $\PP^2$ and
$\PP^{1,n}$, respectively,
 we have \begin{align*}
Q^t_n(\tau_n  \le t) = \E^2[R(t \wedge \tau_n(X^2),X^2)1_{\{\tau_n(X^2) \le t\}}] &= \E^2[R(T \wedge \tau_n(X^2),X^2)1_{\{\tau_n(X^2) \le t\}}] \\
&= \PP^{1,n}(\tau_n(X^{1,n}) \le t) \\
& = \PP^1(\tau_n(X^1) \le t) \\
&= \PP^1\left( \int_0^t|b(s,X^1)|^2ds \ge n\right),  
\end{align*}
 where the penultimate step used the fact that $(X^1_{t \wedge \tau_n(X^1)})_{t \in [0,T]}$
 satisfies the SDE \eqref{eq-SDE1} with $b$ replaced by $b_n$ and thus, by uniqueness in law
 of the latter SDE, the law of 
 $(X^1_{t \wedge \tau_n(X^1)})_{t \in [0,T]}$ under $\PP^1$ coincides
 with that of $(X^{1,n}_{t \wedge \tau_n(X^{1,n})})_{t \in [0,T]}$  under $\PP^{1,n}$.
 Since  the right-hand side of the last display  vanishes as $n \to \infty$ due to \eqref{ap:asmp:girsanov}, this proves \eqref{eq-criterion}. 

Hence, under $P^2$, we have shown that $R$ is a martingale on a finite time horizon, and thus a uniformly integrable martingale on that time horizon. Since 
$dP^{1,n}/dP^2=R(T\wedge \tau_n(\cdot),\cdot)$ for each $n$, we deduce easily that $dP^1/dP^2=R(T,\cdot)$. Since $R(T,\cdot) > 0$, we deduce that $P^1$ and $P^2$ are equivalent.
\end{proof}

Recalling the definition of relative entropy functional $H$ from \eqref{def-relentropy},
  we record the following well-known relative entropy identity as a corollary:

\begin{corollary} \label{co:entropyestimate}
Let $d \in \N$ and $\lambda_0 \in \P(\R^d)$. Suppose $b^1,b^2 : [0,T] \times \C \to \R^d$ and $\sigma : [0,T] \times \C \to \R^{d \times d}$ are progressively measurable and bounded. Assume $\sigma(t,x)$ is invertible for each $(t,x)$ and that $\sigma^{-1}$ is uniformly bounded.
For $i=1,2$, suppose $(\Omega^i,\F^i,\FF^i ,\PP^i)$ is a filtered probability space supporting a $d$-dimensional $\FF^i$-Brownian motion $W^i$ and continuous $d$-dimensional $\FF^i$-adapted process $X^i$ satisfying
\begin{align*}
dX^i(t) &= b^i(t,X^i)dt + \sigma(t,X^i)dW^i(t), \quad X^i(0) \sim \lambda_0.
\end{align*}
Assume the driftless SDE
\begin{align*}
dX(t)=\sigma(t,X)dW(t), \quad X(0)\sim \lambda_0
\end{align*}
is unique in law.
Then the following relative entropy identity holds:
	\begin{equation*}
		H(\L(X^1[T]) \,|\, \L(X^2[T])) = \frac12 \E^{\PP^1} \left[ \int_0^T |\sigma^{-1}b^1(t,X^1)-\sigma^{-1}b^2(t,X^1)|^2 \,dt \right].
	\end{equation*}
\end{corollary}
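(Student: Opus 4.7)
The plan is to derive the relative entropy identity by applying Lemma \ref{le:ap:girsanov} twice and computing the logarithm of the resulting Radon--Nikodym derivative as an expectation under $\PP^1$.

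First, I would verify that the hypotheses of Lemma \ref{le:ap:girsanov} are satisfied for both $(b^1,\sigma)$ and $(b^2,\sigma)$: the drifts $b^1,b^2$ are bounded, $\sigma$ is bounded with bounded inverse, and the driftless SDE is unique in law by assumption. The integrability condition \eqref{ap:asmp:girsanov} then holds trivially. Let $P^* := \L(X[T])$ denote the law of the driftless process on $\C_T$. Lemma \ref{le:ap:girsanov} applied twice yields, for $P^*$-a.e.\ $x$,
\begin{align*}
\frac{d\L(X^i[T])}{dP^*}(x) = \exp\left(\int_0^T (\sigma\sigma^\top)^{-1}b^i(t,x) \cdot dx(t) - \tfrac{1}{2}\int_0^T |\sigma^{-1}b^i(t,x)|^2 \, dt\right), \quad i=1,2.
\end{align*}
Since both densities are strictly positive, $\L(X^1[T])$ and $\L(X^2[T])$ are mutually equivalent, and their ratio is obtained by subtraction inside the exponential:
\begin{align*}
\log \frac{d\L(X^1[T])}{d\L(X^2[T])}(x) = \int_0^T (\sigma\sigma^\top)^{-1}(b^1-b^2)(t,x) \cdot dx(t) - \tfrac{1}{2}\int_0^T \bigl(|\sigma^{-1}b^1|^2 - |\sigma^{-1}b^2|^2\bigr)(t,x)\, dt.
\end{align*}

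Next, I would evaluate the right-hand side along the path $X^1$ under $\PP^1$, using the SDE $dX^1 = b^1(t,X^1)\,dt + \sigma(t,X^1)\,dW^1(t)$ to split the stochastic integral as
\begin{align*}
\int_0^T (\sigma\sigma^\top)^{-1}(b^1-b^2)(t,X^1) \cdot dX^1(t) = \int_0^T (a-c)\cdot a\, dt + \int_0^T \sigma^{-1}(b^1-b^2)(t,X^1) \cdot dW^1(t),
\end{align*}
where $a := \sigma^{-1}b^1(t,X^1)$ and $c := \sigma^{-1}b^2(t,X^1)$; here I used the identity $(\sigma\sigma^\top)^{-1}(b^1-b^2) \cdot \sigma\, dW^1 = \sigma^{-1}(b^1-b^2) \cdot dW^1$. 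Because $b^1,b^2$ and $\sigma^{-1}$ are uniformly bounded, the integrand of the stochastic integral is bounded, so this integral is a genuine (square-integrable) martingale and has zero expectation under $\PP^1$. Thus
\begin{align*}
H(\L(X^1[T])\,|\,\L(X^2[T])) = \E^{\PP^1}\left[\int_0^T \left((a-c)\cdot a - \tfrac{1}{2}|a|^2 + \tfrac{1}{2}|c|^2\right)dt\right].
\end{align*}

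Finally, an elementary algebraic manipulation gives $(a-c)\cdot a - \tfrac{1}{2}|a|^2 + \tfrac{1}{2}|c|^2 = \tfrac{1}{2}|a-c|^2$, which yields the claimed identity. The only potential obstacle is verifying the true martingale property of the stochastic integral under $\PP^1$, but this is immediate from the uniform boundedness hypotheses on $b^1,b^2,\sigma^{-1}$, which ensure the integrand is bounded. Everything else is routine calculation once the Radon--Nikodym derivatives are in hand.
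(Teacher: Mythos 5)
Your proposal is correct and follows essentially the same route as the paper's proof: apply Lemma \ref{le:ap:girsanov} twice to identify $d\L(X^1[T])/d\L(X^2[T])$, then take the expectation of the logarithm under $\PP^1$ and simplify. The paper compresses the final algebra and the zero-mean martingale argument into a single line, whereas you spell these steps out explicitly, but the underlying argument is identical.
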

\begin{proof}
Abbreviate $P^i=\L(X^i[T])$ for $i=1, 2$. 
The boundedness of $b^i$ ensures that \eqref{ap:asmp:girsanov} holds trivially. We may therefore  apply Lemma \ref{le:ap:girsanov} twice to get
\begin{equation*}
\frac{dP^1}{dP^2}(x) = \exp\left(\int_0^T (\sigma\sigma^\top)^{-1}(b^1-b^2)(t,x) \cdot dx(t) + \frac12 \int_0^T (|\sigma^{-1}b^2(t,x)|^2 - |\sigma^{-1}b^1(t,x)|^2) \,dt \right).
\end{equation*}
Hence, it follows that 
\begin{align*}
H&(P^1 | P^2) \\
	&= \E^{\PP^1} \left[ \int_0^T (\sigma\sigma^\top)^{-1}(b^1-b^2)(t,X^1) \cdot dX^1(t) + \frac12 \int_0^T (|\sigma^{-1}b^2(t,X^1)|^2 - |\sigma^{-1}b^1(t,X^1)|^2) \,dt \right] \\
	& = \frac12 \E^{\PP^1} \left[ \int_0^T |\sigma^{-1}b^1(t,X^1)-\sigma^{-1}b^2(t,X^1)|^2 \,dt \right].
\end{align*}
This completes the proof. 
\end{proof}

Lastly, we prove an infinite-dimensional result similar to Lemma \ref{le:ap:girsanov}, tailor-made for its use in the proof of Lemma \ref{le:infinitegraphlimit-GW}.

\begin{lemma} \label{le:ap:girsanov2}
Let $d \in \N$, and let $V$ be a countable set. Let $\lambda_0 \in \P((\R^d)^V)$.
Suppose $b^1_v,b^2_v : [0,T] \times \C^V \to \R^d$ for $v \in V$ and $\sigma : [0,T] \times \C \to \R^{d \times d}$ are progressively measurable.
Assume $\sigma(t,x)$ is invertible for each $(t,x)$ and that $\sigma$ and $\sigma^{-1}$ are uniformly bounded.
For $i=1,2$, suppose $(\Omega^i,\F^i,\FF^i = \{\F_t^i\}_{t \geq 0},\PP^i)$ is a filtered probability space supporting independent $d$-dimensional $\FF^i$-Brownian motions $(W^i_v)_{v \in V}$ as well as continuous $d$-dimensional $\FF^i$-adapted processes $(X_v^i)_{v \in V}$ satisfying
\begin{align*}
dX_v^i(t) &= b^i_v(t,X)dt + \sigma(t,X^i_v)dW^i_v(t), \quad v \in V, \ \  X^i(0)=(X^i_v(0))_{v \in V} \sim \lambda_0,
\end{align*}
where the SDE system for $X^2$ 
is assumed to be unique in law. 
Assume that $b^1_v \equiv b^2_v$ except for at most finitely many $v \in V$, and that for $i =  1, 2$, 
\begin{equation}
\PP^i\left(\int_0^T |b^1_v(t,X^i)-b^2_v(t,X^i)|^2dt < \infty\right) =1, \quad \mbox{ for each } v \in V. 
\label{asmp:ap:girs2}
\end{equation}
Then,   if  $P^i \in \P(\C^V)$ denotes the law of $X^i=(X^i_v)_{v \in V}$ under $\PP^i$ for $i=1,2$, 
then  $P^1$ and $P^2$ are equivalent, and 
\begin{align*}
\frac{dP^1}{dP^2}(X^2) = \exp\left\{\sum_{v \in V}\left( \int_0^T \sigma^{-1}(b^1_v-b^2_v)(t,X^2) \cdot dW_v(t) - \frac12 \int_0^T | \sigma^{-1}(b^1_v-b^2_v)(t,X^2)|^2dt \right)\right\}
\end{align*}
almost surely,
where $\sigma^{-1}(b^1_v-b^2_v)$ denotes the function $[0,T] \times \C^V \ni (t,x) \mapsto \sigma^{-1}(t,x_v)(b^1_v(t,x)-b^2_v(t,x))$ for $v \in V$.
\end{lemma}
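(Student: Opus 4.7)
The plan is to reduce to a finite-dimensional application of the Blanchet--Ruf criterion, exactly as in the proof of Lemma \ref{le:ap:girsanov}. Let $F := \{v \in V : b^1_v \not\equiv b^2_v\}$, which is finite by assumption. Since the summands for $v \notin F$ in the candidate exponent vanish identically, the candidate Radon--Nikodym derivative $\mathcal{E}_T(M)$ is driven by the single continuous local martingale
\[
M_t := \sum_{v \in F}\int_0^t \sigma^{-1}(s,X^2_v)(b^1_v - b^2_v)(s,X^2) \cdot dW^2_v(s), \qquad t \in [0,T],
\]
under $\PP^2$, whose quadratic variation is $[M]_t = \sum_{v \in F}\int_0^t |\sigma^{-1}(b^1_v-b^2_v)(s,X^2)|^2\,ds$. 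Uniform boundedness of $\sigma^{-1}$ together with \eqref{asmp:ap:girs2} yields $[M]_T < \infty$ a.s.\ under $\PP^2$.

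First, I would localize by $\tau_n := \inf\{t \in [0,T] : [M]_t \ge n\}$ (with $\tau_n = \infty$ on the complement). The stopped exponent satisfies Novikov's condition trivially, so $\mathcal{E}_{t \wedge \tau_n}(M)$ is a uniformly integrable $\PP^2$-martingale, and one may define $Q^n \ll \PP^2$ by $dQ^n/d\PP^2 := \mathcal{E}_{T \wedge \tau_n}(M)$. A direct application of the (finite-dimensional) Girsanov theorem shows that under $Q^n$, the process $X^2 = (X^2_v)_{v \in V}$ solves the SDE system with drift $b^2_v + 1_{\{t \le \tau_n\}}(b^1_v - b^2_v)$ for $v \in F$ and $b^2_v$ for $v \notin F$, driven by shifted Brownian motions. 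This stopped system is unique in law: it differs from the $X^2$ system only through a bounded-variation perturbation of finitely many drifts, so Girsanov applied in reverse, combined with the assumed uniqueness in law for $X^2$, transfers uniqueness.

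Next, the Blanchet--Ruf criterion (\cite[Corollary 2.1]{blanchet-ruf}) tells us that $\mathcal{E}(M)$ is a true $\PP^2$-martingale on $[0,T]$ if and only if $\lim_{n\to\infty} Q^n(\tau_n \le T) = 0$. To check this, observe that the stopped system coincides with the $X^1$-SDE up to time $\tau_n$; by the uniqueness noted above, the law of $(X^2_v[T \wedge \tau_n(X^2)])_{v \in V}$ under $Q^n$ equals that of $(X^1_v[T \wedge \tau_n(X^1)])_{v \in V}$ under $\PP^1$. Consequently,
\[
Q^n(\tau_n(X^2) \le T) = \PP^1(\tau_n(X^1) \le T) = \PP^1\!\left(\sum_{v \in F}\int_0^T |\sigma^{-1}(b^1_v - b^2_v)(s,X^1)|^2\,ds \ge n\right),
\]
which tends to $0$ as $n \to \infty$ by \eqref{asmp:ap:girs2} and boundedness of $\sigma^{-1}$.

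Having established that $\mathcal{E}(M)$ is a true $\PP^2$-martingale, I would conclude by standard Girsanov reasoning: the measure $P^1_* := \mathcal{E}_T(M)\cdot P^2$ makes $X^2$ solve the same SDE system as $X^1$ (now with all drifts $b^1_v$), and by uniqueness in law of that system (which follows from uniqueness of the $X^2$ system and Girsanov in the reverse direction on each $[0, T \wedge \tau_n]$, then letting $n \to \infty$), we obtain $P^1 = P^1_*$, yielding the stated formula. Equivalence of $P^1$ and $P^2$ then follows because $\mathcal{E}_T(M) > 0$ a.s. The main obstacle is not really the infinite dimension, which is tamed by $|F| < \infty$; rather, it is the fact that $[M]_T$ need not be bounded nor satisfy Novikov's condition, which is precisely what the localization plus Blanchet--Ruf criterion circumvents, mirroring the structure of Lemma \ref{le:ap:girsanov}.
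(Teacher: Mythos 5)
Your proof is correct and follows essentially the same route as the paper: localize by a stopping time controlling the quadratic variation, invoke Novikov for the stopped exponential, use Girsanov together with uniqueness in law of the $X^2$ system to identify the tilted law with the law of the correspondingly stopped drift, and then verify the Blanchet--Ruf criterion via the assumption \eqref{asmp:ap:girs2} applied under $\PP^1$. The only cosmetic difference is that you absorb $\sigma^{-1}$ into the definition of $\tau_n$ (stopping on $[M]_t$ rather than on $\sum_{v}\int_0^t|b^1_v-b^2_v|^2\,ds$), which is equivalent given the uniform boundedness of $\sigma^{-1}$.
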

\begin{proof}
Let $V_0 := V \setminus \{v \in V : b^1_v \equiv b^2_v\}$, and note that $V_0$ is finite by assumption. If $\sum_{v \in V_0}|b^1_v-b^2_v|^2$ is uniformly bounded, then the claim is a standard application of Girsanov's theorem. For the general case, define $\tau_n : \C^V \to [0,T] \cup \{\infty\}$ and $b^{1,n}_v : [0,T] \times \C^V \to \R^d$ for $v \in V$ by
\begin{align*}
\tau_n(x) &:= \inf\left\{t \in [0,T] : \sum_{v \in V_0}\int_0^t |b^1_v(s,x)-b^2_v(s,x)|^2ds \ge n\right\}, \\
b^{1,n}_v(t,x) &:= 1_{\{t \le \tau_n(x)\}}b^1_v(t,x) + 1_{\{t > \tau_n(x)\}}b^2_v(t,x).
\end{align*}
With these definitions, the remainder of the proof follows that of Lemma \ref{le:ap:girsanov} very closely, so we give fewer details.
Define $R : [0,T] \times \C^V \to \R_+$ by 
\begin{align*}
R(t,x) := \exp\sum_{v \in V}\Bigg(&\int_0^t (\sigma\sigma^\top)^{-1}(b^1_v-b^2_v)(s,x) \cdot \big(dx_v(s) - b^2_v(s,x)ds\big) \\
	&- \frac12 \int_0^t | \sigma^{-1}(b^1_v-b^2_v)(s,x)|^2ds \Bigg),
\end{align*}
which is well-defined for $P^2$-a.e.\ $x=(x_v)_{v \in V} \in \C^V$. Note that $b^{1,n}_v \equiv b^1_v \equiv b^2_v$ for $v \in V \setminus V_0$, so that the summation in the definition of $R$ is actually over the finite set $V_0$. Since also $\sum_{v \in V_0}\int_0^{\tau_n(x)}|\sigma^{-1}(b^1_v-b^2_v)(t,x)|^2dt \le n$ for all $x \in \C^V$ by construction, Novikov's condition ensures that $(R(t \wedge \tau_n(X^2),X^2), {\mathcal F}_t^2)_{t \in [0,T]}$ is a $\PP^2$-martingale, for each $n$. Hence, by Girsanov's theorem and uniqueness in law of the $X^2$ equation, the SDE system
\begin{align*}
dX^{1,n}_v(t) &= b^{1,n}_v(t,X^{1,n})dt + \sigma(t,X^{1,n}_v)dW_v(t), \quad v \in V, \ \ X^{1,n}(0) \sim \lambda_0,
\end{align*}
is unique in law, and its law $P^{1,n}$ satisfies $P^{1,n} \ll P^2$ and, a.s.,
\begin{align*}
\frac{dP^{1,n}}{dP^2}(X^2) = \exp\left\{\sum_{v \in V} \left( \int_0^T \sigma^{-1}(b^{1,n}_v-b^2_v)(t,X^2) \cdot dW^2_v(t) - \frac12 \int_0^T | \sigma^{-1}(b^{1,n}_v-b^2_v)(t,X^2)|^2dt \right)\right\}.
\end{align*}
Assume $X^{1,n}$ is defined on a filtered probability space $(\Omega^{1,n},\F^{1,n},\FF^{1,n},\PP^{1,n})$.

To complete the proof, as in Lemma \ref{le:ap:girsanov}, it suffices to show that the local martingale $R$ is a true martingale. To this end, note that the assumption \eqref{asmp:ap:girs2} and boundedness of $\sigma$ and $\sigma^{-1}$ ensure $\tau_n(X^2) \to \infty$ and $R(t \wedge \tau_n(X^2),X^2) \to R(t,X^2)$ a.s.\ as $n\to\infty$. For each $t \in [0,T]$ and $n \in \N$, we define $Q_n^t \ll P^2$ by $dQ_n^t/dP^2(x) = R(t \wedge \tau_n(x),x)$, $x \in \C^V$. Then, by \cite[Corollary 2.1]{blanchet-ruf}, $R$ is a $P^2$-martingale if and only if $\lim_{n\to\infty}Q_n^t(\tau_n \le t) = 0$ for each $t \in [0,T]$. The latter follows from assumption \eqref{asmp:ap:girs2} by means of a calculation similar to that used in Lemma \ref{le:ap:girsanov}: Since the laws of $(X^{1,n}(t \wedge \tau_n(X^{1,n})))_{t \in [0,T]}$ under $\PP^{1,n}$ and
$(X^1(t \wedge \tau_n(X^1)))_{t \in [0,T]}$ under $\PP^1$ coincide, we have 
\begin{align*}
Q_n^t(\tau_n \le t) = \PP^{1,n}(\tau_n(X^{1,n}) \le t)  & = \PP^1(\tau_n(X^1) \le t) \\
	&= \PP^1\left(\int_0^t |b^1_v(s,X^1)-b^2_v(s,X^1)|^2ds \ge n\right),  
\end{align*}
which converges to zero as $n \rightarrow \infty$ due to \eqref{asmp:ap:girs2}. 
\end{proof}

\section{Proof of Lemma \ref{le:lingrowth}} \label{se:ap:entropy-proof}

Recall that $(X_v(0))_{v \in \V}$ are independent of $\tree$ and are i.i.d.\ and square-integrable by Assumption (\ref{assumption:A}.3), $X = (X^\tree_v)_{v \in \tree}$ satisfies the SDE system \eqref{statements:SDE}.
Using the linear growth of Assumption (\ref{assumption:A}.1) and the boundedness of $\sigma$ of Assumption (\ref{assumption:A}.2), we thus find, for all $t \in [0,T]$, 
\begin{align*}
\E[\|X_v\|^2_{*,t} \,|\, \tree] \le C\Bigg( 1 + \int_0^t\Big(\E[\|X_v\|^2_{*,s} \,|\, \tree] + \frac{1}{|N_v(\tree)|}\sum_{u \in N_v(\tree)}\E[\|X_u\|^2_{*,s} \,|\, \tree]\Big)ds \Bigg),
\end{align*}
where $C < \infty$ is a constant depending only on $T$, $\lambda_0$, and the constants of Assumptions (\ref{assumption:A}.1) and (\ref{assumption:A}.2). (As usual, the average over $N_v(\tree)$ is understood to be zero when $N_v(\tree)=\emptyset$ or $v \notin \tree$.) This implies
\begin{align*}
\sup_{v \in \V}\E[\|X_v\|^2_{*,t} \,|\, \tree] \le 2C\Bigg(1 + \int_0^t \sup_{v \in \V}\E[\|X_v\|^2_{*,s} \,|\, \tree]ds\Bigg).
\end{align*}
The proof of \eqref{def:2ndmomentbound} can be completed using Gronwall's inequality. 

To derive the entropy bounds, fix a finite set $A \subset \V$ and a time horizon $T \in (0,\infty)$. Suppose first that the tree $\tree$ is a.s.\ finite. Define a change of probability measure $\QQ^A$ by the Radon-Nikodym derivative
\begin{align*}
\frac{d\QQ^A}{d\PP} = \EE_T\left(-\sum_{v \in A}\int_0^\cdot \sigma^{-1}b(t,X_v,X_{N_v(\tree)}) \cdot dW_v(t)\right).
\end{align*}
Working conditionally on the (finite) tree, we may apply Girsanov's theorem in the form of Lemma \ref{le:ap:girsanov}, due to Assumption (\ref{assumption:A}.1) and Remark \ref{re:ap:girsanov},
to deduce that this change of measure is well defined (i.e., $d\QQ^A/d\PP$ has mean 1), and the processes
\[
W^A_v(t) := W_v(t) + \int_0^t\sigma^{-1}b(s,X_v,X_{N_v(\tree)})ds, \quad v \in \V, \ \ t\in[0,T],
\]
are independent Brownian motions under $\QQ^A$ by Girsanov's theorem. Thus, under $\QQ^A$, we find that $(X_v)_{v \in A}$ satisfy the driftless SDE
\begin{align*}
dX_v(t) = 1_{\{v \in \tree\}}\sigma(t,X_v)dW_v(t), \quad v \in A. 
\end{align*}
As this SDE is unique in law by Assumption (\ref{assumption:A}.2b), we deduce that
\[
\QQ^A \circ X_A^{-1} = \PP \circ \widehat{X}_A^{-1}, 
\]
where $\widehat{X} = (\widehat{X}^\tree_v)_{v \in \mathbb{V}},$ is the solution to the SDE
  system \eqref{pf:existence-driftless}  and we have assumed (for notational simplicity) that $X$ and $\widehat{X}$ are defined
  on the same probability space $(\Omega, \F, \PP)$.  
By the data processing inequality of relative entropy, we have
\begin{align*}
  H\big(\L(X_A[T])\,|\,\L(\widehat{X}_A[T])\big) &= H\big(\PP \circ X_A[T]^{-1}\,|\,\PP \circ \widehat{X}_A[T]^{-1}\big)
  \\
  &= H\big(\PP \circ X_A[T]^{-1}\,|\,\QQ^A \circ X_A[T]^{-1}\big) \\
	&\le H(\PP \,|\, \QQ^A) \\
	&= \frac12 \E^{\PP}\left[\sum_{v \in A}\int_0^T |\sigma^{-1}b(t,X_v,X_{N_v(\tree)})|^2 dt \right].
\end{align*}
The proof of  \eqref{def:entropybound1} can be completed by 
using the boundedness of $\sigma^{-1}$, the linear growth of $b$, and the result \eqref{def:2ndmomentbound} of the first part
(possibly changing the constant).  
Similarly, to prove \eqref{def:entropybound2}, still in the case of an a.s.\ finite tree $\tree$, we compute
\begin{align*}
  H\big(\L(\widehat{X}_A[T])\,|\,\L(X_A[T])\big) &= H\big(\PP \circ \widehat{X}_A[T]^{-1}\,|\,\PP \circ X_A[T]^{-1}\big) \\
  &= H\big(\QQ^A \circ X_A[T]^{-1}\,|\,\PP \circ X_A[T]^{-1}\big) \\
	&\le H(\QQ^A \,|\, \PP) \\
	&= \frac12 \E^{\QQ^A}\left[\sum_{v \in A}\int_0^T |\sigma^{-1}b(t,X_v,X_{N_v(\tree)})|^2 dt \right].
\end{align*}
The SDE system \eqref{statements:SDE}  under $\QQ^A$ takes the form
\begin{align*}
dX_v(t) &= 1_{\{v \in \tree\}}\Big(b(t,X_v,X_{N_v(\tree)})dt + \sigma(t,X_v)dW^A_v(t)\Big), \quad v \in \V \setminus A, \\
dX_v(t) &= 1_{\{v \in \tree\}}\sigma(t,X_v)dW^A_v(t), \quad v \in A,
\end{align*}
and it is straightforward to argue that the SDE system under $\QQ^A$ enjoys an identical second moment bound as in \eqref{def:2ndmomentbound}. This completes the proof under the additional assumption that $\tree$ is a.s.\ finite.
We prove the case of a general random tree $\tree$ by truncating the tree to the first $n$ generations, $\tree_n :=\tree\cap\V_n$, and deducing from above that the bounds \eqref{def:entropybound1} and \eqref{def:entropybound2} hold when $\tree$ is replaced with $\tree_n$. 
The particle system $(X^{\tree_n}_v)_{v \in \V}$ clearly converges to $(X^{\tree}_v)_{v \in \V}=(X_v)_{v \in \V}$ in law, and the lower semicontinuity of relative entropy 
lets us take limits as $n \rightarrow \infty$ on both sides of \eqref{def:entropybound1} and \eqref{def:entropybound2} to show that these bounds hold for $\tree$. \qed

\bibliographystyle{amsplain}

\begin{bibdiv}
\begin{biblist}

\bib{aldous-lyons}{article}{
      author={Aldous, D.},
      author={Lyons, R.},
       title={Processes on unimodular random networks},
        date={2007},
     journal={Electronic Journal of Probability},
      volume={12},
       pages={1454\ndash 1508},
        note={paper no. 54},
}

\bib{BhaBudWu18}{article}{
      author={Bhamidi, S.},
      author={Budhiraja, A.},
      author={Wu, R.},
       title={Weakly interacting particle systems on inhomogeneous random
  graphs},
        date={2019},
     journal={Stoch. Proc. Appl.},
      volume={129},
      number={6},
       pages={2174\ndash 2206},
}

\bib{blanchet-ruf}{article}{
      author={Blanchet, J.},
      author={Ruf, J.},
       title={A weak convergence criterion for constructing changes of
  measure},
        date={2016},
     journal={Stochastic Models},
      volume={32},
      number={2},
       pages={233\ndash 252},
}

\bib{Bordenave2016}{unpublished}{
      author={Bordenave, C.},
       title={Lecture notes on random graphs and probabilistic combinatorial
  optimization},
        date={2016},
         url={https://www.math.univ-toulouse.fr/~bordenave/coursRG.pdf},
}

\bib{brunick2013mimicking}{article}{
      author={Brunick, G.},
      author={Shreve, S.},
       title={Mimicking an {I}t{\^o} process by a solution of a stochastic
  differential equation},
        date={2013},
     journal={The Annals of Applied Probability},
      volume={23},
      number={4},
       pages={1584\ndash 1628},
}

\bib{CopDieGia18}{article}{
      author={Coppini, F.},
      author={Dietert, H.},
      author={Giacomin, G.},
       title={A law of large numbers and large deviations for interacting
  diffusions on {E}rd\"{o}s-{R}\'{e}nyi graphs},
        date={2020},
     journal={Stochastics and Dynamics},
      volume={20},
      number={2},
        note={DOI 10.1142/S0219493720500100},
}

\bib{CrimaldiPratelli2005}{article}{
      author={Crimaldi, I.},
      author={Pratelli, L.},
       title={Convergence results for conditional expectations},
        date={2005},
     journal={Bernoulli},
      volume={11},
      number={4},
       pages={737\ndash 745},
}

\bib{CsiKor11}{book}{
      author={Csisz\'{a}r, I.},
      author={K\"{o}rner, J.},
       title={Information theory: Coding theorems for discrete memoryless
  systems},
   publisher={Cambridge University Press},
        date={2011},
}

\bib{DelGiaLuc16}{article}{
      author={Delattre, S.},
      author={Giacomin, G.},
      author={Lu\c{c}on, E.},
       title={A note on dynamical models on random graphs and {F}okker-{P}lanck
  equations},
        date={2016},
     journal={J. Stat. Phys},
      volume={165},
       pages={785\ndash 798},
}

\bib{dembo-montanari}{article}{
      author={Dembo, A.},
      author={Montanari, A.},
       title={Gibbs measures and phase transitions on sparse random graphs},
        date={2010},
     journal={Brazilian Journal of Probability and Statistics},
      volume={24},
      number={2},
       pages={137\ndash 211},
}

\bib{DetFouIch18}{article}{
      author={Detering, N.},
      author={Fouque, J.-P.},
      author={Ichiba, T.},
       title={Directed chain stochastic differential equations},
        date={2020},
     journal={Stochastic Processes and their Applications},
      volume={130},
      number={4},
       pages={2519\ndash 2551},
}

\bib{georgii2011gibbs}{book}{
      author={Georgii, H.-O.},
       title={Gibbs measures and phase transitions},
   publisher={Walter de Gruyter},
        date={2011},
      volume={9},
}

\bib{gyongy1986mimicking}{article}{
      author={Gy{\"o}ngy, I.},
       title={Mimicking the one-dimensional marginal distributions of processes
  having an {I}t{\^o} differential},
        date={1986},
     journal={Probability theory and related fields},
      volume={71},
      number={4},
       pages={501\ndash 516},
}

\bib{harris-book}{book}{
      author={Harris, T.E.},
       title={The theory of branching processes},
   publisher={Courier Corporation},
        date={2002},
}

\bib{israel1986some}{article}{
      author={Israel, R.B.},
       title={Some examples concerning the global {M}arkov property},
        date={1986},
     journal={Communications in mathematical physics},
      volume={105},
      number={4},
       pages={669\ndash 673},
}

\bib{karatzas-shreve}{book}{
      author={Karatzas, I.},
      author={Shreve, S.E.},
       title={Brownian motion and stochastic calculus},
      series={Graduate Texts in Mathematics},
   publisher={Springer New York},
        date={1991},
}

\bib{Kes85}{article}{
      author={Kessler, C.},
       title={Examples of extremal lattice fields without the global markov
  property},
        date={1985},
     journal={Publ. RIMS, Kyoto Univ.},
      volume={21},
       pages={877\ndash 888},
}

\bib{Kol10}{book}{
      author={Kolokoltsov, N.},
       title={Nonlinear markov processes and kinetic equations},
      series={Cambridge Tracts in Mathematics},
   publisher={Cambridge University Press},
        date={2010},
      volume={vol. 182},
}

\bib{KotKur10}{article}{
      author={Kotelenez, P.M.},
      author={Kurtz, T.G.},
       title={Macroscopic limits for stochastic partial differential equations
  of mckean–vlasov type},
        date={2010},
     journal={Probability Theory and Related Fields},
       pages={146\ndash 189},
}

\bib{Kurtz-Protter}{article}{
      author={Kurtz, T.G.},
      author={Protter, P.E.},
       title={Weak limit theorems for stochastic integrals and stochastic
  differential equations},
        date={1991},
     journal={The Annals of Probability},
       pages={1035\ndash 1070},
}

\bib{kurtz1999particle}{article}{
      author={Kurtz, T.G.},
      author={Xiong, J.},
       title={Particle representations for a class of nonlinear {SPDE}s},
        date={1999},
     journal={Stochastic Processes and their Applications},
      volume={83},
      number={1},
       pages={103\ndash 126},
}

\bib{LacRamWu19a}{article}{
      author={Lacker, D.},
      author={Ramanan, K.},
      author={Wu, R.},
       title={Large sparse networks of interacting diffusions},
        date={2019},
     journal={preprint arXiv:1904.02585v1},
}

\bib{LacRamWu20a}{article}{
      author={Lacker, D.},
      author={Ramanan, K.},
      author={Wu, R.},
       title={Local weak convergence for sparse networks of interacting
  processes},
        date={2020},
     journal={preprint arXiv:1904.02585v3},
}

\bib{LacRamWu19b}{article}{
      author={Lacker, D.},
      author={Ramanan, K.},
      author={Wu, R.},
       title={Locally interacting diffusions as {M}arkov random fields on path
  space},
        date={2021},
        ISSN={0304-4149},
     journal={Stochastic Processes and their Applications},
      volume={140},
       pages={81\ndash 114},
  url={https://www.sciencedirect.com/science/article/pii/S0304414921001009},
}

\bib{LacRamWu20c}{unpublished}{
      author={Lacker, D.},
      author={Ramanan, K.},
      author={Wu, R.},
       title={Marginal dynamics of probabilistic cellular automata on trees},
        date={2021},
        note={Work in progress},
}

\bib{lauritzen1996graphical}{book}{
      author={Lauritzen, S.L.},
       title={Graphical models},
   publisher={Clarendon Press},
        date={1996},
      volume={17},
}

\bib{liptser-shiryaev}{book}{
      author={Liptser, R.S.},
      author={Liptser, R.S.},
       title={Statistics of random processes: I. general theory},
   publisher={Springer Science \& Business Media},
        date={2001},
      volume={1},
}

\bib{Luc18quenched}{article}{
      author={Lu\c{c}on, E.},
       title={Quenched asymptotics for interacting diffusions on inhomogeneous
  random graphs},
        date={2020},
     journal={Stochastic Processes and their Applications},
      volume={130},
      number={11},
       pages={6783\ndash 6842},
}

\bib{lyons1995conceptual}{article}{
      author={Lyons, R.},
      author={Pemantle, R.},
      author={Peres, Y.},
       title={Conceptual proofs of {L} log {L} criteria for mean behavior of
  branching processes},
        date={1995},
     journal={The Annals of Probability},
       pages={1125\ndash 1138},
}

\bib{Mck67}{incollection}{
      author={McKean, H.P.},
       title={Propagation of chaos for a class of non-linear parabolic
  equations},
        date={1967},
   booktitle={Stochastic differential equations},
      series={(Lecture Series in Differential Equations, Session 7, Catholic
  Univ.)},
       pages={41\ndash 57},
}

\bib{Med18}{article}{
      author={Medvedev, G.S.},
       title={The continuum limit of the {K}uramoto model on sparse directed
  graphs},
        date={2019},
     journal={Communications in Mathematical Sciences},
      volume={17},
      number={4},
       pages={883\ndash 898},
}

\bib{neveu1986arbres}{article}{
      author={Neveu, J.},
       title={Arbres et processus de {G}alton-{W}atson},
        date={1986},
     journal={Ann. Inst. H. Poincar{\'e} Probab. Statist},
      volume={22},
      number={2},
       pages={199\ndash 207},
}

\bib{OliReiSto19}{article}{
      author={Oliveira, R.I.},
      author={Reis, G.~H.},
      author={Stolerman, L.~M.},
       title={Interacting diffusions on sparse graphs: hydrodynamics from local
  weak limits},
        date={2020},
     journal={Electronic Journal of Probability},
      volume={25},
      number={110},
        note={35 pp.},
}

\bib{OliRei18}{article}{
      author={Oliveira, R.I.},
      author={Reis, G.H.},
       title={Interacting diffusions on random graphs with diverging degrees:
  hydrodynamics and large deviations},
        date={2019},
     journal={Journal of Statistical Physics},
      volume={176},
       pages={1057\ndash 1087},
}

\bib{rogers-williams}{book}{
      author={Rogers, L.C.G.},
      author={Williams, D.},
       title={Diffusions, {M}arkov processes and martingales: {V}olume 2,
  {I}t{\^o} calculus},
   publisher={Cambridge University Press},
        date={2000},
      volume={2},
}

\bib{GerMenSto20}{article}{
      author={S.~Gerchinovitz, P.~M\'{e}nard},
      author={Stoltz, G.},
       title={Fano's inequality for random variables},
        date={2020},
     journal={Statistical Science},
      volume={35},
      number={2},
       pages={178\ndash 201},
}

\bib{SudijonoThesis19}{article}{
      author={Sudijono, T.},
       title={Stationarity and ergodicity of local dynamics of interacting
  {M}arkov chains on large sparse graphs},
        date={2019},
        note={Senior Honors Thesis, Brown University; Advisor: K. Ramanan;
  Mentor: A. Ganguly},
}

\bib{sznitman1991topics}{article}{
      author={Sznitman, A.-S.},
       title={Topics in propagation of chaos},
        date={1991},
     journal={Ecole d'Et{\'e} de Probabilit{\'e}s de Saint-Flour XIX—1989},
       pages={165\ndash 251},
}

\bib{van2009random}{article}{
      author={{van der Hofstad}, R.},
       title={Random graphs and complex networks},
        date={2009},
     journal={Available on http://www.win.tue.nl/~rhofstad/NotesRGCN.pdf},
      volume={11},
}

\bib{van2009randomII}{unpublished}{
      author={{van der Hofstad}, R.},
       title={Random graphs and complex networks, volume 2},
        date={2016},
         url={https://www.win.tue.nl/~rhofstad/NotesRGCNII.pdf},
}

\bib{Wei80}{incollection}{
      author={{von Weizs\"{a}cker}, H.},
       title={A simple example concerning the global markov property of lattice
  random fields},
        date={1980},
   booktitle={8th winter school on abstract analysis},
   publisher={Czechoslovak Academy of Sciences, Praha},
       pages={194\ndash 198},
}

\bib{MitchellThesis18}{article}{
      author={Wortsman, M.},
       title={Systems of interacting particles and efficient approximations for
  large sparse graphs},
        date={2018},
        note={Senior Honors Thesis, Brown University; Advisor: K. Ramanan;
  Mentor: A. Ganguly},
}

\end{biblist}
\end{bibdiv}

\end{document}